\newcommand{\haus}{\mathscr{H}}
\theoremstyle{plain}
\newtheorem{theorem}{Theorem}[section]
\newtheorem{lemma}[theorem]{Lemma}
\newtheorem{corollary}[theorem]{Corollary}
\newtheorem{conjecture}[theorem]{Conjecture}
\newtheorem{proposition}[theorem]{Proposition}
\theoremstyle{definition}
\newtheorem{definition}[theorem]{Definition}
\theoremstyle{remark}
\newtheorem{remark}[theorem]{Remark}
\DeclareSymbolFont{AMSb}{U}{msb}{m}{n}
\DeclareMathSymbol{\N}{\mathalpha}{AMSb}{"4E}
\DeclareMathSymbol{\R}{\mathalpha}{AMSb}{"52}
\DeclareMathSymbol{\Z}{\mathalpha}{AMSb}{"5A}
\DeclareMathSymbol{\D}{\mathalpha}{AMSb}{"44}
\DeclareMathSymbol{\s}{\mathalpha}{AMSb}{"53}
\newcommand{\X}{X}
\DeclareMathOperator{\vol}{vol}
\DeclareMathOperator{\Ch}{Ch}
\DeclareMathOperator{\lip}{Lip}
\DeclareMathOperator{\supp}{supp}
\DeclareMathOperator{\m}{m}
\DeclareMathOperator{\de}{d}
\DeclareMathOperator{\ric}{ric}
\DeclareMathOperator{\diam}{diam}
\DeclareMathOperator{\Div}{div}
\DeclareMathOperator{\Test}{Test}
\newcommand{\CD}{\mathrm{CD}}
\newcommand{\RCD}{\mathrm{RCD}}
\title[Stability of tori]{Gromov-Hausdorff stability of tori under Ricci and integral scalar curvature bounds}
\thanks{\textit{2010 Mathematics Subject classification}. Primary 53C21 30L99, Keywords: scalar curvature, RCD spaces, comparison geometry, stability of tori}
\author[Honda]{Shouhei Honda}
\address[Shouhei Honda]{Graduate School of Mathematical Sciences, The University of Tokyo, Tokyo, Japan}
\email{shouhei@ms.utokyo.ac.jp}
\author[Ketterer]{Christian Ketterer}
\address[Christian Ketterer]{Mathematisches Institut, Albert-Ludwigs Universit\"at, Freiburg, Germany}
\email{christian.ketterer@math.uni-freiburg.de}
\author[Mondello]{Ilaria Mondello}
\address[Ilaria Mondello]{Université Paris Est Créteil, Laboratoire d’Analyse et Mathématiques appliqués, UMR CNRS 8050, F-94010 Créteil, France.}
\email{ilaria.mondello@u-pec.fr}
\author[Perales]{Raquel Perales}
\address[Raquel Perales]{Centro de Investigaci\'on en Matem\'aticas, 
De Jalisco s/n, Valenciana, Guanajuato, Gto. Mexico. 36023}
\email{raquel.perales@cimat.mx}
\author[Rigoni]{Chiara Rigoni}
\address[Chiara Rigoni]{Faculty of Mathematics, University of Vienna, Austria}
\email{chiara.rigoni@univie.ac.at}
\begin{document}
\begin{abstract} 
We establish a nonlinear analogue of a splitting map into a Euclidean space, as a harmonic map into a flat torus. We prove that the existence of  such a map implies Gromov-Hausdorff closeness to a flat torus in any dimension. Furthermore, Gromov-Hausdorff closeness to a flat torus  and an integral bound {on $r_M(x)$, the smallest eigenvalue of the Ricci tensor $\ric_x$ in $x$}, imply the existence of a harmonic splitting map. Combining these results with Stern's inequality, we provide a new Gromov-Hausdorff stability theorem for flat $3$-tori. 
The main tools we employ include the harmonic map heat flow, Ricci flow,  and both Ricci limits and RCD theories.

\end{abstract}
\maketitle

{
  \hypersetup{linkcolor=black}
  \tableofcontents
}

\section{Introduction}
\subsection{Related known results}

Several recent results and conjectures concern torus stability, meaning that under the appropriate assumptions on curvature and the appropriate notions of distance, a torus must be close to a (possibly flat and collapsed) torus. Very recently, Bru\`e, Naber and Semola \cite{BrueSemolaNaber_torus} showed that under an arbitrary {lower bound on the sectional curvature}, a sequence of $n$-tori must converge in the Gromov-Hausdorff (GH) topology to a $k$-torus, where $0\leq k \leq n$ and this $k$-torus is not necessarily flat. This was conjectured by Petrunin, see \cite{Zamora}. In dimension 3, the sectional curvature bound can be replaced by a Ricci lower bound \cite{BrueSemolaNaber_torus}. When assuming an almost non-negative Ricci lower bound on a sequence of $n$-tori, a result of Cheeger and Colding \cite{Colding97, ChCo} ensures GH-convergence to a flat torus. This was extended by \cite{MMP22} to sequences of $\RCD$ spaces whose first Betti number coincides with the analytic dimension of the space. In \cite{CMT23}, the lower Ricci bound  was weakened to an integral Kato bound.

When considering scalar curvature, the situation is more involved. By the work of Schoen-Yau \cite{SchoenYau1979,SchoenYau1979-2} and Gromov-Lawson \cite{GromovLawson1980}, it is known that any metric on the torus with non-negative scalar curvature must be flat. The question of stability for this result was raised by Gromov. Namely, in page 61 in \cite{Gromov2014}, it is stated that ``there is a particular ``Sobolev type weak metric'' in the space of $n$-manifolds, such that, for example'' a sequence of tori with almost non-negative scalar curvature, combined with appropriate compactness conditions, should converge to a flat torus with respect to this metric. Several notions of convergence have been considered to address this question, starting from $C^0$ convergence of the Riemannian metric (see \cite{Gromov2014, Bamler2016, Burkhardt2019}). Sormani reformulated Gromov's conjecture \cite{Sormani_survey} by proposing a volume preserving condition and a uniform minA lower bound for $n=3$, where minA is the infimum of the 
areas of all closed minimal surfaces within the manifold. In doing so, she used the notion of intrinsic flat distance, which is a distance introduced by Sormani--Wenger \cite{SormaniWenger2011} between integral current spaces.  This led to studying the stability with respect to intrinsic flat distance of several classes of tori, as it was done, for example, in the following works: 
\begin{itemize}
\item 3-tori with a warped product metric \cite{AHPPS2019} (in this case, measured Gromov-Hausdorff 
and $C^0$ convergence is also obtained);
\item in dimension $n \geq 3$, graphical tori \cite{CPKP_graphTori} and conformal tori \cite{Allen2021,ChuLee}. 
\end{itemize}
For $n \geq 2$, Lee, Naber and Neumayer  \cite[Theorem 1.19]{LeeNaberNeu} obtained stability of tori with respect to the so-called $d_p$-convergence 
assuming that the entropy is small, and for $n\geq 4$ they constructed sequences of tori with small entropy that collapse in both Gromov-Hausdorff and intrinsic flat sense. More recently, Kazaras and Xu exhibited sequences of $3$-dimensional tori with a minA uniform lower bound that converge in the  $W^{1,p}$-sense, $1\leq  p < 2$, to a flat torus but do not converge in the intrinsic flat sense to a torus \cite{KX23}.

In this work, we establish new Gromov-Hausdorff stability results for tori by assuming a Ricci lower bound and an integral bound for the negative part of the scalar curvature. In particular, in conjunction with a recent result by Stern \cite{stern}, we obtain a new Gromov-Hausdorff stability result for $3$-tori. 
Observe that in the non-collapsing case, in which we mainly focus, Gromov-Hausdorff convergence also implies measured Gromov-Hausdorff convergence.

\subsection{Results}
Let $M$ be a Riemannian manifold and $\Phi$ be a harmonic map of the form
\begin{equation*}
\Phi=(\phi_1,\ldots, \phi_n):M\to \mathbb{S}^1(r_1)\times \cdots \times \mathbb{S}^1(r_n),
\end{equation*}
where $\mathbb{S}^1(r)=\{x \in \mathbb{R}^2\, : \, |x|_{\mathbb{R}^2}=r\}$.
Note that $\Phi$ determines $n$ harmonic $1$-forms $\{d\phi_i\}_{i=1}^n$ on $M$ because of the (isometrically) identification $\mathbb{S}^1(r)\cong \mathbb{R}/(2\pi r\mathbb{Z})$.

One of the main purposes of this paper is to consider $\Phi$ as a nonlinear analogue of a splitting map 
to a Euclidean space. Splitting maps have been extensively studied and applied in both 
Ricci limits and RCD theories. Notably, it is known that the existence of a splitting map is equivalent to the closeness to a Euclidean space.

In this paper, we define the notion of 
$(\delta; C, \tau)$-harmonic map as follows. Note that we can restrict our discussion to the case where each $\mathbb{S}^1(r_i)$ is isomorphic to $\mathbb{R}/\mathbb{Z}$ (equivalently, $\mathbb{S}^1(\frac{1}{2\pi})$) owing to the scaling factor applied to each circle. Namely we mainly work on $(\mathbb{R}/\mathbb{Z})^n=\mathbb{R}^n/\mathbb{Z}^n$ as the target space of harmonic maps (see also Remark \ref{new rem}).

\begin{definition}[$(\delta; C, \tau)$-harmonic map]
We say that a smooth map \mbox{$\Phi:M \to \mathbb{R}^n/\mathbb{Z}^n$}
is \textit{$(\delta; C, \tau)$-harmonic} if the following conditions are satisfied:
\begin{enumerate}
    \item $\Phi$ is a harmonic map;
    \item The {(averaged)} energy of $\Phi$ is bounded by $C$, 
    $$E(\Phi) := \frac{1}{2}{-\!\!\!\!\!\!\int_M}\langle \Phi^*g_{\mathbb{R}^n/\mathbb{Z}^n}, g_M\rangle \de\!\vol_M\le C;$$ 
    \item $\Phi$ is non-degenerate in the following sense,
    \begin{equation*}
D(\Phi):=\mathrm{det}\left( -\!\!\!\!\!\!\int_M\langle d\phi_i, d\phi_j\rangle \de\!\mathrm{vol}_M\right)_{ij}\ge \tau>0,
\end{equation*}
where $-\!\!\!\!\!\!\int_A \cdot =\frac{1}{\vol_M(A)}\int_A \cdot \de\!\mathrm{vol}_M$ when $0<\vol_M(A) <\infty$;
\item For $i=1,2,\ldots, n$, we have
\begin{equation*}
-\!\!\!\!\!\!\int_M|\nabla d\phi_i|^2\de\!\mathrm{vol}_M\le \delta.
\end{equation*}
\end{enumerate}
\end{definition}

\begin{remark}\label{new rem}
{ Given two flat tori $T, S$ of dimension $n$ it is also well-known that:
\begin{itemize}
    \item  there exists a canonical affine harmonic diffeomorphism $f:T\to S$;
    \item for a smooth map $\Phi$ from $M$ to  $T$, $\Phi$ is harmonic if and only if $f\circ \Phi$ is harmonic.
\end{itemize}}

Therefore all our results explained below can be also justified in the case when the target space is replaced by any flat torus. See also Remark \ref{bilip}.
\end{remark}
{
\begin{remark}\label{rem:delta}
    If $\ric_M\ge -\delta$ holds, then any harmonic map $\Phi:M \to \mathbb{R}^n/\mathbb{Z}^n$ satisfies
    $$\sum_{i=1}^n-\!\!\!\!\!\!\int_M|\nabla d\phi_i|^2\de \vol_M\le 2\delta E(\Phi).$$
    This is a direct consequence of Bochner's inequality (for functions) on $M$ and the flatness of $\mathbb{R}^n/\mathbb{Z}^n$. 
\end{remark}
}
We are now ready to state our main result concerning Gromov-Hausdorff stability of tori, 
which is directly linked to the harmonic map $\Phi$ described above. In the following we denote by $\Psi(\delta|C_1, \ldots , C_k)$ a quantity depending on $\delta, C_1, \ldots, C_k$ and tending to 0 as $\delta$ tends to zero.

\begin{theorem}[Characterization of almost flat tori via harmonic maps]\label{mthm0}
Let $n \in \mathbb{N}$, $ K \in \mathbb{R}, 0<\delta<1, D, L>0, {\tau>0}$ and let $M$ be a closed Riemannian manifold of dimension $n$ with $\mathrm{diam}_M\le D$ and $\mathrm{ric}_M\ge K$.
Then we have the following. 
\begin{enumerate}
\item If there exists a $(\delta;L, \tau)$-harmonic map \[\Phi=(\phi_i)_{i=1}^n:M \to \mathbb{R}^n/\mathbb{Z}^n\] 
then $M$ is $\Psi$-Gromov-Hausdorff close to a flat $n$-torus $T$, 
for some $\Psi=\Psi(\delta| n, K, D, L, \tau)$. 
 
Furthermore, for $\delta$ small enough 
depending only on $n, K, D, L$ and $\tau$, we have the following.
\begin{enumerate}
    \item $M$ is diffeomorphic to $T$;
    \item $\Phi$ is a covering map with $\mathrm{deg}(\Phi) \le C_0$; in particular $\Phi$ is surjective. More strongly, there exists an affine harmonic diffeomorphism $H:\mathbb{R}^n/\mathbb{Z}^n \to T$ such that $H$ is $C_1$-bi-Lipschitz (namely both $H, H^{-1}$ are $C_1$-Lipschitz) and that for all $x, y \in M$ with $\de_M(x, y) \le r$,
\begin{equation}\label{eq-holderlip}
C_2\de_M(x, y)^{1+\Psi} \le \de_{T}(H\circ \Phi(x), H \circ \Phi(y)) \le C_3\de_M(x, y),
\end{equation}
 where $C_i=C_i(K, n, D, L, \tau)>0$ and $r=r(K, n, D, L, \tau)>0$.
    \item if $\mathrm{deg}(\Phi)=1$, then $\Phi$ is a diffeomorphism, (\ref{eq-holderlip}) is satisfied for all $x, y \in M$, and $H \circ \Phi$ is a $\Psi$-Gromov-Hausdorff approximation.
\end{enumerate}

\item If $M$ is $\delta$-Gromov-Hausdorff close to a flat $n$-torus $T$ with $\mathrm{vol}_{T}(T) \ge v$ for some $v>0$, and
\begin{equation*}
-\!\!\!\!\!\!\int_M{r_M^-}\de\!\mathrm{vol}_M \le \delta,
\end{equation*}
{where $r_M(x)$ denotes the smallest eigenvalue of $\ric_M$ at x},
then there exists a  $(\Psi;C_3, C_4)$-harmonic diffeomorphism
\begin{equation*}
\Phi=(\phi_i)_{i=1}^n:M\to \mathbb{R}^n/\mathbb{Z}^n, 
\end{equation*}
where $C_i=C_i(K,n, D, v)>0$, $\Psi=\Psi(\delta|n, K, D, v)>0$ and { $r^-_M= \max\{-r_M, 0\}$}. 
\end{enumerate} 
\end{theorem}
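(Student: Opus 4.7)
\emph{Part (1).} My plan is a compactness--contradiction argument in the $\RCD(K,n)$ category. Suppose the conclusion fails, so that there is a sequence $(M_j,g_j)$ with $\diam\le D$, $\ric\ge K$, admitting $(\delta_j;L,\tau)$-harmonic maps $\Phi_j:M_j\to \R^n/\Z^n$ with $\delta_j\to 0$, yet uniformly far from any flat torus. The non-degeneracy $D(\Phi_j)\ge \tau$ together with the diameter and Ricci bounds prevents collapse, so by Cheeger--Colding I extract a subsequential pmGH-limit $X$, a non-collapsed $\RCD(K,n)$ space. Combining the uniform $L^2$-Hessian bound with lower semicontinuity of the Cheeger energy under pmGH-convergence (Gigli--Ambrosio--Savar\'e), the $\phi_{j,i}$ pass to limit harmonic maps $\phi_{\infty,i}:X\to \R/\Z$ whose differentials are \emph{parallel}. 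Iterating the Gigli splitting theorem (independence of the $d\phi_{\infty,i}$ being ensured by $D(\Phi_\infty)\ge\tau$) then forces $X$ to be a closed flat $n$-torus, contradicting the standing assumption. For the refined statements (a)--(c) in the small-$\delta$ regime, I follow the harmonic-splitting-map $\varepsilon$-regularity framework: the nearly parallel frame $\{d\phi_i\}$ makes $\Phi$ an immersion with $C^{0,\alpha}$-controlled differential, and the linear change of coordinates $H$ diagonalizing the Gram matrix $\bigl(-\!\!\!\!\!\!\int \langle d\phi_i,d\phi_j\rangle\bigr)_{ij}$ renders $H\circ\Phi$ almost isometric. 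Between closed $n$-manifolds this is a covering of degree bounded by a volume ratio, yielding \eqref{eq-holderlip}; the degree-$1$ case is then a diffeomorphism.

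\emph{Part (2).} Here I construct $\Phi$ via Hodge theory. A smoothing of the $\delta$-GH-approximation yields a smooth $f:M\to T$; in the almost-rigid regime one has $b_1(M)=n$ (as in \cite{MMP22}, \cite{CMT23}), so $f^*:H^1(T,\Z)\to H^1(M,\Z)$ is an isomorphism. Let $\omega_i\in \mathcal{H}^1(M)$ be the harmonic representative of $f^*[dx_i]$; the integer-period condition lets me integrate $\omega_i$ into harmonic functions $\phi_i:M\to \R/\Z$, defining $\Phi=(\phi_i)_{i=1}^n$. Harmonicity and the energy bound are immediate, since $L^2$-minimality of harmonic representatives gives $\|\omega_i\|_{L^2}^2\le \|f^*dx_i\|_{L^2}^2\le C$; non-degeneracy $D(\Phi)\ge C_4$ follows because $\omega_i$ is $L^2$-close to $f^*dx_i$, which the GH-approximation forces near an orthonormal parallel frame. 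For the Hessian bound, the Bochner identity for harmonic $1$-forms
\begin{equation*}
\tfrac{1}{2}\Delta|\omega_i|^2=|\nabla \omega_i|^2+\ric(\omega_i,\omega_i)
\end{equation*}
integrates to
\begin{equation*}
-\!\!\!\!\!\!\int_M|\nabla\omega_i|^2\,\de\!\vol_M \le \|\omega_i\|_\infty^2 \cdot -\!\!\!\!\!\!\int_M r_M^-\,\de\!\vol_M \le \|\omega_i\|_\infty^2\,\delta,
\end{equation*}
and an $L^\infty$ bound $\|\omega_i\|_\infty\le C$ is obtained by Moser iteration using the Sobolev inequality available from $\ric\ge K$, $\diam\le D$ and $\vol_M(M)\gtrsim v$. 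The diffeomorphism conclusion then follows from Part (1).

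\emph{Main obstacle.} The principal difficulty is the $L^\infty$ bound on $\omega_i$ under only an \emph{integral} control of $r_M^-$: Moser iteration needs feedback from the very term one wishes to estimate, and I expect to require Kato-type arguments in the spirit of Petersen--Wei or Carron, combined with the non-collapsing hypothesis, to close the loop. A secondary delicate point, in Part (1), is guaranteeing that the limit harmonic map lands in the compact target $\R^n/\Z^n$ rather than degenerating when lifting to universal covers; this is handled via stability of harmonic maps to compact targets under pmGH-convergence, with the non-degeneracy preventing the image from collapsing.
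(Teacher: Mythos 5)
Your overall architecture for Part (1) — contradiction, pass to a non-collapsed $\RCD(K,n)$ limit, show the limit harmonic components have parallel differentials, and invoke a rigidity theorem to conclude the limit is a flat torus — matches the paper's structure, but you invoke the wrong rigidity tool. You write that one should ``iterate the Gigli splitting theorem.'' The splitting theorem produces a factor of $\mathbb{R}$ and cannot be iterated on a \emph{compact} space: a compact $\RCD$ space contains no lines, so the splitting theorem does not apply. The correct statement is the torus rigidity theorem in the spirit of Gigli--Rigoni (Theorem~\ref{thm-torus} of the paper): the Regular Lagrangian Flows of the $n$ pointwise-orthonormal parallel vector fields generate a transitive $\mathbb{R}^n$-action by measure-preserving isometries, and $X$ is then $\mathbb{R}^n/\Gamma$ for a lattice $\Gamma$. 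Moreover, to feed the limit vector fields into that theorem you must place them in $H^{1,2}_C$ (not merely $W^{1,2}_C$); the paper establishes this via a capacity estimate $\mathrm{Cap}_2(\mathcal{S})=0$ for the singular set, obtained from the quantitative Minkowski bound of Cheeger--Jiang--Naber on $\mathcal{S}^{n-2}_\epsilon$. Your sketch omits this regularity step entirely, and without it you cannot appeal to the smooth Bochner identity for $\langle V_i,V_j\rangle$ that makes the inner products constant.

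For Part (2) your route through Hodge theory — take harmonic representatives $\omega_i$ of $f^*[dx_i]$ and integrate the integer classes to circle-valued maps — is a genuine and reasonable alternative to the paper's harmonic map heat flow (since for a flat torus target the harmonic map in a homotopy class is exactly obtained by integrating harmonic $1$-form representatives, the two constructions produce the same $\Phi$). The energy bound via $L^2$-minimality of the harmonic representative is clean. However, your justification of the non-degeneracy $D(\Phi)\ge C_4$ is a gap as written: $L^2$-minimality only gives $\|\omega_i\|_{L^2}\le\|f^*dx_i\|_{L^2}$, and you assert without proof that $\omega_i$ is $L^2$-close to $f^*dx_i$. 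Making this rigorous requires a stability/compactness argument — the paper does this with Proposition~\ref{prop-lower b} (a contradiction argument using the local splitting theorem to show degenerate limits have lower-dimensional image) combined with the spectral convergence of the Hodge Laplacian used in part (2) of Theorem~\ref{partial answer}. Finally, the obstacle you flag about the $L^\infty$ bound on $\omega_i$ under ``only integral control of $r_M^-$'' does not arise here: the hypothesis includes the uniform lower bound $\ric_M\ge K$, so $|\omega_i|^2$ is a subsolution of $\Delta u\ge K u$ and Moser iteration with the Bishop--Gromov Sobolev inequality gives $\|\omega_i\|_\infty\le C(K,n,D,v)$ directly, exactly as in Part~\textbf{3} of the paper's proof of Theorem~\ref{th:harmonicVF}. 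No Kato-type refinement is needed.
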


\begin{remark}\label{rem-mthm0}
\begin{itemize}
\item[]
    \item In general $\Phi$ appearing in (1) above is not necessarily a diffeomorphism. For any $n \in \mathbb{Z}$ the covering map $$\phi_n:\mathbb{S}^1(1) \to \mathbb{S}^1(1),\quad (\cos \theta, \sin \theta) \mapsto (\cos n\theta, \sin n\theta)$$ is harmonic with  degree $n$ and $E(\phi_n)=\frac{\pi n^2}{2}$;
    \item {  In (2) above,  we conjecture that the integral $L^1$-bound assumption of the negative part of $r_M$ can be dropped. }
    \item From the proof of \cite[Theorem 1.2]{HondaPeng} and \cite[Theorem A.1.3]{ChCo}, if $M$ is $\epsilon$-Gromov-Hausdorff close to a flat torus $T$ with $\vol_{T}(T)\ge v>0$, then there exists a diffeomorphism $F:M \to T$ such that for all $x, y \in M$,
    \begin{equation}\label{pp}(1-\Psi)\de_M(x, y)^{1+\Psi}\le \de_T(F(x), F(y)) \le (1+\Psi)\de_M(x, y),
    \end{equation}
    and for any $1 \le p<\infty$
    \begin{equation}
    \|F^*g_{T}-g_M\|_{L^p}\le \Psi,
    \end{equation}
    where $\Psi=\Psi(\epsilon| K, n, D, v, p)$. Although (\ref{pp}) is sharper than (\ref{eq-holderlip}), the diffeomorphism obtained in Theorem \ref{mthm0} is a harmonic map.

    See also Proposition \ref{approx}.
\end{itemize} 
\end{remark}

In relation to the third bullet in Remark \ref{rem-mthm0},  
by a theorem of Anderson and Cheeger \cite{andersoncheeger}, 
if we additionally assume a lower bound for the injectivity radius, we get $C^{0,\alpha}$-closeness of the Riemannian metrics.

\begin{corollary}
Let $K\in \R, n \in \mathbb{N}$, $j, D, L, \epsilon, \tau>0$ and $\alpha\in (0,1)$. There exists $\delta>0$ such that the following holds.

If $M$ is a closed $n$-dimensional Riemannian manifold with $\ric_{M}\geq K$, $\mbox{inj}_{M}\geq j$, $\diam_M\leq D$,
and there exists a $(\delta; L, \tau)$-harmonic map $\Phi:M \to \mathbb{R}^n/\mathbb{Z}^n$, 
then $M$ is diffeomorphic to a flat torus, and there exists a flat smooth Riemannian metric $g_T$ on $M$ such that 
\[\left\|g_{M}- g_T\right\|_{C^{0, \alpha}}\leq \epsilon.\]
\end{corollary}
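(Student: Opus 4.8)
\medskip
\noindent\emph{Proof proposal.} The plan is to derive the statement from Theorem~\ref{mthm0}(1) together with the $C^{0,\alpha}$-precompactness theorem of Anderson and Cheeger \cite{andersoncheeger}, by a compactness argument that produces the threshold $\delta$. So I would argue by contradiction: suppose the conclusion fails for some fixed $\epsilon_0>0$ and $\alpha_0\in(0,1)$, so that there are $\delta_i\downarrow 0$ and closed $n$-dimensional Riemannian manifolds $(M_i,g_i)$ with $\ric_{M_i}\ge K$, $\mathrm{inj}_{M_i}\ge j$, $\diam_{M_i}\le D$, each admitting a $(\delta_i;L,\tau)$-harmonic map $\Phi_i:M_i\to\mathbb{R}^n/\mathbb{Z}^n$, but with $\|g_i-g\|_{C^{0,\alpha_0}}>\epsilon_0$ for every smooth flat metric $g$ on $M_i$. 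By Theorem~\ref{mthm0}(1)(a), for $i$ large $M_i$ is diffeomorphic to a flat $n$-torus (so the family we test over is nonempty), and $M_i$ is $\Psi(\delta_i\mid n,K,D,L,\tau)$-Gromov--Hausdorff close to a flat $n$-torus $T_i$, where $\Psi(\delta_i\mid\cdots)\to 0$.

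Next I would invoke \cite{andersoncheeger}: the sequence $(M_i,g_i)$ is precompact in the $C^{0,\alpha}$-topology for every $\alpha<1$, so, along a subsequence, $(M_i,g_i)\to(M_\infty,g_\infty)$ in this topology; here $g_\infty$ is a continuous Riemannian metric on a closed manifold $M_\infty$, and there are diffeomorphisms $F_i:M_\infty\to M_i$ with $F_i^*g_i\to g_\infty$ in $C^{0,\alpha}(M_\infty)$. The injectivity radius bound forces a uniform lower volume bound, so the convergence is non-collapsed and $\dim M_\infty=n$. From $d_{GH}(M_\infty,M_i)\to0$ and $d_{GH}(M_i,T_i)\to0$ we get $d_{GH}(M_\infty,T_i)\to0$; the $T_i$ have diameters bounded above and, by non-collapsing and Colding's volume continuity, volumes bounded below away from $0$, hence (after a further subsequence, by compactness of the space of flat $n$-tori with such bounds) $T_i\to T_\infty$, a flat $n$-torus. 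Therefore $(M_\infty,d_{g_\infty})$ is isometric as a metric space to $T_\infty$.

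Now I would upgrade this to flatness of $g_\infty$. Since $(M_\infty,d_{g_\infty})$ is locally isometric to Euclidean space, around each point there is a homeomorphism onto a Euclidean ball that preserves distances; comparing lengths of curves (which are determined by the distance) with their Euclidean lengths shows that in these charts $g_\infty$ equals the Euclidean metric pointwise. Hence $g_\infty$ is a smooth flat metric, the isometry $\iota:(M_\infty,g_\infty)\to(T_\infty,g_{\mathrm{flat}})$ is a diffeomorphism, and $M_\infty$ is diffeomorphic to a flat torus. Finally, for $i$ large, choose a smooth diffeomorphism $\widetilde\Lambda_i:M_i\to T_\infty$ that is $C^{1,\alpha_0}$-close to $\Lambda_i:=\iota\circ F_i^{-1}$ (possible by density of smooth diffeomorphisms). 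Then $\bar g_i:=\widetilde\Lambda_i^*g_{\mathrm{flat}}$ is a smooth flat metric on $M_i$, and
\[
\|g_i-\bar g_i\|_{C^{0,\alpha_0}(M_i)}\le\|g_i-\Lambda_i^*g_{\mathrm{flat}}\|_{C^{0,\alpha_0}(M_i)}+\|\Lambda_i^*g_{\mathrm{flat}}-\widetilde\Lambda_i^*g_{\mathrm{flat}}\|_{C^{0,\alpha_0}(M_i)},
\]
where the second term is small because $\widetilde\Lambda_i$ is $C^{1,\alpha_0}$-close to $\Lambda_i$, and the first term, after pulling back by the controlled diffeomorphism $F_i$, equals up to bounded distortion $\|F_i^*g_i-(\Lambda_i\circ F_i)^*g_{\mathrm{flat}}\|_{C^{0,\alpha_0}(M_\infty)}=\|F_i^*g_i-\iota^*g_{\mathrm{flat}}\|_{C^{0,\alpha_0}(M_\infty)}=\|F_i^*g_i-g_\infty\|_{C^{0,\alpha_0}(M_\infty)}\to0$. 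So $\|g_i-\bar g_i\|_{C^{0,\alpha_0}}<\epsilon_0$ for $i$ large, contradicting the choice of $M_i$.

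\noindent\emph{Main obstacle.} The delicate points are: first, verifying that the Anderson--Cheeger limit, which is \emph{a priori} only a continuous Riemannian manifold, actually carries a flat (hence smooth) metric once one knows its underlying metric space is a flat torus --- this is exactly where the injectivity radius hypothesis is indispensable, since otherwise the Gromov--Hausdorff limit could be lower-dimensional or singular and no $C^{0,\alpha}$ statement would even make sense; and second, keeping track of how $C^{0,\alpha}$-norms behave when transported by the approximating diffeomorphisms, including the mild smoothing of $\Lambda_i$ needed to land on a genuinely smooth flat metric on $M_i$. The remaining ingredients --- volume non-collapsing, compactness of flat tori of bounded diameter and volume, and Theorem~\ref{mthm0}(1) itself --- are used as black boxes.
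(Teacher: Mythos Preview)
Your proposal is correct and follows essentially the same approach as the paper: the paper does not give a detailed proof of this corollary but simply indicates that it follows by combining Theorem~\ref{mthm0}(1) with the Anderson--Cheeger $C^{0,\alpha}$-compactness theorem, and your contradiction argument is precisely a careful elaboration of that strategy. The only point worth noting is that your ``upgrade to flatness'' step (showing the $C^{0,\alpha}$ limit metric is actually smooth and flat once the limit metric space is a flat torus) is, as you acknowledge, the one genuinely delicate ingredient; it can be made rigorous via elliptic regularity in harmonic coordinates (the components of the limiting isometry are harmonic with respect to $g_\infty$, so Schauder theory bootstraps them to $C^\infty$), which is implicit in the Anderson--Cheeger framework.
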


On the other hand, for a $3$-torus, by combining Stern's inequality from \cite{stern}, we obtain the following stability result with respect to Gromov-Hausdorff convergence. Note that in the theorem, we do not assume the existence of a $(\delta; L, \tau)$-harmonic map $\Phi$ into $\mathbb{R}^n/\mathbb{Z}^n$.

\begin{theorem}[Characterization of almost flat $3$-tori]\label{th:main}
Let $K \in \R$ and $D, v, \delta>0$. 
If $M$ is a Riemannian 3-torus with $\ric_{M}\geq K$, $\diam_M\leq D, \vol_M(M)\ge v$ and 
\[-\!\!\!\!\!\!\int_M R_M^-\de\!\vol_M\leq \delta,\]
then $M$ is $\Psi$-Gromov-Hausdorff close to a flat $3$-torus, where { $R^-_M= \max\{-R_M, 0\}$, $R_M$ denotes the scalar curvature of $M$} and $\Psi=\Psi(\delta |K, D, v)$.
\end{theorem}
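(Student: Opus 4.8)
## Proof proposal for Theorem \ref{th:main}

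The plan is to deduce Theorem~\ref{th:main} from Theorem~\ref{mthm0}(1) by producing, out of the hypotheses, a $(\Psi;L,\tau)$-harmonic map $\Phi\colon M\to\mathbb{R}^3/\mathbb{Z}^3$ (equivalently, into some flat $3$-torus, by Remark~\ref{new rem}) with $\Psi=\Psi(\delta|K,D,v)$ and $L,\tau$ depending only on $K,D,v$. Since $M$ is a $3$-torus we have $b_1(M)=3$; I would choose harmonic representatives $d\phi_1,d\phi_2,d\phi_3$ of a $\mathbb{Z}$-basis of $H^1(M;\mathbb{Z})/\mathrm{tors}$, which assemble into a harmonic map $\Phi=(\phi_i)_{i=1}^3\colon M\to\mathbb{R}^3/\mathbb{Z}^3$. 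The four defining conditions then have to be checked: harmonicity holds by construction, while the energy bound, the non-degeneracy and the smallness of the Hessians require separate arguments.

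For the energy bound and the non-degeneracy I would first record the a priori estimate coming from Bochner's formula: since $\ric_M\ge K$, each $|d\phi_i|$ is a subsolution of $\Delta f\ge Kf$, so Moser iteration together with $\diam_M\le D$ gives $\|d\phi_i\|_{L^\infty}\le C(K,D)\,\|d\phi_i\|_{L^2}$, which after rescaling the $\phi_i$ yields condition~(2) with $L=L(K,D)$. The non-degeneracy~(3) amounts to saying that the ``harmonic period lattice'' $H^1(M;\mathbb{Z})$ inside $\bigl(H^1(M;\mathbb{R}),\langle\cdot,\cdot\rangle_{L^2}\bigr)$ is not too distorted; here one uses the non-collapsing hypothesis $\vol_M(M)\ge v$ to bound from below the $L^2$-harmonic length of any nontrivial integral class (a long thin ``neck'' carrying a short harmonic class is incompatible with $\ric_M\ge K$, $\diam_M\le D$ and $\vol_M(M)\ge v$) and to bound the covolume; composing $\Phi$ with a suitable affine diffeomorphism of flat tori (Remark~\ref{new rem}) one may then assume $\langle d\phi_i,d\phi_j\rangle_{L^2}$ is close to the identity matrix, giving~(3) with $\tau=\tau(K,D,v)$. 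It is plausible that this step is itself cleanest via a compactness argument, and I would be prepared to merge it with the final one.

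The remaining, and decisive, condition~(4) is where Stern's inequality~\cite{stern} enters. Applied to the harmonic map $\phi_i\colon M\to\mathbb{S}^1$, it bounds a weighted Hessian integral of $\phi_i$ together with $\int_M|d\phi_i|R_M$ from above by $2\pi\int_{\mathbb{S}^1}\chi(\Sigma^i_\theta)\,d\theta$, where $\Sigma^i_\theta=\phi_i^{-1}(\theta)$; and since $M$ is a $3$-torus we have $\pi_2(M)=0$, so the level sets, being Poincaré dual to the nonzero class $[d\phi_i]$, carry no homologically essential spheres and Stern's torus computation gives $\int_{\mathbb{S}^1}\chi(\Sigma^i_\theta)\,d\theta\le 0$. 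Hence
\begin{equation*}
\int_M\frac{|\nabla d\phi_i|^2}{|d\phi_i|}\,\de\!\vol_M\ \lesssim\ \int_M|d\phi_i|\,R_M^-\,\de\!\vol_M\ \le\ \|d\phi_i\|_{L^\infty}\,\delta\,\vol_M(M)\ \lesssim\ \delta .
\end{equation*}
The main obstacle is to upgrade this \emph{weighted} bound to the \emph{unweighted} bound $\meanint_M|\nabla d\phi_i|^2\,\de\!\vol_M\le\Psi(\delta)$ required by the definition, since $|d\phi_i|$ may vanish on the critical set of $\phi_i$. I expect this to be handled by contradiction and compactness: a sequence $M_j$ of $3$-tori violating the theorem with $\meanint_{M_j}R_{M_j}^-\to 0$ subconverges in the Gromov--Hausdorff sense to a non-collapsed $\RCD(K,3)$ space $X$ (volume convergence being available in the non-collapsed regime); the normalized harmonic maps $\Phi_j$ converge to a harmonic map of $X$ whose components have vanishing weighted Hessian, and the crux is to show that the limiting harmonic $1$-forms cannot vanish, hence are parallel and, by the non-degeneracy surviving in the limit, linearly independent, forcing $X$ to be a flat $3$-torus --- contradicting that $M_j$ stays Gromov--Hausdorff far from flat tori. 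Establishing the regularity and convergence of harmonic maps and of the Stern quantities on $\RCD$ spaces, together with the non-vanishing of the limit forms, is the technical heart of the argument.
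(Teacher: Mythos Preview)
Your overall architecture matches the paper's: produce a harmonic map $\Phi:M\to\mathbb{R}^3/\mathbb{Z}^3$, use Stern's inequality together with the torus topology to bound the Hessians, and feed this into Theorem~\ref{mthm0}(1). However, you have misidentified which step carries the difficulty.

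What you call the ``main obstacle'' --- passing from the weighted Stern bound to the unweighted Hessian bound --- is immediate and needs none of the $\RCD$ compactness you outline. You already have $\|d\phi_i\|_{L^\infty}\le C$; since $|d\phi_i|\le C$ gives $|\nabla d\phi_i|^2\le C\,|\nabla d\phi_i|^2/|d\phi_i|$ pointwise on $\{d\phi_i\ne 0\}$, and the zero set of a nontrivial harmonic $1$-form has measure zero,
\[
\int_M |\nabla d\phi_i|^2\,\de\!\vol_M \;\le\; C\int_M \frac{|\nabla d\phi_i|^2}{|d\phi_i|}\,\de\!\vol_M \;\le\; C^2\int_M R_M^-\,\de\!\vol_M.
\]
You were seeking a lower bound on $|d\phi_i|$ to control the weight; in fact it is the \emph{upper} bound that does the job. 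This is precisely Corollary~\ref{cor:stern}.

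The substantive steps are the energy bound and the non-degeneracy, and here the paper takes a route quite different from yours. Rather than picking harmonic representatives of an integral basis and arguing about the period lattice (your ``rescaling the $\phi_i$'' does not obviously give a uniform bound on $\|d\phi_i\|_{L^2}$ for integral classes), the paper first runs the Ricci flow for a short time $t_0=t_0(K,D,v)$ (Simon--Topping \cite{ST}) to reach a metric $g_{t_0}$ of bounded geometry, with the identity $(M,g_M)\to(M,g_{t_0})$ being $C(K,D,v)$-Lipschitz; smooth compactness of such bounded-geometry torus metrics then yields a $C$-Lipschitz diffeomorphism $F:M\to\mathbb{R}^3/\mathbb{Z}^3$, and the Eells--Sampson heat flow \cite{ES} converts $F$ into a harmonic $\Phi$ homotopic to $F$ with $E(\Phi)\le E(F)\le C$ and $\deg\Phi=1$. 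Non-degeneracy $D(\Phi)\ge\tau$ is then deduced, after the Hessian bound is in hand, from the surjectivity of $\Phi$ (forced by degree~$1$) via Proposition~\ref{prop-lower b} --- a compactness argument of the kind you sketch, but deployed for the determinant lower bound rather than for the Stern upgrade.
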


Note that, by Gromov-Lawson and Schoen-Yau \cite{SchoenYau1979,SchoenYau1979-2, GromovLawson1980}, there is no metric on the torus such that $\ric\geq K$ for $K>0$. Furthermore in the case of $K=0$,  the only possibility is the flat metric. Thus, the result above provides an almost stability statement of this observation in terms of the Gromov-Hausdorff distance.

We point out that Allen, Bryden and Kazaras showed stability of 3-tori \cite[Theorem 1.11]{ABK} with respect to $C^{0, \gamma}$ convergence of the metrics under an $L^3$-integral Ricci curvature bound,  a two sided bound on the volume,  a uniform Neumann isoperimetric bound,
an $L^1$-integral bound on the negative part of the scalar curvature, and some topological restrictions. We prove an analogous result in our setting, see Theorem \ref{th:ABK}. We note that in dimension 3 the integral bound on Ricci is  equivalent to an integral bound of the full Riemannian curvature tensor. In contrast, the uniform lower bound on the Ricci tensor that we assume does not imply an integral bound for the Ricci nor the Riemann tensor, or any lower bound for the sectional curvature in general. We also note that the proofs of \cite{ABK} 
are based on the $C^{0, \gamma}$-compactness under an integral bound of Ricci curvature,  while our arguments use Gromov-Hausdorff compactness together with the analytic stability results for the Laplacian.

Observe that other works address stability questions by combining scalar and Ricci lower bound{s}: see for instance \cite[Theorem 1.1]{KazarasKhuriLee} for Gromov-Hausdorff stability of the positive mass theorem in dimension 3.

In both \cite{ABK} and \cite{KazarasKhuriLee} the restriction to dimension 3 is related to the use of a result by Stern \cite{stern} for smooth 3-manifolds. \\

{
Finally let us mention that our techniques also allow us to prove the following result in the case of almost nonnegative Ricci curvature.
\begin{theorem}\label{thm:almostnonricci}
    Let $M$ be an oriented closed Riemannian manifold of dimension $n$ with $\diam_M \le D$ and $\ric_M \ge -\delta$ 
    for some $D, \delta>0$. Then we have the following:
    \begin{enumerate}
        \item If there exists a harmonic map $\Phi:M \to \mathbb{R}^n/\mathbb{Z}^n$ such that $\haus^n(\Phi(M))\ge c>0$ and $E(\Phi) \le L$ hold for some $c, L>0$, then $M$ is $\Psi$-Gromov-Hausdorff close to a flat $n$-torus whose volume is bounded below by a positive constant depending only on $n, D, c$ and $L$, where $\Psi=\Psi(\delta|n, D, c, L)$. In particular $M$ is diffeomorphic to the torus if $\delta$ is small enough, depending only on $n, D, c$ and $L$.
        \item If there exists a smooth map $F:M \to \mathbb{R}^n/\mathbb{Z}^n$ whose degree does not vanish with $E(F) \le L$ for some $L>0$, then $M$ is $\Psi$-Gromov-Hausdorff close to a flat $n$-torus whose volume is bounded below by a positive constant depending only on $n, D$ and $L$, where $\Psi=\Psi(\delta|n, D, L)$. In particular, $M$ is diffeomorphic to the torus if $\delta$ is small enough, depending only on $n, D$ and $L$.
    \end{enumerate}
\end{theorem}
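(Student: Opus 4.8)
The plan is to reduce both statements to Theorem \ref{mthm0}(1) by producing, out of the given map, a genuine $(\Psi; L', \tau)$-harmonic map in the sense of the Definition, and then invoking the characterization of almost flat tori. For part (1), the map $\Phi$ is already harmonic and has energy bounded by $L$; by Remark \ref{rem:delta}, the hypothesis $\ric_M \ge -\delta$ automatically gives $\sum_i -\!\!\!\!\!\!\int_M |\nabla d\phi_i|^2 \de\vol_M \le 2\delta E(\Phi) \le 2\delta L$, which is exactly condition (4) with parameter $2\delta L = \Psi(\delta\mid L)$. So the only missing ingredient is the non-degeneracy condition (3): we must show $D(\Phi) = \det\big(-\!\!\!\!\!\!\int_M \langle d\phi_i, d\phi_j\rangle\big)_{ij} \ge \tau$ for some $\tau = \tau(n,D,c,L) > 0$. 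Here is where the hypothesis $\haus^n(\Phi(M)) \ge c$ enters: intuitively, a harmonic map whose image has $n$-dimensional Hausdorff measure bounded below cannot have its differential concentrated on an $(n-1)$-dimensional subspace, so the Gram matrix of the $d\phi_i$ must be uniformly elliptic on average. To make this quantitative I would argue by contradiction and compactness: if along a sequence $\delta_k \to 0$ one had $D(\Phi_k) \to 0$ while all the other bounds persist, then (after passing to a subsequence) $M_k \to X$ in measured Gromov-Hausdorff sense with $X$ an $\RCD(0,n)$ space, the maps $\Phi_k$ converge (using the $W^{1,2}$/energy bounds and the Bochner bound on $\nabla d\phi_i$, via the stability theory for harmonic functions on RCD spaces) to a harmonic map $\Phi_\infty : X \to \mathbb{R}^n/\mathbb{Z}^n$ with $D(\Phi_\infty) = 0$, forcing the image $\Phi_\infty(X)$ to lie in a torus of dimension $< n$ and hence have vanishing $n$-dimensional Hausdorff measure; but the lower semicontinuity (or an area-type inequality $\haus^n(\Phi(M)) \le C(n) \int_M |\det(\ldots)|$-type estimate, using $|d\Phi|$ bounds) would contradict $\haus^n(\Phi_k(M)) \ge c$. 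This contradiction yields the desired uniform $\tau$.

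For part (2), the given map $F$ is merely smooth with nonvanishing degree and $E(F) \le L$, so the first step is to \emph{replace $F$ by a harmonic map in its homotopy class}. One runs the harmonic map heat flow starting from $F$; since the target $\mathbb{R}^n/\mathbb{Z}^n$ is flat (nonpositively curved), Eells–Sampson theory guarantees the flow exists for all time and converges smoothly to a harmonic map $\Phi$ homotopic to $F$, with $E(\Phi) \le E(F) \le L$ because energy decreases along the flow. Since $\Phi$ is homotopic to $F$, it has the same (nonzero) degree, hence is surjective, so $\haus^n(\Phi(M)) = \haus^n(\mathbb{R}^n/\mathbb{Z}^n) = 1 > 0$. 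Therefore part (2) reduces to part (1) with $c = 1$, and we conclude by applying the already-proved case (1); the resulting flat torus has volume bounded below by a constant depending only on $n, D, L$ (the dependence on $c$ disappears since $c=1$ is universal).

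The main obstacle is the non-degeneracy estimate in part (1): quantitatively controlling $D(\Phi)$ from below by $\haus^n(\Phi(M)) \ge c$. The compactness/contradiction route requires care in passing the harmonic map and its Gram matrix to the RCD limit — one needs that the $L^2$ bound on $\nabla d\phi_i$ (available via Remark \ref{rem:delta}) upgrades weak $W^{1,2}$ convergence of $\Phi_k$ to something strong enough that $\big(-\!\!\!\!\!\!\int \langle d\phi_i^{(k)}, d\phi_j^{(k)}\rangle\big)_{ij} \to \big(-\!\!\!\!\!\!\int \langle d\phi_i^\infty, d\phi_j^\infty\rangle\big)_{ij}$, and that the limit map is still harmonic with image in a lower-dimensional subtorus when the limit Gram determinant vanishes; and on the other side one needs an area inequality bounding $\haus^n(\Phi_k(M))$ in terms of integrals of (products of) $|d\phi_i^{(k)}|$ that behaves well under the convergence. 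Both ingredients are of the type developed in Ricci-limit and RCD theory (and are essentially the tools listed in the abstract), but assembling them into a clean quantitative statement is the substantive part of the argument; everything else is either a direct citation of Theorem \ref{mthm0}, of Remark \ref{rem:delta}, or of standard Eells–Sampson heat flow theory.
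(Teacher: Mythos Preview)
Your proposal is correct and follows essentially the same route as the paper: for (1) you combine Remark~\ref{rem:delta} with a compactness/contradiction argument to bound $D(\Phi)$ from below (this is exactly the content of Proposition~\ref{prop-lower b} in the paper, whose proof matches the sketch you give), and then apply Theorem~\ref{mthm0}(1); for (2) you run the Eells--Sampson heat flow to obtain a surjective harmonic map and reduce to (1) with $c=1$, which is precisely what the paper does.
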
}

\subsection{Strategy of proofs and remarks}

The main tools to prove Theorem \ref{mthm0} are:
\begin{enumerate}
    
    \item{(Theorem \ref{th:harmonicVF})} a strong convergent result in $L^2$ for harmonic vector fields;
    \item{(Theorem \ref{thm-torus})} a torus rigidity result in the spirit of Gigli and the fifth named author \cite{GR18};
    \item{(Theorem \ref{thm:new})} almost rigidity of Theorem \ref{thm-torus}.
    \end{enumerate}

The first point has been previously investigated in \cite{Honda15, Honda-PDE}. Nevertheless, we present a self-contained proof in Section 3, because the technique of this proof plays an important  role in establishing  Theorem \ref{thm:new}.

Regarding the second point, Gigli and Rigoni's original result states that if an $\RCD(0,N)$ space  with $N\in \N$ carries exactly $N$ harmonic vector fields (note that our working definition of harmonicity may be different from the usual one, namely our meaning is that the dual $1$-form is harmonic), it must be isomorphic to a flat $N$-torus, {namely, there exists a measure preserving isometry from the $\RCD(0, N)$ space to the flat torus equipped with its Riemannian distance and a constant multiple of the Riemannian measure.} Since their work focuses on the case in which the lower bound on the Ricci curvature is 0, while in our paper we consider spaces with Ricci curvature bounded from below by some $K \in \mathbb{R}$, we carefully modify certain aspects of Gigli and Rigoni's proof.

As for the almost rigidity of the previous result,  we need to establish $H^{1,2}_C$-stability for vector fields with respect to non-collapsed measured Gromov-Hausdorff convergence, which was not achieved in the setting of \cite{Honda17}. To get this, we first establish the smoothness and the flatness 
of the limit space, except for a closed subset having null $2$-capacity, where the last property comes from
a quantitative Minkowski estimate on the singular set, due to Cheeger, Jiang and Naber in \cite{CJN}. Then we follow techniques in \cite{Honda17} to get the desired $H_C^{1,2}$-stability.

Theorem \ref{thm:new}, when combined with 
the topological stability result by Cheeger-Colding, allows us to provide a characterization of almost flat tori in terms of harmonic $1$-forms. See Theorem \ref{partial answer} which has an independent interest. Based on this, we effectively realize Theorem \ref{mthm0}. It is important to emphasize that there are no non-collapsed assumptions in Theorem \ref{mthm0}, even though Theorem \ref{partial answer} is stated in a non-collapsed setting. The reason is simple: we can reduce the discussion, in the proof of Theorem \ref{partial answer}, to the case of functions via the Euler-Lagrange equation, allowing the application of stability results for the Laplacian acting on functions \cite{ambrosio_honda, ambrosio_honda_local, gms_convergence}.

To establish Theorem \ref{th:main}, we firstly  use the Ricci flow to regularize a given space, achieving bounded geometry as done in \cite{ST} by Simon-Topping. As a consequence, we construct a diffeomorphism $F$ from $M$ to $\mathbb{R}^3/\mathbb{Z}^3$ with a quantitative energy estimate.
Secondly, we use the harmonic map heat flow starting at $F$, established by Eells-Sampson in \cite{ES}, to construct a surjective harmonic map $G$ from $M$ to  $\mathbb{R}^3/\mathbb{Z}^3$ with the same quantitative energy estimate.
Then we use Stern's result in \cite{stern}, to prove the $\delta$-harmonicity for $G$. Here we use the topological assumption that $M$ is a torus.
Finally, to conclude from Theorem \ref{mthm0}, we need to find a quantitative positive lower bound on $D(G)$. This is justified by a contradiction argument based on the stability analysis on the Laplacian with the surjectivity (see Proposition \ref{prop-lower b}). Then we get Theorem \ref{th:main}.

It is crucial to note that the majority of results in the paper, including Theorems \ref{th:harmonicVF}, \ref{thm-torus}, and \ref{thm:new}, hold true in any dimension. The restriction to a dimension equal to 3 is only necessary in Theorem \ref{th:main} to be able to apply Stern's result.

In general we have the following conjecture. 

\begin{conjecture}\label{conjecture}
Let $K\in \R$, $n \in \mathbb{N}$, $v, D>0$ and $\epsilon >0$. There exists $\delta>0$ such that the following holds.
If $M$ is a  Riemannian $n$-torus with $\ric_{M}\geq K$, $\vol_{M}(M)\geq v$, $\diam_M\leq D$ and $\int_M R_M^-\de\!\vol_M\leq \delta$, then 
there exists a $n$-dimensional flat torus $T$ such that $M$ is $\epsilon$-Gromov-Hausdorff close to $T$
and that $M$ is diffeomorphic to $T$. 
\end{conjecture}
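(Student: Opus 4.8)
For $n=3$ the conjecture is essentially Theorem~\ref{th:main}: that theorem already provides $\Psi$-Gromov--Hausdorff closeness to a flat $3$-torus, and the diffeomorphism assertion is then exactly the conclusion of Theorem~\ref{mthm0}(1)(a), which applies once $\delta$ is small because the proof of Theorem~\ref{th:main} produces a genuine $(\Psi;C_3,C_4)$-harmonic map $G$. So the real content of Conjecture~\ref{conjecture} is the range $n\ge 4$, and the plan is to run the proof of Theorem~\ref{th:main} in general dimension, isolating and replacing the two ingredients that currently force $n=3$.

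The scheme would be: (i) regularize $M$ by Ricci flow, as in Simon--Topping \cite{ST}, to obtain bounded geometry and, as a consequence, a diffeomorphism $F:M\to\mathbb{R}^n/\mathbb{Z}^n$ with an a priori energy bound $E(F)\le L(n,K,D,v)$; (ii) flow $F$ by the Eells--Sampson harmonic map heat flow \cite{ES} to a harmonic map $G:M\to\mathbb{R}^n/\mathbb{Z}^n$, which is surjective (it has the same degree as $F$) and still satisfies $E(G)\le L$; (iii) bound the Hessians, $-\!\!\!\!\!\!\int_M|\nabla dG_i|^2\,\de\vol_M\le\Psi(\delta\mid n,K,D,v)$; (iv) promote surjectivity to a quantitative non-degeneracy $D(G)\ge\tau(n,K,D,v)>0$ via the stability of the Laplacian, exactly as in Proposition~\ref{prop-lower b}; and (v) conclude from Theorem~\ref{mthm0}(1) that $M$ is $\Psi$-Gromov--Hausdorff close to a flat $n$-torus and, for $\delta$ small, diffeomorphic to it. Steps (ii), (iv), (v) are already dimension-independent in the paper, so the work is concentrated in (i) and (iii).

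I expect (iii) to be the main obstacle by a wide margin. In dimension $3$ it is precisely Stern's inequality \cite{stern}, whose proof runs a Gauss--Bonnet/coarea argument on the level surfaces of the harmonic coordinate functions --- a mechanism with no direct analogue for $n\ge 4$, where the level sets of a single function are hypersurfaces of dimension $n-1\ge 3$. What one needs is a Bochner/Stern-type identity converting the failure of the harmonic $1$-forms $dG_i$ to be parallel into a controlled multiple of $R_M^-$. Natural candidates, each a substantial project in its own right, are: a quantitative version of the Schoen--Yau descent through stable minimal hypersurfaces (complicated by possible singularities for $n\ge 8$ and by the need for effective rather than merely qualitative estimates); a quantitative spinorial argument in the spirit of Gromov--Lawson and Witten, estimating a Dirac-type defect by $-\!\!\!\!\!\!\int_M R_M^-\,\de\vol_M$; or a quantitative $\mu$-bubble argument in the spirit of Gromov and Chodosh--Li. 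Replacing (i) in general dimension --- Ricci-flow smoothing is not available under a mere lower Ricci bound for $n\ge 4$ --- is a second, presumably less severe, difficulty: one could instead try to argue directly with non-collapsed Ricci limit or $\RCD$ compactness, though then passing the integral scalar bound to the limit and invoking a rigidity statement is itself delicate. A realistic partial target along the way is the second bullet of Remark~\ref{rem-mthm0}, namely removing the hypothesis $-\!\!\!\!\!\!\int_M r_M^-\le\delta$ from Theorem~\ref{mthm0}(2); but this faces the same core difficulty, since manufacturing a harmonic splitting map with Hessian control out of Gromov--Hausdorff closeness alone again amounts to passing from an integral scalar bound to an integral Hessian bound on harmonic $1$-forms.
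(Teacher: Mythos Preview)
The statement is labeled \emph{Conjecture} in the paper, and the paper does not claim to prove it; there is no proof to compare against. Your proposal is not a proof either --- it is (and you present it as) a research outline identifying where the argument of Theorem~\ref{th:main} breaks for $n\ge 4$. In that capacity your analysis is accurate and matches what the paper implicitly signals: the restriction to $n=3$ enters precisely through Stern's inequality (your step (iii)) and through the Simon--Topping Ricci-flow regularization (your step (i)), while steps (ii), (iv), (v) are already available in any dimension via Theorem~\ref{mthm0}, Proposition~\ref{prop-lower b}, and the Eells--Sampson flow.

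One small correction: for $n=3$ you say the diffeomorphism assertion follows from Theorem~\ref{mthm0}(1)(a). That is fine, but note that the diffeomorphism to a torus is already part of the hypothesis of the conjecture (``$M$ is a Riemannian $n$-torus''), so the only new content even in the $n=3$ case is the Gromov--Hausdorff closeness, which is exactly Theorem~\ref{th:main}. The diffeomorphism to the \emph{specific} flat torus $T$ is automatic once $M$ is a torus and $T$ is a flat torus of the same dimension. Beyond this, your identification of a higher-dimensional analogue of Stern's inequality as the essential missing ingredient is the consensus view, and the candidate approaches you list (quantitative Schoen--Yau descent, spinorial methods, $\mu$-bubbles) are indeed the natural avenues; none is currently known to yield the required estimate $-\!\!\!\!\!\!\int_M|\nabla dG_i|^2\le \Psi(\delta)$ from $-\!\!\!\!\!\!\int_M R_M^-\le\delta$ alone.
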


Our method, which combines the theories of harmonic maps, Ricci limits  and $\RCD$ spaces in conjunction with Stern's formula might also be applicable for other stability problems involving scalar curvature lower bounds in a Ricci-curvature-bounded-from-below background. We will investigate this in future work.\\

\textbf{Acknowledgments:} 
Part of this work was conducted at the Fields Institute for Research in Mathematical Sciences, where all the authors were partially supported to participate in the 2022 program on Nonsmooth Riemannian and Lorentzian Geometry. We would like to express our gratitude to the organizers of the program and to the Fields Institute for providing an excellent working environment. 
{
Moreover, we would like to thank the reviewer for their careful reading of the first version, which helped us improve the presentation of this manuscript.}
The first named author acknowledges supports of the Grant-in-Aid for Scientific Research (B) of 20H01799, the
Grant-in-Aid for Scientific Research (B) of 21H00977 and Grant-in-Aid for Transformative Research Areas (A) of 22H05105. This research was funded in part by the Austrian Science Fund (FWF) \href{https://doi.org/10.55776/ESP224}{10.55776/ESP224}. For open access purposes, the fifth author has applied a CC BY public copyright license to any author accepted manuscript version arising from this submission. Finally, we would like to acknowledge Man-Chun Lee for his comments about using Ricci flow to regularize manifolds, {and we wish to thank Andrea Mondino for pointing out an error in an earlier version of
Theorem \ref{partial answer}. }

\section{Preliminaries}

\subsection{Convergence of Sobolev functions on $\RCD$ spaces}\label{subsec:21}
In the sequel, we consider compact metric measure  spaces $(X_i, \de_{X_i}, \m_{X_i})= {\sf X}_i$ with $\m_{X_i}(X_i)=1$ that converge in measured Gromov-Hausdorff sense to a compact metric measure space $(X, \de_X, \m_X)= {\sf X}$. By definition, there exist a separable metric space $Y$ and distance preserving maps $\iota_i: X_i \rightarrow Y$, $\iota:X \rightarrow Y$ such that $(\iota_i)_{\#}\m_{X_i}$ weakly converge to $\iota_{\#}\m_X$ in $\mathcal M_{loc}(Y)$, where $\mathcal M_{loc}(Y)$ denotes the space of (signed) Borel measures on $Y$ that are finite on bounded sets. This space is equipped with the weak topology defined via duality with $C_{bs}(Y)$, the space of continuous functions with bounded support.  To simplify the notation, we will identify $\m_{X_i}$ and $\m_X$ with their pushforward under $\iota_i$ and $\iota$, respectively.

\begin{definition} 
Let $p\in (1, \infty)$. A sequence $f_i\in L^p(\m_{X_i})$ weakly converges in $L^p$ to $f\in L^p(\m_X)$ if $f_i \m_{X_i}$ weakly converges to $f \m_X$ in $\mathcal M_{loc}(Y)$ and 
\begin{align}\label{ineq:sup}\sup_{i\in \mathbb N}\left\| f_i\right\|_{L^p(\m_{X_i})}<\infty.
\end{align}

\begin{remark}
If $(f_i)$ weakly converges in $L^p$ to $f$, then $$\left\|f\right\|_{L^p(\m_X)}\leq \liminf_{i\rightarrow \infty} \left\|f_i\right\|_{L^p(\m_{X_i})}.$$
Given a sequence $f_i\in L^p(\m_{X_i})$ that satisfies \eqref{ineq:sup} there exists a subsequence of $(f_i)$ that weakly converges in $L^p$ to a function $f\in L^p( \m_X)$.
\end{remark}

A sequence $f_i\in L^p( \m_{X_i})$ strongly converges in $L^p$ to $f\in L^p( \m_X)$ if it weakly converges in $L^p$ and in addition satisfies
\begin{align*}
\limsup_{i\rightarrow \infty} \left\|f_i\right\|_{L^p( \m_{X_i})} \leq \left\| f\right\|_{L^p( \m_X)}.
\end{align*}
\end{definition}

\begin{remark}
All notions of strong and weak convergence have natural local analogues. These concepts derive immediately from the global ones by multiplying by characteristic functions. 
\end{remark}

From now on we assume that 
 ${\sf X}_i$ and ${\sf X}$  are compact   $\RCD(K,N)$ spaces for $K\in \R$ and $N\in (1, \infty)$. A  metric measure space satisfies the $\RCD(K,N)$ condition if it is infinitesimally Hilbertian \cite{giglistructure} and satisfies the curvature-dimension condition $\CD(K,N)$ in the sense of Lott-Sturm-Villani, which is a synthetic notion of Ricci curvature bounded from below by $K$ and dimension bounded from above by $N$ \cite{stugeo2, lottvillani, agsriemannian, erbarkuwadasturm, amsnonlinear}.
Two key results in the theory of $\RCD$ spaces are as follows: 
\begin{itemize}
\item The metric measure space associated to a Riemannian manifold satisfies the $\RCD(K,N)$ condition if and only if its Ricci curvature is bounded from below by $K$ and its dimension is bounded from above by $N$.
\item 
The $\RCD(K,N)$ condition is stable with respect to measured GH convergence.
\end{itemize}
In the following we collect properties of $\RCD$ spaces relevant for our purposes.  For more information we refer the reader to the survey paper \cite{villani_survey}. 
\smallskip

Let $\Ch_X$ be the Cheeger energy associated to ${\sf X}$ and $W^{1,2}({\sf X})$ the Sobolev space of $L^2(\m_X)$ functions with finite Cheeger energy \cite{cheegerlipschitz}. Given $U\subset X$ open, then  $f\in W^{1,2}_{loc}(U)$ if and only if for every Lipschitz function $\phi$ with compact support in $U$, $\phi \in \lip_c(U)$, it holds $\phi f\in W^{1,2}({\sf X})$. Every $f\in W^{1,2}_{loc}(U)$ admits a minimal weak upper gradient $|\nabla f|$ and $\Ch_X(f)= \int_X |\nabla f|^2 \,{\rm d}\!\m_X$ holds if  $f\in W^{1,2}({\sf X})$. 

Moreover if $f\in W^{1,2}_{loc}(U)$ is a locally Lipschitz function, then $|\nabla f|$ coincides for $\m_X$-a.e. with its local Lipschitz constant in $x$. 
The inner product $\langle \nabla f, \nabla g\rangle$ for $f,g\in W^{1,2}_{loc}(U)$ is defined by polarization of the minimal weak upper gradient $|\nabla \cdot|$. We also denote by $W^{1,2}(U)$ the set of all $f \in W^{1,2}_{loc}(U)$ with $f, |\nabla f| \in L^2(U)$. For more information (e.g. the Laplacian $\Delta_X$ and its domain $D(\Delta_X)$) we refer for instance to \cite{ags_heat}.

\begin{theorem}[\cite{gms_convergence}]\label{th:mosco} The Cheeger energies, $\Ch_i:=\Ch_{{\sf X}_i}$, Mosco converge to the Cheeger energy $\Ch:=\Ch_{\sf X}$, i.e. the following statements hold: 
\begin{enumerate}
\item For every $f_i\in L^2(\m_{X_i})$ weakly converging in $L^2$ to $f\in L^2(\m_X)$, one has 
\begin{align*}
\Ch(f)\leq \liminf_{i\rightarrow \infty} \Ch_i(f_i).
\end{align*}
Moreover, if $\sup_{i\in \mathbb N} \Ch_i(f_i)<\infty$, then $f_i$ strongly converges in $L^2$ to $f$. 
\item For every $f\in L^2(\m)$ there exists a sequence $f_i\in L^2(\m_i)$ that strongly converges in $L^2$ to $f$ with 
\begin{align*}
\Ch(f)= \lim_{i\rightarrow \infty} \Ch_i(f_i).
\end{align*}
Moreover, if $f\in \lip\big((X, \de)\big)\cap W^{1,2}({\sf X})$, then such $f_i$ can be chosen to satisfy $f_i\in \lip\big((X_i, \de_i)\big)\cap W^{1,2}({\sf X}_i)$ with $\sup_{i\in \mathbb N}\left\||\nabla f_i|\right\|_{L^\infty(\m_{X_i})}<\infty$. 
\end{enumerate}
\end{theorem}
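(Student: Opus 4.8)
The statement in question is Theorem~\ref{th:mosco}, the Mosco convergence of Cheeger energies under measured Gromov--Hausdorff convergence of $\RCD(K,N)$ spaces. My plan is to prove the two halves separately, as they correspond to the two defining properties of Mosco convergence: a $\liminf$-inequality (the ``weak'' or ``$\Gamma$-$\liminf$'' part) together with an $L^2$-compactness improvement, and a recovery-sequence statement (the ``strong'' or ``$\Gamma$-$\limsup$'' part).

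\textbf{The $\liminf$-inequality and compactness.} First I would set up the ambient realization: fix the separable space $Y$ and the distance-preserving embeddings $\iota_i\colon X_i\to Y$, $\iota\colon X\to Y$ with $(\iota_i)_\#\m_{X_i}\rightharpoonup\iota_\#\m_X$, and identify all measures with their pushforwards. Given $f_i\rightharpoonup f$ weakly in $L^2$ with $C:=\sup_i\Ch_i(f_i)<\infty$, the goal is $\Ch(f)\le\liminf_i\Ch_i(f_i)$ together with strong $L^2$-convergence $f_i\to f$. The heart of the matter is that $\RCD(K,N)$ spaces carry uniform Bishop--Gromov and Poincar\'e inequalities, hence a uniform (local) $L^2$-Sobolev or at least an Rellich-type embedding: the uniform bound on $\Ch_i(f_i)+\|f_i\|_{L^2}^2$ gives equi-integrability and tightness good enough to upgrade weak to strong $L^2$-convergence, exactly as in the classical Rellich--Kondrachov argument transported to the mGH setting (this is where one cites the segment inequality / Poincar\'e inequality on $\RCD$ spaces). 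For the lower semicontinuity of $\Ch$ itself, the cleanest route is via the heat semigroup: $\Ch(f)=\lim_{t\downarrow 0}\tfrac1t\big(\tfrac12\|f\|_{L^2}^2-\tfrac12\langle H_t f,f\rangle\big)$ on each space, the heat flows $H^i_t$ converge (by mGH-stability of the $\RCD$ structure and spectral-type convergence of the associated Dirichlet forms), and one passes to the limit in the monotone quantities; alternatively one uses that $\Ch_i$ are the $L^2$-relaxations of the (upper) local Lipschitz constant functionals and a diagonal argument. Either way, the technical input is the uniform functional inequalities on $\RCD(K,N)$ spaces plus weak compactness of bounded sets in $L^2$.

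\textbf{The recovery sequence.} For part (2), given $f\in L^2(\m_X)$ I first reduce to $f$ Lipschitz with bounded support (hence in $W^{1,2}(\sf X)$) by a density argument: Lipschitz functions with $\Ch$-approximation are dense in $W^{1,2}(\sf X)$, and $W^{1,2}$ is dense in $L^2$, so a diagonal argument reduces the general case to the Lipschitz one (for non-Lipschitz $f$ one just needs the equality of limits of $\Ch_i$, which follows once the $\liminf$-inequality is combined with the Lipschitz recovery sequences via a diagonalization and the lower semicontinuity). For $f$ Lipschitz on $(X,\de_X)$, I would extend $\iota_\# f$ to a Lipschitz function $\tilde f$ on $Y$ with the same Lipschitz constant (McShane--Whitney extension), restrict to $X_i$ via $\iota_i$ to get $f_i:=\tilde f\circ\iota_i$, which is Lipschitz on $(X_i,\de_{X_i})$ with $\mathrm{Lip}(f_i)\le\mathrm{Lip}(f)$ uniformly, hence $\||\nabla f_i|\|_{L^\infty(\m_{X_i})}\le\mathrm{Lip}(f)$. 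Uniform convergence of $\tilde f\circ\iota_i$ and weak convergence of the measures give $f_i\to f$ strongly in $L^2$. The remaining point is $\limsup_i\Ch_i(f_i)\le\Ch(f)$: this is the subtle inequality, because a priori the local Lipschitz constant of the restriction to $X_i$ could be strictly larger than $|\nabla f|$ in the limit. One resolves it using that on $\RCD$ spaces $|\nabla f|$ equals the local Lipschitz constant for Lipschitz $f$, together with an asymptotic ``no-concentration'' estimate for the local Lipschitz constants of $\tilde f\circ\iota_i$ coming from the mGH convergence and the almost-rigidity of the metric structure; combined with the $\liminf$-inequality this forces $\lim_i\Ch_i(f_i)=\Ch(f)$.

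\textbf{Main obstacle.} I expect the genuinely hard step to be the sharp upper bound $\limsup_i\Ch_i(f_i)\le\Ch(f)$ in the recovery-sequence part — i.e., producing a recovery sequence whose energies do not overshoot in the limit. The $\liminf$-inequality and $L^2$-compactness are, in the $\RCD(K,N)$ class, fairly standard consequences of the uniform doubling and Poincar\'e inequalities plus Dirichlet-form/heat-semigroup convergence; the delicate asymmetry is that a crude McShane extension controls the global Lipschitz constant but not the \emph{pointwise} slope along $X_i$, so one must exploit the metric-measure convergence quantitatively (for instance through convergence of the heat kernels, or through the identification of $\Ch$ as relaxation and a careful localization/partition-of-unity argument on $Y$) to rule out energy concentration. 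This is precisely the content that was established in \cite{gms_convergence}, and the proof proposal is essentially to follow that blueprint, emphasizing the uniform functional inequalities of the $\RCD(K,N)$ class at each quantitative step.
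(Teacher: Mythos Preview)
The paper does not prove Theorem~\ref{th:mosco}; it is stated as a known result and attributed to \cite{gms_convergence} with no argument given. There is therefore no in-paper proof to compare your proposal against. Your outline is a reasonable sketch of the strategy actually used in \cite{gms_convergence} (uniform doubling/Poincar\'e for compactness and the $\liminf$ part, McShane-type extension plus a finer analysis for the recovery sequence), and you correctly identify the $\limsup$ bound on the recovery energies as the delicate step.
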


The space of test functions is  
\[\Test F({\sf X})= \{f\in D(\Delta_X)\cap L^\infty(\m_X): |\nabla f|\in L^\infty(\m_X), \Delta_X f\in W^{1,2}({\sf X})\}.\]

\begin{definition}
A sequence $f_i\in W^{1,2}({\sf X}_i)$ weakly converges in $W^{1,2}$ to $f\in W^{1,2}({\sf X})$ if $f_i$ is weakly convergent  in $L^2$ (or equivalently strongly convergent in $L^2
$) to $f$ and $\sup_{i\in \mathbb N} \Ch_i(f_i)$ is finite. 

Strong convergence in $W^{1,2}$ is defined by requiring strong convergence  in $L^2$ of $f_i$ to $f$ and $\Ch(f)=\lim_{i\rightarrow \infty}\Ch(f_i)$. 
\end{definition}

\begin{lemma}[{\cite[Lemma 2.10]{ambrosio_honda}}] \label{lem:bumpfct}
Let $x_i\rightarrow x$.
For any $\phi \in \lip\big((X, \de)\big)$ with $\supp \phi\subset B_\delta(x)$ there exists  a sequence $\phi_i\in \lip\big((X_i, \de_i)\big)$ with $\supp \phi_i\subset B_\delta(x_i)$ such that $\sup_{i} \lip \phi_i<\infty$ and $\phi_i$ converges strongly in $W^{1,2}$ to $\phi$.
\end{lemma}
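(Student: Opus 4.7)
The plan is to combine the recovery sequence from Mosco convergence (Theorem~\ref{th:mosco}(2)) with a spatial cutoff that enforces the support condition. Since $\supp\phi$ is compact inside the open ball $B_\delta(x)$, I would first fix radii $0<\delta'<\delta''<\delta$ with $\supp\phi\subset B_{\delta'}(x)$. Applying Theorem~\ref{th:mosco}(2) to $\phi\in \lip\big((X,\de_X)\big)\cap W^{1,2}({\sf X})$ produces $\tilde\phi_i\in \lip\big((X_i,\de_{X_i})\big)\cap W^{1,2}({\sf X}_i)$ with $\tilde\phi_i\to\phi$ strongly in $L^2$, $\Ch_i(\tilde\phi_i)\to\Ch(\phi)$ and $\sup_i\lip\tilde\phi_i<\infty$. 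Truncating at levels $\pm\|\phi\|_\infty$ (which decreases the Cheeger energy and preserves Lipschitz bounds) I can further arrange $\sup_i\|\tilde\phi_i\|_\infty\le \|\phi\|_\infty$; the Mosco liminf in Theorem~\ref{th:mosco}(1) then still forces $\Ch_i(\tilde\phi_i)\to\Ch(\phi)$ after truncation.

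Next I would introduce the cutoff
\[
\chi_i(y):=\min\Bigl\{1,\tfrac{(\delta''-\de_{X_i}(y,x_i))_+}{\delta''-\delta'}\Bigr\}
\]
on $X_i$, the analogous $\chi$ on $X$ using $x$, and set $\phi_i:=\chi_i\tilde\phi_i$. By construction $\supp\phi_i\subset \overline{B_{\delta''}(x_i)}\subset B_\delta(x_i)$, and the Leibniz estimate $\lip\phi_i\le \|\tilde\phi_i\|_\infty\lip\chi_i+\lip\tilde\phi_i$ gives $\sup_i\lip\phi_i<\infty$. Since $\chi_i$ and $\chi$ are traces on $X_i$ and $X$ of the same Lipschitz function built from the ambient distance in $Y$, the hypothesis $x_i\to x$ in $Y$ yields $\chi_i\to\chi$ uniformly on compact subsets of $Y$. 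Combined with strong $L^2$ convergence of $\tilde\phi_i$ to $\phi$, uniform $L^\infty$ control of $\chi_i$, and the identity $\chi\equiv 1$ on $\supp\phi$ (so that $\chi\phi=\phi$), this gives $\phi_i\to\phi$ strongly in $L^2$.

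To upgrade this to strong $W^{1,2}$ convergence only the $\limsup$ estimate $\Ch_i(\phi_i)\le \Ch(\phi)+o(1)$ is missing. I would expand, $\m_{X_i}$-a.e.,
\[
|\nabla\phi_i|^2=\chi_i^2|\nabla\tilde\phi_i|^2+2\chi_i\tilde\phi_i\langle\nabla\chi_i,\nabla\tilde\phi_i\rangle+\tilde\phi_i^2|\nabla\chi_i|^2.
\]
The first term tends to $\int_X\chi^2|\nabla\phi|^2\,\de\m_X=\int_X|\nabla\phi|^2\,\de\m_X=2\Ch(\phi)$ by strong $W^{1,2}$ convergence of $\tilde\phi_i$ and $\chi_i\to\chi\equiv 1$ on $\supp\phi$. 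The crucial observation is that $\nabla\chi_i$ is supported on the annulus $A_i:=B_{\delta''}(x_i)\setminus B_{\delta'}(x_i)$; since $\phi\equiv 0$ on $X\setminus B_{\delta'}(x)$, strong $L^2$ convergence of $\tilde\phi_i$ gives $\|\tilde\phi_i\mathbf{1}_{A_i}\|_{L^2(\m_{X_i})}\to 0$, and the uniform bounds $|\nabla\chi_i|\le (\delta''-\delta')^{-1}$ together with $\sup_i\||\nabla\tilde\phi_i|\|_{L^\infty(\m_{X_i})}<\infty$ allow Cauchy--Schwarz to force both the cross and the last terms to contribute $o(1)$.

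The step I expect to be the main obstacle is precisely the cross term $\chi_i\tilde\phi_i\langle\nabla\chi_i,\nabla\tilde\phi_i\rangle$: the gradient $\nabla\tilde\phi_i$ is only controlled in $L^\infty$ and is not known to converge pointwise, while the annulus $A_i$ does not shrink. What rescues the argument is the support assumption on $\phi$, which makes $\tilde\phi_i$ itself $L^2$-small on $A_i$; without the uniform Lipschitz bound granted by Theorem~\ref{th:mosco}(2), the estimate would not close.
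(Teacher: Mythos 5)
Your proposal is correct and follows essentially the same route as the proof of the cited Lemma 2.10 in Ambrosio--Honda: apply the Lipschitz recovery sequence provided by Mosco convergence, then localize by multiplying with a distance cutoff, using the support hypothesis to make the error terms (both the cross term and the $|\nabla\chi_i|^2$ term, which live on an annulus disjoint from $\supp\phi$) vanish in the limit. The only point worth making explicit is that once you fix $\sigma<\delta'$ with $\supp\phi\subset B_\sigma(x)$, a single bounded Lipschitz function $\eta$ on $Y$ vanishing on $B_\sigma(x)$ and equal to $1$ off $B_{\delta'}(x)$ dominates $\mathbf{1}_{A_i}$ for large $i$ and kills $\phi$, so the strong $L^1$ convergence $\tilde\phi_i^2\to\phi^2$ yields $\int_{A_i}\tilde\phi_i^2\,\de\m_{X_i}\le\int\eta\,\tilde\phi_i^2\,\de\m_{X_i}\to\int\eta\,\phi^2\,\de\m_X=0$, which is the lemma you implicitly use twice.
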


Let $R>0$ and let $x_i\rightarrow x$ in $Y$. We say that $f_i\in W^{1,2}(B_R(x_i))$ are weakly convergent in $W^{1,2}$ to $f\in W^{1,2}(B_R(x))$ if $f_i$ weakly (or equivalently strongly) converge in $L^2$ to $f$ with $$\sup_{i\in \mathbb N} \left\|f_i \right\|_{W^{1,2}(B_R(x_i))}<\infty.$$  Strong convergence in $W^{1,2}$ on $B_R(x)$ is defined by {additionally} requiring $$\lim_{i\rightarrow \infty} \left\| |\nabla f_i|\right\|_{L^2(B_R(x_i))} = \left\||\nabla f|\right\|_{L^2(B_R(x))}.$$

The following can be found in \cite[Theorem 4.2]{ambrosio_honda_local}.
\begin{theorem}\label{th:localcompactness}
Let $R>0$ and $f_i\in W^{1,2}(B_R(x_i))$ with $\sup_{i\in \mathbb N} \left\|f_i\right\|_{W^{1,2}(B_R(x_i))}<\infty$. Then there exists $f\in W^{1,2}(B_R(x))$ and a subsequence $f_{i_j}$ such that $f_{i_j}$ strongly converge in $L^2$ to $f$ on $B_R(x)$ and 
\begin{align*}
\liminf_{j\rightarrow \infty} \int_{B_R(x_{i_j})}|\nabla f_{i_j}|^2 \, {\rm d}\!\m_{X_{i_j}}\geq \int_{B_R(x)} |\nabla f|^2 \, {\rm d}\!\m_X.
\end{align*}
\end{theorem}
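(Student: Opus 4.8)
\textbf{Proof proposal for Theorem \ref{th:localcompactness}.}

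The plan is to combine the local Rellich-type compactness already available for $W^{1,2}$-functions on sequences of $\RCD(K,N)$ spaces with a diagonal/exhaustion argument to pass from global to local statements, together with Mosco convergence (Theorem \ref{th:mosco}) to get the lower semicontinuity of the Dirichlet energy. First I would reduce to the compactly supported situation: fix a cutoff $\chi$ which is $1$ on $B_{R/2}(x)$ and supported in $B_R(x)$, and use Lemma \ref{lem:bumpfct} to transplant it to a sequence $\chi_i$ with $\supp\chi_i\subset B_R(x_i)$, $\sup_i\lip\chi_i<\infty$, $\chi_i\to\chi$ strongly in $W^{1,2}$. Then $g_i:=\chi_i f_i\in W^{1,2}({\sf X}_i)$ with $\sup_i\Ch_i(g_i)<\infty$ (using $|\nabla(\chi_i f_i)|\le |\chi_i||\nabla f_i|+|f_i||\nabla\chi_i|$ and the uniform bounds on $\|f_i\|_{W^{1,2}(B_R(x_i))}$ and $\lip\chi_i$). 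By the second part of Theorem \ref{th:mosco}(1), after passing to a subsequence, $g_i$ strongly converges in $L^2$ to some $g\in W^{1,2}({\sf X})$, and $\Ch(g)\le\liminf_i\Ch_i(g_i)$.

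Next I would iterate this over an exhaustion $R_k\uparrow R$, i.e. run the argument with $\chi^{(k)}$ equal to $1$ on $B_{R_k}(x)$ and supported in $B_R(x)$, and extract a nested sequence of subsequences; a standard diagonal argument then produces a single subsequence $f_{i_j}$ and a function $f$ defined on $B_R(x)$, with $f_{i_j}\to f$ strongly in $L^2$ on every $B_{R_k}(x)$ and hence strongly in $L^2$ on $B_R(x)$ (the $L^2(B_R\setminus B_{R_k})$-tails are controlled uniformly by the energy bound via a Caccioppoli-type estimate, so no mass escapes). On each fixed ball $B_{R_k}(x)$ we get $\liminf_j\int_{B_{R_k}(x_{i_j})}|\nabla f_{i_j}|^2\,\de\m\ge\int_{B_{R_k}(x)}|\nabla f|^2\,\de\m$ from the lower semicontinuity above applied to the localized functions (and the fact that away from the cutoff region the localization does not change the gradient); letting $k\to\infty$ via monotone convergence on the right-hand side and monotonicity of the left-hand side integrals in $k$ gives the claimed inequality on $B_R(x)$, and in particular $f\in W^{1,2}(B_R(x))$ since $\|f\|_{L^2(B_R(x))}\le\liminf_j\|f_{i_j}\|_{L^2(B_R(x_{i_j}))}<\infty$ and the gradient bound just obtained.

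The main obstacle I anticipate is twofold and purely technical: first, ensuring that the cutoff errors genuinely vanish, i.e. that $|\nabla\chi_i|^2 f_i^2$ does not contribute in the limit on the interior region $B_{R_k}(x)$ — this is handled because $\nabla\chi^{(k)}$ is supported in the annulus $B_R\setminus B_{R_k}$, so on $B_{R_k}$ one has $|\nabla(\chi_i^{(k)} f_i)|=|\nabla f_i|$ exactly; second, the diagonal extraction must be organized so that the limit $f$ is consistent across the exhaustion (the restriction of the limit obtained at level $k+1$ to $B_{R_k}(x)$ must agree with the level-$k$ limit), which follows from uniqueness of $L^2$-limits. I would also invoke the Rellich compactness for $\RCD(K,N)$ spaces (the second sentence of Theorem \ref{th:mosco}(1), giving strong $L^2$-convergence from a uniform Cheeger energy bound) as the key input that upgrades weak to strong $L^2$-convergence; everything else is bookkeeping. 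No genuinely new idea beyond Theorem \ref{th:mosco} and Lemma \ref{lem:bumpfct} is needed, which is why the statement is quoted from \cite{ambrosio_honda_local} rather than reproved here.
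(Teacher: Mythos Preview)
The paper does not prove this theorem; it merely cites \cite[Theorem 4.2]{ambrosio_honda_local}. Your overall strategy---localize via cutoffs, invoke global Mosco convergence (Theorem \ref{th:mosco}), diagonalize over an exhaustion $R_k\uparrow R$---is the standard one and is essentially how the cited result is proved. Two technical points need correcting.

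First, the tail control you attribute to a ``Caccioppoli-type estimate'' is misnamed and, as stated, unjustified: Caccioppoli inequalities bound interior gradients of (sub)solutions to elliptic equations and say nothing about arbitrary $W^{1,2}$ functions. What actually controls $\|f_i\|_{L^2(B_R(x_i)\setminus B_{R_k}(x_i))}$ uniformly is the Sobolev--Poincar\'e inequality on balls in $\RCD(K,N)$ spaces (cf.\ the proof of Theorem \ref{th:211}), which upgrades the uniform $W^{1,2}$ bound to a uniform $L^{2^*}$ bound on $B_R(x_i)$; then H\"older gives
\[
\|f_i\|_{L^2(B_R(x_i)\setminus B_{R_k}(x_i))}\le C\,\m_{X_i}\big(B_R(x_i)\setminus B_{R_k}(x_i)\big)^{1/N},
\]
and Bishop--Gromov makes the annulus volume go to zero uniformly in $i$ as $R_k\uparrow R$.

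Second, Lemma \ref{lem:bumpfct} as stated only provides $\chi_i\to\chi$ strongly in $W^{1,2}$ with uniformly bounded Lipschitz constant and the correct support; it does not assert $\chi_i\equiv 1$ on $B_{R_k}(x_i)$, which you use when claiming $|\nabla(\chi_i^{(k)} f_i)|=|\nabla f_i|$ on the inner ball. This is easy to arrange by choosing the cutoffs explicitly as $\chi_i^{(k)}=\psi(\de_{X_i}(\cdot,x_i))$ for a fixed profile $\psi\in C^\infty_c([0,R))$ with $\psi\equiv 1$ on $[0,R_k]$, but you should say so rather than invoke the lemma verbatim.
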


\begin{corollary}\label{cor:weak-conv}
Let $f_i,  (g_i)\in W^{1,2}(B_R(x_i))$ be strongly (respectively weakly) convergent in $W^{1,2}$ to $f, (g) \in W^{1,2}(B_R(x))$ on $B_R(x)$. Then 
\begin{align*}
\lim_{i\rightarrow \infty} \int_{B_R(x_i)} \langle \nabla f_i, \nabla g_i\rangle \,{\rm d}\!\m_{X_i} = \int_{B_R(x)} \langle \nabla f, \nabla g\rangle \,{\rm d}\!\m_X. 
\end{align*}
\end{corollary}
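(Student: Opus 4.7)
The plan is to reduce the statement, via the pointwise polarization identity
$$4\langle \nabla u, \nabla v\rangle = |\nabla(u+v)|^2 - |\nabla(u-v)|^2,$$
to a convergence of squared $L^2$-norms of gradients, and then to combine the lower semicontinuity of the Dirichlet energy on balls (Theorem \ref{th:localcompactness}) with the parallelogram identity and with density of test functions via Theorem \ref{th:mosco} and Lemma \ref{lem:bumpfct}.

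For the strong-strong case, when both $f_i \to f$ and $g_i \to g$ strongly in $W^{1,2}$ on $B_R(x)$, I would first observe that $f_i \pm g_i$ weakly converge in $W^{1,2}$ on $B_R(x)$ to $f \pm g$ by linearity of weak $L^2$-convergence and the uniform $W^{1,2}$-bound, so Theorem \ref{th:localcompactness} yields
$$\liminf_{i\to\infty} \int_{B_R(x_i)} |\nabla(f_i \pm g_i)|^2 \de\m_{X_i} \geq \int_{B_R(x)} |\nabla(f \pm g)|^2 \de\m_X.$$
On the other hand, the parallelogram identity gives
$$\int |\nabla(f_i+g_i)|^2 \de\m_{X_i} + \int |\nabla(f_i-g_i)|^2 \de\m_{X_i} = 2\int |\nabla f_i|^2 \de\m_{X_i} + 2\int |\nabla g_i|^2 \de\m_{X_i},$$
whose right-hand side converges to $\int |\nabla(f+g)|^2 \de\m_X + \int |\nabla(f-g)|^2 \de\m_X$ by definition of strong $W^{1,2}$-convergence on $B_R(x)$. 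Adding the two $\liminf$ bounds forces both to be equalities, so each squared norm converges, and the polarization identity closes the argument.

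For the strong-weak case, where $g_i$ converges only weakly, polarization is insufficient because weak convergence lacks the norm equality needed above. The remedy is to approximate the strongly convergent factor $f$ by a test function: given $\epsilon > 0$, choose $\psi \in \Test F({\sf X})$, suitably cut off so as to be compactly supported inside $B_R(x)$, with $\|f - \psi\|_{W^{1,2}(B_R(x))} < \epsilon$, and lift $\psi$ via Lemma \ref{lem:bumpfct} and Theorem \ref{th:mosco} to a sequence $\psi_i \in \Test F({\sf X}_i)$ strongly $W^{1,2}$-convergent to $\psi$ with $\Delta_{X_i}\psi_i$ strongly $L^2$-convergent to $\Delta_X \psi$ (an additional standard input from the Mosco/spectral convergence theory for the Laplacian on $\RCD$ spaces). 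Split
$$\int \langle \nabla f_i, \nabla g_i\rangle \de\m_{X_i} = \int \langle \nabla \psi_i, \nabla g_i\rangle \de\m_{X_i} + \int \langle \nabla(f_i - \psi_i), \nabla g_i\rangle \de\m_{X_i}.$$
The first integral, via integration by parts (no boundary contribution because $\psi_i$ has compact support inside $B_R(x_i)$), equals $-\int g_i \Delta_{X_i}\psi_i \de\m_{X_i}$, which converges to $-\int g \Delta_X \psi \de\m_X = \int \langle \nabla \psi, \nabla g\rangle \de\m_X$ by the weak-strong $L^2$-pairing. The second integral is controlled by Cauchy-Schwarz: the strong-strong case applied to the pair $(f_i, \psi_i)$ implies that $f_i - \psi_i$ strongly converges to $f - \psi$ in $W^{1,2}$, hence $\||\nabla(f_i - \psi_i)|\|_{L^2}\to \||\nabla(f-\psi)|\|_{L^2} \le \epsilon$, and multiplying by the uniform bound $\sup_i \||\nabla g_i|\|_{L^2}$ gives an error of order $\epsilon$. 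Letting $\epsilon \to 0$ concludes.

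The main obstacle is that the underlying spaces carrying the gradients change with $i$, so the standard fixed-space argument ``strong times weak converges to the limit'' cannot be applied directly; in the strong-strong case the parallelogram-based complementary bounds are what upgrade the $\liminf$ inequalities of Theorem \ref{th:localcompactness} to equalities, and in the strong-weak case the reduction to a weak-strong $L^2$-pairing via integration by parts, made possible by the Mosco-type approximation by test functions with strongly convergent Laplacians, is the key step that would need to be set up carefully.
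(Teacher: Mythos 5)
Your strong-strong reduction via the parallelogram identity and the lower-semicontinuity of Theorem~\ref{th:localcompactness} is correct, and it is the natural route: the two $\liminf$ inequalities for $\nabla(f_i\pm g_i)$ together with the convergence of the right-hand side force equalities, and polarization closes the argument. (The paper does not record a proof, but this is precisely the kind of direct consequence intended.)

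Your treatment of the strong-weak case, however, has a genuine gap. You propose to choose $\psi\in\Test F({\sf X})$, cut off to have compact support inside $B_R(x)$, with $\|f-\psi\|_{W^{1,2}(B_R(x))}<\epsilon$. This is impossible in general: compactly supported functions are $W^{1,2}$-dense in $W^{1,2}_0(B_R(x))$, not in $W^{1,2}(B_R(x))$. If $f$ is, say, a nonzero constant on $B_R(x)$, then any $\psi$ vanishing near $\partial B_R(x)$ must make a jump across a thin annulus, and the Dirichlet energy of that jump blows up as the annulus shrinks, so $\|f-\psi\|_{W^{1,2}(B_R(x))}$ stays bounded away from zero. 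Consequently the error term in your Cauchy--Schwarz bound cannot be driven to zero, and the integration-by-parts step (which requires the compact support precisely to avoid boundary terms) cannot be set up. This is not a matter of ``setting it up carefully''; the density you need is simply false.

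The good news is that the strong-weak case does not require any approximation; your strong-strong argument already contains the right idea and just needs a one-parameter extension. For $t\in\mathbb{R}$, the sequence $f_i+t g_i$ converges strongly in $L^2$ to $f+tg$ with a uniform $W^{1,2}$-bound, so Theorem~\ref{th:localcompactness} (with the standard sub-subsequence argument) gives
\begin{equation*}
\liminf_{i}\int_{B_R(x_i)}|\nabla(f_i+tg_i)|^2\,\de\m_{X_i}\ \ge\ \int_{B_R(x)}|\nabla(f+tg)|^2\,\de\m_X .
\end{equation*}
Expanding the squares, subtracting the convergent term $\int|\nabla f_i|^2\to\int|\nabla f|^2$, writing $A_i=\int\langle\nabla f_i,\nabla g_i\rangle$, $A=\int\langle\nabla f,\nabla g\rangle$, and using that $B_i:=\int|\nabla g_i|^2$ is only bounded by some $M$, one gets $\liminf(2tA_i+t^2B_i)\ge 2tA+t^2\int|\nabla g|^2$. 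For $t>0$ this gives $\liminf A_i\ge A-\tfrac{t}{2}M$, and letting $t\to 0^+$ yields $\liminf A_i\ge A$; for $t<0$ the same manipulation yields $\limsup A_i\le A$. This recovers the claimed limit with the same ingredients as your strong-strong case, avoiding entirely the density step that fails.
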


Let $U\subset X$ be open. A function $f\in W^{1,2}(U)$ is in the domain of the Laplacian,  i.e. $f\in D({\Delta}, U)$, if 
there exists a function ${\Delta}_U f\in L^2(U, \m_X|_U)$, sometimes denoted by $\Delta_X f$ for simplicity, such that 
\[\int_X \langle \nabla f, \nabla \phi \rangle \, {\rm d}\!\m_X = \int_X  \phi  {\Delta}_U f {\rm d} \!\m_X \qquad \forall \phi \in \lip_c(U).\]
We say that $f\in W^{1,2}(U)$ is harmonic if $f\in D({\Delta},U)$ and  ${\Delta}_{U} f=0$. This is consistent with the definition in \cite{ambrosio_honda_local}.

\begin{theorem}[{\cite[Theorem 4.4, Corollary 4.12]{ambrosio_honda_local}}]\label{th:localharmoniccompactness}
Let $f_i\in W^{1,2}(B_R(x_i))$ be harmonic functions on $B_R(x_i)$. If $(f_i)_i$ strongly converges in $L^2$ to $f$ on $B_R(x)$ with $\sup_{i\in \mathbb N}\left\|f_i\right\|_{W^{1,2}(B_R(x_i))}<\infty$, then $f$ is also harmonic on $B_R(x)$. 
Moreover, $f_i|_{B_r(x_i)}$ strongly converge in $W^{1,2}$ to $f|_{B_r(x)}$ on $B_r(x)$ for all $r\in (0,R)$. 
\end{theorem}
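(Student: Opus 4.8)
The plan is to prove the two assertions separately, in each case by passing to the limit in the integrated (weak) formulation of harmonicity, using the compactness and lower semicontinuity of Theorem~\ref{th:localcompactness} together with the weak--strong pairing of gradients in Corollary~\ref{cor:weak-conv}. For the first assertion, I would first apply Theorem~\ref{th:localcompactness} to a subsequence on $B_R(x)$: since $f_i\to f$ strongly in $L^2$ on $B_R(x)$ and $\sup_i\|f_i\|_{W^{1,2}(B_R(x_i))}<\infty$, this forces $f\in W^{1,2}(B_R(x))$, and hence $f_i$ converges weakly in $W^{1,2}$ to $f$ on $B_R(x)$ (and on every $B_{R'}(x)$, $R'<R$). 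Then, given $\phi\in\lip_c(B_R(x))$, choose $R'<R$ with $\supp\phi\subset B_{R'}(x)$ and use Lemma~\ref{lem:bumpfct} to produce $\phi_i\in\lip_c(B_{R'}(x_i))$, uniformly Lipschitz, converging strongly in $W^{1,2}$ to $\phi$. Since $f_i$ is harmonic on $B_R(x_i)$ (and harmonicity may be tested against $W^{1,2}$ functions with compact support in the ball, by density of $\lip_c$), we have $\int_{X_i}\langle\nabla f_i,\nabla\phi_i\rangle\,\de\m_{X_i}=0$, and Corollary~\ref{cor:weak-conv} allows passing to the limit to conclude $\int_X\langle\nabla f,\nabla\phi\rangle\,\de\m_X=0$ for all such $\phi$, i.e.\ $f$ is harmonic on $B_R(x)$.

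For the second assertion, fix $0<r<r'<R$; the inequality $\int_{B_r(x)}|\nabla f|^2\,\de\m_X\le\liminf_i\int_{B_r(x_i)}|\nabla f_i|^2\,\de\m_{X_i}$ is again Theorem~\ref{th:localcompactness}, so only the reverse inequality is needed. Pick a Lipschitz cutoff $\eta$ on $X$ with $0\le\eta\le1$, $\eta\equiv1$ on $\overline{B_r(x)}$, $\supp\eta\subset B_{r'}(x)$, and a uniformly Lipschitz recovery sequence $\eta_i$ on $X_i$ with the same properties converging strongly in $W^{1,2}$ to $\eta$ (built, as usual, from the distance functions to $B_r(x_i)$ and to $X_i\setminus B_{r'}(x_i)$, which converge uniformly). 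Testing the equation for $f_i$ against $\eta_i^2f_i$ and using the Leibniz rule gives the Caccioppoli identity
\begin{align*}
\int_{X_i}\eta_i^2|\nabla f_i|^2\,\de\m_{X_i}&=-2\int_{X_i}\eta_if_i\langle\nabla f_i,\nabla\eta_i\rangle\,\de\m_{X_i}\\
&=-\tfrac12\int_{X_i}\langle\nabla(f_i^2),\nabla(\eta_i^2)\rangle\,\de\m_{X_i},
\end{align*}
and the same computation with $f$ in place of $f_i$ (legitimate since $f$ is harmonic, by the first step) gives $\int_X\eta^2|\nabla f|^2\,\de\m_X=-\tfrac12\int_X\langle\nabla(f^2),\nabla(\eta^2)\rangle\,\de\m_X$. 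By the standard interior $L^\infty$-estimate for harmonic functions on $\RCD$ spaces, $\sup_i\|f_i\|_{L^\infty(B_{r'}(x_i))}<\infty$, so $f_i^2$ converges weakly in $W^{1,2}$ to $f^2$ on $B_{r'}(x)$, while $\eta_i^2$ converges strongly in $W^{1,2}$ to $\eta^2$; Corollary~\ref{cor:weak-conv} then gives $\int_{X_i}\eta_i^2|\nabla f_i|^2\,\de\m_{X_i}\to\int_X\eta^2|\nabla f|^2\,\de\m_X$. Since $\eta_i\equiv1$ on $B_r(x_i)$ this yields $\limsup_i\int_{B_r(x_i)}|\nabla f_i|^2\,\de\m_{X_i}\le\int_X\eta^2|\nabla f|^2\,\de\m_X$, and letting $r'\downarrow r$ (so $\eta\downarrow\mathbf 1_{\overline{B_r(x)}}$) the right-hand side decreases to $\int_{B_r(x)}|\nabla f|^2\,\de\m_X$ for a.e.\ $r<R$, hence for every $r<R$ by monotonicity. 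Combined with the $\liminf$ inequality and the strong $L^2$ convergence of $f_i$, this is precisely strong $W^{1,2}$ convergence on $B_r(x)$.

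I expect the main obstacle to lie entirely in the second step: on the one hand one must arrange the cutoffs $\eta_i$ so that they are genuinely identically $1$ on $B_r(x_i)$, uniformly Lipschitz, and strongly $W^{1,2}$-convergent, and then organise the exhaustion so as to cover every radius below $R$ rather than merely almost every one; on the other hand one must pass to the limit in the cross term $\int\eta_if_i\langle\nabla f_i,\nabla\eta_i\rangle$, which above is handled by rewriting it as $-\tfrac12\int\langle\nabla(f_i^2),\nabla(\eta_i^2)\rangle$ and invoking local boundedness of harmonic functions — the alternative being to work directly in the module $L^2(TX)$ of vector fields and use weak--strong convergence there. Everything else (existence and properties of recovery sequences, passage to subsequences, the fact that the prescribed limit makes subsequential arguments harmless) is routine given Theorems~\ref{th:mosco} and~\ref{th:localcompactness} and Lemma~\ref{lem:bumpfct}.
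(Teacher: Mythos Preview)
The paper does not give its own proof of this theorem: it is quoted directly from \cite[Theorem~4.4, Corollary~4.12]{ambrosio_honda_local} without argument, so there is no in-paper proof to compare against. That said, your reconstruction is essentially the standard one and is correct in outline. The first assertion is exactly as you describe: weak $W^{1,2}$ convergence of $f_i$ on $B_R$ plus strong $W^{1,2}$ convergence of the Lipschitz test recovery from Lemma~\ref{lem:bumpfct}, then Corollary~\ref{cor:weak-conv}. For the second, the Caccioppoli identity tested against $\eta_i^2 f_i$ and the rewriting as $-\tfrac12\int\langle\nabla(f_i^2),\nabla(\eta_i^2)\rangle$ is indeed the clean way to pass to the limit, and the interior $L^\infty$ bound for harmonic functions on $\RCD(K,N)$ spaces (uniform in $i$, from the local mean-value/Moser inequality) is the right ingredient to make $f_i^2$ a legitimate weakly $W^{1,2}$-convergent sequence on $B_{r'}$.

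Two minor points worth tightening. First, you should justify that $\eta_i^2$ converges \emph{strongly} in $W^{1,2}$ to $\eta^2$: this is not quite Lemma~\ref{lem:bumpfct} applied to $\eta^2$, but follows since $\nabla(\eta_i^2)=2\eta_i\nabla\eta_i$ and $\eta_i\to\eta$ uniformly (being built from distance functions) while $\nabla\eta_i\to\nabla\eta$ strongly in $L^2$. Second, your concluding limit $r'\downarrow r$ actually needs $\m_X(\partial B_r(x))=0$ to get $\int_X\eta^2|\nabla f|^2\,\de\m_X\to\int_{B_r(x)}|\nabla f|^2\,\de\m_X$; this holds for all but countably many $r$, and the monotonicity argument you give then extends it to every $r<R$.
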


\begin{corollary}\label{cor:pairing}
Let $f_i\in W^{1,2}(B_R(x_i))$ be weakly converging  in $W^{1,2}$ to $f\in W^{1,2}(B_R(x))$ on $B_R(x)$. Let $r\in (0,R)$ and let $h_i\in W^{1,2}(B_r(x_i))$ strongly converge in $W^{1,2}$ to $h\in W^{1,2}(B_r(x))$ on $B_r(x)$ with $\sup_{i\in \mathbb N} \left\| |\nabla h_i|\right\|_{L^{\infty}(\m_i)}<\infty$. Then $\langle \nabla f_i, \nabla h_i\rangle$ 
in $B_r(x_i)$ weakly converges  in $L^2$ to $\langle \nabla f, \nabla h\rangle$ in $B_r(x)$.
\end{corollary}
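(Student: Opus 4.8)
The plan is to prove Corollary \ref{cor:pairing} by reducing the convergence of the inner products $\langle \nabla f_i, \nabla h_i \rangle$ to a weak-$L^2$ statement tested against locally Lipschitz functions, and then to exploit the chain/Leibniz rules for the carr\'e du champ operator on $\RCD$ spaces together with the compactness results already recalled in the excerpt. First I would fix $r' \in (0,r)$ and a test function $\varphi \in \lip_c(B_{r'}(x))$; by Lemma \ref{lem:bumpfct} I can pick $\varphi_i \in \lip_c(B_{r'}(x_i))$ with $\sup_i \lip \varphi_i < \infty$ converging strongly in $W^{1,2}$ to $\varphi$. The quantity to analyze is $\int_{B_r(x_i)} \varphi_i \langle \nabla f_i, \nabla h_i \rangle \, \de\m_{X_i}$, and the target is to show it converges to $\int_{B_r(x)} \varphi \langle \nabla f, \nabla h \rangle \, \de\m_X$; the uniform bound $\sup_i \| \varphi_i \langle \nabla f_i, \nabla h_i\rangle \|_{L^2} < \infty$ follows since $\sup_i \||\nabla h_i|\|_{L^\infty} < \infty$ and $\sup_i \||\nabla f_i|\|_{L^2(B_r(x_i))} < \infty$, so it suffices to identify the limit.

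The key algebraic device is the identity $2\varphi_i \langle \nabla f_i, \nabla h_i\rangle = \langle \nabla(\varphi_i f_i), \nabla h_i\rangle + \langle \nabla f_i, \nabla(\varphi_i h_i)\rangle - f_i \langle \nabla \varphi_i, \nabla h_i\rangle - h_i \langle \nabla f_i, \nabla \varphi_i\rangle$, valid by the Leibniz rule for the minimal weak upper gradient on the open ball. Each term on the right is now a pairing between a strongly $W^{1,2}$-convergent factor and either a weakly or a strongly $W^{1,2}$-convergent factor: $\varphi_i f_i$ converges strongly in $W^{1,2}$ (product of a uniformly Lipschitz, uniformly bounded, strongly convergent function with a weakly convergent one, using the $L^\infty$ bound on $|\nabla \varphi_i|$ and boundedness of $\varphi_i$ — here one must be a little careful, since $f_i$ is only weakly $W^{1,2}$-convergent, so $\varphi_i f_i$ is likewise only weakly $W^{1,2}$-convergent, which is still enough), $\varphi_i h_i$ converges strongly in $W^{1,2}$, and the cross terms $f_i\langle \nabla \varphi_i, \nabla h_i\rangle$ and $h_i \langle \nabla f_i, \nabla \varphi_i\rangle$ are handled by Corollary \ref{cor:weak-conv} (for the gradients) combined with strong $L^2$-convergence of $f_i, h_i$. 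For the terms of the form $\langle \nabla(\varphi_i f_i), \nabla h_i\rangle$ with the first factor only weakly convergent and the second strongly convergent with uniform $L^\infty$ gradient bound, one invokes Corollary \ref{cor:weak-conv} in the weak/strong pairing form. Passing to the limit in each of the four pieces and reassembling gives $\int \varphi \cdot 2\langle \nabla f, \nabla h\rangle\,\de\m_X$ via the same Leibniz identity on $\sf X$.

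Concretely, the steps in order are: (i) reduce to testing against $\varphi_i \to \varphi$ as above and note the uniform $L^2$ bound; (ii) write the polarization/Leibniz identity; (iii) establish weak $W^{1,2}$-convergence of $\varphi_i f_i$ and strong $W^{1,2}$-convergence of $\varphi_i h_i$ from the product rules and the uniform Lipschitz/boundedness bounds on $\varphi_i$; (iv) pass to the limit term-by-term using Corollary \ref{cor:weak-conv} (possibly in the combination strong$\times$weak, which is exactly what that corollary and Theorem \ref{th:localcompactness} provide); (v) recombine to identify the limit, and conclude by density of $\lip_c$ test functions and the uniform $L^2$ bound that $\langle \nabla f_i, \nabla h_i\rangle \rightharpoonup \langle \nabla f, \nabla h\rangle$ weakly in $L^2$ on $B_r(x)$.

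The main obstacle I anticipate is bookkeeping the precise mode of convergence of the product $\varphi_i f_i$: since $f_i$ is only \emph{weakly} $W^{1,2}$-convergent, one cannot claim strong $W^{1,2}$-convergence of $\varphi_i f_i$, so the limit-passage in step (iv) must use the weak$\times$strong version of the pairing lemma rather than the strong$\times$strong one. One must also confirm that the Leibniz rule for $|\nabla \cdot|$ genuinely holds on the open set $B_r(x_i)$ for functions that are only in $W^{1,2}_{loc}$ there (this is standard on $\RCD$ spaces, cf. the calculus recalled before Theorem \ref{th:mosco}), and that multiplying by the cutoff $\varphi_i$ legitimately localizes everything inside $B_{r'}$ so that no boundary effects near $\partial B_r(x_i)$ enter. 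None of these are deep, but getting the convergence classes exactly right for each of the four terms is where the care lies.
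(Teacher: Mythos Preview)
The paper's proof is a two-line argument: note that weak $W^{1,2}$-convergence of $f_i$ gives $\sup_i\|f_i\|_{W^{1,2}(B_R(x_i))}<\infty$, hence $\langle\nabla f_i,\nabla h_i\rangle$ is uniformly $L^2$-bounded on $B_r(x_i)$, and then invoke \cite[Theorem~4.6]{ambrosio_honda_local} as a black box. Your approach is genuinely different: you attempt a self-contained derivation via a Leibniz identity and Corollary~\ref{cor:weak-conv}, essentially trying to reproduce the content of that external theorem from the tools recalled in the paper.

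There is, however, a real circularity in your treatment of the cross terms. For the term $\int h_i\langle\nabla f_i,\nabla\varphi_i\rangle\,\de\m_{X_i}$ you write that it is ``handled by Corollary~\ref{cor:weak-conv} (for the gradients) combined with strong $L^2$-convergence of $f_i,h_i$''. But Corollary~\ref{cor:weak-conv} only yields convergence of the \emph{bare} integral $\int\langle\nabla f_i,\nabla\varphi_i\rangle\,\de\m_{X_i}$; it says nothing about convergence when an additional $L^2$-factor $h_i$ is inserted. To pass to the limit in $\int h_i\langle\nabla f_i,\nabla\varphi_i\rangle$ with $h_i\to h$ strongly in $L^2$, you would need weak $L^2$-convergence of $\langle\nabla f_i,\nabla\varphi_i\rangle$ itself---and that is precisely the statement of the corollary you are proving, with $\varphi_i$ playing the role of $h_i$. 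The third cross term $\int f_i\langle\nabla\varphi_i,\nabla h_i\rangle$ has the same defect: Corollary~\ref{cor:weak-conv} does not upgrade to convergence tested against $f_i$ without first knowing weak $L^2$-convergence of $\langle\nabla\varphi_i,\nabla h_i\rangle$.

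One could try to break the circularity by first establishing the special case where \emph{both} sequences converge strongly in $W^{1,2}$ with uniform $L^\infty$ gradient bounds (which would dispose of the third term), but the fourth term still involves the weakly convergent $f_i$ and so remains at the same level of difficulty as the full statement. The missing ingredient is exactly the content of \cite[Theorem~4.6]{ambrosio_honda_local}, which is why the paper simply cites it.
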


\begin{proof} Weak convergence in $W^{1,2}$ of $(f_i)$ on $B_R(x_i)$ yields that 
\[\sup_{i\in \N} \left\|{f_i}\right\|_{W^{1,2}(B_R(x_i))}\leq C <\infty\]
for some $C\in (0, \infty)$.
Hence $\langle \nabla f_i, \nabla h_i\rangle$ is uniformly $L^2$-bounded on $B_r(x_i)$. Then  we can apply  \cite[Theorem 4.6]{ambrosio_honda_local}.
\end{proof}
The next theorem is obtained as a corollary of Theorem \ref{th:localcompactness} (see also  \cite[Theorem 4.4]{ambrosio_honda_local}).
\begin{theorem}\label{th:211} Assume each ${\sf X}_i$ is an $\RCD(K,N)$ space with $N\geq 2.$
Let $x_i\in X_i$ be any sequence of points and 
$f_i\in W^{1,2}(B_R(x_i))$ any sequence of harmonic functions on $B_R(x_i)$ such that  
 $\int_{B_R(x_i)} f_i \, {\rm d}\!\m_{X_i}=0$. Assume that $\sup_{i\in \mathbb N} \left\| |\nabla f_i|\right\|_{L^2(B_R(x_i))}<\infty$. Then  there exists a subsequence  $(i_j)_{j\in \mathbb N}$ such that $x_{i_j}\rightarrow x$
 and there exists a harmonic function $f\in W^{1,2}(B_{R}(x))$ such that $f_{i_j}|_{B_r(x_{i_j})}$ strongly converge in $W^{1,2}$ to $f$ on $B_r(x)$ for any $r\in (0,R)$.
\end{theorem}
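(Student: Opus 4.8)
The plan is to deduce Theorem \ref{th:211} from the local compactness theorems already stated, namely Theorem \ref{th:localcompactness} and Theorem \ref{th:localharmoniccompactness}, after first normalizing the situation by a suitable translation of the harmonic functions. The only genuine work is to promote the uniform gradient bound $\sup_i \||\nabla f_i|\|_{L^2(B_R(x_i))} < \infty$ together with the zero-average condition into a uniform $W^{1,2}$-bound, i.e. to also control $\sup_i \|f_i\|_{L^2(B_R(x_i))}$, so that Theorem \ref{th:localcompactness} can be applied.

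First I would fix $r \in (0, R)$ and work on the ball $B_r(x_i)$. Since ${\sf X}_i$ is an $\RCD(K,N)$ space with $N \geq 2$, it satisfies a scale-invariant (local) Poincaré inequality with constants depending only on $K, N, R$ (via Bishop--Gromov and the doubling property); this is standard in the $\RCD$ setting. Applying the Poincaré inequality on $B_{R/2}(x_i)$, say, we get
\begin{equation*}
\int_{B_{R/2}(x_i)} \left| f_i - \fint_{B_{R/2}(x_i)} f_i \, {\rm d}\!\m_{X_i} \right|^2 {\rm d}\!\m_{X_i} \leq C(K, N, R) \int_{B_{R}(x_i)} |\nabla f_i|^2 \, {\rm d}\!\m_{X_i},
\end{equation*}
which is uniformly bounded by hypothesis. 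The issue is that the zero-average is taken over $B_R(x_i)$, not $B_{R/2}(x_i)$, but the difference of the two averages is itself controlled by the gradient via a telescoping/chaining argument (or simply by a Poincaré inequality on the larger ball), so $\sup_i \|f_i\|_{L^2(B_{R/2}(x_i))} < \infty$; iterating over a finite chain of balls of comparable radius covering $B_r(x_i)$ (using the doubling property to bound the number of balls uniformly) upgrades this to $\sup_i \|f_i\|_{L^2(B_r(x_i))} < \infty$ for each fixed $r < R$.

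Next I would extract the subsequence. By compactness of the sequence of pointed $\RCD(K,N)$ spaces (Gromov precompactness plus stability of $\RCD$), pass to a subsequence along which $x_{i_j} \to x$ in $Y$ and ${\sf X}_{i_j}$ converges in measured Gromov--Hausdorff sense to the limit ${\sf X}$. Now apply Theorem \ref{th:localcompactness} on $B_r(x_{i_j})$ for a fixed $r < R$: using the uniform $W^{1,2}(B_r(x_{i_j}))$ bound just obtained, there is a further subsequence and a function $f \in W^{1,2}(B_r(x))$ with $f_{i_j} \to f$ strongly in $L^2$ on $B_r(x)$. A diagonal argument over an increasing sequence $r_k \uparrow R$, combined with the lower semicontinuity of the gradient $L^2$-norm from Theorem \ref{th:localcompactness}, produces $f \in W^{1,2}(B_R(x))$. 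Finally, Theorem \ref{th:localharmoniccompactness} (applied on each $B_{r_k}(x_{i_j})$, whose hypotheses — harmonicity on $B_{r_k}(x_{i_j})$, strong $L^2$-convergence, uniform $W^{1,2}$-bound — are exactly what we have arranged) gives that $f$ is harmonic on $B_{r_k}(x)$ for every $k$, hence on $B_R(x)$, and moreover that $f_{i_j}|_{B_s(x_{i_j})} \to f|_{B_s(x)}$ strongly in $W^{1,2}$ on $B_s(x)$ for every $s < r_k$, hence for every $s \in (0, R)$; along a final diagonal subsequence this holds simultaneously for all such $s$.

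The main obstacle — really the only nontrivial point — is the passage from the gradient bound plus the $B_R$-zero-average to a genuine uniform $L^2$-bound on each interior ball $B_r(x_i)$, which requires invoking the uniform (local) Poincaré inequality and doubling property valid on $\RCD(K,N)$ spaces and handling the mismatch between the averaging ball and the balls on which Poincaré is applied; everything after that is an invocation of the already-stated compactness theorems together with a routine diagonal extraction.
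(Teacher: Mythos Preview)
Your proposal is correct and follows the same overall scheme as the paper (Poincar\'e-type control $\Rightarrow$ Theorem~\ref{th:localcompactness} $\Rightarrow$ Theorem~\ref{th:localharmoniccompactness}), but you take a longer route than necessary. The paper invokes the Sobolev--Poincar\'e inequality on the \emph{full} ball $B_R(x_i)$ (with the same ball on both sides; this follows from Rajala's local Poincar\'e together with Haj\l asz--Koskela and Bishop--Gromov), yielding directly
\[
\Bigl(\, -\!\!\!\!\!\!\int_{B_R(x_i)}|f_i|^{2^*}\,\de\!\m_{X_i}\Bigr)^{1/2^*}\le C(K,N,R)\Bigl(\, -\!\!\!\!\!\!\int_{B_R(x_i)}|\nabla f_i|^2\,\de\!\m_{X_i}\Bigr)^{1/2},
\]
hence a uniform $L^2$-bound on all of $B_R(x_i)$. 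Theorem~\ref{th:localcompactness} is then applied once, on $B_R$, and Theorem~\ref{th:localharmoniccompactness} immediately gives harmonicity of the limit on $B_R(x)$ and strong $W^{1,2}$ convergence on every $B_r(x)$, $r<R$, with no diagonal extraction.

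Your telescoping/chaining on sub-balls and the subsequent diagonal over $r_k\uparrow R$ are valid (since in $\RCD(K,N)$ the weak Poincar\'e self-improves to a same-ball inequality, which is precisely what your parenthetical ``Poincar\'e on the larger ball'' tacitly uses), but they are unnecessary once you realize that the $L^2$-bound is available on $B_R$ itself. The cleanest fix to your write-up is simply to replace the first paragraph by the single Sobolev--Poincar\'e estimate on $B_R(x_i)$ and delete the chaining and diagonal steps.
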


\begin{proof} Since ${\sf X}_i$ GH converges to $\sf X$ we can extract a subsequence $(i_j)_j$ such that  $(x_{i_j})$ converges to a point $x\in X$. 

Recall the Sobolev inequality 
\[\left(
-\!\!\!\!\!\!\int_{B_R(x_i)} \left| f- 
-\!\!\!\!\!\!\int_{B_R(x_i)} \!\!f_i \de\!\m_{X_i}\right|^{2^*}\de\!\m_{X_i}\right)^{\frac{1}{2^*}} \!\!\!\leq
C\left(-\!\!\!\!\!\!\!\int_{B_R(x_i)} |\nabla f_i |^2 \de\!\m_{X_i}\right)^{\frac{1}{2}}
\] where $C=C(K,N,2^*, R)>0$, $2^*= \frac{2N}{N-2}<\infty$ for $N>2$, and $2^*$ is any number in $(2, \infty)$ for $N=2$. In our setting the inequality follows from the local Poincar\'e inequality \cite[Theorem 1]{rajala2},  \cite[Theorem 5.1]{koskela} and the Bishop-Gromov estimate on $\RCD$ spaces.

Together with $\int_{B_R(x_i)}f_i \de\!\m_{X_i}=0$ and $\sup_i\left\| |\nabla f_i|\right\|_{L^2(\m_{X_i})}<\infty$ we have that the assumptions of Theorem \ref{th:localcompactness} hold. Thus, after extracting another subsequence, $f_{i_j}$ converges $L^2$ strongly to $f\in W^{1,2}(B_R(x))$ on $B_R(x)$. Hence, by Theorem \ref{th:localharmoniccompactness} $W^{1,2}$-strong convergence in $B_r(x)$ follows. The harmonicity of $f$ is a direct consequence of \cite[Theorem 4.4]{ambrosio_honda_local} (or Lemma \ref{lem:div}).
\end{proof}

\subsection{Convergence of vector fields}
We assume familiarity with the $L^2$-tangent and cotangent module $L^2(T \sf X)$ and $L^2(T^\ast\sf X)$ associated to a metric measure space $\sf X$. For details we refer to \cite{giglinonsmooth}. 
If $\sf X$ comes from a smooth Riemannian 
manifold endowed with a smooth reference
measure, then the tangent module can be canonically identified with the space
of $L^2$-vector fields. 

If $\sf X$ is infinitesimal Hilbertian, then $L^2(T^*\sf X)$ and $L^2(T\sf X)$ are Hilbert modules and $L^2(T^*{\sf X})\simeq L^2(T{\sf X})$. Let $\sharp: L^2(T{\sf X})\rightarrow L^2(T^* {\sf X})$ be the {\it musical isomorphism} that identifies the tangent with the cotangent module. 

\smallskip
The following definitions and results are taken from \cite{ast}.
\begin{definition}
A sequence of vector fields $V_i\in L^2(T {\sf X}_i)$ weakly converges to $V\in L^2(T{\sf X})$ in $L^2$ if  $\sup_{i\in \mathbb N} \int |V_i|^2 \, {\rm d}\!\m_{X_i}<\infty$ and $\langle V_i, \nabla h_i\rangle \m_{X_i}$ weakly converges to $\langle V, \nabla h\rangle \m_X$ whenever $h_i\in W^{1,2}({\sf X}_i)$ strongly converges in $W^{1,2}$ to $h\in W^{1,2}({\sf X})$ and $\sup_{i\in \mathbb N} \left\| |\nabla h_i|\right\|_{L^\infty(\m_{X_i})}<\infty$. 

We say that  $V_i\in L^2(T{\sf X}_i)$ strongly converges in $L^2$ to $V\in L^2(T{\sf X})$ if, in addition, $\lim_{i\rightarrow \infty}\int_{X_i}|V_i|^2\,{\rm d}\!\m_{X_i}= \int_X |V|^2 \, {\rm d}\!\m_X$. 
\end{definition}

\begin{proposition}\label{prop-L2weakV} 
Let ${\sf X}_i$ be $\RCD(K,N)$ spaces that converge in measured Gromov-Hausdorff sense to an $\RCD(K,N)$ space $\sf X$.
Let $V_i\in L^2(T{\sf X}_i)$ be vector fields such that $\sup_{i\in \mathbb N} \int_{X_i} |V_i|^2 {\rm d}\!\m_{X_i}< \infty.$ Then, there exists a subsequence $V_{i_j}$ of $V_i$ and $V\in L^2(T{\sf X})$ such that $V_{i_j}$ weakly converge  in $L^2$ to $V$. 
\end{proposition}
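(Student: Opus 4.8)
The plan is to reduce the statement to a weak-compactness argument for the sequence of scalar measures obtained by pairing $V_i$ with test gradients, and then to produce the limit vector field $V$ via a Riesz-type representation inside the Hilbert module $L^2(T{\sf X})$. First I would fix a countable family $\mathcal{D}=\{h^{(k)}\}_{k\in\mathbb N}$ that is dense in $W^{1,2}({\sf X})$ in the strong $W^{1,2}$-topology and consists of functions in $\Test F({\sf X})$ (so that each $h^{(k)}$ has bounded gradient); using Theorem~\ref{th:mosco}(2) in the sharp Lipschitz form, for each $k$ choose $h^{(k)}_i\in \lip((X_i,\de_i))\cap W^{1,2}({\sf X}_i)$ strongly convergent in $W^{1,2}$ to $h^{(k)}$ with $\sup_i\||\nabla h^{(k)}_i|\|_{L^\infty}<\infty$. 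For each fixed $k$, the signed measures $\mu^{(k)}_i:=\langle V_i,\nabla h^{(k)}_i\rangle\,\m_{X_i}$ have uniformly bounded total variation (Cauchy--Schwarz together with $\sup_i\int|V_i|^2\,\de\m_{X_i}<\infty$ and the uniform $L^\infty$ gradient bound), so by weak-$\ast$ compactness in $\mathcal M_{loc}(Y)$ and a diagonal extraction over $k$ we get a single subsequence $(i_j)$ along which $\mu^{(k)}_{i_j}\rightharpoonup \mu^{(k)}$ for every $k$ simultaneously.

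Next I would identify the limiting functional. Define, for $h\in\mathcal D$, $\ell(h):=\mu^{(h)}(X)=\lim_j \int_{X_{i_j}}\langle V_{i_j},\nabla h_{i_j}\rangle\,\de\m_{X_{i_j}}$ (taking the total mass, which converges because $\m_{X_i}(X_i)=1$ and the measures are supported on a fixed compact set — more carefully one tests against a fixed cutoff equal to $1$ on a neighbourhood of the supports). The bound $|\ell(h)|\le \big(\liminf_j\int|V_{i_j}|^2\big)^{1/2}\,\||\nabla h|\|_{L^2(\m_X)}$ (using lower semicontinuity of the $L^2$-norm under weak convergence for $V_{i_j}$ as $L^2$-functions paired against the strongly convergent $\nabla h_{i_j}$, and strong $L^2$-convergence of $|\nabla h^{(k)}_i|$) shows $\ell$ is a bounded linear functional on the subspace $\{\nabla h : h\in W^{1,2}({\sf X})\}\subset L^2(T^\ast{\sf X})$ with norm at most $S:=\sup_i\|V_i\|_{L^2(T{\sf X}_i)}$. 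Since ${\sf X}$ is infinitesimally Hilbertian, $L^2(T^\ast{\sf X})$ is a Hilbert module and is generated (as a module) by differentials of $W^{1,2}$ functions; extending $\ell$ by module-linearity and density and invoking the Riesz representation in the Hilbert module $L^2(T{\sf X})\simeq L^2(T^\ast{\sf X})$, I obtain $V\in L^2(T{\sf X})$ with $\|V\|_{L^2(T{\sf X})}\le S$ and $\int\langle V,\nabla h\rangle\,\de\m_X=\ell(h)$ for all $h\in\mathcal D$.

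Finally I would upgrade from the dense family $\mathcal D$ to all admissible test sequences: given arbitrary $h_i\in W^{1,2}({\sf X}_i)$ strongly convergent in $W^{1,2}$ to $h$ with $\sup_i\||\nabla h_i|\|_{L^\infty}<\infty$, approximate $h$ in $W^{1,2}({\sf X})$ by $h^{(k)}\in\mathcal D$, use the definition of strong $W^{1,2}$-convergence together with a standard diagonal/recovery argument (and the uniform $L^\infty$ bounds, which pass to suitable truncations) to control the error $\langle V_{i_j},\nabla(h_{i_j}-h^{(k)}_{i_j})\rangle$ in $L^1$ uniformly in $j$, and pass to the limit first in $j$ and then in $k$; this yields $\langle V_{i_j},\nabla h_{i_j}\rangle\m_{X_{i_j}}\rightharpoonup\langle V,\nabla h\rangle\m_X$, i.e.\ weak $L^2$-convergence of $V_{i_j}$ to $V$ in the sense of the definition. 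The main obstacle I expect is this last density/uniformity step: one must be careful that the $L^\infty$-gradient bounds and the weak-$\ast$ convergence interact correctly when swapping the two limits, and that the module-linear extension of $\ell$ in the previous paragraph genuinely captures the pairing against every strongly convergent gradient sequence and not merely against the fixed recovery sequences $h^{(k)}_i$ — this is precisely where the Hilbert module structure (as opposed to working functional-by-functional) does the essential work. As noted in the excerpt, all of this is carried out in detail in \cite{ast}, so it suffices to quote it, but the sketch above is the argument.
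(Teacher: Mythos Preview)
The paper does not prove this proposition; it simply records it as a result taken from \cite{ast}. Your overall architecture --- fix a countable dense family of test functions, choose recovery sequences, extract a diagonal subsequence along which the paired measures converge weak-$*$, then represent the limiting functional by an element of $L^2(T{\sf X})$ --- is the standard one and is essentially what is done in \cite{ast}.

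There is, however, a genuine gap in your Riesz step. You define $\ell(h):=\mu^{(h)}(X)$, the \emph{total mass} of the limit measure, and then speak of ``extending $\ell$ by module-linearity''. But $\ell$ as you have written it is only an $\mathbb{R}$-linear functional on the subspace $\{\nabla h:h\in W^{1,2}({\sf X})\}$, and this subspace is in general a proper closed $\mathbb{R}$-linear subspace of $L^2(T^\ast{\sf X})$ (think of a flat torus, where harmonic $1$-forms are orthogonal to all exact forms). An $\mathbb{R}$-linear functional on gradients does not have a canonical module-linear extension; Hahn--Banach gives many extensions, and Riesz on the gradient subspace only produces the gradient component of $V$. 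More importantly, even for the $V$ you obtain you only know $\int_X\langle V,\nabla h^{(k)}\rangle\,\de\m_X=\mu^{(k)}(X)$, whereas the definition of weak $L^2$-convergence of vector fields demands the \emph{measure} identity $\langle V,\nabla h^{(k)}\rangle\,\m_X=\mu^{(k)}$, which you never verify. Consequently the final claim $\langle V_{i_j},\nabla h_{i_j}\rangle\m_{X_{i_j}}\rightharpoonup\langle V,\nabla h\rangle\m_X$ does not follow from what you have set up.

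The fix is to use the full limit measures rather than just their masses: for $g\in C_{bs}(Y)$ (or, after a further diagonal argument, for $g$ in a countable $L^2$-dense family of bounded Lipschitz functions on $X$) set
\[
L\big(g\,\nabla h^{(k)}\big):=\int_Y g\,\de\mu^{(k)}=\lim_{j}\int_{X_{i_j}} g\,\langle V_{i_j},\nabla h^{(k)}_{i_j}\rangle\,\de\m_{X_{i_j}}.
\]
One then checks that $L$ is well-defined on finite sums $\sum_k g_k\nabla h^{(k)}$ (this is where the uniform $L^2$-bound on $V_{i_j}$ and strong $W^{1,2}$-convergence of the recovery sequences are used) and bounded by $S\cdot\|\,\cdot\,\|_{L^2(T^\ast{\sf X})}$. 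Since such sums are dense in $L^2(T^\ast{\sf X})$ (the module is generated by differentials), Riesz for the Hilbert space $L^2(T^\ast{\sf X})$ now yields a unique $V$ with $\langle V,\nabla h^{(k)}\rangle\,\m_X=\mu^{(k)}$ as measures, and your final density step goes through.
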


\subsection{Harmonic vector fields}\label{harmonic vec} 

Let $\sf X$ be an $\RCD(K,N)$ space. 
The space of Sobolev vector fields, denoted as $W_{C}^{1,2}(T\sf X)$, is defined as the space of vector fields $V\in L^2(T \sf X)$ such that its covariant derivative in the sense of \cite{giglinonsmooth}, $\nabla V\in L^{2}(T^{\otimes 2}\sf X)$, exists. Moreover on $W_{C}^{1,2}(T\sf X)$, we define the energy $\mathcal E(V)=\int_X |\nabla V|^2_{HS} {\rm d}\!\m_X$, where $|\nabla V|_{HS}$ is the Hilbert-Schmidt norm of $\nabla V$.

The space of vector fields with finite energy $\mathcal E(V)$ is denoted as $W_{C}^{1,2}(T{\sf X})$. The $W_{C}^{1,2}(T{\sf X})$-closure of the linear span of vector fields of the form $g\nabla f$ with $g,f\in \Test F({\sf X})$ is denoted by $H^{1,2}_C(T{\sf X})$. 

We also denote by $D(\mathrm{div})$ the space of $L^2$-vector fields $V \in L^2(T{\sf X})$ satisfying that there exists a unique $f \in L^2(\m_X)$, denoted by $\mathrm{div} (V)$, such that
$$\int_X\langle \nabla \phi, V\rangle \de\!\m_X=-\int_X\phi f\de\!\m_X$$
holds for any $\phi \in W^{1,2}({\sf X})$.

Similarly, the space $W_{ d}^{1,2}(T^\ast {\sf X})$ is the space of $1$-forms $\omega \in L^2(T{\sf X})$ such that the exterior differential in the sense of \cite{giglinonsmooth} , ${ d} \omega \in L^2(\Lambda^2 T^\ast {\sf X})$,  exists. In the same manner,  we define $H_{ d}^{1,2}(T^\ast {\sf X}) \subset W_{ d}^{1,2}(T^\ast {\sf X})$ as the $W^{1,2}_{ d}$-closure of the set of all the linear combinations of $1$-forms of the form $f { d} g$ for $f, g \in \Test F({\sf X})$. It follows from \cite[(3.5.2)]{giglinonsmooth} that 
$$d\eta(V_0, V_1)=(\nabla_{V_0}\eta)(V_1)-(\nabla_{V_1}\eta)(V_0)$$
holds for all $V_i \in L^2(T{\sf X}), \eta=fdg$ for $f, g \in \Test F({\sf X})$. In particular
$H^{1,2}_C(T^*{\sf X})\subset H^{1,2}_{d}(T^*{\sf X})$ holds with $|d \omega| \le C|\nabla \omega|$ for $\m_X$-a.e., thus
\begin{equation}\label{dc ineq}
\|d \omega\|_{L^{2}}\le C\|\nabla \omega\|_{L^{2}},
\end{equation} where $C>0$ is a universal positive constant and $H^{1,2}_C(T^*{\sf X})$ is also a Sobolev space for $1$-forms, which is isomorphic to $H^{1,2}_C(T{\sf X})$ via the musical isomorphism $T{\sf X}\cong T^*{\sf X}$ (similar notations, including $W^{1,2}_d(T{\sf X})$, will be used immediately below).

The Hodge Laplacian $\Delta_H$ for vector fields in $ W^{1,2}_{C}(T{\sf X})$ is defined analogously to the Laplace operator for functions,  replacing the Cheeger energy with the Hodge energy \cite{giglinonsmooth}. Namely, letting $W^{1,2}_H(T{\sf X})=W^{1,2}_{\de}(T{\sf X}) \cap D(\mathrm{div})$ with $\|V\|_{W^{1,2}_H}^2=\|V\|_{W^{1,2}_d}^2+\|\mathrm{div}(V)\|_{L^2}^2$, we denote by $H^{1,2}_H(T{\sf X})$ the $W^{1,2}_H$-closure of the space of test vector fields, and then
a vector field $V$ is called harmonic if $V \in H^{1,2}_H(T{\sf X})$ holds with $\mathrm{div}(V)=0$ and $dV^{\flat}=0$.

Note that our working definition of a harmonic vector field is different from a traditional one,  equivalently, we mainly work on harmonic $1$-forms (namely,
applying the $\flat$ operator to a harmonic vector field $V$ we obtain a harmonic $1$-form $V^\flat$ that satisfies ${\rm d}^{\star}V^\flat=0$ and ${\rm d} V^{\flat}=0$.

In the context of a smooth Riemannian manifold ${\rm d}$ is the exterior derivative on forms and ${\rm d}^\star$ the corresponding dual operator with respect to the $L^2$-inner product).

It is worth mentioning that Bochner's inequality shows $H^{1,2}_H(T{\sf X}) \subset H^{1,2}_C(T{\sf X})$ with $\|V\|_{H^{1,2}_C}\le C(K)\|V\|_{H^{1,2}_H}$ \cite[Corollary 3.6.4]{giglinonsmooth}. Let us recall the following.
\begin{lemma}\label{h=c}
    If $\sf X$ is non-collapsed, namely $\m_X=\haus^N$, then $H^{1,2}_H(T^*{\sf X})=H_{C}^{1, 2}(T^*\sf X)$ with $|dV^{\flat}|+|\mathrm{div}(V)|\le C(N) |\nabla V|$ for $\m_X$-a.e. and {$\|V\|_{H^{1,2}_H}\le C(N)\|V\|_{H^{1,2}_C}$.}
\end{lemma}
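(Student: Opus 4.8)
\textbf{Proof plan for Lemma \ref{h=c}.} The inclusion $H^{1,2}_H(T^*{\sf X}) \subset H^{1,2}_C(T^*{\sf X})$ together with the bound $\|V\|_{H^{1,2}_C}\le C(K)\|V\|_{H^{1,2}_H}$ is already recorded above as a consequence of Bochner's inequality, so the content of the lemma is the reverse inclusion $H^{1,2}_C(T^*{\sf X}) \subset H^{1,2}_H(T^*{\sf X})$ together with the pointwise estimate $|dV^\flat| + |\operatorname{div}(V)| \le C(N)|\nabla V|$ for $\m_X$-a.e. point, in the non-collapsed case $\m_X = \haus^N$. The plan is to first establish the pointwise estimate on a dense class of vector fields, then promote it to all of $H^{1,2}_C$ by closure, and finally use it to identify the two Sobolev spaces.

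First I would prove the pointwise estimate on test vector fields $V = \sum_j g_j \nabla f_j$ with $f_j, g_j \in \Test F({\sf X})$. For such $V$ one has the algebraic identities $dV^\flat(V_0,V_1) = (\nabla_{V_0}V^\flat)(V_1) - (\nabla_{V_1}V^\flat)(V_0)$ (this is exactly equation \eqref{dc ineq}'s predecessor, $(3.5.2)$ in \cite{giglinonsmooth}, already quoted) and $\operatorname{div}(V) = \operatorname{tr}(\nabla V)$. Both the antisymmetrization $\nabla V \mapsto dV^\flat$ and the trace $\nabla V \mapsto \operatorname{tr}(\nabla V)$ are, at $\m_X$-a.e.\ point $x$, bounded linear maps from $(T^*_x{\sf X})^{\otimes 2}$ to $\Lambda^2 T^*_x{\sf X}$ and to $\R$ respectively, with operator norm controlled by a dimensional constant \emph{provided} the pointwise dimension of the tangent module is $\le N$. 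This is where non-collapsedness enters: by the result of Brué--Semola (and Gigli--De Philippis) on constancy of the dimension, together with the fact that on a non-collapsed $\RCD(K,N)$ space $\m_X=\haus^N$ forces the essential dimension to equal $N$, the tangent module $L^2(T{\sf X})$ has constant dimension $N$; hence $|\operatorname{tr} A| \le \sqrt{N}\,|A|_{HS}$ and $|A - A^T|_{HS} \le 2|A|_{HS}$ pointwise. Applying this to $A = \nabla V$ yields $|\operatorname{div}(V)| \le \sqrt{N}|\nabla V|$ and $|dV^\flat| \le C(N)|\nabla V|$ $\m_X$-a.e., for $V$ in the test class.

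Second, I would pass to the closure. Given $V \in H^{1,2}_C(T^*{\sf X})$, take test vector fields $V_k \to V$ in $W^{1,2}_C$; then $\nabla V_k \to \nabla V$ in $L^2(T^{\otimes 2}{\sf X})$, so along a subsequence $\nabla V_k \to \nabla V$ $\m_X$-a.e., and the pointwise bounds survive the limit, giving $|dV^\flat| + |\operatorname{div}(V)| \le C(N)|\nabla V|$ for $\m_X$-a.e.; moreover $dV_k^\flat \to dV^\flat$ and $\operatorname{div}(V_k)\to\operatorname{div}(V)$ in $L^2$, which shows $V \in W^{1,2}_H(T^*{\sf X})$ and that the $W^{1,2}_H$-norm is continuous along the approximating sequence. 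Since each $V_k$ is a test vector field, hence lies in $H^{1,2}_H(T^*{\sf X})$ by definition, and $V_k \to V$ in $W^{1,2}_H$-norm, we conclude $V \in H^{1,2}_H(T^*{\sf X})$; the inequality $\|V\|_{H^{1,2}_H} \le C(N)\|V\|_{H^{1,2}_C}$ follows by combining $\|dV^\flat\|_{L^2}, \|\operatorname{div}(V)\|_{L^2} \le C(N)\|\nabla V\|_{L^2}$ with $\|V\|_{L^2}\le\|V\|_{H^{1,2}_C}$.

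The main obstacle is the pointwise dimensional bound on the trace and antisymmetrization operators: one must know that the $\m_X$-a.e.\ pointwise dimension of the tangent module does not exceed $N$, which is precisely the non-collapsing hypothesis $\m_X=\haus^N$ combined with the constancy-of-dimension theorem for $\RCD(K,N)$ spaces. Everything else — the algebraic identities for $d$ and $\operatorname{div}$ in terms of $\nabla$, and the stability of pointwise inequalities under $L^2$-convergence of the covariant derivatives along the density argument — is routine once that dimensional fact is in hand. (One should also double-check that the closure is taken in the right space: $H^{1,2}_C(T^*{\sf X})$ is by definition the $W^{1,2}_C$-closure of the test $1$-forms, and $H^{1,2}_H(T^*{\sf X})$ the $W^{1,2}_H$-closure of the same test class, so the argument only needs that $W^{1,2}_C$-convergence of test vector fields implies $W^{1,2}_H$-convergence, which is exactly what the pointwise bounds deliver.)
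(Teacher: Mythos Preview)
Your overall strategy matches the paper's: reduce to test vector fields, use that $d$ and $\operatorname{div}$ are controlled by $\nabla$ via the formulae $dV^\flat(V_0,V_1)=(\nabla_{V_0}V^\flat)(V_1)-(\nabla_{V_1}V^\flat)(V_0)$ and $\operatorname{div}(V)=\operatorname{tr}(\nabla V)$, then close up. The paper's proof is in fact just two lines: it invokes \cite[Proposition~3.2]{Han19} for $\operatorname{div}(V)=\operatorname{tr}(\nabla V)$ on test vector fields, combines this with \eqref{dc ineq}, and is done.

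There is, however, a genuine gap in your justification. You present $\operatorname{div}(V)=\operatorname{tr}(\nabla V)$ as an ``algebraic identity'' on the same footing as the antisymmetrization formula for $d$, and then invoke non-collapsedness only to bound $|\operatorname{tr}A|\le\sqrt{N}|A|_{HS}$ via the pointwise dimension. This is backwards. The identity $\operatorname{div}(V)=\operatorname{tr}(\nabla V)$ is \emph{not} algebraic: $\operatorname{div}$ is defined through integration by parts against the reference measure $\m_X$, whereas $\operatorname{tr}(\nabla V)$ is a purely metric quantity. On a weighted manifold $(M,g,e^{-f}\vol_g)$, for instance, one has $\operatorname{div}_\m(V)=\operatorname{tr}(\nabla V)-\langle\nabla f,V\rangle$, and such spaces can be $\RCD(K,N)$ with $N>\dim M$. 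That $\operatorname{div}(V)=\operatorname{tr}(\nabla V)$ holds is precisely the content of Han's result, and it is \emph{here} that the hypothesis $\m_X=\haus^N$ is genuinely used. By contrast, the dimensional bound $|\operatorname{tr}A|\le\sqrt{N}|A|_{HS}$ needs only that the essential dimension is at most $N$, which follows from $\RCD(K,N)$ alone (Bru\`e--Semola) without any non-collapsing assumption. So your argument as written does not actually establish the divergence bound; you need to cite or reprove Han's identity, and that is where $\m_X=\haus^N$ enters.
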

\begin{proof}
    It follows from \cite[Proposition 3.2]{Han19} that for any test vector field $V$, we have $\mathrm{div}(V)=\mathrm{tr}(\nabla V)$. From this with (\ref{dc ineq}), we have the conclusion. 
\end{proof}

Finally let us mention that local analogues, e.g. $W_{ d,loc}^{1,2}$, are also well-defined as in the case of functions.

The following is a direct consequence of Theorem 4.1 in \cite{Honda15}, {where although the paper deals with Ricci limit spaces, the same proof works in this setting}. For the convenience of the reader, we provide a proof.

\begin{lemma}\label{lem:div}
Let ${\sf X}_i$ and ${\sf X}$ be as in the previous subsection and, let $(V_i)$ be a sequence of vector fields on ${X_i}$ with $V_i \in D(\mathrm{div})$ and $\sup_i\|\mathrm{div}(V_i)\|_{L^2}<\infty$ that converges weakly in $L^2$ to a vector field $V {\in L^2(T \sf X)}$ on ${X}$. 
Then $V \in D (\mathrm{div})$ and $\mathrm{div} (V_i)$ $L^2$-weakly converge to $\Div (V)$. 
\end{lemma}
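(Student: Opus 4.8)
The plan is to test the divergence of the limit field $V$ directly against Lipschitz functions with compact support and to pass to the limit in the defining identity of $\mathrm{div}(V_i)$ using the convergence machinery recalled above. First I would fix $\phi \in \lip_c(X)$; by Lemma \ref{lem:bumpfct} (applied locally, covering $\supp \phi$ by finitely many balls and using a partition of unity, or directly if $\supp\phi$ sits in a single ball) there is a sequence $\phi_i \in \lip(X_i)$ with $\sup_i \lip \phi_i < \infty$, uniformly bounded supports, and $\phi_i \to \phi$ strongly in $W^{1,2}$. For each $i$ the definition of $\mathrm{div}(V_i)$ gives
\begin{equation*}
\int_{X_i} \langle \nabla \phi_i, V_i\rangle \,\de\!\m_{X_i} = -\int_{X_i} \phi_i\, \mathrm{div}(V_i)\,\de\!\m_{X_i}.
\end{equation*}

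Next I would pass to the limit on both sides. On the left-hand side, since $\phi_i \to \phi$ strongly in $W^{1,2}$ with $\sup_i \||\nabla \phi_i|\|_{L^\infty} < \infty$, the very definition of weak $L^2$-convergence of the vector fields $V_i \rightharpoonup V$ yields that $\langle V_i, \nabla \phi_i\rangle \m_{X_i}$ converges weakly to $\langle V, \nabla \phi\rangle \m_X$ in $\Meas_{loc}(Y)$; testing this weak convergence against the constant function $1$ (legitimate because all supports are contained in a fixed bounded set, after enlarging $Y$-neighbourhoods if necessary, and $\m_{X_i}(X_i)=1$) gives convergence of the left-hand integrals. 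On the right-hand side, $\sup_i \|\mathrm{div}(V_i)\|_{L^2} < \infty$ implies, up to a subsequence, that $\mathrm{div}(V_i)$ converges weakly in $L^2$ to some $f \in L^2(\m_X)$; combined with the strong $L^2$-convergence $\phi_i \to \phi$ this gives $\int \phi_i \,\mathrm{div}(V_i)\,\de\!\m_{X_i} \to \int \phi f \,\de\!\m_X$. Hence $\int_X \langle \nabla \phi, V\rangle\,\de\!\m_X = -\int_X \phi f\,\de\!\m_X$ for all $\phi \in \lip_c(X)$, and since $\lip_c(X)$ is dense in $W^{1,2}(\sf X)$ (using that $\sf X$ is compact, so $W^{1,2} = W^{1,2}_{loc}$) the identity extends to all $\phi \in W^{1,2}(\sf X)$. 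This shows $V \in D(\mathrm{div})$ with $\mathrm{div}(V) = f$, and in particular the weak $L^2$-limit $f$ is independent of the subsequence, so the full sequence $\mathrm{div}(V_i)$ converges weakly in $L^2$ to $\mathrm{div}(V)$.

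The main obstacle, and the only genuinely delicate point, is the passage to the limit in the left-hand side: one must produce recovery functions $\phi_i$ that simultaneously converge strongly in $W^{1,2}$, have uniformly bounded Lipschitz constants, and have uniformly bounded (indeed, eventually contained in a fixed compact subset of $Y$) supports, so that the pairing of $V_i$ against $\nabla \phi_i$ can be evaluated against the constant $1$ within the framework of weak convergence in $\Meas_{loc}(Y)$. Lemma \ref{lem:bumpfct} delivers exactly this for $\phi$ supported in a single ball; the general case follows by a finite partition-of-unity argument, which is routine. Everything else is a direct combination of the definitions of weak/strong $L^2$-convergence of functions and vector fields and the uniform bounds in the hypotheses.
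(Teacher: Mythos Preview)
Your proof is correct and follows essentially the same line as the paper's: produce a Lipschitz recovery sequence for the test function, pass to the limit in the duality identity using the definition of weak $L^2$-convergence of vector fields on the left and weak $L^2$-compactness of $\mathrm{div}(V_i)$ on the right, and identify the limit. The only cosmetic difference is that the paper obtains the recovery sequence directly from the ``moreover'' clause in Theorem~\ref{th:mosco}(2), which already gives a global Lipschitz approximant with uniformly bounded gradient for any Lipschitz $h\in W^{1,2}({\sf X})$; this spares you the partition-of-unity step and the worry about supports (which is in any case moot here, as the spaces are compact).
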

\begin{proof}

By Theorem \ref{th:mosco} for any  Lipschitz function $h \in W^{1,2}({\sf X})$  we can pick a sequence of Lipschitz functions $(h_i)_i$ in $W^{1,2}({\sf X}_i)$ with $\sup_i \left\| |\nabla h_i |\right\|_{L^\infty(\m_{X_i})}<\infty$ that strongly converges in $W^{1,2}$ to $h$. By definition of weak convergence of $(V_i)_i$ it follows that $\langle V_i, \nabla h_i\rangle \m_{X_i}$ converges weakly to $\langle V, \nabla h\rangle \m_X$. In particular 
\begin{equation}
\int_{X_i} \langle V_i, \nabla h_i \rangle \de\!\m_{X_i} \to \int_X \langle V, \nabla h \rangle \de\!\m_X.
\end{equation}
On the other hand, after passing to a subsequence, we can find the $L^2$-weak limit $f$ of $\mathrm{div}(V_i)$. Thus we have
\begin{equation}
\int_{X_i}\mathrm{div}(V_i) h_i \de\!\m_{X_i} \to \int_Xf h\de\!\m_X.
\end{equation}
Therefore
\begin{equation}
\int_X\langle V, \nabla h\rangle \de\!\m_X=-\int_X fh\de\!\m_X,
\end{equation}
which shows $V \in D(\mathrm{div})$ with $\mathrm{div}(V)=f$. This completes the proof.
\end{proof}

\begin{theorem}\label{thm-Honda}
If the vector fields  $V_i\in W^{1,2}_C(T{\sf X}_i)$ strongly converge  to $V {\in L^2(T \sf X)}$ in $L^2$ and $\sup_i \left\| \nabla V_i\right\|_{L^2}< \infty$, then $V\in W^{1,2}_C(T{\sf X})$ and $\nabla V_i$  weakly converges to $\nabla V$ in $L^2$. Moreover 
$$\liminf_{i\rightarrow \infty} \int_{X_i} |\nabla V_i|^2 \de\!\m_{X_i}\geq \int_{X}|\nabla V|^2\de\!\m_X.$$
\end{theorem}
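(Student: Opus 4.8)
\textbf{Proof strategy for Theorem \ref{thm-Honda}.}

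The plan is to reduce the statement to the known Mosco-type convergence and lower semicontinuity results for the Cheeger energy on functions, applied componentwise to the covariant derivative after pairing against test tensors. First I would recall that the covariant derivative $\nabla V$ of a Sobolev vector field is characterized through its action on pairs of test vector fields: for $W_0 = g_0 \nabla f_0$ and $W_1 = g_1 \nabla f_1$ with $f_j, g_j \in \Test F({\sf X})$ one has an explicit integration-by-parts identity expressing $\int_X \langle \nabla V, W_0 \otimes W_1\rangle\, \de\!\m_X$ in terms of $\langle V, \nabla f_j\rangle$, $\langle \nabla f_0, \nabla f_1\rangle$, the functions $g_j$, and the Laplacians $\Delta f_j$ (this is the standard defining formula for $\nabla V$ in \cite{giglinonsmooth}). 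The point is that every term on the right-hand side of that identity involves only \emph{functions} and their gradients/Laplacians, for which the convergence machinery of Subsection \ref{subsec:21} (Theorems \ref{th:mosco}, \ref{th:localharmoniccompactness}, Corollaries \ref{cor:weak-conv}, \ref{cor:pairing}) is available.

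Concretely, I would proceed as follows. Fix the target tensor $W_0 \otimes W_1$ on ${\sf X}$ built from test functions $f_j, g_j$ as above. Using Theorem \ref{th:mosco}(2) choose recovery sequences $f_{j,i}, g_{j,i} \in \Test F({\sf X}_i)$ (one must check the test-function class is preserved, or at least approximate within it; this is routine given the spectral/heat-flow characterizations and the bounds $\sup_i \||\nabla f_{j,i}|\|_{L^\infty} < \infty$, $\sup_i \|\Delta f_{j,i}\|_{L^2} < \infty$ obtainable by standard mollification via the heat semigroup — one uses that the heat flow commutes nicely with mGH convergence). Form $W_{j,i} = g_{j,i}\nabla f_{j,i} \in L^2(T{\sf X}_i)$; these converge strongly in $L^2$ to $W_j$. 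Since $V_i \to V$ strongly in $L^2$ and $\sup_i \|\nabla V_i\|_{L^2} < \infty$, passing to the defining identity on each ${\sf X}_i$,
\[
\int_{X_i} \langle \nabla V_i,\, W_{0,i}\otimes W_{1,i}\rangle\, \de\!\m_{X_i} = (\text{explicit functional of } V_i, f_{j,i}, g_{j,i}),
\]
and letting $i \to \infty$, every term on the right converges by Corollaries \ref{cor:weak-conv} and \ref{cor:pairing} together with strong $L^2$-convergence of $V_i$, $g_{j,i}$ and the bounds on $\Delta f_{j,i}$. Hence the left-hand side converges; and since $\sup_i \|\nabla V_i\|_{L^2} < \infty$, by weak compactness (the tensor-module analogue of Proposition \ref{prop-L2weakV}, which holds by the same Hilbert-module argument) a subsequence of $\nabla V_i$ converges weakly in $L^2(T^{\otimes 2}{\sf X})$ to some $T$; the computed limit shows $T$ satisfies precisely the defining identity for $\nabla V$. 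Therefore $V \in W^{1,2}_C(T{\sf X})$ with $\nabla V = T$, and since the limit is independent of the subsequence, the whole sequence $\nabla V_i$ converges weakly to $\nabla V$. The final lower-semicontinuity inequality $\liminf_i \int_{X_i}|\nabla V_i|^2 \geq \int_X |\nabla V|^2$ is then immediate from weak $L^2$-lower semicontinuity of the norm.

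The main obstacle I anticipate is the bookkeeping around the recovery sequences for test \emph{tensors}: one needs density of tensors of the form $\sum_k (g_{0,k}\nabla f_{0,k})\otimes(g_{1,k}\nabla f_{1,k})$ in $L^2(T^{\otimes 2}{\sf X})$ (true by definition of the tensor module, but requires that finite sums suffice and that the approximation can be transferred along the mGH sequence), and the verification that one can simultaneously arrange the $L^\infty$-gradient bounds and $L^2$-Laplacian bounds on ${\sf X}_i$. This is a technical but standard point, handled via the heat-flow regularization $f \mapsto h_t f$ and the known convergence of heat semigroups under mGH convergence (used already implicitly in the cited references \cite{ambrosio_honda, ambrosio_honda_local, gms_convergence}); everything else is a direct diagonal-subsequence argument combined with the lower semicontinuity already recorded in Theorem \ref{th:mosco}(1). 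Note that strong (rather than merely weak) $L^2$-convergence of $V_i$ is exactly what is needed to pass the product terms $\langle V_i, \nabla f_{j,i}\rangle$ to the limit against the uniformly-$L^\infty$-bounded gradients, so the hypothesis is used essentially.
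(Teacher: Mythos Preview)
Your proposal is correct and follows essentially the same approach as the paper: the paper simply cites \cite[Theorem 6.7]{Honda-PDE} and \cite[Theorem 10.3]{ambrosio_honda}, observing that the covariant derivative $\nabla V$ is defined by the same duality formula as the Hessian of a function (Definitions 3.3.1 and 3.4.1 in \cite{giglinonsmooth}), so the Ambrosio--Honda argument for weak convergence of Hessians adapts verbatim. What you have written is precisely an outline of how that adaptation goes---pairing against test tensors built from test functions, constructing recovery sequences along the mGH sequence, and passing to the limit in the defining identity using the function-level convergence results---so there is no substantive difference in strategy.
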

\begin{proof}
If the spaces  ${\sf X}_i$ emerge as non-collapsed Gromov Hausdorff limits of Riemannian manifolds the theorem is proven by Honda in \cite[Theorem 6.7]{Honda-PDE}. If ${\sf X}_i$ are $\RCD$ spaces, the theorem still holds by adapting the proof, given by Ambrosio and Honda in \cite[Theorem 10.3]{ambrosio_honda}, for weak convergence of the Hessian operators of a sequence of functions $f_i$ that converges strongly in $W^{1,2}$.  For this one observes that $\nabla V$ for a vector field $V$ in $W_C^{1,2}$ is defined by the same duality formula as the Hessian operator of a function $f$ in $W^{2,2}$ (Definition 3.3.1 and Definition 3.4.1 in \cite{giglinonsmooth}).
\end{proof}

Similarly we have the following (see also \cite{Honda-PDE}).
\begin{theorem}\label{th:stablity d}
If the $1$-forms  $\omega_i\in W^{1,2}_d(T^*{\sf X}_i)$ strongly converge to $\omega {\in L^2(T^* \sf X)}$ in $L^2$ and $\sup_i \left\| d\omega_i\right\|_{L^2}< \infty$, then $\omega\in W^{1,2}_d(T^*{\sf X})$ and $d \omega_i$  weakly converges to $d\omega$ in $L^2$. Moreover 
$$\liminf_{i\rightarrow \infty} \int_{X_i} |d\omega_i|^2 \de\!\m_{X_i}\geq \int_{X}|d\omega|^2\de\!\m_X.$$
\end{theorem}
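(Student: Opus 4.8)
The plan is to mimic the structure of the proof of Theorem \ref{thm-Honda} (the $W^{1,2}_C$ statement) and simply replace the covariant-derivative object $\nabla V$ by the exterior-differential object $d\omega$, exploiting that both are defined by a duality formula against test objects, in the spirit of Definition 3.3.1 and the construction of $W^{1,2}_d$ in \cite{giglinonsmooth}. First I would recall the variational/duality characterization: for a $1$-form $\omega\in L^2(T^*{\sf X})$, the condition $\omega\in W^{1,2}_d(T^*{\sf X})$ with $d\omega=\Omega\in L^2(\Lambda^2T^*{\sf X})$ is equivalent to an integration-by-parts identity testing $\Omega$ against $2$-forms of the type $h_0\,dh_1\wedge dh_2$ with $h_j\in \Test F({\sf X})$, where the right-hand side involves only $\omega$ paired with gradients and Laplacians of the $h_j$'s. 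Concretely one uses that for such test $2$-forms the pairing $\int \langle d\omega, h_0\, dh_1\wedge dh_2\rangle\,\de\m$ can be rewritten, via the Leibniz rule for $d$ and integration by parts, as an expression that is continuous with respect to: (a) strong $L^2$-convergence of $\omega_i$, and (b) the strong $W^{1,2}$-convergence of the $h_j$'s together with their uniform Lipschitz and Laplacian bounds guaranteed by Theorem \ref{th:mosco} and Lemma \ref{lem:bumpfct}.

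The key steps, in order, are as follows. \textbf{Step 1: uniform bound and weak limit.} From $\sup_i\|d\omega_i\|_{L^2}<\infty$ we extract (using the analogue of Proposition \ref{prop-L2weakV} for $2$-forms, i.e. weak $L^2$-compactness of bounded sequences of elements of the Hilbert module $L^2(\Lambda^2T^*{\sf X}_i)$) a subsequence along which $d\omega_i$ converges weakly in $L^2$ to some $\Omega\in L^2(\Lambda^2T^*{\sf X})$. \textbf{Step 2: identify $\Omega=d\omega$.} Fix $h_0,h_1,h_2\in\Test F({\sf X})$; by Theorem \ref{th:mosco}(2) pick recovery sequences $h_{j,i}\in \Test F({\sf X}_i)$ strongly $W^{1,2}$-convergent with $\sup_i\|\,|\nabla h_{j,i}|\,\|_{L^\infty}<\infty$ and $\sup_i\|\Delta_{X_i}h_{j,i}\|_{L^2}<\infty$ (the latter available because recovery sequences for test functions can be built from mollified heat-flow approximations, cf. the discussion around Theorem \ref{th:mosco} and \cite{ambrosio_honda}). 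Write the defining identity for $d\omega_i$ on ${\sf X}_i$ tested against $h_{0,i}\,dh_{1,i}\wedge dh_{2,i}$, pass to the limit $i\to\infty$ term by term — the $\omega_i$-side converges because $\omega_i\to\omega$ strongly in $L^2$ while the test side converges strongly, and the $d\omega_i$-side converges by weak $L^2$-convergence against the strongly convergent test $2$-forms — and conclude that the limit identity is exactly the defining identity for $d\omega=\Omega$ on ${\sf X}$. Since such test $2$-forms are dense in $L^2(\Lambda^2T^*{\sf X})$, this shows $\omega\in W^{1,2}_d(T^*{\sf X})$ with $d\omega=\Omega$, and in particular the weak limit does not depend on the subsequence, so the full sequence $d\omega_i$ converges weakly. \textbf{Step 3: lower semicontinuity.} The inequality $\liminf_i\int_{X_i}|d\omega_i|^2\,\de\m_{X_i}\ge\int_X|d\omega|^2\,\de\m_X$ is the standard weak-lower-semicontinuity of the $L^2$-norm under weak convergence in the varying Hilbert modules, exactly as in the last line of the proof of Theorem \ref{thm-Honda}.

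The main obstacle I expect is Step 2: making rigorous that the integration-by-parts identity defining $d\omega$ pairs a strongly $L^2$-convergent $1$-form against a strongly convergent bundle of test data, i.e. verifying that one can choose the recovery sequences $h_{j,i}$ so that \emph{all} relevant quantities ($h_{j,i}$, $|\nabla h_{j,i}|$, $\Delta_{X_i}h_{j,i}$, and the scalar products $\langle\nabla h_{1,i},\nabla h_{2,i}\rangle$, etc.) converge strongly in $L^2$ with uniform $L^\infty$ control — this is precisely the kind of bookkeeping carried out in \cite{ambrosio_honda, ambrosio_honda_local} for Hessians, and the point of the remark in Theorem \ref{thm-Honda} is that it transfers verbatim because $d$, like $\Hess$ and $\nabla$ on vector fields, is governed by the same duality formula (see \cite[(3.5.2)]{giglinonsmooth}). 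Granting that transfer, everything else is routine, so the proof amounts to: \emph{``this follows by the same argument as Theorem \ref{thm-Honda}, replacing $\nabla V$ by $d\omega$ and using the duality formula \cite[(3.5.2)]{giglinonsmooth}, together with the weak $L^2$-compactness of $2$-forms and the lower semicontinuity of the module norm.''}
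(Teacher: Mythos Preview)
Your proposal is correct and is essentially the same approach as the paper's: the paper does not give a separate proof of Theorem~\ref{th:stablity d} at all, merely introducing it with ``Similarly we have the following (see also \cite{Honda-PDE}),'' i.e.\ it is obtained exactly as you describe by rerunning the argument of Theorem~\ref{thm-Honda} with the duality formula for $d$ in place of that for $\nabla$. Your write-up is in fact more detailed than what the paper provides.
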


\subsection{Monotone vector fields}

We introduce the notion of monotone vector fields which will be used to study the
behavior of the $W_2$ distance under the Regular Lagrangian Flow.

\begin{definition}
Consider $U\subset X$ open, {a real number $k \in \mathbb{R}$},  and a vector field $V\in W^{1,2}_{C, loc}(T{\sf X})$. We say $V$ is infinitesimally $k$-monotone  in $U$ if for any $ W\in H^{1,2}_C(T{\sf X})$, $W \equiv 0$ on $U^c$, it holds
\[\nabla_{sym} V(W\otimes W) \geq k|W|^2  \ \m\mbox{-a.e. in $U$}, \]
where we use the symbol $\nabla_{sym}$ to denote the symmetric  derivative, i.e. \[2\nabla_{sym} V(W\otimes Z) = \langle\nabla_W V, Z\rangle + \langle \nabla_Z V, W\rangle \quad \text{for all } \, W, Z \in H^{1,2}_C(T \sf X).\]
\end{definition}

\noindent Let us denote by $\mathcal P_2(X, \m)$  the set of measures in $\mathcal P(X, \m)$ with finite second moment.

\begin{definition}
    Consider $U\subset X$ open and a vector field $V\in L^2_{loc}(T{\sf X})$. We say that $V$ is $k$-monotone in $U$ if 
    \[ \int_X {\rm d} \phi (V) {\rm d} \mu^0 + \int_X {\rm d} \phi^c (V) {\rm d}\mu^1 \geq k W_2(\mu^0, \mu^1)^2\]
    for any pair $\mu^0, \mu^1\in \mathcal P_2(X, \m)$ with bounded densities and bounded supports in $U$ and any couple of Kantorovich potentials $(\phi, \phi^c)$  relative to $(\mu^0, \mu^1)$. 
    \end{definition}

\begin{remark}\label{rem-monotoneiff}
Let $\sf X$ be an $\RCD(K, N)$ space and let $V\in W^{1,2}_{C, loc} (T{\sf X})$. Then $V$ is $k$-monotone  in $X$ if and only if $V$ is infinitesimally $k$-monotone in $X$. 
For later purposes we check the ``if'' direction in this statement.

Let $\mu_0, \mu_1\in\mathcal P(X, \m_X)$ be concentrated in $B_{R}(x_0)$. Then the  $W_2$-geodesic $(\mu_t)_{t\in [0,1]}$ between them is concentrated in $B_{2R}(x_0)$. Let $\Pi$ be an optimal geodesic  plan such that $(e_t)_{\#}\Pi= \mu_t$ and $\phi_t$ be a function such that $-(s-t) \phi_t$ is a Kantorovich potential  relative to $(\mu_t, \mu_s)$. By the second variation formula { \cite{GT_sv_short, GT_sv_long}} we have that $t\in [0,1]\mapsto\langle V, \nabla \phi_t\rangle \circ e_t\in L^2(\Pi)$ is in $C^1([0,1], L^2(\Pi))$ and 
\begin{align*}
\frac{\rm d}{{\rm d}t} \langle V, \nabla \phi_t\rangle \circ e_t= \nabla_{sym} V(\nabla \phi_t \otimes \nabla\phi_t)\circ e_t .
\end{align*}
Integrating the previous equation with respect to $\Pi$ and with respect to $t$ from $0$ to $1$ yields
\begin{align*}
\int_X \langle V, \nabla \phi^c\rangle {\rm d}\mu_1+ \int_X\langle V, \nabla \phi \rangle {\rm d}\mu_0&=
\int_0^1 \int_X 
\frac{\rm d}{{\rm d}t} \langle V, \nabla \phi_t\rangle {\rm d}\mu_t {\rm d}t\\
&= \int_0^1 \int_X \nabla V(\nabla \phi_t \otimes \nabla\phi_t) {\rm d}\mu_t  {\rm d}t,
\end{align*}
where $\phi=-\phi_0$ and $\phi^c= \phi_1$. Since $V$ is infinitesimally $k$-monotone, the RHS is bounded from below by 
\[k \int_0^1\int_X |\nabla \phi_t|^2 {\rm d}\mu_t  {\rm d}t = k W_2(\mu_0, \mu_1)^2. \]
Hence $V$ is $k$-monotone.
\smallskip
\end{remark}

\begin{corollary}\label{cor:monV}
Consider $V\in W^{1,2}_{C,loc}(T{\sf X})$ such that $\nabla_{sym} V= 0$ $\m_X$-a.e. in $X$.  Then $V$ and $-V$ are $0$-monotone in $X$. 
\end{corollary}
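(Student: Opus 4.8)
The plan is to deduce Corollary~\ref{cor:monV} directly from Remark~\ref{rem-monotoneiff}, which establishes that for $V\in W^{1,2}_{C,loc}(T{\sf X})$ on an $\RCD(K,N)$ space, infinitesimal $k$-monotonicity in $X$ implies $k$-monotonicity in $X$ (the ``if'' direction is precisely the one checked there). So the only thing to verify is that the hypothesis $\nabla_{sym}V=0$ $\m_X$-a.e.\ yields infinitesimal $0$-monotonicity for both $V$ and $-V$.

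First I would unwind the definition of infinitesimal $k$-monotonicity with $k=0$: it asks that $\nabla_{sym}V(W\otimes W)\ge 0$ $\m_X$-a.e.\ in $X$ for every $W\in H^{1,2}_C(T{\sf X})$ vanishing outside $X$ (which is the vacuous constraint here, so all such $W$ are allowed). Since by assumption $\nabla_{sym}V=0$ as an element of $L^2(T^{\otimes 2}{\sf X})$ (or locally), we have $\nabla_{sym}V(W\otimes W)=0$ $\m_X$-a.e., and in particular $\nabla_{sym}V(W\otimes W)\ge 0$ $\m_X$-a.e. Hence $V$ is infinitesimally $0$-monotone in $X$. The same computation applied to $-V$ uses $\nabla_{sym}(-V)=-\nabla_{sym}V=0$, so $\nabla_{sym}(-V)(W\otimes W)=0\ge 0$ as well, giving infinitesimal $0$-monotonicity of $-V$.

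Then I would invoke Remark~\ref{rem-monotoneiff} twice — once for $V$, once for $-V$ — to conclude that both $V$ and $-V$ are $0$-monotone in $X$ in the Wasserstein sense of the second definition. This completes the proof. There is essentially no obstacle: the statement is a formal corollary, and all the real content (the second variation formula argument connecting the infinitesimal and Wasserstein notions) is already contained in the remark that precedes it. One minor point worth a sentence is that the map $W\mapsto \nabla_{sym}V(W\otimes W)$ is $L^1_{loc}$-valued and well-defined for $W\in H^{1,2}_C(T{\sf X})$ by the module structure of $L^2(T^{\otimes 2}{\sf X})$ together with the $L^\infty$-bound available on test-type vector fields; but since $\nabla_{sym}V$ vanishes identically this is automatic and requires no quantitative estimate.
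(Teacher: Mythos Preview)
Your proposal is correct and matches the paper's intended argument: the corollary is stated without proof immediately after Remark~\ref{rem-monotoneiff}, so it is meant to follow exactly as you describe, by observing that $\nabla_{sym}V=0$ (and hence also $\nabla_{sym}(-V)=0$) trivially gives infinitesimal $0$-monotonicity and then invoking the ``if'' direction of that remark.
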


\subsection{Regular Lagrangian Flows}

In this section we recall the basic notions needed to prove the rigidity of tori in Section \ref{sec:torusrigidity}. For  details on the following definition and Theorem \ref{thm:RLF} we refer to \cite{amtr, amtrnotes}.

\begin{definition}
Let $\sf X$ be an $\RCD(K,N)$  space  and  $V\in L^2([0, T], L^2_{{loc}}(T{\sf X}))$ be a time-dependent vector field. We say that the measurable map
\[F \colon [0, T]\times {\sf X}\to {\sf X}\]
is a Regular Lagrangian flow (RLF) associated to $V$ if 
\begin{enumerate}
\item  $\left(F^s\right)_{\sharp}\m\leq C\m$ $\forall s\in [0,T]$ for some $C=C(T)>0$;
\item  for $\m$-a.e. $x\in X$ the curve $s\in [0,T]\mapsto F^s(x)$ is continuous and $F^0(x)=x$;
\item for each $f\in W^{1,2}({\sf X})$ and for $\m$-a.e. $x\in X$, the function $s\mapsto f(F^s(x))$ belongs to $W^{1,1}([0, T])$ and 
\[\frac{\rm d}{{\rm d} s} f(F^s(x))={\rm d}\!f (V_s)(F^s(x)) \mbox{ for } \mathcal L^1\mbox{-a.e. } s \in [0, T].\]
\end{enumerate}
\end{definition}

\begin{theorem}\label{thm:RLF} 
Let $V: [0,T] \rightarrow L^2_{loc}(T{\sf X})$ be a Borel vector field such that $V_t\in D(\Div)$ for every $t\in [0,T]$. Assume that $\left\||V_t|\right\|_{L^2(\m_X)} \in L^1([0,T]), \left\|\Div(V_t)^-\right\|_{L^\infty}\in L^\infty([0,T])$ and $\left\|\nabla_{sym} V_t\right\|_{L^{2}(T^{\otimes 2}X)}\in L^1([0,T])$. Then there exists $F \colon [0, T] \times {\sf X}\to {\sf X}$, RLF associated to $V$, such that
\begin{itemize}
    \item[$(i)$] for any initial condition $\mu_0 = f\!\m_X$ with $f \in L^1(\m_X) \cap L^\infty(\m_X)$, $\mu_t := (F_t)_\sharp \mu_0$ is a solution to the following continuity equation
    \begin{equation}\label{eq:CE}
    \dfrac{\rm d}{{\rm d} t} \int_X g \, \de\!\mu_t = \int_X \langle \nabla g, V_t \rangle \, \de\!\mu_t, \quad \mathcal L^1\mbox{-a.e. } t \in (0, T)
    \end{equation}
    with $\lim_{t \to 0} \int g \, \de\! \mu_t = \int g \,\de\! \mu_0$, for any $g \in \lip(X, \de) \cap L^\infty(\m_X)$;
    \item[$(ii)$] for any initial condition $\mu_0 = f\!\m_X$ with $f \in L^1(\m_X) \cap L^\infty(\m_X)$, $\frac{\de\!\mu_t}{\de\!\m_X} \in L^1(\m_X) \cap L^\infty(\m_X)$ for any $t \in (0, T)$, while if $\mu_0$ is a probability measure with $f \in L^2(\m_X)$, then for any $t \in (0, T)$ it holds
    \[
    \bigg|\bigg| \dfrac{\de\!\mu_t}{\de\!\m_X}   \bigg|\bigg|_{L^2(\m_X)} \le e^{D t} || f ||_{L^2(\m_X)}
    \]
    where the constant $D$ depends only on $|| (\Div V_t)^- ||_{L^\infty(\m_X)}$;
    \item[$(iii)$] for $\m_X$-a.e. $x$ the curve $s\in [0,T]\mapsto F^s(x)$ is absolutely continuous and its metric speed is given by \[|\dot{(F^s(x))}|= |V_t|\circ F^t(x) \mbox{ for $\mathcal L^1$-a.e. } t\in [0,T].\]
    \item[$(iv)$] $F_t$ is unique, where uniqueness is to be intended in the following sense: if $\tilde F_t$ is another map satisfying the above properties, then $(\tilde F_t)_\sharp \mu = (F_t)_\sharp \mu$ for any probability measure $\mu \in \mathcal P(X, \m_X)$ with bounded density, where $\mathcal P(X, \m_X)$ is the set of probability measures in $X$ that are absolutely continuous with respect to $\m_X$.
    \end{itemize}
\end{theorem}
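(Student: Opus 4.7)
I would follow the Eulerian strategy of Ambrosio-Trevisan: first prove well-posedness for the continuity equation $\partial_t(\rho_t \m_X) + \Div(\rho_t V_t \m_X) = 0$ among measures with bounded density, then transfer to the Lagrangian picture via Ambrosio's superposition principle. Existence of a solution $\mu_t = \rho_t \m_X$ starting from $\rho_0 = f \in L^1(\m_X) \cap L^\infty(\m_X)$ would be obtained by smoothing $V_t$ in time and space (for instance via the heat semigroup $P_\varepsilon$, using that $P_\varepsilon V_t$ gains regularity thanks to Bakry-\'Emery contraction) and passing to a compactness limit. The $L^p$-estimate in $(ii)$ is then obtained by testing the continuity equation with $p \rho_t^{p-1}$: formally
\[
\tfrac{d}{dt}\|\rho_t\|_{L^p}^p = (p-1)\int \rho_t^p \, \Div V_t \, d\m_X \le (p-1)\|(\Div V_t)^-\|_{L^\infty} \|\rho_t\|_{L^p}^p,
\]
and Gronwall yields the claimed exponential bound for $p=2$.

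\textbf{Uniqueness via commutator estimate.} The heart of the argument is uniqueness among solutions with bounded density. Given such a solution $\rho_t$, I would regularize $\rho_t^\varepsilon := P_\varepsilon \rho_t$ so that
\[
\partial_t \rho_t^\varepsilon + \Div(\rho_t^\varepsilon V_t) = \mathcal C_\varepsilon(\rho_t, V_t),
\]
where the commutator $\mathcal C_\varepsilon$ is the only obstruction to a clean Gronwall argument. The hypothesis $\nabla_{sym} V_t \in L^2(T^{\otimes 2}X)$ is precisely what makes $\mathcal C_\varepsilon \to 0$ in $L^1_{loc}$ as $\varepsilon \to 0$: one expands the commutator using Bochner's inequality and the self-adjointness of $P_\varepsilon$ on $L^2(\m_X)$, and controls the surviving term by the $L^2$-norm of the symmetric covariant derivative (this is the $\RCD$ analogue of the classical DiPerna-Lions commutator lemma). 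Once $\mathcal C_\varepsilon$ vanishes in the limit, a renormalization principle yields $\partial_t \beta(\rho_t) + \Div(\beta(\rho_t) V_t) = (\beta(\rho_t) - \rho_t \beta'(\rho_t)) \Div V_t$ for any $\beta \in C^1$, and plugging in a strictly convex $\beta$ combined with Gronwall gives uniqueness.

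\textbf{From Eulerian to Lagrangian and conclusion.} With uniqueness in hand, I would invoke Ambrosio's superposition principle on the $\RCD$ space: every bounded-density solution $\mu_t$ admits a lift $\boldsymbol\pi \in \mathcal P(C([0,T], X))$ concentrated on absolutely continuous curves $\gamma$ with $\dot\gamma_t = V_t(\gamma_t)$ for $\mathcal L^1$-a.e.\ $t$ and with metric speed $|V_t|(\gamma_t)$. Eulerian uniqueness then forces the disintegration of $\boldsymbol\pi$ with respect to the evaluation at $t=0$ to be a Dirac mass for $\m_X$-a.e.\ $x$, so $\boldsymbol\pi$ represents a deterministic map $F \colon [0,T] \times X \to X$, which is the desired RLF. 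Properties $(i)$ and $(iv)$ follow directly from this construction; $(ii)$ is the a priori estimate above; and $(iii)$ is a consequence of the $\boldsymbol\pi$-representation. The main obstacle is the commutator estimate in the $\RCD$ setting: in Euclidean space it is a classical computation controlled by $\|\nabla V\|_{L^2}$, but here one has only the second-order calculus of \cite{giglinonsmooth} together with Bakry-\'Emery, which is precisely why the hypothesis is stated in terms of $\|\nabla_{sym} V_t\|_{L^{2}(T^{\otimes 2}X)} \in L^1([0,T])$ rather than mere $L^2$-boundedness of $V_t$.
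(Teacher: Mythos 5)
The paper does not prove this theorem: it is recalled directly from Ambrosio--Trevisan, as the sentence immediately preceding the statement indicates (``For details on the following definition and Theorem \ref{thm:RLF} we refer to \cite{amtr, amtrnotes}''), so there is no in-paper proof to compare your argument against.

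That said, your sketch does capture the skeleton of the cited proof correctly: Eulerian well-posedness for the continuity equation among bounded-density solutions, with the $L^p$ a priori bound obtained by testing against $p\rho_t^{p-1}$ and Gronwall, uniqueness obtained from a DiPerna--Lions-type commutator estimate in which $P_\varepsilon$-regularization of the density, self-adjointness of $P_\varepsilon$, Bakry--\'Emery, and the $L^2$-control on $\nabla_{sym}V_t$ are exactly the ingredients that make the commutator vanish; then the passage to the Lagrangian picture via the metric-measure superposition principle, with Eulerian uniqueness forcing the lift to be a deterministic map. The one genuine divergence from what Ambrosio--Trevisan actually do is your existence step. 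You propose to smooth the vector field $V_t$ by the heat semigroup and pass to the limit; but $P_\varepsilon V_t$ does not become Lipschitz on a general $\RCD$ space, so this regularization does not directly produce a flow you could take a compactness limit of. In the reference the existence of bounded-density Eulerian solutions is obtained instead by solving the \emph{adjoint} parabolic problem with vanishing viscosity and recovering $\rho_t$ by duality; the viscous term is exactly what trades missing regularity of $V_t$ for solvability. Everything else in your outline is consistent with the argument the paper relies on.
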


\begin{remark}
    We point out that as a direct consequence of the last property in Theorem \ref{thm:RLF}, we get that $(F_t)_{t \in [0, T]}$ is a semigroup, namely
    \[
    F_{t + s}(x) = F_t \circ F_s(x) \, \m_X\mbox{-a.e.\,} x \in X, \, \forall s, t, t+s \in [0, T].
    \]
\end{remark}

\subsection{Sobolev and harmonic maps into circles}\label{circleSob}

In this subsection, we provide a brief introduction to the Sobolev space $W^{1,2}(X; \mathbb{S}^1(r))$, of maps from a compact $\RCD(K, N)$ space, denoted as $\sf X$, to the circle $\mathbb{S}^1(r)$.

We say that a map $f:X \to \mathbb{S}^1(r)$ is a Sobolev map if each component $f_i$ is in $W^{1,2}({\sf X})$, where $f=(f_1, f_2)$ is considered as a map into $\mathbb{R}^2$ via the canonical inclusion $\mathbb{S}^1(r) \hookrightarrow \mathbb{R}^2$. 
Note that by the locality of the minimal relaxed slope, $|\nabla f| \in L^2(\m_X)$  makes sense through the identification $\mathbb{S}^1(r)\cong \mathbb{R}/(2 \pi r\mathbb{Z})$. Consequently, we can state similar definitions and results for $W^{1,2}(X; \mathbb S^1(r))$ as for $W^{1,2}(X)$ in Section 3.1. 
In particular, we can define the differential $1$-form $df \in L^2(T^*{\sf X})$. We {then} define $f$ to be harmonic if it is a critical point of the {(averaged)} energy functional,
\begin{equation}
E(f):=\frac{1}{2}{-\!\!\!\!\!\!\int_X}e(f)\de\!\m_X,\quad e(f):=|df_1|^2+|df_2|^2 (=|\nabla f|^2).
\end{equation}
Furthermore, if $f$ is Lipschitz, it can be deduced, for instance, from \cite[Proposition 6.2]{HS}, that $f$ is harmonic if and only if $f_i \in D(\Delta_X)$ holds, and the Euler-Lagrange equation:
\begin{equation}\label{EL}
r^2\Delta_Xf_i+|\nabla f|^2f_i=0
\end{equation}
is satisfied for all $i=1,2$. This observation extends to cases where the target space is $\mathbb{S}^1(r_1)\times \cdots \times \mathbb{S}^1(r_k)$. It is worth mentioning that the local Lipschitz continuity of a harmonic map from an open subset into a (locally) $\mathrm{CAT}(0)$ space has been established in \cite{Gigli-Lipschitz, MS}.

\section{Strong convergence of harmonic vector fields} 
The aim of this section is to improve 
Proposition \ref{prop-L2weakV}  to strong convergence. For this purpose, we impose smoothness of the $\RCD$ spaces and harmonicity of vector fields. The proof of this result is an adaptation of \cite[Theorem 6.9]{Honda-PDE}. We provide a self-contained proof for the convenience of the reader, as the techniques will be used in the proofs of the main results.

\begin{theorem}\label{th:harmonicVF} 
Let $\{M_i\}_{i \in \mathbb N}$ be a sequence of compact $N$-dimensional Riemannian manifolds with $\ric_{M_i}\geq K$, $\diam_{M_i}\leq D$ and $\vol_{M_i}(M_i)\geq v$ that converges in Gromov-Hausdorff sense to a non-collapsed $\RCD(K,N)$ space $\sf X$.
Let $V_i\in L^2(TM_i)$ be harmonic vector fields with 
\[\sup_{i\in \mathbb N} \int_{M_i} |V_i|^2 {\de}\!\vol_{M_i}< \infty.\]
Then there exists a subsequence $V_{i_j}$ and a vector field $V\in L^2(T{\sf X})$ such that $V_{i_j}$ strongly converges in $L^2$ to $V$.
\end{theorem}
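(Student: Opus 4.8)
The plan is to follow the strategy of \cite[Theorem 6.9]{Honda-PDE}, using the harmonicity of the vector fields $V_i$ (equivalently, of the dual $1$-forms $\omega_i = V_i^\flat$) to upgrade the weak $L^2$-convergence from Proposition \ref{prop-L2weakV} to strong convergence. First I would pass to a subsequence so that $V_{i_j}$ weakly converges in $L^2$ to some $V \in L^2(T{\sf X})$ (possible by Proposition \ref{prop-L2weakV}). Harmonicity means $\operatorname{div}(V_i) = 0$ and $d V_i^\flat = 0$; by Lemma \ref{lem:div} the limit satisfies $V \in D(\operatorname{div})$ with $\operatorname{div}(V) = 0$, and by Theorem \ref{th:stablity d} (applied to $\omega_i = V_i^\flat$, which have $\sup_i \|d\omega_i\|_{L^2} = 0 < \infty$) we get $d V^\flat = 0$. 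So the limit $V$ is itself harmonic on $\sf X$. The crux is to prove the energy does not drop in the limit, i.e.
\[
\lim_{j\to\infty}\int_{M_{i_j}}|V_{i_j}|^2\,\de\!\vol_{M_{i_j}} = \int_X |V|^2\,\de\!\m_X,
\]
which combined with weak convergence gives strong convergence.

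The key tool to prevent energy loss will be a Bochner-type (Weitzenböck) identity. Since $V_i$ is a harmonic $1$-form on the smooth manifold $M_i$, the Bochner formula gives
\[
\tfrac12\Delta_{M_i}|V_i|^2 = |\nabla V_i|^2 + \operatorname{ric}_{M_i}(V_i,V_i) \ge |\nabla V_i|^2 + K|V_i|^2 ,
\]
so that $\sup_i \|\nabla V_i\|_{L^2} < \infty$ (integrating over the closed manifold kills the left-hand side); this puts us in a position to invoke Theorem \ref{thm-Honda} once we know $L^2$-strong convergence — but that is circular, so instead I would use the $W^{1,2}_C$-bound to extract, via Theorem \ref{th:localcompactness} / Theorem \ref{thm-Honda}-type compactness arguments, strong $L^2_{loc}$-convergence on the regular part of $\sf X$. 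The non-collapsing hypothesis $\vol_{M_i}(M_i)\ge v$ is essential here: it guarantees the limit $\sf X$ is a non-collapsed $\RCD(K,N)$ space, whose singular set $\mathcal S$ has codimension $\ge 2$ (in fact, by \cite{CJN}, a quantitative Minkowski/capacity estimate) and whose regular part carries a $C^{1,\alpha}$ (indeed smoother, away from a small set) Riemannian structure on which local elliptic estimates for harmonic $1$-forms apply uniformly. On compact subsets of the regular set one has uniform a priori $C^1$ (or at least $W^{2,2}$) bounds on $V_i$ by elliptic regularity applied to $\Delta_H V_i = 0$, yielding strong $L^2$-convergence there by Arzelà–Ascoli / Rellich.

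The main obstacle — and the heart of the argument — is controlling the $L^2$-mass of $V_i$ near the singular set $\mathcal S$ of $\sf X$, i.e. showing no energy concentrates there in the limit. For this I would use a cutoff argument: take the good cutoff functions $\chi_\varepsilon$ adapted to $\mathcal S$ provided by the $2$-capacity (Minkowski) estimate — functions that are $1$ away from an $\varepsilon$-neighborhood of $\mathcal S$, $0$ near $\mathcal S$, with $\|\nabla \chi_\varepsilon\|_{L^2}^2 \to 0$ as $\varepsilon \to 0$ (this is exactly where the codimension-$\ge 2$ / null-$2$-capacity property of $\mathcal S$ is used, following \cite{CJN}). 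Lifting such cutoffs to the $M_i$ (via the GH-approximations and Lemma \ref{lem:bumpfct}/Theorem \ref{th:mosco}) and testing the harmonic equation against $\chi_\varepsilon^2 V_i$, one integrates by parts and uses the $W^{1,2}_C$-bound together with Cauchy–Schwarz to bound $\int_{M_i} (1-\chi_\varepsilon)|V_i|^2$ by a quantity that is small uniformly in $i$ when $\varepsilon$ is small. Combined with the strong $L^2$-convergence on the good (regular) region, a diagonal argument over $\varepsilon \to 0$ then gives $\limsup_j \int_{M_{i_j}}|V_{i_j}|^2 \le \int_X |V|^2$, and lower semicontinuity supplies the reverse inequality, completing the proof of strong $L^2$-convergence. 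I expect the delicate points to be: (i) making the cutoff transfer between $M_i$ and $\sf X$ rigorous under mere GH-convergence, and (ii) ensuring the elliptic estimates on the regular part are genuinely uniform in $i$ — both handled by the non-collapsed structure theory and the stability results for the Laplacian quoted in Section 2.
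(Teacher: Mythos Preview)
Your overall outline (extract a weak limit, then show no $L^2$-energy is lost) is correct, but two of the steps do not work as written, and you are missing the key device the paper uses.

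\textbf{The local primitive trick.} Your plan for getting strong $L^2_{\mathrm{loc}}$-convergence on the regular part is vague: you invoke ``uniform $C^1$/$W^{2,2}$ elliptic estimates for $\Delta_H V_i=0$'' followed by Arzel\`a--Ascoli/Rellich, but the $V_i$ live on different manifolds and there is no ready-made Rellich theorem for vector fields along a GH-converging sequence (indeed Theorem \ref{thm-Honda} requires $L^2$-strong convergence as an input, as you noticed). The paper resolves this by a different, more elementary idea: cover most of each $M_i$ by balls $B_{\alpha r^l}(x_i^l)$ that are simply connected (this uses the Kapovitch--Mondino topological embeddings, Corollary \ref{cor-coverings}), and on each such ball write $V_i=\nabla f_i^l$ with $f_i^l$ harmonic (Poincar\'e lemma, since $dV_i^\flat=0$ and $d^{\star}V_i^\flat=0$). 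This reduces the convergence of the \emph{vector fields} to the already-known strong $W^{1,2}$-convergence of \emph{harmonic functions} (Theorem \ref{th:211}), which immediately gives $\int_{B}|V_i|^2=\int_B|\nabla f_i^l|^2\to \int_B|\nabla f^l|^2=\int_B|V|^2$ on each ball. You never use this reduction, and without it your regular-part step has no clear mechanism.

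\textbf{The cutoff step.} Your proposal to ``test the harmonic equation against $\chi_\varepsilon^2 V_i$'' to bound $\int (1-\chi_\varepsilon)|V_i|^2$ does not work: since $dV_i^\flat=0$ and $d^{\star}V_i^\flat=0$, pairing $\Delta_H V_i$ with anything gives zero identically, and testing the Bochner identity against $\chi_\varepsilon^2$ only relates $\int\chi_\varepsilon^2|\nabla V_i|^2$ to cross terms involving $|V_i|\,|\nabla\chi_\varepsilon|$, not to $\int(1-\chi_\varepsilon)|V_i|^2$. The paper's remedy is again simpler: Bochner gives $\Delta|V_i|^2\ge K|V_i|^2$, so standard elliptic theory yields a uniform bound $\|V_i\|_{L^\infty}\le \tilde C$, and then the mass on the complement of the good balls is at most $\tilde C\cdot\vol(\text{complement})<\tilde C\cdot 2\eta$, with $\eta$ chosen in advance by the Vitali covering. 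No $2$-capacity or \cite{CJN}-type Minkowski estimate is needed here; that machinery is used later in the paper (proof of Theorem \ref{thm:new}) for a different purpose, namely to show that the limit parallel vector fields lie in $H^{1,2}_C$.
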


We recall the definition of regular points.  
Let $N\in \mathbb N$. Given a complete metric measure space ${\sf X} = (X, \de, \m)$ and $r, \epsilon>0$, following \cite{ChCo, kapovitch_mondino}, we define
{the set of $(N,\epsilon,r)$-regular points of $X$ by}
\[(\mathcal R_N)_{\epsilon, r} = \Big\{ x\in X: \exists t>r \mbox{ such that } \de_{GH}\big(B^X_s(x) , B_s^{\mathbb R^N}(0)\big)\leq \epsilon s\  \forall s\in (0, t)\Big\}.\]
The $(\epsilon, N)$-regular set of $X$ is
\[(\mathcal R_N)_\epsilon = \bigcup_{r>0} (\mathcal R_N)_{\epsilon, r}\]
and the set of $N$-regular points of $\sf X$ is 
\[\mathcal R_N= \bigcap_{\epsilon>0} (\mathcal R_N)_{\epsilon}.\]

Finally recall that an $\RCD(K,N)$ space 
${\sf X}$ is non-collapsed if $\m$ is the $N$-dimensional Hausdorff measure. In this case, it is known that $\mathcal{R}_N$ has full measure.
 The following theorem was proven by Kapovitch and Mondino.

\begin{theorem}[\cite{kapovitch_mondino}]\label{thm-MK}
Let $K\in \mathbb R$, $N\in \mathbb N$ and $\alpha\in (0,1)$. There exists $\bar \epsilon=\bar \epsilon(K,N, \alpha)>0$ such that for any $\epsilon\in (0,\bar \epsilon]$ there exists $\bar r=\bar r(K,N,\alpha, \epsilon)>0$ that satisfies the following. 

Let $\sf X$ be a non-collapsed $\RCD(K,N)$ space and let $x\in (\mathcal R_N)_{\epsilon,r}$ for some $r\in (0,\bar r)$. Then there exists a topological embedding
$$F: B_{\alpha r}(0)\subset \mathbb R^N\rightarrow X$$ 
such that $F(B_{\alpha r}(0))\supset B_{\alpha r}(x)$, and the maps $F, F^{-1}$ are $\alpha$-H\"older continuous and $\Psi(\epsilon|N)$-Gromov-Hausdorff approximations.
\end{theorem}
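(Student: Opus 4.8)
The plan is to adapt Cheeger--Colding's intrinsic Reifenberg theorem \cite{ChCo} to the non-collapsed $\RCD$ setting. The hypothesis $x\in(\mathcal R_N)_{\epsilon,r}$ means there is $t>r$ such that the rescaled pointed space $(X,s^{-1}\de_X,x)$ is $\epsilon$-Gromov--Hausdorff close to $(\mathbb R^N,|\cdot|,0)$ for every $s\in(0,t)$; after rescaling I reduce to $r=1$. The two $\RCD$ inputs I would use are: \textbf{(i)} non-collapsed volume convergence (De Philippis--Gigli), which upgrades GH-closeness to $|\haus^N(B_s(x))-\omega_N s^N|\le\Psi(\epsilon|N)s^N$ for $s\le t$, where $\omega_N$ is the volume of the Euclidean unit ball; and \textbf{(ii)} the almost splitting theorem (Gigli's second-order calculus \cite{giglinonsmooth} together with the $\RCD$ Bochner inequality and good cutoff functions), which converts GH-closeness to $\mathbb R^N$ at a given scale into a $\delta$-splitting map at that scale with $\delta=\Psi(\epsilon|N)$.

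\emph{Step 1 (construction).} Using (i)--(ii) on $B_2(x)$, produce a harmonic map $u=(u_1,\dots,u_N)\colon B_2(x)\to\mathbb R^N$ with $u(x)=0$, $|\nabla u_a|\le1+\delta$, $-\!\!\!\!\!\!\int_{B_2(x)}\big|\langle\nabla u_a,\nabla u_b\rangle-\delta_{ab}\big|\,\de\haus^N\le\delta$, and the scale-invariant Hessian bound $-\!\!\!\!\!\!\int_{B_2(x)}|\Hess u_a|^2\,\de\haus^N\le\delta$. \emph{Step 2 (upper estimate).} The gradient bound gives $|u(y)-u(z)|\le(1+\delta)\de_X(y,z)$, so once $F:=u^{-1}$ is shown to make sense it is $(1+\delta)$-Lipschitz from below; it remains to prove the H\"older lower bound $|u(y)-u(z)|\ge c\,\de_X(y,z)^{1+\Psi(\epsilon|N)}$.

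\emph{Step 3 (the hard lower bound).} This is the heart, following the transformation theorem for splitting maps of Cheeger--Jiang--Naber \cite{CJN}, which is available here because all the ingredients---Bochner with good cutoffs, the Poincar\'e inequality, and second-order calculus \cite{giglinonsmooth}---hold on $\RCD(K,N)$ spaces. One shows that for every $y\in B_1(x)$ and $s\le1$ there is an $N\times N$ matrix $A_{y,s}$ with $\|A_{y,s}^{\pm1}\|$ controlled and $\|A_{y,s}A_{y,2s}^{-1}-\mathrm{Id}\|$ small, such that $A_{y,s}\circ(u-u(y))$ is a $\Psi(\epsilon|N)$-splitting map on $B_s(y)$; the scale-invariant Hessian decay from Step 1 together with the sharp almost-splitting theorem makes each rescaled map a $\Psi(\epsilon|N)$-GH approximation $B_s(y)\to B_s^{\mathbb R^N}(0)$. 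Iterating over the dyadic scales $s=2^{-k}$ between $\de_X(y,z)$ and $1$ and summing the geometrically decaying distortions of the $A_{y,s}$ yields, for $\delta$ small, $|u(y)-u(z)|\ge c\,\de_X(y,z)^{1+\Psi(\epsilon|N)}$; choosing $\epsilon\le\bar\epsilon(K,N,\alpha)$ small forces $1+\Psi(\epsilon|N)\le1/\alpha$, hence the $\alpha$-H\"older bound, and in particular injectivity of $u$ near $x$.

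\emph{Step 4 (surjectivity and assembly).} Since by Step 3 the restriction of $u$ to $\partial B_s(x)$ is $\Psi(\epsilon|N)s$-close to the round sphere of radius $s$, a degree/homotopy argument shows $u(B_{2\alpha}(x))$ contains $B_\alpha^{\mathbb R^N}(0)$ once $\epsilon$ is small. Then $F:=\big(u|_{u^{-1}(\overline{B_\alpha^{\mathbb R^N}(0)})}\big)^{-1}$, rescaled back to scale $r$, is a topological embedding $B_{\alpha r}(0)\to X$ with $F(B_{\alpha r}(0))\supset B_{\alpha r}(x)$; the bi-H\"older bounds of Steps 2--3 show $F,F^{-1}$ are $\alpha$-H\"older, and the $\Psi(\epsilon|N)$-almost-isometry property of $u$ at scale $r$ gives that $F,F^{-1}$ are $\Psi(\epsilon|N)$-GH approximations. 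The main obstacle is Step 3: one must run the Cheeger--Jiang--Naber transformation machinery---existence, boundedness and scale-consistency of the normalizing matrices---entirely within $\RCD$ calculus and the sharp almost-splitting theorem, with no appeal to smooth comparison geometry.
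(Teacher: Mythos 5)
The paper does not prove this statement at all: it is imported verbatim from Kapovitch--Mondino \cite{kapovitch_mondino}, whose argument is essentially the purely metric one. Namely, from $x\in(\mathcal R_N)_{\epsilon,r}$ one first propagates almost-Euclidean behaviour to every point $y$ near $x$ and every scale $s$ below $r$ --- via Bishop--Gromov monotonicity plus the volume convergence/almost-rigidity theorems for non-collapsed $\RCD(K,N)$ spaces, almost-maximal volume of $B_s(x)$ passes to all sub-balls $B_{s'}(y)$ and is then upgraded back to Gromov--Hausdorff closeness to the Euclidean ball --- and then one feeds this uniform, all-scales, all-points Reifenberg condition into the intrinsic Reifenberg theorem of Cheeger--Colding (\cite[Appendix A]{ChCo}), which is a statement about metric spaces only and directly assembles the bi-H\"older topological embedding by gluing GH approximations across dyadic scales. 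Your route through harmonic $\delta$-splitting maps and the Cheeger--Jiang--Naber transformation theorem is genuinely different and considerably heavier: it would yield a stronger conclusion (canonical harmonic bi-H\"older charts rather than an abstract homeomorphism), but it requires the full transformation machinery in the $\RCD$ setting (available, e.g., through \cite{BrueSemolaNaber_boundary} and related work), whereas the cited proof needs no second-order calculus at all. Two points you should make explicit if you pursue your route. First, the same propagation step is needed by you too: the hypothesis only gives GH-closeness at the point $x$, while both the construction of the splitting map on $B_2(x)$ and, more importantly, the hypothesis of the transformation theorem at every $y\in B_1(x)$ and every scale $s\le 1$ require $(N,\Psi(\epsilon|N))$-regularity at all those points and scales; this must be derived from volume monotonicity and almost-rigidity before Step 3 can start. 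Second, in Step 3 the scale-invariant Hessian bound on $B_2(x)$ does not by itself decay on sub-balls; it is precisely the matrices $A_{y,s}$ together with the re-derived almost-splitting at each scale (from the propagated GH-closeness) that produce a $\Psi$-splitting map on $B_s(y)$, so the logical order is: propagate regularity, then transform. With those two caveats the proposal is a viable, if overbuilt, alternative to the quoted proof.
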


\begin{corollary}\label{cor-balls}
Let $\{M_i\}_{i\in \mathbb N}$ be a sequence of Riemannian manifolds with $\ric_{M_i}\geq K$, $\dim_{M_i} = N$ and $\vol_{M_i}(B_1(o_i))\geq v >0$ and let $\sf X$ be an $\RCD(K,N)$ space for $K\in \mathbb R$ and $N\in \mathbb N$ such that $\sf X$ is the Gromov-Hausdorff limit of the $M_i$'s. Let $\alpha\in (0,1)$. Then, there exists $\bar \epsilon= \bar \epsilon(K, N, \alpha)>0$ such that  for any $\epsilon\in (0,\frac{1}{2}\bar\epsilon]$ there exists $\bar r= \bar r(K,N, \alpha, \epsilon)>0$ satisfying  the following assertion: 

Let $x\in (\mathcal R_N)_{\epsilon,r}$ for some $r\in (0, \bar r)$ and let $x_i\in M_i$ such that $x_i\rightarrow x$. Then,  there exists $i_0\in \mathbb N$ such that  for  $i\geq i_0$ there exist topological embeddings 
$$F: B_{\alpha r}(0)\subset \R^N\rightarrow X \quad \text{and} \quad 
F_i: B_{\alpha r}(0)\subset \R^N \rightarrow M_i,$$ 
where $F$ and $F_i$ have the properties as in the previous theorem. In particular,  the images $F(B_{\alpha r}(0))$ and $F_i(B_{\alpha r}(0))$ are open and homeomorphic to the ball $B_{\alpha r}(0)\subset \R^N$, and $F_i(B_{\alpha r}(0)) \supset B_{\alpha r}(x_i)$, $F(B_{\alpha r}(0)) \supset B_{\alpha r}(x)$.
\end{corollary}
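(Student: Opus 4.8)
The plan is to reduce everything to two invocations of Theorem~\ref{thm-MK}: one on the limit space $\sf X$ to produce $F$, and one on each $M_i$ with $i$ large to produce $F_i$. The map $F$ is essentially free: the uniform lower volume bound $\vol_{M_i}(B_1(o_i))\ge v$ forces $\sf X$ to be a non-collapsed $\RCD(K,N)$ space, so with $x\in(\mathcal R_N)_{\epsilon,r}$ and $r<\bar r$ Theorem~\ref{thm-MK} directly yields $F$; it suffices to take $\bar\epsilon$ in the corollary to be at most the constant $\bar\epsilon(K,N,\alpha)$ of Theorem~\ref{thm-MK} and $\bar r$ at most the corresponding $\bar r(K,N,\alpha,\epsilon)$. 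Hence the real content is the assertion that, for all $i$ large, $x_i$ is a $(\epsilon',r')$-regular point of the \emph{smooth} (thus non-collapsed $\RCD(K,N)$) manifold $M_i$ for some $\epsilon'\le\bar\epsilon(K,N,\alpha)$ and $r'<\bar r(K,N,\alpha,\epsilon')$. Once that is in hand, applying Theorem~\ref{thm-MK} to $M_i$ produces $F_i$, and all the listed properties (the images are open, homeomorphic to $B_{\alpha r}(0)\subset\mathbb{R}^N$, and contain $B_{\alpha r}(x)$, respectively $B_{\alpha r}(x_i)$) are read off from that theorem.

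The naive route — using only that $\de_{GH}\!\bigl(B_s^{M_i}(x_i),B_s^{X}(x)\bigr)\to 0$ for each fixed $s$ — does not work, because the index beyond which the estimate holds depends on $s$ and degenerates as $s\to 0$, whereas $(\epsilon',r')$-regularity of $x_i$ requires Euclidean closeness at \emph{all} scales $s\in(0,t)$ with one common $t>r'$ and one common index. I would therefore pass through volumes. Since $x\in(\mathcal R_N)_{\epsilon,r}$, the ball $B_r^{X}(x)$ is $\epsilon r$-Gromov--Hausdorff close to $B_r^{\mathbb{R}^N}(0)$; by continuity of the Hausdorff measure under non-collapsed measured Gromov--Hausdorff convergence (De Philippis--Gigli, or Colding and Cheeger--Colding in the Ricci-limit setting) together with the almost-rigidity of the Bishop--Gromov inequality, this forces $\haus^N\!\bigl(B_r^{X}(x)\bigr)\ge\bigl(1-\Psi(\epsilon+|K|r^2\mid N)\bigr)\omega_N r^N$, and then $\vol_{M_i}\!\bigl(B_r^{M_i}(x_i)\bigr)\to\haus^N\!\bigl(B_r^{X}(x)\bigr)$ provides an $i_0$ such that $B_r^{M_i}(x_i)$ has almost maximal volume for every $i\ge i_0$. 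The key point is that this uses volume convergence only at the \emph{single fixed} scale $r$, so it yields one threshold $i_0$.

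From here the argument is internal to each fixed $M_i$, $i\ge i_0$, so no further index issue arises. Bishop--Gromov monotonicity on $M_i$ propagates the almost-maximality of $\vol B_r^{M_i}(x_i)$ to every $B_s^{M_i}(x_i)$ with $s\in(0,r)$, and the quantitative almost-rigidity of Bishop--Gromov (Cheeger--Colding/Colding) then gives $\de_{GH}\!\bigl(B_s^{M_i}(x_i),B_s^{\mathbb{R}^N}(0)\bigr)\le\Psi(\epsilon+|K|r^2\mid N)\,s$ for all $s\le r$; that is, $x_i\in(\mathcal R_N)_{\epsilon',\,r/2}$ for $M_i$ with $\epsilon'=\Psi(\epsilon+|K|r^2\mid N)$. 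One first fixes $\epsilon\in(0,\tfrac12\bar\epsilon]$ — this is exactly the place the restriction $\epsilon\le\tfrac12\bar\epsilon$ is used, to keep $\epsilon'$ inside the admissible window — and then takes $\bar r$ small enough (in terms of $K,N,\alpha,\epsilon$) that $|K|r^2$ is negligible, so that $\epsilon'\le\bar\epsilon(K,N,\alpha)$ and $r/2<\bar r(K,N,\alpha,\epsilon')$; Theorem~\ref{thm-MK} applied to $M_i$ with this data gives $F_i$ with the desired properties, while shrinking $\bar r$ keeps the data for $F$ admissible as well. The free Hölder exponent $\alpha$ passes through both applications without loss.

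I expect the genuine obstacle to be precisely this passage from ``Euclidean-close at one definite scale'' to ``Euclidean-close at all smaller scales, uniformly in $i$'': it is the interplay of volume convergence at the scale $r$, Bishop--Gromov monotonicity, and its almost-rigidity that upgrades a scale-by-scale statement to a uniform one, and the only delicate bookkeeping is ensuring the accumulated error $\epsilon'$ remains below $\bar\epsilon(K,N,\alpha)$ so that Theorem~\ref{thm-MK} is applicable to $M_i$ — which is what dictates the form of the hypotheses ($\epsilon\le\tfrac12\bar\epsilon$, and $\bar r$ allowed to depend on $\epsilon$).
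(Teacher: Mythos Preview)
Your approach is essentially the paper's: apply Theorem~\ref{thm-MK} once to $\sf X$ and once to each $M_i$, the only real work being to show that $x_i$ is sufficiently regular in $M_i$. The paper treats this last point as immediate, simply asserting that $x_i\in(\mathcal R_N)_{2\epsilon,r}$ for all large $i$ (and accordingly setting $\bar r=\min\{\bar r(K,N,\alpha,\epsilon),\bar r(K,N,\alpha,2\epsilon)\}$), whereas you supply the actual mechanism---volume convergence at a single fixed scale, Bishop--Gromov monotonicity downward, and Colding's almost-rigidity---which is the standard way to justify this step and explains why the factor-of-two slack in $\epsilon$ is there. One small correction: running your volume argument at scale $r$ yields only $x_i\in(\mathcal R_N)_{\epsilon',\,r/2}$, so Theorem~\ref{thm-MK} would produce $F_i$ on $B_{\alpha r/2}(0)$ rather than $B_{\alpha r}(0)$; to match the stated conclusion, run the argument instead at a scale $t'\in(r,t)$, where $t>r$ is the witness from $x\in(\mathcal R_N)_{\epsilon,r}$.
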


\begin{proof} Observe that the non-collapsing assumption and volume continuity ensure that the limit measure is $\mathcal{H}^N$ and $\sf X$ is non-collapsed. Choose $\bar\epsilon>0$  as in Theorem \ref{thm-MK} and let $\epsilon\in (0, \frac{1}{2}\bar \epsilon]$. Then we choose \[\bar r= \min\{\bar r(K,N, \alpha, \epsilon), \bar r(K, N, \alpha, 2 \epsilon)\}\] where the $\bar r$'s that appear inside the minimum are again as in Theorem \ref{thm-MK}. Assume $x\in (\mathcal R_N)_{\epsilon, r}$ for some $r\in (0, \bar r)$. 
Then there exists a topological embedding 
\[F: B_{\alpha r}(0)\subset \mathbb R^N\rightarrow X,\] 
 such that $F(B_{\alpha r}(0))\supset B_{\alpha r}(x)$ as in Theorem \ref{thm-MK}. Now, since $x_i\rightarrow x$, there exists $i_0\in \mathbb N$ such that $x_i\in (\mathcal R_N)_{2\epsilon, r}$ for $i\geq i_0$. Hence, by Theorem \ref{thm-MK}, there exists a topological embedding 
$$F_i: B_{\alpha r}(0) \subset \mathbb R^N \rightarrow M_i,$$ 
such that $F_i(B_r(0))\supset B_{\alpha r}(x_i)$ and $F_i, F_i^{-1}$ are $\alpha$-H\"older continuous.  By the invariance of the domain theorem,  $F(B_{\alpha r}(0))$ and $F_i(B_{\alpha r}(0))$ are open and,
$F: B_{\alpha r}(0)\rightarrow F(B_{\alpha r}(0))$, 
$F_i: B_{\alpha r}(0)\rightarrow F_i(B_{\alpha r}(0))$ are homeomorphisms.
\end{proof}

\begin{corollary}\label{cor-coverings}
Let $\{M_i\}_{i\in \mathbb N}$ be a sequence of Riemannian manifolds with $\ric_{M_i}\geq K$, $\dim_{M_i} = N$ and $\vol_{M_i}(B_1(o_i))\geq v >0$ and let $\sf X$ be an $\RCD(K,N)$ space for $K\in \mathbb R$ and $N\in \mathbb N$ such that $\sf X$ is the Gromov-Hausdorff limit of the $M_i$'s. Let $\alpha\in (0,1)$ and $\eta>0$.
Let $\bar \epsilon= \bar \epsilon(K, N, \alpha)>0, \epsilon \in (0, \frac{\bar \epsilon}{2}]$ and $\bar r= \bar r(K, N, \alpha, \epsilon)$ as in Corollary \ref{cor-balls}. Then there exist $x^1, \dots, x^k\in X$ and $r^1, \dots, r^k\in (0, \bar r)$  such that the closed balls $\overline{B_{\alpha r^l}(x^l)}$, $l=1, \dots, k$, are mutually disjoint. Moreover there exist $i_1 \in \mathbb{N}$ such that for all $i \geq i_1$ and for all $l\in \{1, \ldots, k \}$ there exist $x_i^l\in M_i$ such that $x_i^l \rightarrow x^l$ and 
\begin{itemize}
\item $\overline{B_{\alpha r^l}(x^l_i)}$ converges in GH sense to $\overline{B_{\alpha r^l}(x^l)}$
    \item the closed balls $\overline{B_{\alpha r^l}(x^l_i)}$, $l=1, \dots, k$, in $M_i$ are disjoint,
    \item $$\vol_{M_i}\left(M_i\backslash \bigcup_{l=1}^k B_{\alpha r^l}(x_i^l)\right)< 2\eta,$$
    \item  for large $i$, there exist topological embeddings 
    \[
    F^l_i: B_{\alpha r^l}(0)\rightarrow M_i\qquad F^l_i(B_{\alpha r^l}(0))\supset B_{\alpha r^l}(x^l_i)
    \] 
     and $F_i^l, (F_i^l)^{-1}$ are $\alpha$-H\"older continuous. 
\end{itemize}
In particular, the open sets $U^l_i:= F^l_i(B_{\alpha r^l}(0))\subset M_i$ are simply connected. 
\end{corollary}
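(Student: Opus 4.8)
The strategy is to produce the covering first on the limit space $\sf X$ by a Vitali-type argument, and then transport it to the manifolds $M_i$, using for the last step the bi-H\"older Euclidean charts supplied by Corollary~\ref{cor-balls}.

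\emph{Covering the limit.} As observed in the proof of Corollary~\ref{cor-balls}, the hypothesis $\vol_{M_i}(B_1(o_i))\ge v$ together with volume continuity forces $\sf X$ to be non-collapsed, so the regular set $\mathcal R_N$, and hence each $(\mathcal R_N)_\epsilon$, has full $\m_X$-measure. Since $(\mathcal R_N)_{\epsilon,\rho}\subseteq(\mathcal R_N)_{\epsilon,\rho'}$ whenever $\rho'\le\rho$, every point of the full-measure set $(\mathcal R_N)_\epsilon$ is the centre of closed balls $\overline{B_{\alpha\rho}(x)}$ of arbitrarily small radius whose centre lies in $(\mathcal R_N)_{\epsilon,\rho}$ with $\rho\in(0,\bar r)$; these balls form a fine cover of $(\mathcal R_N)_\epsilon$. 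Because $\m_X$ is doubling (it is the reference measure of a compact $\RCD(K,N)$ space), the Vitali covering theorem extracts a countable pairwise disjoint subfamily $\{\overline{B_{\alpha r^l}(x^l)}\}_{l\ge1}$, with $x^l\in(\mathcal R_N)_{\epsilon,r^l}$ and $r^l\in(0,\bar r)$, such that $\m_X\big(X\setminus\bigcup_{l\ge1}\overline{B_{\alpha r^l}(x^l)}\big)=0$. Since $\sum_l\m_X(\overline{B_{\alpha r^l}(x^l)})\le\m_X(X)<\infty$, a finite subfamily $l=1,\dots,k$ already satisfies $\m_X\big(X\setminus\bigcup_{l=1}^k B_{\alpha r^l}(x^l)\big)<\eta$; these $k$ closed balls are pairwise disjoint compact sets.

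\emph{Transporting to $M_i$.} Fix $x_i^l\in M_i$ with $x_i^l\to x^l$ for each $l$ (these exist since $M_i\to X$). Working in the realisation $Y$ of the convergence, the limsup inclusion is immediate from distance-preservation, and the liminf inclusion follows by approximating a point of $\overline{B_{\alpha r^l}(x^l)}$ along a geodesic from $x^l$ by points of the open balls $B_{\alpha r^l}(x_i^l)$ (using geodesicity of $\sf X$); hence $\overline{B_{\alpha r^l}(x_i^l)}\to\overline{B_{\alpha r^l}(x^l)}$ in the Hausdorff sense in $Y$, thus in the Gromov--Hausdorff sense. Since the limits are disjoint compacta in $Y$ they are at positive mutual distance, so for $i$ large the balls $\overline{B_{\alpha r^l}(x_i^l)}$ are pairwise disjoint in $M_i$. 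For the volume bound, Colding's volume continuity gives $\vol_{M_i}(M_i)\to\m_X(X)$ (the diameters are eventually bounded because $X$ is compact), while weak lower semicontinuity on open balls under measured Gromov--Hausdorff convergence gives $\liminf_i\vol_{M_i}(B_{\alpha r^l}(x_i^l))\ge\m_X(B_{\alpha r^l}(x^l))$; hence, using disjointness,
\[
\limsup_{i\to\infty}\vol_{M_i}\Big(M_i\setminus\bigcup_{l=1}^k B_{\alpha r^l}(x_i^l)\Big)\le\m_X(X)-\sum_{l=1}^k\m_X(B_{\alpha r^l}(x^l))=\m_X\Big(X\setminus\bigcup_{l=1}^k B_{\alpha r^l}(x^l)\Big)<\eta<2\eta,
\]
so the stated estimate holds for $i$ large. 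Finally, for each $l$ Corollary~\ref{cor-balls} applied with $x=x^l\in(\mathcal R_N)_{\epsilon,r^l}$, $r=r^l<\bar r$ and $x_i=x_i^l$ provides, for $i$ beyond some $i_0^l$, a topological embedding $F_i^l\colon B_{\alpha r^l}(0)\subset\R^N\to M_i$ onto an open set containing $B_{\alpha r^l}(x_i^l)$, with $F_i^l$ a homeomorphism onto its image and $F_i^l,(F_i^l)^{-1}$ $\alpha$-H\"older. Choosing $i_1$ larger than all the $i_0^l$ and than the thresholds above yields every assertion for $i\ge i_1$, and $U_i^l=F_i^l(B_{\alpha r^l}(0))$ is homeomorphic to $B_{\alpha r^l}(0)$, hence simply connected.

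\emph{Main obstacle.} The one point requiring care is the covering step: the Vitali selection must keep every chosen ball centred at a point of the regular set and of radius below the threshold $\bar r$ of Corollary~\ref{cor-balls}, while still exhausting $X$ up to a $\m_X$-null set. This is exactly what the monotonicity of $(\mathcal R_N)_{\epsilon,\rho}$ in $\rho$, combined with the full $\m_X$-measure of $\mathcal R_N$, makes possible; the remaining points are routine Gromov--Hausdorff stability statements (of distances, of balls between geodesic spaces, and of volume).
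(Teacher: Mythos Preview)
Your argument is correct and follows essentially the same route as the paper: a Vitali covering of the full-measure regular set of the limit by balls with centres in $(\mathcal R_N)_{\epsilon,r}$ and radii below $\bar r$, followed by transport to the $M_i$ via Corollary~\ref{cor-balls}. You supply somewhat more detail than the paper on the GH convergence of balls and on the volume estimate (via Colding's continuity plus lower semicontinuity on open sets), whereas the paper simply asserts these; conversely the paper takes the centres in $(\mathcal R_N)_{\epsilon/2,r^l}$ to make the passage to $x_i^l$ explicit, which you instead absorb directly into the invocation of Corollary~\ref{cor-balls}.
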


\begin{proof}
Recall that by definition of $\mathcal{R}_N$ and by the fact that $\sf X$ is non-collapsed we have
\[\mathcal{R}_N=\bigcap_{\epsilon>0}\bigcup_{r>0}(\mathcal{R}_N)_{\epsilon,r},\quad \mathcal{H}^N(X \setminus \mathcal{R}_N)=0.\]
Let $\bar{\epsilon}=\bar \epsilon(K,N, \alpha), \epsilon$ and $\bar{r}$ be as in the previous corollary. 
Hence, for any $\epsilon\in (0, \frac{\bar \epsilon}{2}]$, we have 
 that for any $x \in \mathcal{R}_N$ there exists $r_{x}>0$ such that $x \in (\mathcal{R}_N)_{\frac{\epsilon}{2},{\Large r}}$  for all $r\in (0, r_x]$. Moreover, by definition of $(\mathcal R_N)_{\frac{\epsilon}{2}, r_x}$ we can assume $r_x\in (0, \bar r)$. 

Consider the covering of $\mathcal{R}_N$ given by the balls $B_{\alpha r}(x)$, where $x \in \mathcal{R}_N$ and $r\in (0,r_x]$.  This is a Vitali covering of $X$.  Let then $\eta>0$ be fixed.  Vitali's covering theorem implies that there exist $x^1, \dots, x^k\in X $ and $r^1, \dots, r^k\in (0, \bar r)$  such that
\begin{itemize}
\item $x^l\in (\mathcal R_N)_{\frac{\epsilon}{2}, r^l}$ for every $l\in \{1, \dots, k\}$,
\item the closed balls $\overline{B_{\alpha r^l}(x^l)}$, $l=1, \dots, k$, are mutually disjoint, 
\item $\mathcal H^N\left(X\backslash \bigcup_{l=1}^k B_{\alpha r^l}(x^l)\right)< \eta$.
\end{itemize}
Moreover, according to Theorem \ref{thm-MK}, for any $l\in \{1, \ldots, k\}$ there exists a topological embedding $F^l: B_{\alpha r^l}(0)\rightarrow {X}$.

For any $l \in \{1, \ldots, k\}$, let $x_i^l\in M_i$, $x_i^l \to x^l$ for $i \to \infty$. In particular, there exists $i_0$ such that for all $i \geq 0$ the point $x_i^l$ belongs to the $(N, \epsilon, r^l)$-regular set of $M_i$. Moreover, for any $l \in \{1, \ldots, k\}$ the balls $\overline{B_{\alpha r^l}}(x_i^l)$ converge in GH sense to $\overline{B_{\alpha r^l}}(x^l)$. By our choices of $\epsilon$ and $r^l \in (0, \bar{r})$, we can apply Corollary \ref{cor-balls}: there exists $i_1 \in \mathbb{N}$, $i_1\geq i_0$ such that for all $i \geq i_1$ for $l=\{1, \ldots, k\}$ the closed balls $\overline{B_{\alpha r^l}}(x_i^l)$ are disjoint, we have 
$$\vol_{M_i}\left(M_i\backslash \bigcup_{l=1}^k B_{\alpha r^l}(x_i^l)\right)< 2\eta,$$
and there exist topological embeddings $F^l_i: B_{\alpha r^l}(0)\rightarrow M_i$ such that $F^l_i(B_{\alpha r^l}(0))\supset B_{\alpha r^l}(x^l_i)$ is open and $F_i^l, (F_i^l)^{-1}$ are $\alpha$-H\"older continuous.  In particular, the open set $F^l_i(B_{\alpha r^l}(0))=: U^l_i\subset M_i$ is simply connected. 
\end{proof}

\begin{proof}[Proof of Theorem \ref{th:harmonicVF}] 
We divide the proof of the theorem in three parts.

{\bf 1. Construction of harmonic functions on each $U^l_i \subset M_i$:}
Since $\vol_{M_i}\geq v$, the sequence $M_i$ is non-collapsed and $\sf X$ is a non-collapsed $\RCD(K,N)$ space, that is $\m={ c}\mathcal H^N$ { for a normalization constant $c>0$}.

Let $\alpha, \bar \epsilon$ and $\bar r$, as well as $x_i^l$, $l=1, \dots, k$ and $U_i^l=F^l_i(B_{\alpha r^l}(0))$ be as in Corollary \ref{cor-coverings}. We know that 
$$\vol_{M_i}\left(M_i\backslash \bigcup_{l=1}^k B_{\frac{1}{2}\alpha r^l}(x_i^l)\right)< 2\eta.$$
Since $U_i^l$ is simply connected for every $(l,i)\in \{1, \dots, k\}\times \N$, by the Poincar\'e lemma we can find a function $f_i^l$ such that $\nabla f^l_i= V_i$ on $U^l_i$. Moreover,
\[\Delta_{M_i} f_i^l= {\rm d}^{\star} {\rm d}{f_i^l} = {\rm d}^{\star} V_i^{\#}=0\]
since $V_i$ is harmonic. Hence $f_i^l$ is harmonic on $U_i^l$ and, by locality of the Laplace operator, it is also harmonic on $B_{\alpha r^l}(x_i^l)\subset U_i^l$.\\
\\
{\bf 2. Obtaining $V$ and showing it is locally the gradient of harmonic functions:}
By adding constants we can assume that $\int_{B_{\alpha r^l}(x_i^l) } f_i^l \, {\rm d}\vol_{M_i}=0$. Moreover, we also have that $\int_{U_i^l} |\nabla f_i^l|^2 \, {\rm d}\vol_{M_i}\leq  C$ because the harmonic functions $f_i^l$  locally satisfy a uniform gradient estimate with constant $C>0$ that only depends on $K, N$ and $r^l>0$.

We can apply Theorem \ref{th:211} for every $l=1, \dots, k$.  From this we conclude that there exist harmonic functions $f^l$ on $B_{ \frac{1}{2}\alpha r^l}(x^l)\subset X$ such that, possibly extracting suitable  subsequences, $f_i^l\rightarrow f^l$ strongly in $W^{1,2}(B_{ \frac{1}{2} \alpha r^l}(x^l))$.

On the other hand $V_i$ is uniformly $L^2$-bounded by the constant $C$. Therefore, we can extract another subsequence such that $V_i\rightarrow V$ $L^2$-weakly for a vector field $V$ on $X$.  By definition of $L^2$-weak convergence it follows that  $(\langle V_i, \nabla h_i\rangle)_{i\in \N}$  converges $L^2$ weakly for any sequence $(h_i)$ that converges strongly in $W^{1,2}$ to $h$ and such that $\sup_{i} \left\||\nabla h_i|\right\|_{L^\infty({\vol_{M_i}})}{\color{red} \leq C< \infty}$.  

Now, by Corollary \ref{cor:pairing}  it also follows for such a sequence $h_i$ with support in $B_{\frac{1}{2}\alpha r^l}(x_i^l)$ that $\langle \nabla f_i^l, \nabla h_i\rangle$ converges weakly to $\langle \nabla f^l , \nabla h\rangle$. 

Hence 
$V= \nabla f^l$ on $B_{ \frac{1}{2} \alpha r^l}(x^l)$. 
\\
\\
{\bf 3. $L^2$-strong convergence of $V_i$ to $V$:}
The standard Bochner formula holds for $V_i$ on $M_i$, i.e. 
$$\Delta_{M_i} |V_i|^2= |\nabla V_i|^2 + \ric_{M_i}(V_i,V_i)\geq K |V_i|^2.$$
Hence elliptic regularity theory for $|V_i|^2$ on $M_i$ yields that $$\left\| |V_i|^2\right\|_{L^\infty(M_i)} \leq \tilde C(K, N, C, D)=: \tilde C.$$

The final step consists in showing the $L^2$ strong convergence of $V_i$ to $V$. For that purpose we observe that
\begin{align*}
\int_{M_i} |V_i|^2 \, {\rm d}\!\vol_{M_i} &=  \sum_{l=1}^{k} \int_{B_{ \frac{1}{2} \alpha r^l}(x_i^l)} |V_i|^2 \, {\rm d}\!\vol_{M_i} + \int_{M_i\backslash \bigcup_{l=1}^kB_{ \frac{1}{2} \alpha r^l}(x_i^l)} |V_i|^2 \, {\rm d}\!\vol_{M_i}\\
& \leq  \sum_{l=1}^{k} \int_{B_{ \frac{1}{2} \alpha r^l}(x_i^l)} |\nabla f_i^l|^2 \, {\rm d}\!\vol_{M_i}+ \tilde C\eta\\
&\rightarrow  \sum_{l=1}^{k} \int_{B_{ \frac{1}{2} \alpha r^l}(x^l)} |\nabla f^l|^2 \, {\rm d}\!\haus^N  + \tilde C\eta \\
& \leq  \int_X |V|^2 \, {\rm d}\!\haus^N+ {\tilde C} \eta.
\end{align*}
Since $\eta>0$ was arbitrary, we get 
\[\limsup_{i\rightarrow \infty} \int_{M_i} |V_i|^2 \, {\rm d}\!\vol_{M_i} \leq \int_X |V|^2 \, {\rm d}\!\haus^N.\]
This gives $L^2$-strong convergence of $V_i$ to $V$.
\end{proof}

Let us end this section by presenting the following result, which will play a key role in the proof of Theorem \ref{mthm0}. Although (2) will not be relevant elsewhere in the paper, we add it due to its independent interest. 

\begin{proposition}\label{approx}
Let $M_i$ be a Gromov-Hausdorff convergent sequence of closed Riemannian manifolds of dimension $n$ with $\ric_{M_i}\ge K$ for some $K \in \mathbb{R}$ to a closed Riemannian manifold $M$ of the same dimension. Let $F_i:M_i \to M$ be a sequence of equi-regular 
maps in the following sense: for any $f \in C^{\infty}(M)$ we have $f \circ F_i \in D(\Delta_{M_i})$ with
\begin{equation}
    \sup_i\|\Delta_{M_i}(f \circ F_i)\|_{L^{\infty}}<\infty.
\end{equation}
Then the following two conditions are equivalent:
\begin{enumerate}
\item[(a)] $F_i$ is a $\delta_i$-Gromov-Hausdorff approximation for some $\delta_i \to 0^+$;
\item[(b)] we have $\|F_i^*g_M-g_{M_i}\|_{L^p} \to 0$ for any $1 \le p < \infty$, namely $F_i^*g_M$ $L^p$-strongly converge to $g_M$.
\end{enumerate}
Moreover, if one of the above holds, then:
\begin{enumerate}
\item for all $x, y \in M_i$ and for some $\epsilon_i \to 0^+$, we have
\begin{equation}\label{lip}
(1-\epsilon_i)\de_{M_i}(x, y)^{1+\epsilon_i} \le \de_M(F_i(x), F_i(y))\le C(n) \de_{M_i}(x, y).
\end{equation}
In particular $F_i$ is a diffeomorphism for any sufficiently large $i$;
\item For any smooth closed $1$-form $\omega$ on $M$,  the harmonic representatives of smooth closed $1$-forms $F_i^*\omega$ on $M_i$ in $H^1_{\mathrm{dR}}(M_i;\mathbb{R})$ $L^2$-strongly converge to the harmonic representative of $\omega$ in $H^1_{\mathrm{dR}}(M;\mathbb{R})$.
\end{enumerate}
\end{proposition}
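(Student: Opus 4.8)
# Proof Proposal for Proposition \ref{approx}

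\textbf{Overall strategy.} The key observation is that the equi-regularity hypothesis is exactly what is needed to invoke the stability theory for the Laplacian acting on functions (Theorem \ref{th:mosco}, Theorem \ref{th:localharmoniccompactness}, Corollary \ref{cor:pairing}), via the coordinate functions $f \circ F_i$ for $f \in C^\infty(M)$. Since $M$ is a fixed smooth closed manifold, it embeds smoothly into some $\mathbb{R}^K$, say by $\iota = (f_1,\dots,f_K)$ with $f_j \in C^\infty(M)$; then $F_i$ is encoded by the tuple $(f_j \circ F_i)_{j=1}^K$ of functions in $D(\Delta_{M_i})$ with uniformly bounded Laplacians. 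The plan is to pass to a measured Gromov-Hausdorff limit of the $M_i$ (which, since $M$ is a smooth closed manifold and the Ricci bound is uniform, will be non-collapsed and comparison with $M$ will identify the limit), extract limits of the functions $f_j \circ F_i$, assemble them into a limit map $F_\infty: X \to M$, and then characterize when this map is an isometry.

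\textbf{(a) $\Rightarrow$ (b).} Assume $F_i$ is a $\delta_i$-GH approximation. Up to subsequence, the $M_i$ converge in measured GH sense to a non-collapsed $\RCD(K,N)$ space $\sf X$ with $N = n$; since $F_i$ is also a GH approximation onto $M$, one identifies $\sf X$ with $M$ as a metric space. For each $j$, the functions $f_j \circ F_i$ have uniformly bounded Laplacian, hence (Bochner on $M_i$, elliptic estimates, or rather uniform $W^{1,2}$-bounds from the Laplacian bound plus $L^\infty$ bound since $f_j$ is bounded) uniformly bounded $W^{1,2}$-norm; they converge strongly in $L^2$ to $f_j$ (the coordinate on $M$) because $F_i$ is a GH approximation. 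By the stability of the Laplacian on functions \cite{ambrosio_honda, ambrosio_honda_local, gms_convergence} and the uniform Laplacian bound, $f_j \circ F_i$ converge strongly in $W^{1,2}$ to $f_j$. Using Corollary \ref{cor:pairing} (or rather the global analogue via Theorem \ref{th:mosco}) one gets $\langle \nabla(f_j\circ F_i), \nabla(f_k \circ F_i)\rangle \to \langle \nabla f_j, \nabla f_k\rangle$ weakly in $L^1$, and in fact the pointwise inner products converge strongly in $L^1$ (after upgrading via the energy convergence $\int |\nabla(f_j\circ F_i)|^2 \to \int |\nabla f_j|^2$, which holds by strong $W^{1,2}$-convergence). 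Since $F_i^* g_M = \sum_{j,k} (g_M)_{jk}\circ F_i \, d(f_j\circ F_i)\otimes d(f_k\circ F_i)$ in the coordinates coming from $\iota$, and the coefficients $(g_M)_{jk}\circ F_i$ converge uniformly (continuity plus GH), one obtains $\langle F_i^* g_M, g_{M_i}\rangle \to n$ in $L^1$ and $|F_i^* g_M| \to |g_M| = \sqrt{n}$ (as the limit Riemannian metric on the smooth limit is $g_M$), which together force $\|F_i^* g_M - g_{M_i}\|_{L^2} \to 0$; the $L^p$ statement for all $p<\infty$ then follows from the uniform $L^\infty$-bound on $F_i^* g_M$ (consequence of the gradient estimates on $f_j \circ F_i$, which are harmonic-like with bounded Laplacian) by interpolation. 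The upper Lipschitz bound in \eqref{lip} comes from the uniform $L^\infty$-bound on $|dF_i|$, and the lower Hölder bound from a Moser-type / segment inequality argument combined with the $L^2$-closeness of the metrics, exactly as in \cite{HondaPeng} and \cite[Theorem A.1.3]{ChCo}, which then upgrades $F_i$ to a diffeomorphism for large $i$.

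\textbf{(b) $\Rightarrow$ (a), and conclusion (2).} If $\|F_i^* g_M - g_{M_i}\|_{L^p}\to 0$, then in particular the energies $E(F_i) = \frac12 {-\!\!\!\!\!\!\int} \langle F_i^* g_M, g_{M_i}\rangle$ converge to $\frac n2$, and the $L^p$-closeness of metrics gives that $F_i$ is close to a Riemannian isometry in an integral sense; combined with the uniform Lipschitz bound and a standard argument (controlling distances by integrating along almost-geodesics, using the segment inequality to handle the bad set where $F_i^*g_M$ is far from $g_{M_i}$), one deduces that $F_i$ is a $\delta_i$-GH approximation. For the final assertion (2), given a smooth closed $1$-form $\omega$ on $M$, the pullbacks $F_i^*\omega$ are smooth closed $1$-forms on $M_i$; their harmonic representatives $\eta_i$ minimize $L^2$-norm in the de Rham class and satisfy a uniform $L^2$-bound (since $\|F_i^*\omega\|_{L^2} \to \|\omega\|_{L^2}$ by (b), and the harmonic representative has smaller norm). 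By Theorem \ref{th:harmonicVF} (applied to the dual vector fields — the $M_i$ are non-collapsing with a uniform lower volume bound, since they GH-converge to the smooth $M$), a subsequence of $\eta_i^\sharp$ converges strongly in $L^2$ to some $V \in L^2(T{\sf X}) = L^2(TM)$; by Lemma \ref{lem:div} and Theorem \ref{th:stablity d}, $V^\flat$ is harmonic, and a cohomology-class argument (pairing with closed forms, using that $F_i^*$ is an isomorphism on cohomology for $i$ large since $F_i$ is a diffeomorphism) identifies the limit with the harmonic representative of $\omega$; strong $L^2$-convergence of the whole sequence then follows from uniqueness of the limit.

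\textbf{Main obstacle.} The delicate point is the implication $(b)\Rightarrow(a)$: passing from integral ($L^p$) closeness of Riemannian metrics to a genuine (pointwise, metric) GH approximation. A single bad region of small volume could a priori allow $F_i$ to collapse or tear distances while keeping $\|F_i^*g_M - g_{M_i}\|_{L^p}$ small. Controlling this requires the uniform Lipschitz bound on $F_i$ together with a segment/Poincaré-type inequality on $M_i$ (available under $\ric_{M_i}\ge K$, $\diam \le D$) to show that the set of pairs $(x,y)$ for which $\de_M(F_i(x),F_i(y))$ is not close to $\de_{M_i}(x,y)$ has small measure, and then promoting this to an everywhere statement via the Lipschitz continuity of $F_i$ and a covering argument — this is essentially the content of \cite[Theorem A.1.3]{ChCo} and \cite{HondaPeng}, which we invoke. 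The other technical subtlety is justifying the $L^1$-strong (not merely weak) convergence of the pointwise inner products $\langle \nabla(f_j\circ F_i),\nabla(f_k\circ F_i)\rangle$, which we obtain from the strong $W^{1,2}$-convergence guaranteed by the Laplacian bound via Theorem \ref{th:localharmoniccompactness}-type arguments adapted to functions with bounded (not vanishing) Laplacian.
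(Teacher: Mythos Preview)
Your approach to $(a)\Rightarrow(b)$ and to conclusion (2) is essentially the paper's, modulo one preliminary you gloss over: you repeatedly use a uniform Lipschitz bound on $F_i$ (equivalently $\|dF_i\|_{L^\infty}\le C$) without justifying it. The paper makes this explicit as the first step: equi-regularity implies equi-Lipschitz via the gradient estimate of Jiang \cite[Theorem 3.1]{Jiang} applied to each coordinate $\phi_j\circ F_i$ of the isometric embedding $\Phi:M\hookrightarrow\mathbb{R}^l$. This is what lets you pass to a subsequence and extract a \emph{uniform} limit map $F:M\to M$, which is the organizing object for both directions. (Also, a minor slip: with $\iota$ an isometric embedding you have $F_i^*g_M=(\iota\circ F_i)^*g_{\mathbb{R}^K}=\sum_j d(f_j\circ F_i)\otimes d(f_j\circ F_i)$, so the coefficients are $\delta_{jk}$, not ``$(g_M)_{jk}\circ F_i$''.)

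The real divergence from the paper is in $(b)\Rightarrow(a)$. You propose a direct quantitative route via the segment inequality and a covering argument, and you invoke \cite[Theorem~A.1.3]{ChCo} and \cite{HondaPeng} for this. Those references do not do what you need: Cheeger--Colding A.1.3 \emph{assumes} GH closeness and produces a diffeomorphism, and the Honda--Peng estimate is the H\"older bound in (1), again under GH closeness. So as written this step is a genuine gap. The paper's argument is cleaner and avoids the segment-inequality machinery entirely: having the uniform limit $F$ in hand, one observes that $\bar\Phi:=\lim(\Phi\circ F_i)$ has each component in $D(\Delta_M)$ with $L^\infty$ Laplacian (stability of the Laplacian), hence is $C^1$ by elliptic regularity; assumption (b) then gives $\bar\Phi^*g_{\mathbb{R}^l}=g_M$, so $\bar\Phi$, and therefore $F=\Phi^{-1}\circ\bar\Phi$, preserves lengths of smooth curves and is thus a metric isometry of the compact $M$. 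That immediately yields (a). This is exactly the ``limit map $F_\infty$'' idea you mention in your overall strategy but then abandon in the actual $(b)\Rightarrow(a)$ paragraph.
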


\begin{proof}
Firstly let us remark that the equi-regularity 
for the equi-regular maps $F_i$ 
implies the equi-Lipschitz continuity of $F_i$ because of the following: Fix a Riemannian isometric embedding $\Phi:M \hookrightarrow \mathbb{R}^l$. Applying \cite[Theorem 3.1]{Jiang} to $\Phi \circ F_i$ shows that $\Phi \circ F_i$ is equi-Lipschitz.
Then recalling that $\Phi$ is bi-Lipschitz onto its image, we see that $F_i=\Phi^{-1} \circ (\Phi \circ F_i)$ is equi-Lipschitz.

Thus in the sequel, after passing to a subsequence, with no loss of generality we can assume that $F_i$ converge uniformly to a Lipschitz map $F:M \to M$. 

    Assume (a). Then $F$ must be an isometry {of metric measure spaces}, in particular $F$ is smooth {because it is well-known that a map between Riemannian manifolds is a Riemannian isometry if and only if it is an isometry of metric spaces, see for instance \cite[Theorem 11.1]{Hel}}. 
    On the other hand, applying  \cite[Theorem 4.4]{ambrosio_honda_local}, after passing to a subsequence again, $\phi_j \circ F_i$ converge -strongly in $W^{1,2}$ to some $\bar \phi_j \in D(\Delta_M)$ with $\Delta_M \bar \phi_j \in L^{\infty}(M)$. Thus letting $\bar \Phi=(\bar \phi_i)_{i=1}^l$, we have $\Phi \circ F= \bar \Phi$. In particular $\bar \Phi$ is also a smooth isometric embedding. This observation allows us to conclude that $F_i^*g_M=(\Phi \circ F_i)^*g_{\mathbb{R}^l}$ $L^p$-strongly converge to $\bar \Phi^*g_M=g_M$ for any $1 \le p< \infty$. Since $g_{M_i}$ also $L^p$-strongly converge to $g_M$ (see \cite[Proposition 3.78]{Honda15}), we have (b).

    Assume (b). As observed above, with no loss of generality we can assume that $\phi_j \circ F_i$ $W^{1,2}$-strongly converge to some $\bar \phi_j \in D(\Delta_M)$ with $\Delta_M \bar \phi_j \in L^{\infty}(M)$. Then the elliptic regularity theory proves that $\bar \Phi$ is $C^1$, where $\bar \Phi=(\bar \phi_i)_{i=1}^l$.  On the other hand, our assumption (b) shows that $\bar \Phi^*g_{\mathbb{R}^l}$ coincides with the $L^p$-limit of $(\Phi \circ F_i)^*g_{\mathbb{R}^l}=F_i^*g_M$, namely $g_M$. Therefore $\bar \Phi$ preserve the length of any  smooth curve. Since $\Phi \circ F=\bar \Phi$, then $F=\Phi^{-1} \circ \bar \Phi$ also preserves the length of any smooth curve. Combining this with the compactness of $M$ implies that $F$ is an isometry. This easily implies (a).
    
    Finally let us prove the remaining statements.
    (1) is already proven in \cite[Theorem 1.2]{HondaPeng}.
    Thus it is enough to prove (2). The proof is divided into 3 steps as follows.

    \textbf{Step 1}: {\it Claim:} $f \circ F_i$ $W^{1,2}$-strongly converge to $f$ for any $f \in C^{\infty}(M)$. 
    
    This is a direct consequence of \cite[Corollary 1.10.4]{ambrosio_honda} (or \cite[Theorem 4.4]{ambrosio_honda_local}) with (a). 
    
    \textbf{Step 2}: {\it Claim:} For any smooth $1$-form $\omega$ on $M$, $F_i^*\omega$ $L^2$-strongly converge to $\omega$. 
    
    The proof is as follows. The desired result is correct in the case when $\omega=fdg$ for some $f, g \in C^{\infty}(M)$ because of \textbf{Step 1}. Since any smooth $1$-form on $M$ can be written as a linear combination of forms $fdg$, we conclude.

    \textbf{Step 3}: Denote by $\eta_i$ the harmonic representative of the closed $1$-form $F_i^*{\omega}$ on $M_i$. Thus we can find $f_i \in C^{\infty}(M_i)$ with $\int_{M_i}f_i\de\!\vol_{M_i}=0$ and 
    \begin{equation}\label{198}
    F_i^*{\omega}=\eta_i+df_i.
    \end{equation}
    Since \begin{equation}\label{199}
    \|F_i^*{\omega}\|_{L^2}^2=\|\eta_i\|_{L^2}^2+\|df_i\|_{L^2}^2,
    \end{equation}
    {after passing to a subsequence, we can find; 
    \begin{itemize}
    \item by Theorem \ref{th:211}, the $W^{1,2}$-weak limit $f \in W^{1,2}(M)$ of $f_i$;
    \item by Theorem \ref{th:harmonicVF}, the $L^2$-strong limit $\eta \in W^{1,2}_d(T^*M) \cap D(\de^{\star}) \cap W^{1,2}_C(TM)$ of $\eta_i$ with $d\eta=0$ and $d^{\star}\eta =0$ because of Lemma \ref{lem:div}, Theorems \ref{thm-Honda} and \ref{th:stablity d}.
    \end{itemize} 
    
    Note that weak convergence in the first item can be easily improved to $W^{1,2}$-strong convergence via (\ref{199})).
    }Thus letting $i \to \infty$ in (\ref{198}) yields
    \begin{equation}\label{197}
        \omega=\eta+df.
    \end{equation}
On the other hand, since $M$ is smooth, we have $W^{1,2}=H^{1,2}$ for vector fields. In particular $\eta$ is harmonic in weak sense, thus by elliptic regularity theory, $\eta$ is smooth and harmonic. Therefore $f$ is also smooth because of applying $\de^{\star}$ in both sides of (\ref{197}) with elliptic regularity theory again. Hence $\eta$ is the harmonic representative of $\omega$, which completes the proof.\end{proof}

\begin{remark}\label{approxrem}
Let $M_i$ be a Gromov-Hausdorff convergent sequence of closed Riemannian manifolds of dimension $n$ with $\ric_{M_i}\ge K$ for some $K \in \mathbb{R}$ to a closed Riemannian manifold $M$ of the same dimension. 
It is proved in \cite[Theorem 1.2]{HondaPeng} that for any sufficiently large $i$, there exists a diffeomorphism $F_i:M_i \to M$ such that $F_i$ is a $\delta_i$-Gromov-Hausdorff approximation for some $\delta_i \to 0^+$ and that (\ref{lip}) for this $F_i$ can be improved to 
\begin{equation}\label{lipim}
(1-\epsilon_i)\de_{M_i}(x, y)^{1+\epsilon_i} \le \de_M(F_i(x), F_i(y))\le (1+\epsilon_i) \de_{M_i}(x, y)
\end{equation}
for all $x, y \in M_i$, for some $\epsilon_i \to 0^+$. 
\end{remark}

\section{Torus rigidity}\label{sec:torusrigidity}

The purpose of this section is to establish a rigidity result for tori, which should be compared with the result of Gigli-Rigoni \cite{GR18}. Before presenting the result, we review the definition of analytic dimension.

Let's recall a crucial result from {Bru\`e} and Semola in \cite[Theorem 3.8]{BS18}: if $\sf X$ is an $\RCD(K, N)$ space for some $K \in \mathbb R$ and $1 \le N < + \infty$, then there exists a natural number $1 \le n \le N$ such that the tangent cone of $\sf X$ is the $n$-dimensional Euclidean space at $\m_X$-almost every point in $X$. A direct consequence of this ensures that, in this case, the tangent module $L^2(T \sf X)$ has a constant dimension equal to $n$ (see \cite[Corollary 3.10]{BS18}).

Using the terminology introduced by \cite[Definition 2.10]{Han19}, we refer to this $n$ as the analytic dimension of $\sf X$.

\begin{theorem}\label{thm-torus}
Let $\sf X$ be an $\RCD(K,N)$ space with $K\in \mathbb R$ and $N\in (1, \infty)$. Let $n$ be the analytic dimension of  $\sf X$. Moreover, let $V_1, \dots, V_n$ be  $L^2$-orthogonal vector fields  in
$H_C^{1,2}(T {\sf X}) \cap D(\Div)$  with $\nabla V_i=0$ and $\Div V_i = 0$ for $i = 1, \dots, n$. 
Then  $\sf X$ is isomorphic to a flat $n$-torus. 
\end{theorem}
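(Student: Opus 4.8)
\textbf{Proof plan for Theorem \ref{thm-torus}.}

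The plan is to first produce, from the parallel divergence-free vector fields $V_1,\dots,V_n$, a genuine flat-torus structure on $\sf X$ by "integrating" them via Regular Lagrangian Flows, following the strategy of Gigli--Rigoni \cite{GR18} but adapting the argument from the case $K=0$ to general $K\in\mathbb R$. First I would check that each $V_i$ satisfies the hypotheses of Theorem \ref{thm:RLF}: indeed $V_i\in D(\Div)$ with $\Div V_i=0$, $|V_i|\in L^2(\m_X)$ by assumption, and by Bochner's inequality together with $\nabla V_i=0$ one sees that $|V_i|$ is (after passing to its continuous representative) a bounded function, so the flow map $F_i^{t}$ exists and is $\m_X$-measure preserving for all $t$ (the constant $D$ in Theorem \ref{thm:RLF}(ii) is $0$ here since $\Div V_i=0$). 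Since $\nabla_{sym}V_i=0$, Corollary \ref{cor:monV} gives that both $V_i$ and $-V_i$ are $0$-monotone; by Remark \ref{rem-monotoneiff} and the arguments of \cite{GR18} this forces each flow $F_i^{t}$ to act by measure-preserving \emph{isometries} of $(X,\de_X)$, and one checks they commute (because $[V_i,V_j]=\nabla_{V_i}V_j-\nabla_{V_j}V_i=0$ as $\nabla V_i=\nabla V_j=0$), yielding an action of $\mathbb R^n$ on $\sf X$ by measure-preserving isometries.

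Next I would analyze the orbit structure of this $\mathbb R^n$-action. Because the $V_i$ are $L^2$-orthogonal and pointwise the tangent module has constant dimension $n$ equal to the analytic dimension (by \cite{BS18} recalled just before the statement), the fields $V_1,\dots,V_n$ span the tangent module $\m_X$-a.e., hence form a pointwise-a.e. frame; this should imply the $\mathbb R^n$-action is transitive (each orbit is open, or has full measure, so by isometry-invariance and connectedness it is everything). Transitivity gives $X\cong \mathbb R^n/\Gamma$ for some closed subgroup $\Gamma$ (the stabilizer), and compactness of $\sf X$ (it is an $\RCD(K,N)$ space; here one invokes that the analytic dimension equals the top dimension so $\sf X$ has no boundary and diameter considerations pin down compactness — alternatively compactness may need to be assumed or derived from the context) forces $\Gamma$ to be a lattice of full rank $n$. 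Then the metric: the distance $\de_X$ is $\mathbb R^n$-invariant and the infinitesimal data is $|V_i|^2,\langle V_i,V_j\rangle$ which are parallel (constant) because $\nabla V_i=0$, so the induced length metric on $\mathbb R^n/\Gamma$ is the flat metric determined by the constant Gram matrix $(\langle V_i,V_j\rangle)_{ij}$. This identifies $(X,\de_X,\m_X)$ with a flat $n$-torus with its Riemannian distance and a constant multiple of Lebesgue measure, i.e. an isomorphism of metric measure spaces.

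The main obstacle I expect is the passage from "parallel divergence-free vector fields" to "the RLF acts by isometries," i.e. making rigorous that $0$-monotonicity of $\pm V_i$ forces $W_2(\mu_t^0,\mu_t^1)$ to be constant along the flow and hence that $F_i^t$ preserves $\de_X$ rather than merely $W_2$ on absolutely continuous measures — this requires the second-order differentiation formula along $W_2$-geodesics and an approximation argument to upgrade from measures with bounded density to Dirac masses, exactly the delicate part of \cite{GR18}; transporting this to $K\neq 0$ is where "we carefully modify certain aspects of Gigli and Rigoni's proof" as promised in the introduction. A secondary technical point is verifying transitivity of the action rigorously: one must rule out lower-dimensional orbits using that the $n$ vector fields are $L^2$-orthogonal and genuinely span the $n$-dimensional tangent module almost everywhere, combined with the isometry property to propagate fullness of an orbit from a.e. to everywhere. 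Once these two points are secured, the identification of the metric and measure is essentially bookkeeping with the constant Gram matrix.
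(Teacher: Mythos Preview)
Your proposal is correct and follows essentially the same approach as the paper: produce an $\mathbb R^n$-action by measure-preserving isometries via the RLFs of the $V_i$ (the paper packages this into Propositions \ref{prop:boundW2}--\ref{prop-IsomMeasPres}), establish transitivity, and identify $X\cong\mathbb R^n/\Gamma$. Two small refinements relative to your sketch: commutation of the flows is obtained not by a Lie-bracket computation (which is only heuristic in the nonsmooth setting) but via the identity $F_V^t\circ F_W^s=F_{tV+sW}^1$ from \cite[Theorem 3.24]{GR18}, and full rank of $\Gamma$ is deduced not from compactness of $X$ (which is never assumed) but from $\m_X(X)<\infty$, itself a consequence of $\langle V_i,V_j\rangle$ being $\m_X$-a.e.\ constant (this uses $\nabla V_i=0$ together with the Leibniz rule in $H^{1,2}_C$, cf.\ Remark \ref{rem:WH}) combined with $V_i\in L^2(\m_X)$.
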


\begin{remark}\label{rmk-torus} In Theorem \ref{thm-torus}, if we  assume that $\sf X$ is non-collapsed, then $\mathrm{div} (V_i)=0$ can be dropped because 
of Lemma \ref{h=c}.
\end{remark}

The validity of Theorem \ref{thm-torus} follows by applying the same strategy as the proof presented in detail in \cite[Section 4]{GR18}. However, it is important to underline that the setting of \cite{GR18} is that of $\RCD(0, N)$ spaces. In this more general framework, where the lower bound on the curvature is no longer 0, we need to carefully modify certain arguments from the proof in  \cite{GR18}.

Below, we summarize the main results leading to the proof of Theorem \ref{thm-torus}. We pay special attention to the points where a vanishing covariant derivative or a divergence-free vector field hypothesis is needed. In particular, we provide an analogy with the smooth case, showing that if a vector field $V$ defined on an $\RCD(K, \infty)$ space satisfies $\Div V = 0$ and $\nabla {V} = 0$, then the RLF, denoted as $F^t$, associated with it is a measure-preserving isometry. This proposes a different argument compared to the one in \cite{GR18}.

\begin{remark}\label{rem:WH}
    We point out that the assumption in Theorem \ref{thm-torus} for the vector fields $V_i$'s to be in the space $H^{1, 2}_C(T \sf X)$, and not just in $W^{1, 2}_C(T \sf X)$, is actually needed. In fact, this is essential to ensure that the scalar product $\langle V_i, V_j \rangle$ is in $W^{1,1}(\sf X)$, as pointed out in \cite[Proposition 3.4.6]{giglinonsmooth}. In particular, if the vector fields are harmonic with $\nabla V=0$ or $K=0$, then they satisfy the hypothesis of Theorem \ref{thm-torus}.
     
     It is also natural to ask whether  a non-collapsed $\RCD(K, N)$ space $\sf X$ with $V_i \in W^{1,2}_C(T \sf X)$, $i=1,2,\ldots, N$, satisfying $\nabla V_i=0$ and $\int_X\langle V_i, V_j\rangle \de\!\m_X=\delta_{ij} $, must be isomorphic to a flat torus. However, this question has a negative answer. A closed interval $[0, \pi]$ endowed with $1$-dimensional Hausdorff measure  and the parallel vector field $\nabla x \in W^{1,2}_C$ provides a counterexample ($\nabla x \notin H^{1,2}_C$ due to the boundary condition). 
\end{remark}

\subsection{Behavior of the $W_2$ distance under the RLF of a vector field}

{Notice that, by a slight abuse of notation, in this section we sometimes use $T$ to denote the time of existence of the Regular Lagrangian Flows.}

\begin{proposition}\label{prop:boundW2} 
Let $V(t, \cdot ) \in W^{1,2}_{C,loc}(T{\sf X})$  for any $t \ge 0$ with $\left\||V|\right\|_{L^{\infty}}\leq C<\infty$ and for which there exists a unique RLF, $F\colon [0, \infty) \times {\sf X} \rightarrow {\sf X}$.   Let $x_0\in X$ and $\mu_0, \mu_1 \in \mathcal P(X, \m_{{X}})$ such that $\mu_i(B_R(x_0))=1$  for some $R>0$. Define $\mu_i^t =(F^t)_{\#}\mu_i \in \mathcal P(X, \m_{{X}})$, $i=0,1$. If we further assume that $V$ is infinitesimally $k$-monotone in $X$, then 
\begin{align}\label{eq:contW2}
W_2(\mu_0^t, \mu_1^t)^2 \leq e^{-2kt} W_2(\mu_0, \mu_1)^2 \qquad \forall t\in (0,T).
\end{align}
\end{proposition}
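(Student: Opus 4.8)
The plan is to differentiate the squared Wasserstein distance $t \mapsto W_2(\mu_0^t, \mu_1^t)^2$ along the flow and apply a Gr\"onwall-type argument, using the $k$-monotonicity of $V$ to control the derivative. First I would fix $t_0 \in (0,T)$ and take an optimal $W_2$-geodesic $(\nu_s)_{s\in[0,1]}$ between $\mu_0^{t_0}$ and $\mu_1^{t_0}$, together with Kantorovich potentials $(\phi, \phi^c)$ relative to $(\mu_0^{t_0}, \mu_1^{t_0})$. Since the RLF satisfies the continuity equation (Theorem \ref{thm:RLF}$(i)$) and the measures $\mu_i^{t_0}$ have bounded density and bounded support (using property $(ii)$ of Theorem \ref{thm:RLF} together with the assumption $\|\,|V|\,\|_{L^\infty}\le C$, which keeps supports bounded on compact time intervals), the first-variation formula for $W_2$ along solutions of the continuity equation gives, for a.e.\ $t$,
\begin{equation*}
\frac{\rm d}{{\rm d}t}\Big|_{t=t_0}\!\!\frac12 W_2(\mu_0^t, \mu_1^t)^2
= -\int_X \langle \nabla \phi, V_{t_0}\rangle\,{\rm d}\mu_0^{t_0} - \int_X \langle \nabla \phi^c, V_{t_0}\rangle\,{\rm d}\mu_1^{t_0}.
\end{equation*}

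Next, since $V_{t_0}$ is infinitesimally $k$-monotone in $X$, Remark \ref{rem-monotoneiff} shows it is $k$-monotone in $X$, so the right-hand side above is bounded above by $-k\,W_2(\mu_0^{t_0}, \mu_1^{t_0})^2$. This yields the differential inequality
\begin{equation*}
\frac{\rm d}{{\rm d}t}\, W_2(\mu_0^t, \mu_1^t)^2 \le -2k\, W_2(\mu_0^t, \mu_1^t)^2 \qquad \text{for a.e. } t\in(0,T),
\end{equation*}
and integrating (Gr\"onwall) from $0$ to $t$ gives $W_2(\mu_0^t,\mu_1^t)^2 \le e^{-2kt} W_2(\mu_0,\mu_1)^2$, using that $W_2(\mu_0^t, \mu_1^t)^2 \to W_2(\mu_0,\mu_1)^2$ as $t\to 0^+$, which follows from the initial condition in Theorem \ref{thm:RLF}$(i)$ and narrow continuity of $t\mapsto \mu_i^t$.

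The main obstacle I expect is making rigorous the differentiation of $t\mapsto W_2(\mu_0^t,\mu_1^t)^2$ and the first-variation identity: one needs to justify the absolute continuity of this map, that the time-derivative can be computed via a fixed pair of Kantorovich potentials at $t_0$ (Kantorovich duality plus the envelope/Danskin-type argument, or a direct estimate using both $\mu_0^t$ and $\mu_1^t$ as competitors), and that the $k$-monotonicity inequality applies to the relevant potentials $\phi, \phi^c$ (which requires them to be, say, bounded Lipschitz with bounded-support reference measures, obtainable by truncation since everything is concentrated in a fixed ball). A clean way to package all of this is to directly estimate $W_2(\mu_0^{t+h}, \mu_1^{t+h})^2$ by transporting via the flow from time $t$ to time $t+h$: use $(F^{t+h}\circ (F^t)^{-1})$-type coupling built from an optimal coupling at time $t$, expand to first order in $h$ using property $(iii)$ of Theorem \ref{thm:RLF} for the metric speed, and invoke the semigroup property from the Remark following Theorem \ref{thm:RLF}; this avoids differentiating Kantorovich potentials directly and reduces everything to the monotonicity bound on $\nabla_{sym}V$.
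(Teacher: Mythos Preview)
Your proposal is correct and follows essentially the same strategy as the paper: differentiate $t\mapsto W_2(\mu_0^t,\mu_1^t)^2$ using Kantorovich potentials and the continuity equation, bound the derivative via (infinitesimal) $k$-monotonicity through Remark \ref{rem-monotoneiff}, and conclude with Gr\"onwall. The one point where the paper is more explicit than your sketch is precisely the ``main obstacle'' you flag: rather than invoking a two-sided first-variation formula directly, the paper freezes one argument at a time, first proving $\frac{d}{dt}\tfrac12 W_2(\mu_0^t,\mu_1^s)^2=-\int_X\langle V,\nabla\phi^{t,s}\rangle\,d\mu_0^t$ via \cite[Theorem 4.6]{GigliHan}, then writing
\[
\frac{d}{dt}W_2(\mu_0^t,\mu_1^t)^2=\frac{d}{dr}\Big|_{r=t}W_2(\mu_0^t,\mu_1^r)^2+\frac{d}{ds}\Big|_{s=t}W_2(\mu_0^s,\mu_1^t)^2
\]
at differentiability points, which produces exactly your two-integral expression and makes the envelope-type step rigorous.
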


We  follow the proof of $(1) \Rightarrow (2)$ of  \cite[Theorem 3.18]{han18}. We outline the main steps of the proof for completeness.

\begin{proof}  
We consider $\mu_0, \mu_1\in \mathcal P(X, \m_X)$ concentrated in $B_{R}(x_0)$  for some $R>0$ and $(\mu^t_i)_{t\in (0, T]}\in \mathcal P(X, \m_X)$ be the evolution of $\mu_i$ under the RLF. 
\\

\smallskip

{\it Claim:} For $s\in (0,T)$ fixed it holds 
\begin{align}\label{id:derivative}\frac{d}{dt} \frac{1}{2}W_2(\mu_0^t, \mu_1^s)^2 = -\int_X \langle V, \nabla \phi^{t,s}\rangle \de\!\mu^t_0 \ \mbox{ for a.e. } t\in [0,T],
\end{align}
where $\phi^{t,s}$ is a Kantorovich potential relative to $(\mu_0^t, \mu_1^s)$.  Since $\mu^t_0$ and $\mu^s_1$ have bounded support, we can choose the Kantorovich potential to be Lipschitz. 

The claim then follows from \cite[Theorem 4.6]{GigliHan} since $(\mu^t_0)_{t\in [0,T]}$ solves the continuity equation with respect to the vector field $V$. 

\smallskip

Now, the function $t\in [0,T] \mapsto \int_X \langle V, \nabla \phi^{t,s}\rangle d{\mu_0^t}$ is continuous.  Hence, it follows that \eqref{id:derivative} holds for every $t\in [0,T]$, $t\in (0,T) \mapsto \frac{1}{2} W_2(\mu_0^t, \mu_1^s)^2$ is  $C^1$ and in particular, we can take the derivative at $t=s$. Pick $t\in (0,T)$ where $W_2(\mu^t_0, \mu_1^t)^2$ is differentiable. 
Then 
\begin{align*}
&\frac{d}{dt} W_2(\mu_0^t, \mu_1^t)^2=\lim_{h\downarrow 0} \frac{W_2(\mu_0^{t+h}, \mu_1^{t+h})^2 - W_2(\mu_0^t, \mu_1^t)^2}{h}\\
&= \lim_{h\downarrow 0}\frac{W_2(\mu_0^{t+h}, \mu_1^{t+h})^2-W_2(\mu_0^{t+h}, \mu_1^t)^2 + W_2(\mu_0^{t+h}, \mu_1^t)^2 - W_2(\mu_0^t, \mu_1^t)^2}{h}\\
&= \frac{d}{dr}\Big|_{r=t} W_2(\mu_0^t, \mu_1^r)^2 + \frac{d}{ds}\Big|_{s=t} W_2(\mu_0^s, \mu_1^t)^2.
\end{align*}

In combination with \eqref{id:derivative} at $r=t$ and $s=t$ and $k$-monotonicity 
(recall that by Remark \ref{rem-monotoneiff}, if $V$ is infinitesimally $k$-monotone in $X$, then $V$ is $k$-monotone in $X$) this yields
\[\frac{\de}{\de\!t} W_2(\mu_0^t, \mu_1^t)^2\leq - 2k W_2({\mu_0^t}, {\mu_1^t})^2 \mbox{ for a.e. $t\in (0,T)$}.\]
Applying Gr\"onwall's lemma finishes the proof of the proposition. 
\end{proof}

We will now prove that, under mild assumptions on the vector field $V$, the RLF $F^t_V$ associated with $V$ and $F^t_{-V}$, the one associated with $-V$, are inverses of each other. We follow the proof in \cite[Lemma 3.18]{GR18}, where the vector field $V$ is defined on an $\RCD(0, \infty)$ space and is assumed to be harmonic. Before proceeding with the proof, let us recall the following fact (see \cite[Section 3.3]{GR18}):
\begin{center}
    If $S', S \colon {\sf X} \to {\sf X}$ are two Borel maps such that\\ 
    $S'_\ast \mu = S_\ast \mu$  for all $\mu \in \mathcal P(X, \m)$ with bounded support and bounded density, \\
    then $S'= S$ $\m$-a.e.
\end{center}

\begin{lemma}
Let $V \colon [0,T] \rightarrow L^2_{loc}(T{\sf X})$ be a Borel vector field such that $V_t\in D(\Div)$ for every $t\in [0,T]$. Assume that $\left\||V_t|\right\|_{L^2(\m)} \in L^1(0,T), \left\|\Div(V_t)\right\|_{L^\infty}\in L^\infty([0,T])$ and $\left\|\nabla_{sym} V_t\right\|_{L(T^{\otimes 2}X)}\in L^1([0,T])$. Then for every $t \ge 0$ we have the validity $\m$-a.e. of the following identities
\[
F^t_{-V} \circ F^t_{V} = \text{Id} \quad \text{ and } \quad F^t_{V} \circ F^t_{-V} = \text{Id}.
\]
\end{lemma}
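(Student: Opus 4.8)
The strategy is to show that the composition $G^t := F^t_{-V} \circ F^t_{V}$ satisfies the defining properties of a Regular Lagrangian flow associated to the trivial vector field $0$, and then invoke the uniqueness statement in Theorem \ref{thm:RLF}$(iv)$ to conclude that $G^t$ coincides $\m$-a.e. with the identity map (which is visibly the RLF of the zero vector field). The symmetric statement $F^t_V \circ F^t_{-V} = \mathrm{Id}$ follows by exchanging the roles of $V$ and $-V$, noting that the hypotheses on $V$ (in particular the bounds on $|V_t|$, $\Div V_t$ and $\nabla_{\mathrm{sym}} V_t$) are symmetric under $V \mapsto -V$.

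\textbf{Key steps.} First I would record that $F^t_V$ and $F^t_{-V}$ exist and are unique by Theorem \ref{thm:RLF}, using that the three integrability assumptions are satisfied (for $-V$ one uses $\Div(-V_t) = -\Div(V_t)$, so $(\Div(-V_t))^- = (\Div(V_t))^+$ is still in $L^\infty$, since we assumed the full $L^\infty$ bound on $\Div(V_t)$, not merely on its negative part). Next, I would verify the bounded-compression property: $(G^t)_\sharp \m \le C' \m$, which follows from composing the two compression bounds $(F^t_{\pm V})_\sharp \m \le C \m$. Then, for the continuity-equation characterization, I would take $\mu_0 = f\m$ with $f \in L^1 \cap L^\infty$ and consider $\mu_t := (G^t)_\sharp \mu_0 = (F^t_{-V})_\sharp \big( (F^t_V)_\sharp \mu_0 \big)$; the point is to compute $\frac{d}{dt} \int_X g\, \de \mu_t$ for $g \in \lip(X,\de) \cap L^\infty$ and show it vanishes. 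Here one must differentiate a composition of two flows, which is where the semigroup remark after Theorem \ref{thm:RLF} and the chain rule for the continuity equation come into play: writing $s \mapsto F^s_V(x)$ for the inner curve and applying property $(3)$ of the RLF definition to $g \circ F^{t}_{-V}$ along the $V$-flow, and property $(3)$ to $g$ along the $-V$-flow, the two contributions $\de g(V_t)$ and $\de g(-V_t) = -\de g(V_t)$ cancel pointwise. Finally, by the uniqueness clause $(iv)$ of Theorem \ref{thm:RLF} applied to the zero vector field, whose RLF is the identity, we get $(G^t)_\sharp \mu = (\mathrm{Id})_\sharp \mu = \mu$ for every $\mu \in \mathcal P(X, \m_X)$ with bounded density, and hence $G^t = \mathrm{Id}$ $\m$-a.e. by the Borel-map criterion recalled just before the lemma.

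\textbf{Main obstacle.} The delicate point is making rigorous the cancellation of the drift terms when differentiating $t \mapsto g(G^t(x))$, since $G^t$ is a composition of two \emph{a priori only measurable} maps and the chain rule for RLFs is only available in the weak/integrated form of property $(3)$ in the definition of RLF (or, more robustly, in the continuity-equation form of Theorem \ref{thm:RLF}$(i)$). I expect the cleanest route is to avoid a pointwise chain rule altogether and instead argue directly at the level of the continuity equation: show that $\mu_t := (G^t)_\sharp \mu_0$ solves $\frac{d}{dt}\int g\, \de\mu_t = 0$ with $\mu_0 = \mu_t|_{t=0}$, by first letting $\nu_s := (F^s_V)_\sharp \mu_0$ solve the continuity equation for $V$, then observing $\mu_t = (F^t_{-V})_\sharp \nu_t$ and using the time-dependent version of Theorem \ref{thm:RLF}$(i)$ together with an elementary computation of $\frac{d}{dt}\int g\, \de (F^t_{-V})_\sharp \nu_t$ that splits into the $-V$-transport of $\nu_t$ and the intrinsic time-derivative of $\nu_t$, which is the $V$-transport; these are exact opposites. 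One should also check the initial condition $\lim_{t\to 0}\int g\,\de\mu_t = \int g\,\de\mu_0$ and that $\mu_t$ has the requisite $L^2$-density bound, both of which follow by composing the corresponding assertions in Theorem \ref{thm:RLF}$(i)$--$(ii)$ for the two flows. With these in hand, uniqueness closes the argument.
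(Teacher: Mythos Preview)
Your overall strategy---reduce to uniqueness of the RLF---matches the paper's, but the implementation you propose runs into the obstacle you yourself flag, and your suggested resolution does not quite close it. When you split $\frac{d}{dt}\int g\, \de (F^t_{-V})_\sharp \nu_t$ into ``the $-V$-transport of $\nu_t$'' plus ``the intrinsic time-derivative of $\nu_t$'', the second term amounts to differentiating $r \mapsto \int (g\circ F^t_{-V})\, \de\nu_r$ via the continuity equation satisfied by $(\nu_r)$; this would require $g\circ F^t_{-V}$ to be an admissible test function (Lipschitz, as in Theorem~\ref{thm:RLF}$(i)$), whereas a priori $F^t_{-V}$ is only a Borel map with bounded compression. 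The same regularity issue obstructs verifying directly that $t\mapsto G^t(x)$ is continuous for $\m$-a.e.\ $x$, since the input to the outer flow varies with $t$.

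The paper sidesteps the ``two moving flows'' difficulty altogether. Fixing (without loss of generality) $t=1$, it compares the curves
\[
s\mapsto \mu_s:=(F^{1-s}_V)_\sharp\mu \qquad\text{and}\qquad s\mapsto \tilde\mu_s:=(F^s_{-V})_\sharp(F^1_V)_\sharp\mu,
\]
notes that $\mu_0=\tilde\mu_0=(F^1_V)_\sharp\mu$, and observes that \emph{both} solve the continuity equation for the \emph{same} drift $-V$ (the first because running the $V$-flow backwards in time yields a solution of the continuity equation for $-V$). Uniqueness (Theorem~\ref{thm:RLF}$(iv)$) then gives $\mu_1=\tilde\mu_1$, i.e.\ $\mu=(F^1_{-V}\circ F^1_V)_\sharp\mu$ for every $\mu$ with bounded density and support, hence $F^1_{-V}\circ F^1_V=\mathrm{Id}$ $\m$-a.e.\ by the Borel-map criterion you recalled. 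The key simplification is that in each of the two curves only \emph{one} flow parameter moves, so the continuity equation is always tested against the genuine Lipschitz function $g$, never against $g$ composed with a flow map.
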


\begin{proof}
First observe that the assumptions of the lemma are such that the vector field $V \colon [0, T] \to L^2_{\text{loc}}(T {\sf X})$ satisfies the hypotheses of Theorem \ref{thm:RLF} as it also does the vector field $-V$.  Now, without loss of generality, we prove the identities for $t = 1$. Let $\mu \in \mathcal P(X, \m_X)$ be with bounded support and bounded density and consider the two curves \[[0, 1] \ni t \mapsto \mu_t, \tilde \mu_t \in \mathcal P(X, \m_X)\]
defined as
\[
\mu_t:= \big(F^{1-t}_V\big)_\sharp \mu \quad \text{ and } \quad \tilde \mu_t := \big( F^t_{-V}\big)_\sharp \big( F^1_V\big)_\sharp \mu.
\]
Notice that $\mu_0  = \tilde \mu_0$ and that both curves solve the continuity equation in the sense of \eqref{eq:CE} associated with the vector field $-V$. The uniqueness of the RLF expressed in Theorem \ref{thm:RLF}-$(iv)$ ensures that $\mu_1 = \tilde \mu_1$, namely
\[\mu = \big(F^1_{-V} \circ F^1_V \big)_\sharp \mu.\]
We can then conclude by the arbitrariness of the measure $\mu$.
\end{proof}

\subsection{Behavior of distance under the flow of a parallel and divergence-free vector field}

Next, we improve the results of the previous subsection by assuming that the vector field $V \colon [0, T] \to L^2_{\text{loc}}(T X)$ has a vanishing covariant derivative. We then assume that $V$ is divergence-free and conclude that its RLF has a measure-preserving continuous representative. We conclude this section by proving Theorem \ref{thm-torus}.

\begin{proposition}\label{prop:isommu}
Under the hypotheses of Proposition \ref{prop:boundW2}, additionally assume that $\nabla V\equiv 0$ and $|| \Div(V_t) ||_{L^\infty} \in L^\infty([0, T])$ on $X$. Then for $\mu_i$ supported in $B_R(x_0)$ and $(\mu_i^t)_{ t\in (0,T)}$ as before it follows that 
\begin{equation}
\label{eq:isometryW2}
W_2(\mu_0^t, \mu_1^t)= W_2(\mu_0, \mu_1) \qquad \forall t\in (0,T).
\end{equation}
\end{proposition}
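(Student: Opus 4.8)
The plan is to show that the monotonicity inequality from Proposition~\ref{prop:boundW2} holds in both directions, which forces equality. The key observation is that when $\nabla V \equiv 0$, the symmetric covariant derivative $\nabla_{sym}V$ also vanishes, so by Corollary~\ref{cor:monV} both $V$ and $-V$ are $0$-monotone in $X$ (equivalently, by Remark~\ref{rem-monotoneiff}, infinitesimally $0$-monotone). Hence Proposition~\ref{prop:boundW2} applies with $k=0$ to \emph{both} vector fields $V$ and $-V$.

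First I would apply Proposition~\ref{prop:boundW2} to $V$ with $k=0$: for $\mu_0,\mu_1$ supported in $B_R(x_0)$ and $\mu_i^t=(F^t_V)_\sharp\mu_i$, this gives
\[
W_2(\mu_0^t,\mu_1^t)^2 \le W_2(\mu_0,\mu_1)^2 \qquad \forall t\in(0,T).
\]
Next I would run the same argument for the vector field $-V$, whose RLF is $F^t_{-V}$ by the previous lemma. Starting from the measures $\nu_i := \mu_i^t = (F^t_V)_\sharp \mu_i$ (which are supported in a ball $B_{R'}(x_0)$ with $R' = R + Ct$ since $\||V|\|_{L^\infty}\le C$, so Proposition~\ref{prop:boundW2} is applicable), and using that $F^s_{-V}\circ F^t_V$ evaluated appropriately returns $\mu_i$ via the identity $F^t_{-V}\circ F^t_V = \mathrm{Id}$ $\m$-a.e., we obtain
\[
W_2(\mu_0,\mu_1)^2 = W_2\bigl((F^t_{-V})_\sharp \nu_0, (F^t_{-V})_\sharp \nu_1\bigr)^2 \le W_2(\nu_0,\nu_1)^2 = W_2(\mu_0^t,\mu_1^t)^2.
\]
Combining the two inequalities yields \eqref{eq:isometryW2}.

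The main technical point to be careful about is the bookkeeping of supports: Proposition~\ref{prop:boundW2} requires the initial measures to be concentrated in some ball, and one must check that $\mu_i^t$ is concentrated in a (possibly larger, but finite) ball, which follows from the uniform bound $\||V|\|_{L^\infty}\le C$ together with property $(iii)$ of Theorem~\ref{thm:RLF} controlling the metric speed of the flow curves. One should also make sure that the hypotheses of Proposition~\ref{prop:boundW2} are genuinely met by $-V$ — in particular the infinitesimal $0$-monotonicity of $-V$, which is exactly what Corollary~\ref{cor:monV} provides, and the bound $\||\Div(V_t)|\|_{L^\infty}\in L^\infty([0,T])$, which is symmetric in $V \leftrightarrow -V$ and is assumed here. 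No single step is a serious obstacle; the argument is essentially a symmetrization of Proposition~\ref{prop:boundW2} using the invertibility of the RLF established in the preceding lemma.
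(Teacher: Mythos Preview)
Your argument is correct, but it follows a slightly different route from the paper's. The paper does not invoke the inverse flow $F^t_{-V}$ or the preceding invertibility lemma at all; instead, it revisits the proof of Proposition~\ref{prop:boundW2} and observes that the \emph{only} place an inequality appears is the step where $k$-monotonicity is used to bound $\frac{d}{dt}W_2(\mu_0^t,\mu_1^t)^2$. Since $\nabla V\equiv 0$ forces $\nabla_{sym}V\equiv 0$, Corollary~\ref{cor:monV} gives that both $V$ and $-V$ are infinitesimally $0$-monotone, so that derivative is simultaneously $\le 0$ and $\ge 0$, hence identically zero, and $W_2(\mu_0^t,\mu_1^t)$ is constant in $t$.

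Your version instead packages the two inequalities externally: one application of Proposition~\ref{prop:boundW2} to $V$, one to $-V$ starting from the pushed-forward measures, glued together by $F^t_{-V}\circ F^t_V=\mathrm{Id}$. This is perfectly valid and arguably more self-contained, since it treats Proposition~\ref{prop:boundW2} as a black box rather than reopening its proof. The cost is that you must check a few extra hypotheses (existence and uniqueness of the RLF for $-V$, bounded support of $\mu_i^t$), which you handle correctly. The paper's approach is shorter but requires the reader to trace through the earlier proof; yours is longer but more modular.
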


\begin{proof} 
By Proposition \ref{prop:boundW2} we have the inequality \[W_2(\mu_0^t, \mu_1^t) \leq W_2(\mu_0, \mu_1) \qquad \forall t\in {(0, T)},\]
since $\nabla V\equiv 0$ implies infinitesimal $0$-monotonicity, as shown in Corollary \ref{cor:monV}. 

We observe that the only inequality that enters the proof of the previous proposition comes from the infinitesimal $k$-monotonicity that also holds  in the other direction by assumption.
\end{proof}

With the previous proposition we can now follow the strategy proposed to prove $(3) \Rightarrow (4)$ in \cite[Theorem 3.16]{han18} to show that the RLF has a continuous representative which is also an isometry.

\begin{proposition}\label{prop:isom}
    Assume that $\nabla V \equiv 0$ and $|| \Div(V_t) ||_{L^\infty} \in L^\infty([0, T])$ on $X$. Then for every $t \in \R$ the RLF $F^t$ associated to $V$ has a unique continuous representative which is an isometry, namely
    \begin{equation}\label{eq:isometry}
    \de(F^t(x), F^t(y)) = \de (x, y) \qquad \forall \, x, y \in X.
    \end{equation}
\end{proposition}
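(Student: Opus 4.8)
The plan is to upgrade the metric-level statement of Proposition~\ref{prop:isommu} to a pointwise statement by testing $W_2$ between highly concentrated measures. First I would fix $x,y\in X$ and, for small $\rho>0$, take $\mu_0^\rho = \m_X(B_\rho(x))^{-1}\m_X|_{B_\rho(x)}$ and $\mu_1^\rho = \m_X(B_\rho(y))^{-1}\m_X|_{B_\rho(y)}$; these have bounded density and bounded support, so Proposition~\ref{prop:isommu} applies and gives $W_2((F^t)_\sharp\mu_0^\rho,(F^t)_\sharp\mu_1^\rho) = W_2(\mu_0^\rho,\mu_1^\rho)$ for all $t\in(0,T)$. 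As $\rho\to 0$ the right-hand side converges to $\de(x,y)$ by the Bishop--Gromov volume control on $\RCD(K,N)$ spaces (the measures concentrate at $x$ and $y$). For the left-hand side one uses Theorem~\ref{thm:RLF}-$(iii)$: since $\||V_t|\|_{L^\infty}\le C$, the flow moves $\m_X$-a.e.\ point with metric speed $\le C$, so $\de(F^t(x'),x')\le Ct$ for $\m_X$-a.e.\ $x'$, whence $(F^t)_\sharp\mu_i^\rho$ is concentrated near $F^t(x)$ (resp.\ $F^t(y)$) once $F^t$ is known to have a continuous representative near $x,y$.

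The order of steps I would carry out: (i) invoke Proposition~\ref{prop:isommu} and the lemma giving $F^t_{-V}\circ F^t_V = \mathrm{Id} = F^t_V\circ F^t_{-V}$ $\m_X$-a.e., so that $F^t$ is essentially a bijection with $\m_X$-essentially-Lipschitz inverse; (ii) combine the inequality $W_2((F^t)_\sharp\mu_0^\rho,(F^t)_\sharp\mu_1^\rho)\le \de(x,y)$ with the a.e.\ speed bound to deduce that $F^t$ admits a representative that is $1$-Lipschitz in the sense that $\de(F^t(x),F^t(y))\le\de(x,y)$ after redefinition on a null set --- concretely, pick Lebesgue points and pass to the limit $\rho\to 0$ in $W_2(\mu_0^\rho,\mu_1^\rho)\ge W_2((F^t)_\sharp\mu_0^\rho,(F^t)_\sharp\mu_1^\rho)\ge \de(F^t(x),F^t(y)) - \Psi(\rho)$ using the metric-speed estimate to control the error $\Psi(\rho)$; (iii) apply the same argument to $-V$ to get the reverse Lipschitz bound for $F^t_{-V}$, and use $F^t_{-V}\circ F^t_V=\mathrm{Id}$ to conclude $F^t$ is a bi-Lipschitz-with-constant-$1$ map, i.e.\ an isometry; (iv) a $1$-Lipschitz bijection of a compact metric space with $1$-Lipschitz inverse is an isometry, and moreover it is automatically continuous, giving the unique continuous representative; uniqueness follows since two continuous maps agreeing $\m_X$-a.e.\ on a space with $\supp\m_X=X$ agree everywhere.

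The main obstacle I anticipate is the passage from the $\m_X$-a.e.\ statements (the RLF is only defined up to $\m_X$-null sets, and the identities $F^t_{-V}\circ F^t_V=\mathrm{Id}$, the speed bound, and $W_2$-preservation all hold only a.e.) to a genuinely pointwise, everywhere-defined isometry. The standard trick is to first establish the Lipschitz bound $\de(F^t(x),F^t(y))\le \de(x,y)$ for $x,y$ in a set of full $\m_X$-measure, then observe that a Lipschitz map defined on a dense subset of a complete metric space extends uniquely and continuously to all of $X$, and this extension is the desired representative; one must check that this extension still satisfies the semigroup and inverse relations, which follows from continuity and density. A secondary technical point is justifying $W_2(\mu_0^\rho,\mu_1^\rho)\to\de(x,y)$ and the analogous statement for the pushed-forward measures uniformly enough in $t$ --- this is where the uniform bound $\||V|\|_{L^\infty}\le C$ and Bishop--Gromov regularity of $\m_X$ on balls are essential, and I would cite the relevant volume doubling estimate rather than reprove it.

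Finally, once $F^t$ is shown to be an isometry for $t\in(0,T)$, extending to all $t\in\R$ is routine: the semigroup property $F^{t+s}=F^t\circ F^s$ (noted in the remark after Theorem~\ref{thm:RLF}) lets one iterate, and $F^t_{-V}=(F^t_V)^{-1}$ handles negative times, so the family $\{F^t\}_{t\in\R}$ is a one-parameter group of isometries.
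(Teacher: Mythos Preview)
Your overall localization strategy---test $W_2$-preservation on measures concentrated at points and pass to the limit---is exactly the paper's. The gap is in your step (ii): the metric-speed estimate $\de(F^t(x'),x')\le Ct$ tells you only that $(F^t)_\sharp\mu_0^\rho$ is supported in $B_{Ct+\rho}(x)$; it says nothing about concentration near $F^t(x)$, so it cannot control the error $\Psi(\rho)$ you need. What actually gives $(F^t)_\sharp\mu_0^\rho\to\delta_{F^t(x)}$ for $\m_X$-a.e.\ $x$ is approximate continuity of $F^t$, i.e.\ $\fint_{B_\rho(x)}\de(F^t(x'),F^t(x))\,\de\m_X(x')\to 0$, which follows from Lebesgue differentiation for the bounded Borel functions $x'\mapsto\de(F^t(x'),z)$ on a doubling space with $z$ ranging over a countable dense set. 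You do mention Lebesgue points, so the idea is there, but you attribute the bound to the wrong mechanism; as written, the argument does not close.

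The paper avoids Lebesgue differentiation entirely and proceeds somewhat differently at this step. Rather than working a.e.\ and extending, it shows directly that for \emph{every} $x_0$ the limit $U_t(x_0):=\lim_{n}(F^t)_\sharp\mu_n$ (along any $\mu_n\to\delta_{x_0}$ in $W_2$ with bounded density and support) exists and is a Dirac mass. Existence and sequence-independence come from the $W_2$-contraction \eqref{eq:contW2} (a Cauchy argument); that the limit is a Dirac is proved by contradiction using uniqueness of the RLF: if $\supp U_{\bar t}(x_0)$ contained two distinct points, one could split the lifted plan over $\m_X(B_{1/n}(x_0))^{-1}\m_X|_{B_{1/n}(x_0)}$ into two positive-mass pieces whose time-$\bar t$ marginals stay apart, producing two approximating sequences with the same limit $\delta_{x_0}$ but distinct evolved measures at time $\bar t$. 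This defines the continuous representative at every point in one stroke, after which \eqref{eq:isometry} follows immediately from Proposition~\ref{prop:isommu} applied to $\mu_0=\delta_x$, $\mu_1=\delta_y$; no separate $1$-Lipschitz-plus-inverse argument via $-V$ is needed, since the equality (not just the inequality) in $W_2$ is already available.
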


\begin{proof}
    Fix an arbitrary point $x_0 \in X$. Let us start by proving the following:
    
\emph{Claim:} The curve of measures $(F^t)_{\sharp} \delta_{x_0}$, starting at $\delta_{x_0} \in \mathcal P_2(X)$, is uniquely defined and supported on a curve in $X$. 

\emph{Proof of the claim}: Let $\{\mu_n\}_{n \in \N} \subset \mathcal P_2(X, \m_X)$ be a sequence of measures supported in $B_R(x_0)$ for some $R > 0$ such that $\lim_{n \to \infty} W_2(\mu_n, \delta_{x_0}) = 0$. By \eqref{eq:contW2} we get that the flows $\mu_n^t = (F^t)_{\sharp}\mu_n$  starting from $\mu_n$ are uniformly converging to a curve as $n \to \infty$. We denote this curve by $\big(U_t({x_0})\big)_t \subset \mathcal P_2(X, \m_X)$. We remark that $\big(U_t({x_0})\big)_t$ is independent of the choice of the approximating sequence $\{\mu_n\}_{n \in \N}$. 

We now show that $U_t(x_0)$ is supported on a single point in $X$. Assume that there exists $\bar t > 0$ for which $\text{supp}(U_{\bar t}(x_0))$ contains at least two distinct points $a, b\in X$.
Let $\Pi^n \in \mathcal P(C([0, \infty), X))$ be the lifting of the curve $(F^t)_\sharp \big(\m_X(B_{1/n}(x_0))^{-1} \m_X |_{B_{1/n}(x_0)} \big)$. The uniqueness of the RLF (see $(iv)$ in Theorem \ref{thm:RLF}) ensures the existence of two sets $\Gamma^1_{ n}, \Gamma^2_{n} \subset \text{supp}(\Pi^n)$ with the property that $\m_X(\Gamma^1_{n}), \m_X(\Gamma^2_{n}) > 0$ and that 
\[
\inf\{ \de(\gamma_{\bar t}^1, \gamma^2_{\bar t}) : \gamma^1 \in \Gamma^1_{n}, \gamma^2 \in \Gamma^2_{n} \} > \frac12 \de(a, b) > 0\]
for $n$ large enough. Thus the two sequences of curves $\mu_{n}^{t, i} := (e_t)_\sharp \big(\Pi^n(\Gamma^i_n)^{-1} \Pi^n |_{\Gamma^i_n} \big)$, $i = 1, 2$ are such that $\mu_{n}^{t, i} \to \delta_{x_0}$, but $\mu_{n}^{\bar t, 1} \neq \mu_{n}^{\bar t, 2}$, contradicting the uniqueness of $U_t(x_0)$.

From the above construction, it follows that $U_t(x) = F^t(x)$ for any point $x \in X$ where the curve $(F^t(x))_t$ is well-defined.

Now we have just to see that the above property holds for any $x \in X$ and to do so we extend $F^t$ to the whole space in the following way: for any $x \in X$, we set $(F^t)_\sharp \delta_x = U_t(x) =\delta_{F^t(x)}$.

Finally, the identity in \eqref{eq:isometry} follows immediately from Proposition \ref{prop:isommu}
 
by considering $\mu_0 = \delta_x$ and $\mu_1 = \delta_y$.
\end{proof}

From now on we will also assume that $V$ is a divergence-free vector field. In this setting the following holds (see \cite[Proposition 3.19]{GR18}).

\begin{proposition}[Preservation of the measure]\label{prop-presMeas}
Let $V \colon [0,T] \rightarrow L^2_{loc}(TX)$ be a vector field for which there exists a unique RLF and such that $\Div(V_t) \equiv 0$. Then for every $t \in \R$ it holds
\[
(F^t)_\sharp \m_X = \m_X.
\]
\end{proposition}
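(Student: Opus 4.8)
The plan is to identify the density $\rho_t := \dfrac{\de (F^t)_\sharp \m_X}{\de\m_X}$ and show that $\rho_t \equiv 1$, by trapping it between its first and second moments. By the first property in the definition of an RLF we have $(F^t)_\sharp \m_X \le C\m_X$, so $\rho_t$ is a well-defined nonnegative function; and since pushforward preserves total mass, $\int_X \rho_t \, \de\m_X = \m_X(X)$. Up to rescaling $\m_X$ (which affects nothing in the statement) we may assume $\m_X(X) = 1$.

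The heart of the matter is then an application of item $(ii)$ of Theorem \ref{thm:RLF} to the initial datum $\mu_0 = \m_X$, which is a probability measure with density $f \equiv 1 \in L^2(\m_X)$: since the hypothesis $\Div(V_t) \equiv 0$ forces $\|(\Div V_t)^-\|_{L^\infty} = 0$, the constant $D$ appearing there --- which depends only on $\|(\Div V_t)^-\|_{L^\infty}$ --- vanishes, so that $\|\rho_t\|_{L^2(\m_X)} \le \|1\|_{L^2(\m_X)} = 1$. Combining this with the mass identity above,
\[
\int_X (\rho_t - 1)^2 \, \de\m_X = \|\rho_t\|_{L^2(\m_X)}^2 - 2\int_X \rho_t \, \de\m_X + 1 \le 1 - 2 + 1 = 0,
\]
whence $\rho_t = 1$ $\m_X$-a.e., i.e.\ $(F^t)_\sharp \m_X = \m_X$ for every $t \ge 0$. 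For negative $t$ one runs the identical argument for the vector field $-V$ --- which also satisfies $\Div(-V_t) \equiv 0$ and hence the standing assumptions of Theorem \ref{thm:RLF} --- and invokes the preceding lemma, by which $F^t_{-V}$ is the $\m_X$-a.e.\ inverse of $F^t_V$; reading $F^{-t}_V := F^t_{-V}$ then gives the claim for all $t \in \mathbb R$.

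The proof is soft: the only point demanding care is the legitimacy of item $(ii)$ of Theorem \ref{thm:RLF} with the sharp constant $D = 0$, which amounts to checking that $V$ and $-V$ meet that theorem's standing hypotheses on $[0,T]$ --- in the present setting $\|(\Div V_t)^-\|_{L^\infty}$ is identically zero, so this is immediate --- together with the elementary Hilbert-space identity displayed above. Alternatively, one may observe that both $\rho_t\m_X$ and $\m_X$ solve the continuity equation driven by $V_t$ with the same initial value (the latter because $\Div V_t \equiv 0$) and conclude by uniqueness; the $L^2$-squeeze above has the merit of relying only on facts already recorded.
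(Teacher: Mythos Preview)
Your argument is correct. Note that the paper does not actually prove this proposition in-text; it simply refers to \cite[Proposition 3.19]{GR18}. The standard proof there (and the one you mention as an alternative at the end) proceeds via uniqueness for the continuity equation: with $\Div V_t\equiv 0$ the constant curve $t\mapsto \m_X$ solves \eqref{eq:CE}, and so does $t\mapsto (F^t)_\sharp\m_X$ by item $(i)$ of Theorem~\ref{thm:RLF}; the uniqueness statement $(iv)$ then forces them to coincide.

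Your main route via the $L^2$-squeeze is a pleasant variant: instead of invoking uniqueness directly, you exploit the quantitative $L^2$-bound in item $(ii)$ with $D=0$, together with conservation of mass, to pin down the density by a Cauchy--Schwarz/variance identity. The only point to keep an eye on---and you already flag it---is that Theorem~\ref{thm:RLF} as stated merely says $D$ ``depends only on'' $\|(\Div V_t)^-\|_{L^\infty}$, not explicitly that $D=0$ when the divergence vanishes; this is of course the actual content of the estimate in \cite{amtr}, but strictly speaking it sits one layer beneath what the paper records. Your alternative via the continuity equation avoids this small unpacking and matches the cited reference.
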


By Proposition \ref{prop-presMeas} and Proposition \ref{prop:isom}, the following holds.

\begin{proposition}\label{prop-IsomMeasPres}
    Let $V \colon [0,T] \rightarrow L^2_{loc}(TX)$ be such that $\Div V = 0$ and $\nabla V  = 0$. Then for every $t \in \R$ the map $F^t$ has a continuous representative which  is a measure preserving isometry.
\end{proposition}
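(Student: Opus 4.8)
The statement to prove is Proposition \ref{prop-IsomMeasPres}: if $V$ is a time-dependent vector field on the $\RCD(K,N)$ space $\sf X$ with $\Div V = 0$ and $\nabla V = 0$, then for every $t$ the RLF $F^t$ associated to $V$ has a continuous representative that is a measure-preserving isometry.

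The plan is to simply assemble the two preceding propositions. First I would observe that the hypotheses $\nabla V = 0$ and $\Div V = 0$ are exactly what is needed to invoke the machinery developed in this subsection: in particular $\nabla V \equiv 0$ gives, via Corollary \ref{cor:monV}, that $V$ is infinitesimally $0$-monotone (and so is $-V$), which is the hypothesis feeding Propositions \ref{prop:boundW2}, \ref{prop:isommu} and \ref{prop:isom}; while the bound $\|\Div(V_t)\|_{L^\infty} \in L^\infty([0,T])$ is automatic since $\Div V_t \equiv 0$, and likewise $\|\nabla_{sym} V_t\|_{L^2(T^{\otimes 2}X)} = 0 \in L^1([0,T])$, so the existence and uniqueness of the RLF from Theorem \ref{thm:RLF} applies. (One should also note that $\||V_t|\|_{L^2(\m_X)} \in L^1([0,T])$ is part of the standing assumption on the vector field, or is being assumed implicitly; if needed one can state it explicitly.)

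Then the argument is: by Proposition \ref{prop:isom}, since $\nabla V \equiv 0$ and $\|\Div(V_t)\|_{L^\infty} \in L^\infty([0,T])$, the RLF $F^t$ admits a unique continuous representative which is an isometry of $(X,\de)$, i.e. $\de(F^t(x),F^t(y)) = \de(x,y)$ for all $x,y \in X$ and all $t \in \R$. By Proposition \ref{prop-presMeas}, since $\Div(V_t) \equiv 0$, this same flow satisfies $(F^t)_\sharp \m_X = \m_X$ for all $t$. Since the continuous representative is unique and coincides $\m_X$-a.e. with the measurable RLF, it is the map realizing both properties simultaneously. Hence $F^t$ has a continuous representative that is a measure-preserving isometry, as claimed.

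There is essentially no obstacle here: the proposition is a bookkeeping corollary of the two results immediately above it, and the only thing that requires a moment's care is checking that the hypotheses of those two results are met simultaneously (which they are, trivially, because the two-sided monotonicity comes for free from $\nabla V = 0$ and all the integrability conditions are met because $\Div V$ and $\nabla_{sym} V$ vanish identically). If I wanted to spell it out, I would write roughly:

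\begin{proof}
Since $\nabla V \equiv 0$, Corollary \ref{cor:monV} shows that both $V$ and $-V$ are $0$-monotone, and the integrability hypotheses of Theorem \ref{thm:RLF} hold for $V$ and $-V$ because $\Div V_t \equiv 0$ and $\nabla_{sym} V_t \equiv 0$; thus a unique RLF $F^t$ exists. Applying Proposition \ref{prop:isom} (with $\|\Div(V_t)\|_{L^\infty} \in L^\infty([0,T])$, which holds trivially), the flow $F^t$ has a unique continuous representative satisfying $\de(F^t(x),F^t(y)) = \de(x,y)$ for all $x,y \in X$ and all $t \in \R$. Applying Proposition \ref{prop-presMeas} (with $\Div(V_t) \equiv 0$), the same flow satisfies $(F^t)_\sharp \m_X = \m_X$ for every $t$. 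Since the continuous representative agrees $\m_X$-a.e. with the RLF, it is at once an isometry and measure-preserving, which is the assertion.
\end{proof}
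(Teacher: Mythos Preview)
Your proof is correct and follows exactly the paper's approach: the paper simply states that the proposition holds by Proposition~\ref{prop-presMeas} and Proposition~\ref{prop:isom}, and you have spelled out the (straightforward) verification that the hypotheses of those two results are met.
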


Note that the above result is the analogous of \cite[Theorem 3.22]{GR18} in the case in which the vector field $V$ defined on the $\RCD(K, N)$ space $\sf X$.

We proceed to the proof of Theorem \ref{thm-torus}. Let us observe that the condition $\nabla V_i \equiv 0$ for every $i \in \{1, \dots, n\}$ ensures that $\langle V_i, V_j \rangle \in W^{1, 2}(\X)$ with
\[
\de \langle V_i, V_j \rangle = \nabla V_i(\cdot, V_j) + \nabla V_j(\cdot, V_i) = 0 \quad \m\text{-a.e.}
\]
This, in particular, implies that $\langle V_i, V_j \rangle$ is $\m$-a.e. equal to a constant function and, since $\int_X \langle V_i, V_j \rangle \, \de \m_X = 0$ if $i \neq j$, we conclude that $V_i$'s are point-wise orthogonal. Without loss of generality, we can assume that $|V_i|\equiv 1$ $\m$-a.e. for every $i \in \{1, \dots, n\}$. We also remark that, since these vector fields are in $L^2(T {\sf X})$, it holds
\begin{equation}\label{eq:mfinite}
\m_X(X) < \infty.
\end{equation}

\begin{proof}[Proof of Theorem \ref{thm-torus}]
First of all, we recall that if $V, W \in L^2(T \X)$ are two harmonic vector fields, then for any $t, s \in \R$ it holds
\begin{equation}\label{eq:FlVW}
F_V^t \circ F_W^s = F_{tV + s W}^1.
\end{equation}
We refer to \cite[Theorem 3.24]{GR18} for a proof of this identity (notice that in \cite{GR18} this result is formulated in the setting of $\RCD(0, \infty)$ spaces, but the same proof holds for any $\RCD(K, \infty)$ space, $K \in \R$).

By Proposition \ref{prop-IsomMeasPres}, there exist continuous RLF's $F_{V_i}$ of  $V_i$, $i=1,\dots,n$ such that $F^t_{V_i}$ is an isomorphism for all $t$. Let us define the map ${\mathcal F} \colon \X \times \R^n \to \X$ by setting
\[
{\mathcal F}(x, \underbar{a})= F^{a_1}_{V_1} \circ \cdots \circ F^{a_n}_{V_n}(x),
\]
where $\underbar{a} = (a_1, \dots, a_n) \in \mathbb{R}^n$. From \eqref{eq:FlVW} we deduce that
\[
{\mathcal  F}\big({\mathcal F}(x, \underbar a), \underbar b\big) = {\mathcal  F}(x, \underbar a + \underbar b), \qquad \forall x \in X, \, \forall \underbar a, \underbar b \in \mathbb R^n.
\]
Thus, ${\mathcal F}$ is a group action of $\mathbb R^n$ on $X$.  Furthermore, by Proposition \ref{prop-IsomMeasPres}, $\mathcal F$ acts by isometries, i.e.
\[
{\mathcal F}(\cdot, \underbar a) \colon X \to X \quad \text{is an isometry} \quad \forall \underbar a \in \mathbb R^n.
\]

Moreover, ${\mathcal F}$ acts transitively (see \cite[Proposition 4.6]{GR18}). That is, for any $x, y \in \X$ there exists a vector $\underbar a \in \R^n$ such that
\[
{\mathcal F}(x, \underbar a) = y \quad \text{ and } \quad |\underbar a | \le \de(x, y).
\]
This ensures that the stabilizer
of $\bar{x} \in \X$, defined as 
\[
\Gamma_{\bar x} := \Big\{ \underbar a \in \R^n : {\mathcal F}(\bar x, \underbar{a}) = \bar x \Big\} \subset \R^n,
\]
does not depend on $\bar x$, and thus we denote it simply by $\Gamma$. Clearly $\Gamma$ is a subgroup of $\R^n$ which is closed because ${\mathcal F}$ is a continuous function and it is discrete as shown in \cite[Proposition 4.7]{GR18}.

Fix some $\bar x \in X$. We then consider the quotient space $\R^n / \Gamma$, equipped with the only Riemannian metric letting the quotient map, $\R^n \longrightarrow \R^n / \Gamma$ be a Riemannian submersion. In particular, the distance induced by this metric is given by
\[
\de_{\R^n / \Gamma}\big([\underbar a], [\underbar b]\big) = \min_{\tiny{\begin{split}\underbar a' : [\underbar a'] = [\underbar a]\\ \underbar b' : [\underbar b'] = [\underbar b]\end{split}}}|\underbar a' - \underbar b'|,
\]
while the volume measure $\m_{\R^n / \Gamma}$ is, up to multiplication by a positive constant, the canonical Haar measure.

The map ${\mathcal F}(\bar x, \cdot): \R^n \to X$, passes to the quotient, inducing a map 
\[
\hat {\mathcal F} \colon \R^n/\Gamma \to \X, \qquad
\hat {\mathcal F}([\underbar a]) := {\mathcal F}(\bar x, \underbar a) \quad \forall \bar a \in \R^n.
\]
In particular, $\hat{\mathcal F}$ is an isometry and $\hat {\mathcal F}_\sharp \m_{\R^n / \Gamma} = c \m$ for some $c > 0$ (see \cite[Theorem 4.8]{GR18}). This fact together with \eqref{eq:mfinite} ensures that $\m_{\R^n/\Gamma}$ is finite. 

In order to conclude, recall that all non-trivial discrete subgroups of $\R^n$ are isomorphic to $\mathbb Z^d$ for $1 \leq d \le n$ and $\R^n/\mathbb Z^d$ has finite volume only when 
$d = n$. Thus, $\Gamma$ must be isomorphic to $\mathbb Z^n$. This implies that the quotient space $\R^n /\Gamma$ is a flat torus $\mathbb T^n$ and the quotient map $\hat {\mathcal F} \colon \mathbb T^n \to \X$ is the required isomorphism.
\end{proof}

\begin{remark}\label{bilip}
    For a lattice $\Gamma$ in $\mathbb{R}^n$, denote by $T_{\Gamma}:=\mathbb{R}^n/\Gamma$ the corresponding $n$-flat torus. It is easy to see that for two flat tori $T_{\Gamma_i}, i=1,2,$ satisfying $\vol_{T_{\Gamma_i}}(T_{\Gamma_i})\ge v>0$ and $\diam_{T_{\Gamma_i}}\le D$, if a group isomorphism $\phi:\Gamma_1 \to \Gamma_2$ is fixed, then there exists a canonical affine diffeomorphism, which is also a harmonic map 
    (see for instance \cite[Section 2]{Ham}),
    $F_{\phi}:T_{\Gamma_1} \to T_{\Gamma_2}$ induced by $\phi$ such that for all $x, y \in T_{\Gamma_1}$
    $$C_1\de_{T_{\Gamma_1}}(x, y) \le \de_{T_{\Gamma_2}}(F_{\phi}(x), F_{\phi}(y)) \le C_2\de_{T_{\Gamma_1}}(x, y)$$
    where $C_i=C_i(n, v, D)>0$. 
    
    This property will play a role in the proof of Theorem \ref{mthm0}. Finally it is worth mentioning that the space of all flat metrics on a $n$-torus with $\diam\le D$ and $\vol \ge v>0$ for fixed $D, v>0$ is compact with respect to the $C^{\infty}$-topology.
\end{remark}

\section{Proof of the main theorems}

This section is devoted to the proofs of Theorem \ref{mthm0} and Theorem \ref{th:main}. 

\subsection{Preliminary results}

We begin by establishing a consequence of Theorem \ref{thm-torus} for smooth manifolds.

\begin{theorem}\label{thm:new}(almost rigidity of Theorem \ref{thm-torus})
Let $n\in \N$ and $K \in \R, \delta, v, D>0$. 
If a closed $n$-dimensional Riemannian manifold ${M}$ satisfies $\ric_M \ge K$, $\vol_M(M) \ge v$, $\diam_M \le D$, and
\begin{equation}
\label{eq:assNew}
-\!\!\!\!\!\!\int_{M}g_M(V_i, V_j)\de\!\vol_M=\delta_{ij},\quad \int_{M}|\nabla V_i|^2 \de\!\vol_M \le \delta
\end{equation}
for some harmonic vector fields 
$V_1,\ldots, V_n$ on $M$, then $M$ is $\Psi$-Gromov-Hausdorff close to a flat $n$-torus $T$, where $\Psi=\Psi(\delta |n, K, v, D)$.
\end{theorem}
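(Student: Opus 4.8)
The plan is to argue by contradiction via a compactness argument, reducing the almost-rigidity statement of Theorem~\ref{thm:new} to the rigidity statement of Theorem~\ref{thm-torus}. Suppose the conclusion fails: then there exist $\epsilon_0>0$ and a sequence of closed $n$-dimensional Riemannian manifolds $M_i$ with $\ric_{M_i}\ge K$, $\vol_{M_i}(M_i)\ge v$, $\diam_{M_i}\le D$, carrying harmonic vector fields $V_1^i,\dots,V_n^i$ satisfying $-\!\!\!\!\!\!\int_{M_i}g_{M_i}(V^i_j,V^i_l)\,\de\!\vol_{M_i}=\delta_{jl}$ and $\int_{M_i}|\nabla V^i_j|^2\,\de\!\vol_{M_i}\le\delta_i$ with $\delta_i\to 0^+$, but such that no $M_i$ is $\epsilon_0$-Gromov-Hausdorff close to any flat $n$-torus. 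Normalizing the volume measures to probability measures $\m_{M_i}:=\vol_{M_i}(M_i)^{-1}\vol_{M_i}$, the Bishop--Gromov inequality and the diameter bound give uniform mass-noncollapsing, so up to a subsequence $(M_i,\de_{M_i},\m_{M_i})$ converges in measured Gromov-Hausdorff sense to a compact $\RCD(K,N)$ space $\sf X$ with $N=n$; moreover $\vol_{M_i}(M_i)\ge v$ together with volume continuity forces $\sf X$ to be non-collapsed, i.e. $\m_X=c\,\haus^n$, and hence the analytic dimension of $\sf X$ equals $n$.

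Next I would pass the vector fields to the limit. By the normalization each $V^i_j$ is uniformly $L^2$-bounded, and being harmonic with $\int_{M_i}|\nabla V^i_j|^2\to 0$ they fit the hypotheses of Theorem~\ref{th:harmonicVF}; thus, after a further subsequence, $V^i_j$ strongly converges in $L^2$ to some $V_j\in L^2(T{\sf X})$. The $L^2$-orthonormality passes to the limit by strong convergence, giving $\int_X\langle V_j,V_l\rangle\,\de\!\m_X=\delta_{jl}$; in particular the $V_j$ are not identically zero and, being $n=\dim_{\mathrm{an}}({\sf X})$ in number and $L^2$-orthonormal, they span the tangent module $\m_X$-a.e. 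From $\sup_i\int_{M_i}|\nabla V^i_j|^2<\infty$ and strong $L^2$-convergence, Theorem~\ref{thm-Honda} yields $V_j\in W^{1,2}_C(T{\sf X})$ with $\nabla V^i_j\rightharpoonup\nabla V_j$ weakly in $L^2$ and the lower semicontinuity $\int_X|\nabla V_j|^2\,\de\!\m_X\le\liminf_i\int_{M_i}|\nabla V^i_j|^2\,\de\!\vol_{M_i}=0$, so $\nabla V_j\equiv 0$. Since each $V^i_j$ is harmonic, $\Div V^i_j=0$, and $\sup_i\|\Div V^i_j\|_{L^2}=0<\infty$; Lemma~\ref{lem:div} then gives $V_j\in D(\Div)$ with $\Div V_j=0$. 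Finally, because $\sf X$ is non-collapsed, Remark~\ref{rmk-torus} (via Lemma~\ref{h=c}) tells us the divergence-free hypothesis is not even needed and $W^{1,2}_C=H^{1,2}_C$ here, so $V_j\in H^{1,2}_C(T{\sf X})\cap D(\Div)$.

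At this point the $V_1,\dots,V_n$ satisfy all hypotheses of Theorem~\ref{thm-torus}: they are $L^2$-orthogonal elements of $H^{1,2}_C(T{\sf X})\cap D(\Div)$ with $\nabla V_j=0$ and $\Div V_j=0$, and $n$ is the analytic dimension. Hence $\sf X$ is isomorphic to a flat $n$-torus $T$. Then $M_i$ converges in Gromov-Hausdorff sense to the flat torus $T$, so $M_i$ is $\Psi(\delta_i)$-Gromov-Hausdorff close to $T$ with $\Psi(\delta_i)\to 0$, contradicting the assumption that $M_i$ is not $\epsilon_0$-close to any flat $n$-torus for all $i$. This contradiction proves the theorem, with $\Psi=\Psi(\delta\,|\,n,K,v,D)$ produced by the usual extraction of an explicit modulus from the compactness argument (one also checks the diameter of the limit torus is $\le D$ and its volume is bounded below, using Bishop--Gromov and volume continuity, so that ``flat $n$-torus'' can be taken within a compact family as in Remark~\ref{bilip}).

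The main obstacle is the verification that the limiting vector fields land in $H^{1,2}_C(T{\sf X})\cap D(\Div)$ with genuinely vanishing covariant derivative, rather than merely in $W^{1,2}_C$ with small derivative; this is where the non-collapsing of $\sf X$ (guaranteeing $\haus^n$ as the limit measure and hence Lemma~\ref{h=c} and the coincidence $W^{1,2}_C=H^{1,2}_C$), the strong $L^2$-convergence from Theorem~\ref{th:harmonicVF}, and the lower-semicontinuity of the energy from Theorem~\ref{thm-Honda} all have to be combined carefully. The orthonormality must be carried through strong (not merely weak) $L^2$-convergence so that the limit fields are nonzero and span, which is exactly what Theorem~\ref{th:harmonicVF} provides.
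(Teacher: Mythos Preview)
Your argument has a genuine gap at the step where you assert that non-collapsing yields $W^{1,2}_C=H^{1,2}_C$. Lemma~\ref{h=c} states only that $H^{1,2}_H(T^*{\sf X})=H^{1,2}_C(T^*{\sf X})$ for non-collapsed spaces, and Remark~\ref{rmk-torus} uses this to drop the divergence-free hypothesis \emph{once one already has} $V_j\in H^{1,2}_C$. Neither result upgrades $W^{1,2}_C$-membership to $H^{1,2}_C$-membership. Indeed, Remark~\ref{rem:WH} exhibits the closed interval $[0,\pi]$ with $\haus^1$ as a non-collapsed $\RCD(0,1)$ space where $\nabla x\in W^{1,2}_C\setminus H^{1,2}_C$, and the remark immediately following the paper's proof states that $W^{1,2}_C=H^{1,2}_C$ for non-collapsed spaces without boundary is only a conjecture. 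Theorem~\ref{thm-Honda} places your limits $V_j$ in $W^{1,2}_C$, whereas Theorem~\ref{thm-torus} genuinely requires $H^{1,2}_C$ (cf.\ Remark~\ref{rem:WH}: this is what makes $\langle V_j,V_k\rangle\in W^{1,1}$ and permits the Leibniz-type identity used to get pointwise orthonormality).

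The paper closes this gap by an additional argument you have omitted. First, the Poincar\'e inequality applied to $g_{M_i}(V_{i,j},V_{i,k})$ along the sequence upgrades the integral orthonormality to the pointwise statement $\langle V_j,V_k\rangle\equiv\delta_{jk}$ $\haus^n$-a.e. Combined with the local description $V_j=\nabla f_j$ on balls in the regular set (from the proof of Theorem~\ref{th:harmonicVF}) and $\mathrm{Hess}_{f_j}=0$, the local splitting argument of \cite{BrueSemolaNaber_boundary} shows $\mathcal{R}^n$ is open, smooth, and locally flat, so each $V_j$ is smooth there. Second, since ${\sf X}$ is a non-collapsed limit of closed manifolds one has $\mathcal{S}=\mathcal{S}^{n-2}$, and the Cheeger--Jiang--Naber Minkowski estimate \eqref{eq:sing_volume} yields $\mathrm{Cap}_2(\mathcal{S})=0$. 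One then takes smooth cutoffs $\phi_i\in C^\infty_c(\mathcal{R}^n)$ with $0\le\phi_i\le 1$, $\phi_i\to 1$ locally uniformly on $\mathcal{R}^n$, and $\sup_i\int_X|\nabla\phi_i|^2\,\de\haus^n<\infty$; the products $\phi_iV_j$ lie in $H^{1,2}_C$ with uniformly bounded norm, and Mazur's lemma forces $V_j\in H^{1,2}_C(T{\sf X})$. Only after this step is Theorem~\ref{thm-torus} applicable.
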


Our proof is by contradiction and relies on the structure of non-collapsed Ricci limit spaces. We thus recall some properties that we will use. Consider a non-collapsing sequence of closed smooth manifolds $M_i$ with $\ric_{M_i}\ge K$ converging to a compact $\RCD(K, n)$ space $(X, \de_X, \haus^n)$. The $n$-regular set $\mathcal{R}^n$ of $X$ has full measure and by \cite[Theorem 6.1]{ChCo}, we know that the singular set of $X$, $\mathcal{S}=X\setminus \mathcal{R}^n$, coincides with $\mathcal{S}^{n-2}$, where 
$$\mathcal S^{n-2}= \{x\in X: \mbox{no tangent cone at $x$ splits off $\R^{n-1}$}\}.$$

Now, a ball $B_{s}(x)\subset X$ is said to be $(n-1,\epsilon)$ symmetric if $d_{GH}(B_s(x), B_s(0))\leq \epsilon s$ for $B_s(0)\subset \R^n$ or $B_s(0)\subset [0,\infty)\times \R^{n-1}$. For $\epsilon, r>0$ we define 
$$\mathcal S^{n-2}_{\epsilon, r} =\{ x\in X: \mbox{there is no $s\in [r,1)$ such that $B_s(x)$ is $(n-1, \epsilon)$-symmetric}\},$$
and $\mathcal S^{n-2}_\epsilon= \bigcap_{r>0} \mathcal S^{n-2}_{\epsilon, r}$. It is well-known that $\mathcal{S}^{n-2}=\bigcup_{\epsilon>0}\mathcal{S}^{n-2}_{\epsilon}$. Moreover,  by  \cite[Theorem 1.9]{CJN} for any $\epsilon, r> 0$ the tubular neighbourhood of size $r$ of the set $\mathcal{S}^{n-2}_{\epsilon}$, denoted by $B_r(\mathcal{S}^{n-2}_{\epsilon})$, satisfies
\begin{equation}
\label{eq:sing_volume}
\haus^n\left(B_r(\mathcal{S}^{n-2}_{\epsilon})\right)\le C(\epsilon,n,v, D)r^{2},
\end{equation}
 where  $D$ is an upper bound of $\diam_X$ and $v$ is a positive lower bound of $\haus^n(X)$.

Now we recall the definition of 2-capacity of a set $\Omega\subset X$ (see for instance \cite[Chapters 1.4 and 6]{BB}), this is given by
\begin{align*}&\mathrm{Cap}_2(\Omega)=\\
&\ \ \ \ \ \ \inf\{\|u\|^2_{W^{1,2}(X)} \,:\, {u \in \mathrm{Lip}(X) \mbox{ such that }} u \geq 1 \mbox{ on a neighborhood of } \Omega \}.\end{align*}
Finding a Lipschitz function $0\le \phi_r \le 1$ satisfying $|\nabla \phi_r|\le \frac{3}{r}$, $\phi_r=1$ on $B_{\frac{r}{2}}(\mathcal{S}^{n-2}_{\epsilon})$, and $\supp \phi_r \subset B_r(\mathcal{S}^{n-2}_{\epsilon}))$ via the distance function from $\mathcal{S}^{n-2}_{\epsilon}$, by \eqref{eq:sing_volume} we have 
$$
\int_X|\nabla \phi_r|^2\de\!\haus^n\le C(\epsilon,n,v, D). $$
Then applying Mazur's lemma (see for instance \cite[Lemma 6.1]{BB}) and letting $r \to 0^+$, we have
$\mathrm{Cap}_2(\mathcal{S}^{n-2}_\epsilon)=0$ because we can find a $W^{1,2}$-strong limit of convex combinations of $\phi_r$ as $r \to 0^+$, where the limit must be $0$ by definition of $\phi_r$, and as a consequence $\mathrm{Cap}_2(\mathcal{S})=0$. Note that this argument still works without assuming the compactness of the spaces.

\begin{proof}
Assume that the conclusion does not hold. Thus, there exists
a non-collapsed measured Gromov-Hausdorff convergent sequence of $n$-dimensional closed Riemannian manifolds, 
\begin{equation*}
\left(M_i, g_{M_i}, \vol_{M_i}\right) \stackrel{\mathrm{mGH}}{\to} (X, \de_X, \haus^n)
\end{equation*}
satisfying 
\begin{equation*}
\ric_{M_i}\ge K,
\end{equation*}
for some $K \in \mathbb{R}$, and for any flat $n$-torus $T$ there exists  some $\tau>0$ such that
\begin{equation}\label{far}
\de_{\mathrm{GH}}(M_i, T) \ge \tau>0
\end{equation}
where the limit space $(X, \de_X)$  is compact {and $n$-dimensional. The compactness is a consequence of $\sup_i\diam_{M_i} <\infty$ and the $n$-dimensionality is a consequence of $\inf_i\vol_{M_i}>0$}.

{Moreover,} there exists a family of harmonic vector fields $V_{i,j}, j=1,\ldots, n$, on $M_i$ such that for all $i$, 
\begin{equation}\label{0}
-\!\!\!\!\!\!\int_{M_i}g_{M_i}(V_{i,j}, V_{i,k})\de\!\vol_{M_i}=\delta_{jk}, \qquad  \,\,\,\forall j\, \, \forall k,
\end{equation}
and for all $j$,
\begin{equation}\label{1}
\int_{M_i} |\nabla V_{i,j}|^2\de\!\vol_{M_i} \to 0 \quad \text{as $i \to \infty$}.
\end{equation}
\smallskip

{By  Theorems \ref{th:harmonicVF}, \ref{thm-Honda} and Lemma \ref{lem:div},} after passing to a subsequence, there exists a family $V_j$, $j=1,2,\ldots, n$, of $W^{1,2}_C$-vector fields on $X$ such that, as $i \to \infty$, $V_{i, j}$ $L^2$-strongly converge to $V_j$, $\nabla V_j =0$, $V_j \in D(\Div)$ and $\Div(V_j) =0$.
\smallskip

Since the $(1,2)$-Poincar\'e inequality on $M_i$ shows
\begin{align}\label{ppoincare}
&\int_{M_i}\left| g_{M_i}(V_{i,j}, V_{i,k})--\!\!\!\!\!\!
\int_{M_i}g_{M_i}(V_{i,j}, V_{i, k})
\de\!\vol_{M_i}  \right|  \de\!\vol_{M_i}  \nonumber \\
&\le C \int_{M_i}\left(|\nabla V_{i,j}|^2+|\nabla V_{i, k}|^2\right) \de\!\vol_{M_i},
\end{align}
letting $i \to \infty$, combined with (\ref{0}) and (\ref{1}), yields
\begin{equation}\label{2}
\langle V_j, V_k\rangle \equiv \delta_{jk} \ \ \ \haus^n\mbox{-almost everywhere.}
\end{equation} 

On the other hand, the arguments provided in the proof of Theorem \ref{th:harmonicVF} allow us to prove that for any $n$-regular point $x \in X$, there exist $r>0$ and $f_i \in D(\Delta, B_r(x))$ such that $V_i=\nabla f_i$ and $\Delta f_i=0$ hold. In particular, combining this with (\ref{2}) shows
\begin{equation*}
\langle \nabla f_j, \nabla f_k\rangle \equiv \delta_{jk}.
\end{equation*}

Since $\mathrm{Hess}_{f_i} \equiv 0$, we can apply the same argument as in the proof of the local splitting theorem \cite[Theorem 3.4]{BrueSemolaNaber_boundary} (even in this case where the Ricci curvature is bounded below by $K$), to conclude that the map $\Phi=(f_1, \ldots, f_n)$ gives an isometry from $B_s(x)$ to the ball in $\R^n$ of radius $s$ and centered at the origin, $B_s(0)$, for some small $0<s<r$. 
Hence, the $n$-dimensional regular set of $X$, $\mathcal{R}^n$, is open and locally flat. In particular it is smooth {after the identification between a small ball to a ball in $\mathbb{R}^n$ via $\Phi$ as above}. Therefore, by elliptic regularity theory, $V_j$ is smooth {(via the same identification)}. 
\smallskip

{\it Claim:}  $V_j \in H^{1,2}_C(T{\sf X})$.
\smallskip

Let us focus on the singular set $\mathcal{S}=X\setminus \mathcal{R}^n$. Since,
as we observed above,
\begin{equation*}
\mathrm{Cap}_2(\mathcal{S})=0,
\end{equation*}
we can find a sequence of smooth functions $\phi_i \in C^{\infty}_c(\mathcal{R}^n)$ that converges uniformly as $i \to \infty$ to $1$ on each compact subset of $\mathcal{R}^n$, satisfies $0\le \phi_i \le 1$, and has the property:
\begin{equation*}
\sup_i\int_{X}|\nabla \phi_i|^2\de\!\haus^n<\infty.
\end{equation*}
Consequently, $\phi_iV_j \in H^{1,2}_C(T{\sf X})$ (due to the smoothness of $V_j$) and
\begin{equation*}
\sup_i\|\phi_iV_j\|_{H^{1,2}_C}<\infty.
\end{equation*}
Letting $i \to \infty$ and applying Mazur's lemma yields $V_j \in H^{1,2}_C(T{\sf X})$.
This concludes the proof of the claim. 

To summarize, we have obtained $n$ vector fields $V_1, \ldots V_n$ which are $L^2$-orthogonal and satisfy $\nabla V_i=0$, $\mathrm{div} 
 (V_i)=0$ for all $i=1, \ldots , n$. Then the assumptions of Theorem \ref{thm-torus} are satisfied and $X$ is isomorphic to a flat torus. This contradicts (\ref{far}).
\end{proof}

\begin{remark}

In connection with Remark \ref{rem:WH}, we conjecture that for a non-collapsed $\RCD$ space, $W^{1,2}_C=H^{1,2}_C$ holds if and only if the space has no boundary. 
We note that there are two definitions of boundary in the literature \cite{kapovitch_mondino, DG}. Although it is not known whether these definitions coincide, it is known that for both notions the meaning of no boundary is the same due to \cite{BrueSemolaNaber_boundary}. 
\end{remark}

Note that under the assumptions of the previous theorem and its proof, we can prove that 
\begin{equation}\label{eq-small}
\left\|g_M-\sum_{j=1}^nV_j^{\sharp} \otimes V_j^{\sharp}\right\|_{L^p}
\end{equation}
is quantitatively small for any $1 \le p <\infty$. Moreover we deduce below that \eqref{eq-small} is small if and only if $M$ is quantitatively Gromov-Hausdorff close to a flat torus $T$.  The precise statement is as follows. Part (1) of Theorem \ref{partial answer} should be compared to Conjecture \ref{conjecture}.

\begin{theorem}\label{partial answer}
Let $K \in \mathbb{R}, n \in \mathbb{N}$ and $v, D>0$. There exists $\epsilon>0$ such that the following hold.
Let $M$ be a closed Riemannian manifold of dimension $n \ge 3$ with $\mathrm{ric}_M\ge K$, $\mathrm{vol}_M(M)\ge v$ and $\mathrm{diam}_M\le D$. Then the following hold.
\begin{enumerate}
\item If, 
{recalling that $r_M(x)$ is the smallest eigenvalue of $\mathrm{ric}_M$ at $x$,
\begin{equation}\label{smallscalar}
    \int_M (r_M)^-\mathrm{d}\mathrm{vol}_M\ge -\delta
\end{equation}
} holds for some $\delta \in (0, \epsilon],$  and 
\begin{equation}\label{l1 bound}
\left\| g_M-\sum_{j=1}^nV_j^{\sharp} \otimes V_j^{\sharp} \right\|_{L^1} \le \delta,
\end{equation}
for some harmonic vector fields $V_j$ on $M$, $j=1,2,\ldots, n$, then $M$ is $\Psi$-Gromov-Hausdorff close to a flat torus $T$, where $\Psi=\Psi(\delta|n, K, v, D)$.
\item If $M$ is $\delta$-Gromov-Hausdorff close to a flat torus $T$, then there exist harmonic vector fields $V_j$ on $M$, $j=1,2,\ldots, n$, such that for any $1\leq p < \infty$
\begin{equation}\label{l1 bound2}
\left\| g_M-\sum_{j=1}^nV_j^{\sharp} \otimes V_j^{\sharp} \right\|_{L^p} \le \Psi(\delta |n, K, v, D, p),
\end{equation}
Moreover, {if (\ref{smallscalar}) holds, then} any harmonic vector field $V$ on $M$ with $\|V\|_{L^2}\le 1$ satisfies
\begin{equation}\label{almost 0}
\int_M|\nabla V|^2\de\!\mathrm{vol}_M\le \Psi(\delta |n, K, v, D).
\end{equation}
\end{enumerate}
\end{theorem}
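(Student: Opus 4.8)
\emph{Part (1).} The plan is to argue by contradiction and reduce to Theorem \ref{thm:new}. Suppose the conclusion fails; then there is a sequence $M_i$ of closed Riemannian $n$-manifolds with $\ric_{M_i}\ge K$, $\vol_{M_i}(M_i)\ge v$, $\diam_{M_i}\le D$, with harmonic vector fields $V_{i,1},\dots,V_{i,n}$ satisfying $\int_{M_i}(r_{M_i})^-\,\de\!\vol_{M_i}\ge-\delta_i$ and $\|g_{M_i}-\sum_j V_{i,j}^\sharp\otimes V_{i,j}^\sharp\|_{L^1}\le\delta_i$ with $\delta_i\to 0^+$, yet each $M_i$ stays GH-far from every flat $n$-torus. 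The first step is to extract quantitative control on the $V_{i,j}$ in $L^2$: taking the trace of the matrix identity, $\big|\,n - \sum_j \meanint_{M_i}|V_{i,j}|^2\,\de\!\vol_{M_i}\big|$ is controlled by $\delta_i$, so $\sup_i\sum_j\|V_{i,j}\|_{L^2}^2<\infty$. Next, I would show $\int_{M_i}\sum_j|\nabla V_{i,j}|^2\,\de\!\vol_{M_i}\to 0$. Here the improved Bochner inequality is the key: for a harmonic $1$-form $\omega$ on $M$ one has $\tfrac12\Delta|\omega|^2 = |\nabla\omega|^2 + \ric(\omega^\sharp,\omega^\sharp) \ge |\nabla\omega|^2 + r_M|\omega|^2$, so integrating over the closed manifold gives $\int_M|\nabla\omega|^2\,\de\!\vol_M \le \int_M (r_M)^-|\omega|^2\,\de\!\vol_M$. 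Applying elliptic regularity as in Step 3 of the proof of Theorem \ref{th:harmonicVF} we get a uniform $L^\infty$-bound $\||V_{i,j}|^2\|_{L^\infty}\le\tilde C$, hence $\int_{M_i}|\nabla V_{i,j}|^2\,\de\!\vol_{M_i}\le \tilde C\int_{M_i}(r_{M_i})^-\,\de\!\vol_{M_i}\le\tilde C\delta_i\to 0$. Finally, the off-diagonal and normalization conditions of \eqref{eq:assNew} need to be recovered up to error: from the $L^1$-closeness of $\sum_j V_{i,j}^\sharp\otimes V_{i,j}^\sharp$ to $g_{M_i}$ we get $\big|\meanint_{M_i} g_{M_i}(V_{i,j},V_{i,k})\,\de\!\vol_{M_i} - \delta_{jk}\big|\le \Psi(\delta_i)$; a Gram--Schmidt orthonormalization of the $V_{i,j}$ (whose coefficient matrix converges to the identity, using the just-obtained energy bound to control the change in $\int|\nabla(\cdot)|^2$) produces harmonic vector fields satisfying \eqref{eq:assNew} exactly with $\delta$ replaced by some $\Psi(\delta_i)\to 0$. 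Theorem \ref{thm:new} then forces $M_i$ to be $\Psi(\delta_i)$-GH-close to a flat $n$-torus, contradicting the assumption. The main obstacle here is bookkeeping the orthonormalization: one must check that replacing $V_{i,j}$ by a nearly-orthogonal combination preserves harmonicity (clear, since harmonicity is linear) and keeps $\int|\nabla(\cdot)|^2$ small (clear from the uniform $L^\infty$-bound and Cauchy--Schwarz).

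\emph{Part (2), first assertion \eqref{l1 bound2}.} Assume $M$ is $\delta$-GH-close to a flat $n$-torus $T$. By the remark after Theorem \ref{mthm0} (the consequence of \cite[Theorem 1.2]{HondaPeng} and \cite[Theorem A.1.3]{ChCo}) there is a diffeomorphism $F:M\to T$ with $\|F^*g_T - g_M\|_{L^p}\le\Psi(\delta|K,n,D,v,p)$. Pull back via $F$ the $n$ parallel (hence harmonic) coordinate $1$-forms $dx_1,\dots,dx_n$ on $T$; by Proposition \ref{approx}(2), or directly by stability of the Hodge Laplacian, the harmonic representatives $\omega_1,\dots,\omega_n$ of $F^*dx_j$ on $M$ are $L^2$-close to the pull-backs, and in fact, arguing by contradiction and invoking Theorem \ref{th:harmonicVF} together with Theorems \ref{thm-Honda}, \ref{th:stablity d} and Lemma \ref{lem:div}, one obtains the quantitative estimate $\|\omega_j - F^*dx_j\|_{L^2}\le\Psi(\delta)$ — and the gradient estimate plus elliptic regularity upgrade this to control in $L^p$ of the induced tensors. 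Since $\sum_j (F^*dx_j)\otimes(F^*dx_j) = F^*g_T$ is $L^p$-close to $g_M$, replacing $F^*dx_j$ by the harmonic $\omega_j=V_j^\sharp$ (using the uniform $L^\infty$-bounds from Bochner/elliptic regularity to pass from $L^2$- to $L^p$-smallness of the difference of the quadratic expressions) gives \eqref{l1 bound2}. I would organize this as a contradiction/compactness argument exactly parallel to the proof of Theorem \ref{thm:new}, since that machinery (strong $L^2$-convergence of harmonic vector fields, smoothness and flatness of the regular part of the limit) is already in place. The potential obstacle is that passing from $L^2$-closeness of vector fields to $L^p$-closeness of the quadratic tensors $V_j^\sharp\otimes V_j^\sharp$ requires the uniform $L^\infty$-bound $\|V_j\|_{L^\infty}\le C$, which again comes from $\tfrac12\Delta|V_j|^2\ge K|V_j|^2$ and De Giorgi--Nash--Moser with constants depending only on $K,n,D,v$.

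\emph{Part (2), second assertion \eqref{almost 0}.} Now additionally assume \eqref{smallscalar}. For \emph{any} harmonic vector field $V$ on $M$ with $\|V\|_{L^2}\le 1$, the integrated Bochner identity gives, as above,
\[
\int_M|\nabla V|^2\,\de\!\vol_M \;\le\; \int_M (r_M)^-\,|V|^2\,\de\!\vol_M \;\le\; \|\,|V|^2\|_{L^\infty}\int_M (r_M)^-\,\de\!\vol_M \;\le\; C\,\delta,
\]
where $\|\,|V|^2\|_{L^\infty}\le C(K,n,D,v)\|V\|_{L^2}^2\le C$ by the same elliptic-regularity bound applied to the subsolution $|V|^2$ of $\tfrac12\Delta u\ge Ku$. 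Hence \eqref{almost 0} holds with $\Psi(\delta|n,K,D,v)=C\delta$. This last part is essentially immediate once the $L^\infty$-estimate on harmonic vector fields (in terms of the $L^2$-norm and $K,n,D,v$) is recorded; I would state that estimate as the one genuinely external input, citing the standard Moser iteration for $\tfrac12\Delta|V|^2\ge K|V|^2$ on a manifold with $\ric\ge K$, $\diam\le D$, $\vol\ge v$.
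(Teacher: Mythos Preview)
Your proposal is essentially correct and closely parallels the paper's proof, with two differences worth noting.

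\textbf{Part (1).} You reduce to Theorem \ref{thm:new} by first extracting approximate $L^2$-orthonormality $\big|\meanint_{M_i}\langle V_{i,j},V_{i,k}\rangle-\delta_{jk}\big|\le\Psi(\delta_i)$ from the hypothesis \eqref{l1 bound}. This claim is true but not immediate: the hypothesis controls $B^TB-I$ (where $B_{ja}=V_j^a$ in a local orthonormal frame), while orthonormality is about $BB^T-I$; one must use that for square matrices these have the same Frobenius norm pointwise where $B$ is invertible, together with the $L^\infty$-bound to handle the bad set. You should spell this out. The paper sidesteps this by \emph{not} reducing to Theorem \ref{thm:new}: it reruns that theorem's limit argument directly, obtains parallel limit fields $V_j$ satisfying $g_X=\sum_jV_j^\sharp\otimes V_j^\sharp$, and then observes that the constant matrix $A=(\langle V_j,V_k\rangle)$ satisfies $A^2=A$, hence $A=I$ by non-collapsing. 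This handles orthonormality cleanly in the limit and avoids Gram--Schmidt bookkeeping. Your individual Bochner bound $\int|\nabla V_{i,j}|^2\le\|V_{i,j}\|_{L^\infty}^2\int(r_{M_i})^-$ is slightly simpler than the paper's summed version (which pairs $r_Mg_M$ against $\sum V_j^\sharp\otimes V_j^\sharp-g_M$ and uses both \eqref{smallscalar} and \eqref{l1 bound}); either works.

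\textbf{Part (2).} For \eqref{l1 bound2} the approaches coincide: both produce harmonic $V_{i,j}$ on $M_i$ approximating the parallel fields on $T$ in $L^2$ (the paper cites spectral convergence of $\Delta_{H,1}$ from \cite{Honda17} together with $b_1(M_i)=n$; your route via Proposition \ref{approx}(2) is equivalent), then upgrade to $L^p$ via the $L^\infty$-bound. For \eqref{almost 0} your direct Bochner argument $\int|\nabla V|^2\le\|V\|_{L^\infty}^2\int(r_M)^-\le C\delta$ is more elementary than the paper's, which instead invokes \eqref{l2zero} for the basis $V_{i,j}$ and the fact that $b_1(M_i)=n$ forces every harmonic vector field to be a combination of the $V_{i,j}$.
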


\begin{proof}
Let us prove (1):
Taking the trace of the left-hand side of (\ref{l1 bound}), we have $\|V_j\|_{L^2}\le C(n)$. Thus,  by elliptic regularity theory as observed in the 3rd step of the proof of Theorem \ref{th:harmonicVF}, we conclude $\|V_j\|_{L^{\infty}} \le C(n, K, D)$. Now, Bochner's formula yields
\begin{align*}
\frac{1}{2}\Delta_M |V_j|^2 &=|\nabla V_j|^2 +\mathrm{ric}_M(V_j, V_j) \\
&{\ge |\nabla V_j|^2+\langle r_Mg_M, V_j^{\sharp}\otimes V_j^{\sharp}\rangle }
\end{align*}
summing over $j$ gives
\begin{equation*}
\frac{1}{2}\Delta_M \left(\sum_{j=1}^n|V_j|^2\right) {\ge} \sum_{j=1}^n|\nabla V_j|^2 + \left\langle {r_Mg_M}, \sum_{j=1}^nV_j^{\sharp}\otimes V_j^{\sharp}-g_M\right\rangle +{nr_M}.
\end{equation*}
Integrating over $M$ yields
\begin{align}\label{l2zero}
\sum_{j=1}^n\int_M|\nabla V_j|^2 \mathrm{d}\mathrm{vol}_M &\le |K|{n} \int_M\left| g_M-\sum_{j=1}^nV_j^{\sharp} \otimes V_j^{\sharp} \right| \mathrm{d}\mathrm{vol}_M + {n}\int_M{(r_M)^-}\mathrm{d}\mathrm{vol}_M \nonumber \\
& \le |K|{n}\delta +{n}\delta .
\end{align}

To establish (1), we now follow the same arguments as in the proof of Theorem \ref{thm:new}. By a contradiction argument, we obtain a non-collapsed limit space $\sf X$ and 
limit parallel vector fields $V_j$ of harmonic (and almost parallel) vector fields  $V_{i,j}$ in the approximating sequence $M_i$. By (\ref{l1 bound}) applied to $V_{i,j}$ and the contradiction argument, $V_j$ satisfy, 
\begin{equation}\label{limit eq}
g_X=\sum_{j=1}^nV_j^{\sharp} \otimes V_j^{\sharp}.
\end{equation}
Denoting $A=(\langle V_j, V_k\rangle)_{jk}$, we have $A^2=A$, implying that any eigenvalue of $A$ is $0$ or $1$. If some eigenvalue of $A$ is $0$, it follows from (\ref{limit eq}) that the analytic dimension of $\sf X$ is less than $n$, contradicting the fact that $\sf X$ is non-collapsed. Thus, $A$ is non-degenerate, namely $\{V_j\}_j$ are linearly independent. Now, if necessary, we orthonormalize $\{V_j\}_j$ and then apply 
{Theorem \ref{thm-torus} with Remark \ref{rmk-torus}} to conclude that $\sf X$ is isomorphic to a flat torus.\\
 
Next we prove (2). The proof is done by contradiction. Thus, we consider a non-collapsed measured Gromov-Hausdorff convergent sequence of Riemannian manifolds $M_i$ of dimension $n$ with Ricci curvature bounded below uniformly, whose limit is a flat $n$-torus $T$,
\begin{equation}\label{noncollapsing torus}
\left(M_i, g_{M_i}, \mathrm{vol}_{M_i}\right) \stackrel{\mathrm{mGH}}{\to} \left(T, \de_T, \haus^n\right).
\end{equation}
Then, it is enough to prove that there exists a sequence of harmonic vector fields $V_{i, j}$ on $M_i$, $j=1,2,\ldots, n$, such that for any $1\le p <\infty$ we have
\begin{equation}\label{app Riem}
    \left\| g_{M_i}-\sum_{j=1}^nV_{i,j}^{\sharp}\otimes V_{i, j}^{\sharp}\right\|_{L^p} \to 0,\quad \text{as $i \to \infty$.}
\end{equation}
To do so, take harmonic vector fields $V_j$ on $T$, $j=1,2,\ldots,n$, with $g_T=\sum_{j=1}^nV_j^{\sharp}\otimes V_j^{\sharp}$. Then let us recall;
\begin{itemize}
    \item the spectral convergence of the Hodge Laplacian $\Delta_{H, 1}$ acting on $1$-forms holds in the setting (\ref{noncollapsing torus})
    as established by  \cite[Theorem 1.1]{Honda17}, along with the discussions in \cite[Section 5]{Honda17} (namely the spectral convergence of $\Delta_{H, 1}$ holds if the non-collapsed limit space satisfies $H^{1,2}_C=W^{1,2}_C$). 
    \item For any sufficiently large $i$, by \cite[Theorem A.1.12]{ChCo} (or see Proposition \ref{approx}), $M_i$ is diffeomorphic to $T$; in particular $b_1(M_i) = b_1(T) = n$.
\end{itemize}
Thus, each $V_j$ can be $L^2$-strongly approximated by a sequence of harmonic vectors fields $V_{i, j}$ on $M_i$. Hence, we obtain (\ref{app Riem}) for the case when $p=1$. The extension to the general case $1\leq p<\infty$ is given by the case $p=1$ and the quantitative $L^{\infty}$ estimate on $V_{i, j}$ as observed in step 3 of the proof of Theorem \ref{th:harmonicVF}.

The final statement, (\ref{almost 0}), follows from (\ref{l2zero}) and the fact that any harmonic vector field on $M_i$ is spanned by $V_{i, j}$ since $M_i$ is diffeomorphic to $T$.
\end{proof}

We end this subsection by stating the following proposition which will be used in the proof of Theorem \ref{mthm0}. Note that we also use this proposition to prove Theorem \ref{th:main}.

\begin{proposition}\label{prop-lower b}
    For all $n \in \mathbb{N}, K \in \mathbb{R}, c, D>0$ and $L>0$ there exist $\delta=\delta(n, K, c, D, L)>0$ and $\tau=\tau(n, K, c, D, L)>0$ such that the following holds. Let $M$ be a closed Riemannian manifold of dimension $n$ with $\ric_M \ge K$ and $\diam_M\le D$, and let
    \begin{equation*}
    \Phi=(\phi_i)_{i=1}^n:M \to \mathbb{R}^n/\mathbb{Z}^n
    \end{equation*}
    be a harmonic map with $E(\Phi)\le L$, $\haus^n(\Phi(M)) \ge c$ {(in particular they imply $\vol_M(M) \ge v(n, K, c, D, L)>0$ by the Lipschitz continuity of $\Phi$)} and 
    $$\int_M|\nabla d\phi_i|^2\de\!\vol_M\le \delta$$ 
    for any $i=1,2,\ldots, n$. Then $D(\Phi) \ge \tau$.
    \end{proposition}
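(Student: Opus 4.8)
The plan is to prove this directly, without invoking any Gromov--Hausdorff limit, by combining the area formula with Minkowski's determinant inequality and a uniform pointwise bound on the $d\phi_i$. Throughout I write
\[
G_\Phi(x):=\big(\langle d\phi_i(x),d\phi_j(x)\rangle\big)_{i,j=1}^{n},
\]
a symmetric positive semidefinite $n\times n$ matrix depending on $x\in M$, and I recall that, since $\Phi$ is a map between $n$-dimensional manifolds, $\Phi^*(dy_1\wedge\cdots\wedge dy_n)=d\phi_1\wedge\cdots\wedge d\phi_n$, so that the Jacobian satisfies $|J_\Phi(x)|=|d\phi_1\wedge\cdots\wedge d\phi_n|(x)=\sqrt{\det G_\Phi(x)}$ pointwise, while by definition $D(\Phi)=\mathrm{det}\big(-\!\!\!\!\!\!\int_M G_\Phi\,\de\!\vol_M\big)$.

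The first step is to record the a priori bounds depending only on $n,K,D,L$. By Bishop--Gromov, $\vol_M(M)\le V(n,K,D)$. Since $\Phi$ is harmonic with flat target it is smooth, each $d\phi_i$ is a harmonic $1$-form, and
\[
-\!\!\!\!\!\!\int_M|d\phi_i|^2\,\de\!\vol_M\ \le\ -\!\!\!\!\!\!\int_M\sum_{j=1}^{n}|d\phi_j|^2\,\de\!\vol_M\ =\ 2E(\Phi)\ \le\ 2L .
\]
The Bochner inequality $\tfrac12\Delta_M|d\phi_i|^2\ge|\nabla d\phi_i|^2+K|d\phi_i|^2\ge K|d\phi_i|^2$, combined with the mean value estimate for subsolutions valid under $\ric_M\ge K$, $\diam_M\le D$ (this is precisely the elliptic estimate used in the third step of the proof of Theorem~\ref{th:harmonicVF}), then gives $\|d\phi_i\|_{L^\infty(M)}\le C_1(n,K,D,L)$ for each $i$, whence by Hadamard's inequality $\|\det G_\Phi\|_{L^\infty(M)}\le\prod_{i}\|d\phi_i\|_{L^\infty}^2\le C_2(n,K,D,L)$.

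The second step uses the area formula for the smooth map $\Phi:M\to\mathbb{R}^n/\mathbb{Z}^n$:
\[
c\ \le\ \haus^n(\Phi(M))\ \le\ \int_{\mathbb{R}^n/\mathbb{Z}^n}\#\Phi^{-1}(y)\,\de y\ =\ \int_M|J_\Phi|\,\de\!\vol_M\ =\ \int_M\sqrt{\det G_\Phi}\,\de\!\vol_M .
\]
Now I interpolate between the exponent $\tfrac12$ occurring here and the exponent $\tfrac1n$ for which concavity is available. By Cauchy--Schwarz, then $\det G_\Phi=(\det G_\Phi)^{1/n}(\det G_\Phi)^{(n-1)/n}$ together with the $L^\infty$-bound of Step~1, one gets
\[
\int_M\sqrt{\det G_\Phi}\,\de\!\vol_M\ \le\ \vol_M(M)^{1/2}\Big(\int_M\det G_\Phi\,\de\!\vol_M\Big)^{1/2}\ \le\ \vol_M(M)^{1/2}\,C_2^{\frac{n-1}{2n}}\Big(\int_M(\det G_\Phi)^{1/n}\,\de\!\vol_M\Big)^{1/2};
\]
on the other hand, since $A\mapsto(\det A)^{1/n}$ is concave on symmetric positive semidefinite matrices (Minkowski's determinant inequality), Jensen's inequality with respect to the probability measure $\vol_M(M)^{-1}\vol_M$ together with $D(\Phi)=\mathrm{det}\big(-\!\!\!\!\!\!\int_M G_\Phi\,\de\!\vol_M\big)$ gives
\[
-\!\!\!\!\!\!\int_M(\det G_\Phi)^{1/n}\,\de\!\vol_M\ \le\ D(\Phi)^{1/n} .
\]
Chaining these inequalities with $\vol_M(M)\le V(n,K,D)$ yields $c\le C_3(n,K,D,L)\,D(\Phi)^{1/(2n)}$, hence $D(\Phi)\ge(c/C_3)^{2n}=:\tau(n,K,c,D,L)>0$, which is the assertion; any $\delta>0$ works.

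I do not anticipate a genuine difficulty: the two points requiring attention are (i) the uniform gradient estimate $\|d\phi_i\|_{L^\infty}\le C_1(n,K,D,L)$, which is the standard Bochner-and-elliptic-regularity bound already used in Theorem~\ref{th:harmonicVF}, and (ii) keeping the exponents consistent when passing from $(\det G_\Phi)^{1/2}$ to $(\det G_\Phi)^{1/n}$, for which the $L^\infty$ bound of (i) is exactly what is needed. Note that this argument uses neither the measured Gromov--Hausdorff stability theory nor the hypothesis $\int_M|\nabla d\phi_i|^2\le\delta$. An alternative, in the style of the rest of the paper, would argue by contradiction: along a sequence with $D(\Phi_k)\to 0$ pass to a non-collapsed limit space, use Theorems~\ref{th:harmonicVF} and~\ref{thm-Honda} to extract limiting parallel vector fields whose averaged Gram matrix is the degenerate limit of the $D(\Phi_k)$-matrices, and deduce $\haus^n(\Phi_k(M_k))\to 0$, contradicting $\haus^n(\Phi_k(M_k))\ge c$; but the direct estimate above is shorter and self-contained.
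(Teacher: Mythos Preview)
Your argument is correct and takes a genuinely different, more elementary route than the paper.

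The paper argues by contradiction: along a sequence with $D(\Phi_i)\to 0$ and $\int_{M_i}|\nabla d\phi_{i,j}|^2\to 0$ it passes to a (non-collapsed) measured Gromov--Hausdorff limit $X$, obtains a uniform limit harmonic map $\Phi:X\to\mathbb{R}^n/\mathbb{Z}^n$ with $\haus^n(\Phi(X))\ge c$, and then uses the Poincar\'e inequality together with the smallness of $\|\nabla d\phi_{i,j}\|_{L^2}$ to force $\langle d\phi_j,d\phi_k\rangle$ to be a \emph{constant} matrix with vanishing determinant; the local splitting theorem of Bru\`e--Naber--Semola then identifies $\Phi$ locally with a projection onto $\mathbb{R}^k$ for some $k<n$, giving the contradiction with $\haus^n(\Phi(X))\ge c$. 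In that approach the hypothesis $\int_M|\nabla d\phi_i|^2\le\delta$ is essential, since it is precisely what forces the Gram matrix to be constant in the limit.

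Your direct route via the area formula, the pointwise bound $\|\det G_\Phi\|_{L^\infty}\le C_2(n,K,D,L)$, and Jensen's inequality for the concave functional $A\mapsto(\det A)^{1/n}$ avoids all of this machinery and, as you observe, never touches the assumption on $\nabla d\phi_i$; it therefore proves a strictly stronger statement (any $\delta>0$ works). The paper's approach, on the other hand, is consistent with the compactness-and-rigidity philosophy used throughout and reuses tools (local splitting, stability of harmonic maps) already developed there. The only ingredients in your proof that deserve a pointer are exactly the two you flagged: the uniform $L^\infty$ bound on $d\phi_i$ (the Bochner/mean-value argument from Step~3 of the proof of Theorem~\ref{th:harmonicVF}, which indeed requires only $\ric_M\ge K$, $\diam_M\le D$, and the averaged energy bound) and the standard area formula $\int_M|J_\Phi|\,\de\!\vol_M=\int_{\mathbb{R}^n/\mathbb{Z}^n}\#\Phi^{-1}(y)\,\de y\ge\haus^n(\Phi(M))$ for the smooth map $\Phi$.
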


\begin{proof}
    The proof is done by contradiction. Then our setting is: 
    \begin{itemize}
    \item let
    \begin{equation*}
        (M_i, g_{M_i}, \vol_{M_i}) \stackrel{\mathrm{mGH}}{\to} (X, \de_X, \haus^n)
    \end{equation*}
    be a non-collapsed measured Gromov-Hausdorff convergent sequence of Riemannian manifolds $M_i$ with $\ric_{M_i}\ge K, \diam_{M_i}\le D$ and $\vol_{M_i}(M_i)\ge v$;
    \item let $$\Phi_i=(\phi_{i,j})_{j=1}^n:M_i \to \mathbb{R}^n/\mathbb{Z}^n$$ be a  harmonic map with $E(\Phi_i) \le L$, $\haus^n(\Phi_i(M_i))\ge c$,
    \begin{equation}\label{deg}
        D(\Phi_i) \to 0, \quad \int_{M_i}|\nabla d\phi_{i,j}|^2\de\!\vol_{M_i} \to 0.
    \end{equation}
    \end{itemize}
    After passing to a subsequence, we can find the uniform limit harmonic map $\Phi=(\phi_i)_{i=1}^n:X \to \mathbb{R}^n/\mathbb{Z}^n$ of $\Phi_i$. In particular, we have 
    \begin{equation}\label{positive meas}
        \haus^n(\Phi(X))\ge \limsup_{i\to \infty}\haus^n(\Phi_i(X_i))\ge c.
    \end{equation}

    On the other hand, (\ref{deg}) with the Poincar\'e inequality implies that $\langle d\phi_j, d\phi_k\rangle$ is constant with $\mathrm{det}(\langle d\phi_j, d\phi_k\rangle )_{jk}=0$ like in \eqref{ppoincare}. Thus, the proof of the local splitting theorem \cite[Theorem 3.4]{BrueSemolaNaber_boundary} allows us to conclude that for any $x \in X$, there exists $r>0$ such that after regarding the map $\Phi$ on $B_r(x)$ as $\Phi:B_{r}(x) \to \mathbb{R}^n$, $\Phi$ can be locally
    identified with the projection to $\mathbb{R}^k$ for some $k <n$. In particular, the Hausdorff dimension of the image $\Phi(X)$ is at most $n-1$. This contradicts (\ref{positive meas}).
\end{proof}

\subsection{Proof of Theorem \ref{mthm0}}
We are now in a position to prove Theorem \ref{mthm0}.

Let us recall the setting of this theorem: Let $M$ be a closed Riemannian manifold of dimension $n$ with $\diam_M\le D$ and $\ric_M\ge K$ for some $D>0$ and some $K \in \mathbb{R}$.

Firstly, let's prove (1), the statement of which is recalled as follows: if there exists a $(\delta;L, \tau)$-harmonic map $$\Phi=(\phi_i)_{i=1}^n:M \to \mathbb{R}^n/\mathbb{Z}^n,$$ 
then $M$ is $\Psi$-Gromov-Hausdorff close to a flat $n$-torus $T$,
for some $\Psi=\Psi(\delta| n, K, D, L, \tau)$.

Furthermore, for $\delta$ small enough depending only on $n, K, D, L$ and $\tau$, we have the following.
\begin{itemize}
    \item[(a)] $M$ is diffeomorphic to $T$;
    \item[(b)] $\Phi$ is a covering map with $\mathrm{deg}(\Phi) \le C_0$; in particular $\Phi$ is surjective. More strongly, there exists an affine harmonic diffeomorphism $H:\mathbb{R}^n/\mathbb{Z}^n \to T$ such that $H$ is $C_1$-bi-Lipschitz (namely both $H, H^{-1}$ are $C_1$-Lipschitz) and that for all $x, y \in M$ with $\de_M(x, y) \le r$,
\begin{equation}\label{holderlip}
(1-\Psi)\de_M(x, y)^{1+\Psi} \le \de_{T}(H\circ \Phi(x), H \circ \Phi(y)) \le C_2\de_M(x, y),
\end{equation}
 where $C_i=C_i(K, n, D, L, \tau)>0$ and $r=r(K, n, D, L, \tau)>0$.
    \item[(c)] if $\mathrm{deg}(\Phi)=1$, then $\Phi$ is a diffeomorphism, (\ref{holderlip}) is satisfied for all $x, y \in M$, and $H \circ \Phi$ is a $\Psi$-Gromov-Hausdorff approximation.
\end{itemize} 

\begin{proof}[Proof of (1) of Theorem \ref{mthm0}]
The proof is done by contradiction.
Thus it is enough to consider the following setting: let 
\begin{equation}\label{mghconv}
\left(M_i, g_{M_i}, \frac{\mathrm{vol}_{M_i}}{\mathrm{vol}_{M_i}(M_i)}\right) \stackrel{\mathrm{mGH}}{\to} {\sf X} =(X, \de_X, \m_X)
\end{equation}
be a measured Gromov-Hausdorff convergent sequence of {closed} Riemannian manifolds $M_i$ of dimension $n$ with $\ric_{M_i}\ge K$ {and $\diam_{M_i}\le D$}, to a compact $\RCD(K, n)$ space $\sf X$ and let 
$$\Phi_i=(\phi_{i,j})_{j=1}^n:M_i \to \mathbb{R}^n/\mathbb{Z}^n$$
be a $(\delta_i;C, \tau)$-harmonic map for some $\delta_i \to 0^+, C>0$ and some $\tau>0$. Then the following arguments allow us to get all conclusions stated in (1) via a contradiction. 

Firstly let us prove that $(X, \de_X)$ is isometric to a flat $n$-torus. Thanks to (\ref{EL}) with the stability of Laplacian \cite[Theorem 4.4]{ambrosio_honda_local}, after passing to a subsequence, there exists the $W^{1,2}$-strong limit harmonic map $\Phi=(\phi_j)_{j=1}^n:X \to \mathbb{R}^n/\mathbb{Z}^n$ of $\Phi_{i}$ (see Subsection \ref{circleSob}, in particular (\ref{EL})). Thus 
\begin{equation}\label{positivedef}
D(\Phi)=\lim_{i \to \infty}D(\Phi_i)\geq \tau>0,
\end{equation}
namely $\{d\phi_j\}_{j=1}^n$ are linearly independent in $L^2(T{\sf X})$. Moreover Theorem \ref{thm-Honda} allows us to conclude $\nabla d\phi_j=0$. Thus combining this with (\ref{positivedef}) shows
$$
\det \left(\langle d\phi_j, d\phi_k\rangle \right)_{jk}(x)>0
$$
for $\m_X$-a.e. $x \in X$. Therefore we see that the analytic dimension of $X$ is at least $n$, thus the limit space $X$ is non-collapsed.
Then we can apply the same argument as in the proof of Theorem \ref{thm:new} for the duals of $\{d\phi_{i,j}\}_{j=1}^n$
and conclude that $X$ is isomorphic to a flat $n$-torus.

Secondly let us prove that $\Phi_i$ gives a local diffeomorphism for any sufficiently large $i$. Denote by $\Phi=(\phi_j)_{j=1}^n$ the limit harmonic map of $\Phi_i$ discussed above. Thanks to the equi-Lipschitz continuity of $\Phi_i$ (recall Part \textbf{3} of the proof of Theorem \ref{th:harmonicVF}), we can take a small $r>0$ satisfying that the restriction of $\Phi_i$ to any ball of radius at most $r$ can be regarded as a map into $\mathbb{R}^n$ because of the flatness of $X$. In the rest of the proof, let us work under this convention; $$\Phi_i:B_r(x_i) \to \mathbb{R}^n,\quad \Phi:B_r(x) \to \mathbb{R}^n,$$
with a fixed small $r>0$, where $x_i \in M_i$ converge to $x \in X$. Note that $r$ can be taken quantitatively.

Denoting by $L_B:\mathbb{R}^n \to \mathbb{R}^n$ the linear map defined by multiplying a matrix $B$ of size $n$ (namely $L_B(v)=Bv$),
note that $L_A \circ \Phi :B_r(x) \to \mathbb{R}^n$ for some $A \in GL(n;\mathbb{R})$ with $|A|\le C(n, D, v, \tau)$ and $\det (A) \ge c(n, D, v, \tau)>0$ is an isometry onto its image, where $D$ is an upper bound on 
$\diam_X$, $\tau$ is a positive lower bound on $D(\Phi)$, and $v$ is a positive lower bound on $\vol_X(X)$ (see also Remark \ref{bilip}).  Thus 
applying the same arguments in \cite[Section 4]{HondaPeng} for regular maps $L_A \circ \Phi_i$ (see also Proposition \ref{approx}), we know
\begin{equation}\label{local scale}
(1-\delta_i)\de_{M_i}(y_i, z_i)^{1+\delta_i} \le |\Phi_i(y_i)-\Phi_i(z_i)|_{\mathbb{R}^n} \le C(n)\de_{M_i}(y_i, z_i)
\end{equation}
for all $y_i, z_i \in B_r(x_i)$, after replacing $r$ by a (quantitatively) smaller one if necessary. In particular $\Phi_i$ is non-degenerate, thus we conclude that $\Phi_i$ is a local diffeomorphism. 

Thirdly let us prove that $\Phi_i$ is a covering map. By (\ref{local scale}) it is enough to check 
the surjectivity of $\Phi_i$. However it is a direct consequence of invariance of domain. Namely, (\ref{local scale}) with invariance of domain implies that $\Phi_i(M_i)$ is open, moreover, $\Phi_i(M_i)$ is compact, in particular closed in $\mathbb{R}^n/\mathbb{Z}^n$ because of the compactness of $M_i$, thus $\Phi_i(X_i)=\mathbb{R}^n/\mathbb{Z}^n$. 

Finally let us prove the remaining statements of (1). It follows from (\ref{local scale}) that for any $x \in \mathbb{R}^n/\mathbb{Z}^n$, taking $y \in \Phi_i^{-1}(x)$ and for $r>0$ small enough, we have
$$|\mathrm{deg}(\Phi_i)| \le \sharp \Phi_i^{-1}(x) \le \frac{\haus^n(X_i)}{\haus^n(B_r(y))}.$$
Since we have a quantitative upper bound of the right-hand-side because of the Bishop-Gromov inequality, we obtain the desired degree estimate.

Combining the above observations with Proposition \ref{approx}, Remark \ref{bilip} (about the existence of harmonic maps between flat tori) and the standard topological arguments, we easily obtain the remaining statements about degree $1$. 
Namely, we assume $\mathrm{deg}(\Phi_i)=1$ for any sufficiently large $i$ in the sequel. Then $\Phi_i$ must be injective because $\Phi_i$ is a covering map. Therefore $\Phi_i$ gives a diffeomorphism. 

Next let us prove that $\Phi$ is also a diffeomorphism. Since the surjectivity of $\Phi$ comes from that of $\Phi_i$, it is enough to check the injectivity of $\Phi$. 
Since $\Phi$ is also a covering map (because of applying the results above to $\Phi$), it is also enough to prove $\mathrm{deg}(\Phi)=1$. However this can be easily checked by the fact that $(\Phi_i)_*\vol_{M_i}$ weakly converge to $\Phi_*\haus^n$, which is a direct consequence of the uniform convergence of $\Phi_i$ to $\Phi$. 
More precisely, since $((\Phi_i)_*\vol_{M_i})(\mathbb{R}^n/\mathbb{Z}^n) \to (\Phi_*\haus^n)(\mathbb{R}^n/\mathbb{Z}^n)$, by co-area formula, this can be written as
$$\int_{M_i}\sqrt{D(\Phi_i)}\de\!\vol_{M_i} \to \mathrm{deg}(\Phi) \int_X\sqrt{D(\Phi)}\de\!\haus^n.$$
On the other hand, the left-hand-side of the above must converge to $\int_X\sqrt{D(\Phi)}\de\!\haus^n$ because $\sqrt{D(\Phi_i)}$ $L^p$-strongly converge to $\sqrt{D(\Phi)}$ for any $1 \le p<\infty$. This shows $\mathrm{deg}(\Phi)=1$.
Thus $\Phi$ gives a diffeomorphism.

Then consider a diffeomorphism $\bar \Phi_i:=\Phi^{-1} \circ \Phi_i:M_i \to X$ and observe that $\bar \Phi_i$ converge uniformly to the identity with respect to (\ref{mghconv}). In particular $\bar \Phi_i$ gives an $\epsilon_i$-Gromov-Hausdorff approximation for some $\epsilon_i \to 0^+$.
Thus noticing that $\bar \Phi_i$ are equi-regular, we can apply Proposition \ref{approx} to conclude that for some $\epsilon_i \to 0^+$, we have
\begin{equation*}
(1-\epsilon_i)\de_{M_i}(x, y)^{1+\epsilon_i} \le \de_{X}(\bar \Phi_i(x),  \bar \Phi_i(y)) \le C(n)\de_{M_i}(x, y)
\end{equation*}
for all $x, y \in M_i$.
Then we complete the proof.

\end{proof}

Finally let us provide a proof of (2) of Theorem \ref{mthm0} whose statement is as follows: if $M$ is $\delta$-Gromov-Hausdorff close to a flat $n$-torus $T$ with $\mathrm{vol}_{T}(T) \ge v$ for some $v>0$, 
then there exists a  $(\Psi;C_3, C_4)$-harmonic diffeomorphism such that
\begin{equation*}
\Phi=(\phi_j)_{j=1}^n:M\to \mathbb{R}^n/\mathbb{Z}^n,
\end{equation*}
where $C_i=C_i(K,n, D, v)>0$ and $\Psi=\Psi(\delta|n, K, D, v)>0$.

\begin{proof}[Proof of (2) of Theorem \ref{mthm0}]
The proof is done by contradiction. Thus by (1) it is enough to prove that for a  non-collapsed measured Gromov-Hausdorff convergent sequence of manifolds with Ricci curvature bounded below  to a flat torus $X$ with $\mathrm{diam}_X\le D$ and $\mathrm{vol}_X(X) \ge v$;
$$(M_i, g_{M_i}, \mathrm{vol}_{M_i}) \stackrel{\mathrm{mGH}}{\to} (X, g_{X}, \mathrm{vol}_{X}),$$
the existence of harmonic diffeomorphisms
$$\Phi_i=(\phi_{i, j})_{j=1}^n:M_i \to \mathbb{R}^n/\mathbb{Z}^n$$
with $\liminf_iD(\Phi_i)>0$.

Take a diffeomorphism $F_i:M_i \to X$ for any sufficiently large $i$ as stated in Remark \ref{approxrem}, and fix an affine harmonic diffeomorphism $\Phi:X \to \mathbb{R}^n/\mathbb{Z}^n$ as explained in Remark \ref{bilip}.

Then let us find the harmonic map representative $\Phi_i=(\phi_{i,j})_{j=1}^n$ in the homotopy class of $\Phi \circ F_i$ as done in \cite{ES} (see also \cite{Hart}) by applying the  harmonic map heat flow $h_t(\Phi \circ F_i)$ as $t \to \infty$. Note that the energies are not increasing along the flow (see for instance the proposition in the bottom of page 135 in \cite{ES}) with 
\begin{align*}
    E(\Phi_i)&  \leq  \frac{1}{2}\int_{M_i}\langle (\Phi \circ F_i)^*g_{\R^n/\Z^n}, g_{M_i}\rangle \de\!\vol_{M_i} \\
    &\le C(n, D, v) \int_{M_i}\langle F_i^*g_{X}, g_{M_i}\rangle \de\!\vol_{M_i} \\
    &=C(n, D, v) \int_{M_i}\langle F_i^*g_{X}-g_{M_i}, g_{M_i}\rangle \de\!\vol_{M_i}+C(n, D, v)\int_{M_i}|g_{M_i}|^2\de\!\vol_{M_i}\\
    &\le C(n, D, v) \int_{M_i}|F_i^*g_{X}-g_{M_i}|\de\!\vol_{M_i}+C(n, D, v) \vol_{M_i}(M_i)n  
    \\
    &\to C(n, D, v) \vol_{X}(X)n,
\end{align*} 
because of Proposition \ref{approx}.
Since $\Phi_i$ is homotopic to the diffeomorphism $\Phi \circ F_i$, the degree of $\Phi_i$ is equal to $1$. Therefore $\Phi_i$ must be surjective. Therefore Proposition \ref{prop-lower b} with (2) of Theorem \ref{partial answer}  
yields
\begin{equation*}
D(\Phi_i)\ge \tau(n, K, D, v)>0
\end{equation*}
which completes the proof. \end{proof}

\subsection{Proof of  Theorem \ref{th:main}}

We now proceed to the proof of Theorem \ref{th:main}. 
Firstly we recall the following result of Stern \cite[Theorem 1.1]{stern}. 

\begin{theorem}[\cite{stern}]
Let $M$ be a closed, oriented Riemannian 3-manifold, and let $u: M \to \mathbb{R}/\mathbb{Z} \cong \mathbb{S}^1(\frac{1}{2\pi})$ be a harmonic map such that $du$ does not vanish in $H^1_{\mathrm{dR}}(M,\R)$. Then, 
\begin{align}\label{ineq:stern}
2\pi \int_{\mathbb{R}/\mathbb{Z}} \chi(\Sigma_{\theta}) \de\!\theta\geq \frac{1}{2} \int_{\mathbb{R}/\mathbb{Z}} \int_{\Sigma_{ \theta}}\left(\frac{|\nabla du|^2}{|du|^2}  + R_M\right) { \de\!\vol_{\Sigma_{\theta}}} \de\!\theta, 
\end{align}
{where 
$\Sigma_{\theta}:=u^{-1}(\theta)$ is a regular surface for a.e. $\theta \in \mathbb{R}/\mathbb{Z}$ and $\chi(\Sigma_{\theta})$ is its Euler characteristic.}
\end{theorem}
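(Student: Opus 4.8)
The plan is to rewrite the left-hand side of \eqref{ineq:stern} as a bulk integral over $M$ by combining Gauss--Bonnet on the level sets with the coarea formula, then to establish a pointwise identity on the complement of the critical set of $du$ that packages the Gauss equation together with the Bochner formula for $u$, and finally to integrate, the only slack coming from the critical set.

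First I would record the setup. Since $u$ is harmonic, $du$ is a harmonic $1$-form (closed and coclosed), and for harmonic $u$ the hypothesis $[du]\neq 0$ in $H^1_{\mathrm{dR}}(M,\R)$ is equivalent to $du\not\equiv 0$. Let $\mathcal C=\{p\in M:\ du_p=0\}$ be its nodal set; by the structure theory of zero sets of harmonic forms, $\mathcal C$ is $1$-rectifiable with locally finite $\haus^1$-measure, so $\haus^3(\mathcal C)=0$ and $\mathcal C$ has zero $2$-capacity. By Sard's theorem applied to $u$, for a.e.\ $\theta\in\R/\Z$ the set $\Sigma_\theta=u^{-1}(\theta)$ is a smooth closed surface contained in $M\setminus\mathcal C$, two-sided (as $M$ is oriented and $du$ is a global form) with unit normal $\nu=du^\sharp/|du|$. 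Then $\int_{\Sigma_\theta}K_{\Sigma_\theta}\,\de\!\vol_{\Sigma_\theta}=2\pi\chi(\Sigma_\theta)$ by Gauss--Bonnet, and the coarea formula (with Jacobian $|du|$) gives
\begin{equation*}
2\pi\int_{\R/\Z}\chi(\Sigma_\theta)\,\de\!\theta=\int_{M\setminus\mathcal C}K_{\mathrm{lev}}\,|du|\,\de\!\vol_M,
\end{equation*}
where $K_{\mathrm{lev}}(p)$ denotes the Gauss curvature at $p$ of the level set of $u$ through $p$.

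Next I would produce the pointwise identity on $M\setminus\mathcal C$. Writing $f=|du|$ and $\mathrm{II}=\Hess u|_{T\Sigma_\theta}/f$ for the second fundamental form of the level sets, the Gauss equation together with the three-dimensional relation $K_M(T\Sigma_\theta)=\tfrac12 R_M-\ric_M(\nu,\nu)$ gives $K_{\mathrm{lev}}=\tfrac12 R_M-\ric_M(\nu,\nu)+\det\mathrm{II}$, while Bochner's formula for the harmonic function $u$ (using $\Delta_M u=0$ and $|\Hess u|^2=|\nabla du|^2$) reads $\tfrac12\Delta_M f^2=|\nabla du|^2+f^2\ric_M(\nu,\nu)$, i.e.\ $f\,\ric_M(\nu,\nu)=\Delta_M f-(|\nabla du|^2-|\nabla f|^2)/f$. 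Substituting into $f$ times the Gauss equation and using the purely algebraic cancellation $|\nabla du|^2-2|\nabla f|^2+2f^2\det\mathrm{II}=0$ — which one checks by writing $\Hess u$ in an orthonormal frame adapted to $\nu$ and invoking $\tr\Hess u=0$ — one arrives at
\begin{equation*}
K_{\mathrm{lev}}\,|du|=\tfrac12 R_M\,|du|+\frac{|\nabla du|^2}{2|du|}-\Delta_M|du|\qquad\text{on }M\setminus\mathcal C.
\end{equation*}
Integrating this over $M\setminus\mathcal C$ and inserting the first step, the proof reduces to the claim that $-\int_{M\setminus\mathcal C}\Delta_M|du|\,\de\!\vol_M\geq 0$. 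This holds because $f=|du|\geq 0$ vanishes precisely on $\mathcal C$, where it attains its minimum, so its distributional Laplacian on the closed manifold $M$ has the form $\Delta_M f=(\Delta_M f)_{\mathrm{cl}}\,\de\!\vol_M+\mu$ with $\mu\geq 0$ supported on $\mathcal C$, and $\int_M\Delta_M f=0$ forces $\int_{M\setminus\mathcal C}(\Delta_M f)_{\mathrm{cl}}\,\de\!\vol_M=-\mu(M)\leq 0$. Feeding this back and rewriting the two remaining bulk terms by the coarea formula yields exactly \eqref{ineq:stern}, with the slack equal to the nonnegative number $\mu(M)$.

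The hard part will be making the last step rigorous, i.e.\ controlling everything near the nodal set $\mathcal C$ of $du$. Concretely, one must use the structure and size of $\mathcal C$, verify integrability up to $\mathcal C$ of $K_{\mathrm{lev}}|du|$, of $|\nabla du|^2/|du|$, and of $\det\mathrm{II}\cdot|du|$ (via the finite vanishing order of $du$ along $\mathcal C$ and its codimension two), and carry out the divergence-theorem argument on the superlevel sets $\{|du|^2>\varepsilon^2\}$ — legitimate for a.e.\ $\varepsilon$ since $|du|^2$ is smooth — checking that the boundary flux $\int_{\{|du|=\varepsilon\}}\partial_\nu|du|$ has a nonpositive limit as $\varepsilon\to 0^+$, so that it can be identified with $-\mu(M)$. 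The remaining ingredients (Gauss equation, Bochner formula, the algebraic cancellation, Gauss--Bonnet and coarea) are short computations; orientability of $M$ is used only to guarantee that the regular level sets are coorientable, so that Gauss--Bonnet and the coarea formula apply without modification.
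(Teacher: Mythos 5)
Your proposal is correct and is essentially a faithful reconstruction of Stern's original argument, which the paper does not reprove but simply cites from \cite{stern}: the combination of coarea plus Gauss--Bonnet on regular level sets, the Gauss equation with $K_M(T\Sigma)=\tfrac12 R_M-\ric_M(\nu,\nu)$, the Bochner identity for the harmonic $1$-form $du$, the traceless-Hessian cancellation $|\nabla du|^2-2|\nabla|du||^2+2|du|^2\det\mathrm{II}=0$, and the nonpositive boundary flux of $|du|$ on superlevel sets $\{|du|>\varepsilon\}$ are exactly the ingredients of Stern's proof, and I have checked that your pointwise identity $K_{\mathrm{lev}}|du|=\tfrac12 R_M|du|+\tfrac{|\nabla du|^2}{2|du|}-\Delta_M|du|$ is correct. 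The technicalities you defer (integrability up to the critical set and passing to the limit $\varepsilon\to 0^+$ in the truncated domains) are precisely the ones Stern handles, and your outline for them is the right one.
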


\begin{corollary}\label{cor:stern}
Let $M$ be a Riemannian 3-torus 
with $\ric_M\geq K$, for some $K\in \R$, and $\diam_M\leq D$, for some $D>0$. 
Then for any $L >0$ there exists a constant $C=C(K, D, L)>0$ such that any harmonic map $u:M \to \mathbb{R}/\mathbb{Z}$ with $\|du\|_{L^2}\le L$ satisfies
\[
0\geq \int_{M} {|\nabla du|^2}\de\!\vol_M - C\int_{M} R^-_M \de\!\vol_M,
\]
where recall $R^-_M= \max\{-R_M, 0\}.$
\end{corollary}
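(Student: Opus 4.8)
The plan is to derive Corollary \ref{cor:stern} from Stern's inequality \eqref{ineq:stern}, with the main work being to handle the Euler characteristic term and the quotient $|\nabla du|^2/|du|^2$. First I would reduce to a harmonic map $u$ for which $du$ does not vanish in $H^1_{\mathrm{dR}}(M,\R)$, so that Stern's theorem applies: if $du$ is exact then $u$ is homotopic to a constant and, being a harmonic map into $\mathbb{R}/\mathbb{Z}$ which is a critical point of the energy, one checks $du\equiv 0$ — in that case $\nabla du\equiv 0$ as well and the claimed inequality is trivial. So assume $[du]\ne 0$ in $H^1_{\mathrm{dR}}(M;\mathbb{R})$.

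Next I would dispose of the left-hand side of \eqref{ineq:stern}. Since $M$ is a $3$-torus, each regular level set $\Sigma_\theta=u^{-1}(\theta)$ is a closed orientable surface embedded in $T^3$; I claim it cannot contain any sphere component — indeed an embedded $2$-sphere in $T^3$ bounds (the universal cover is $\mathbb{R}^3$ and $\pi_2(T^3)=0$, so every embedded sphere is null-homotopic hence, being two-sided and embedded, separates a ball), and the coarea/homological argument shows $\Sigma_\theta$ is homologous to the Poincaré dual of the integral class $[du]\ne 0$, so it has at least one component of nonzero homology class, which is not a sphere; combined with the absence of sphere components among the (finitely many) components we get $\chi(\Sigma_\theta)\le 0$ for a.e.\ $\theta$. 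Therefore $\int_{\mathbb{R}/\mathbb{Z}}\chi(\Sigma_\theta)\,\de\theta\le 0$ and \eqref{ineq:stern} yields
\[
0\ge \int_{\mathbb{R}/\mathbb{Z}}\int_{\Sigma_\theta}\Big(\frac{|\nabla du|^2}{|du|^2}+R_M\Big)\,\de\!\vol_{\Sigma_\theta}\,\de\theta.
\]

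Then I would rewrite the right-hand side using the coarea formula $\int_{\mathbb{R}/\mathbb{Z}}\int_{\Sigma_\theta}\varphi\,\de\!\vol_{\Sigma_\theta}\,\de\theta=\int_M \varphi\,|du|\,\de\!\vol_M$ (valid since $|\nabla u|=|du|$ is the coarea factor for $u:M\to\mathbb{R}/\mathbb{Z}$), obtaining
\[
0\ge \int_M \frac{|\nabla du|^2}{|du|}\,\de\!\vol_M+\int_M R_M\,|du|\,\de\!\vol_M,
\]
hence $\int_M |du|^{-1}|\nabla du|^2\,\de\!\vol_M\le \int_M R_M^-\,|du|\,\de\!\vol_M$. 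To convert this into the stated $L^2$-type inequality I need a pointwise upper bound $|du|\le C=C(K,D,L)$: this is exactly the gradient estimate used in Step 3 of the proof of Theorem \ref{th:harmonicVF} — from the Bochner formula $\tfrac12\Delta_M|du|^2=|\nabla du|^2+\ric_M(du^\sharp,du^\sharp)\ge K|du|^2$ (here $u$ harmonic into the flat $\mathbb{R}/\mathbb{Z}$ makes $du^\sharp$ a harmonic vector field, cf.\ Remark \ref{rem:delta}), together with the Bishop–Gromov volume bound and the $L^2$-bound $\|du\|_{L^2}\le L$, elliptic (Moser/De Giorgi–Nash) estimates give $\|du\|_{L^\infty}\le C(K,D,L)$. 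Using $|du|\le C$ on the right and $|du|^{-1}\ge C^{-1}$ on the left then produces
\[
\frac1C\int_M |\nabla du|^2\,\de\!\vol_M\le C\int_M R_M^-\,\de\!\vol_M,
\]
i.e.\ $0\ge \int_M|\nabla du|^2\,\de\!\vol_M - C'\int_M R_M^-\,\de\!\vol_M$ with $C'=C'(K,D,L)$, as desired.

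The main obstacle I anticipate is the sign of the Euler characteristic term: one must argue carefully, for a.e.\ regular value $\theta$, that no connected component of $\Sigma_\theta$ is a $2$-sphere, using that $M$ is a torus (embedded spheres in $T^3$ bound balls and the level set represents the nonzero class dual to $[du]$); once $\chi(\Sigma_\theta)\le 0$ is established the rest is the coarea formula plus the uniform gradient bound already available from the proof of Theorem \ref{th:harmonicVF}. A minor technical point is to ensure the coarea identity and Stern's formula are applied only over the full-measure set of regular values, and that $\{|du|=0\}$ contributes nothing (on it $|\nabla du|^2/|du|^2$ is interpreted as in \cite{stern}, and it has measure zero by Sard/the structure of harmonic maps, or one works on $\{|du|>0\}$ throughout).
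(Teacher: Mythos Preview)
Your proposal is correct and follows essentially the same route as the paper: Stern's inequality, nonpositivity of the Euler-characteristic term from the torus topology, the coarea formula, and the $L^\infty$ gradient bound from Step~3 of Theorem~\ref{th:harmonicVF}. One small point: your argument for $\chi(\Sigma_\theta)\le 0$ establishes that spheres in $T^3$ bound balls and that at least one component of $\Sigma_\theta$ is not a sphere, but these two facts alone do not exclude additional sphere components; the missing step is that if a component $S\subset\Sigma_\theta$ bounds a ball $B$, then the lift $\tilde u:B\to\mathbb{R}$ is harmonic with constant boundary value, hence constant by the maximum principle, contradicting regularity of $\theta$ --- the paper is equally terse on this point.
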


\begin{proof} We apply \eqref{ineq:stern} with $du$. Since $M$ is a torus, it does not contain non-separating spheres, the left hand side  of \eqref{ineq:stern} is non positive.
Hence in combination with  the co-area formula it follows
\[0\geq \int_M \frac{|\nabla du|^2}{|du|} \de\!\vol_M - \int_{M} R^-_{M}|du| \de\!\vol_M.\]
As already observed in Part {\bf 3} of the proof of Theorem  \ref{th:harmonicVF}, there exists a constant $C=C(K, D, L)$ such that ${\|du\|_{L^{\infty}}}\leq C$.  Therefore
$$0\geq \int_{M} |\nabla du|^2 \de\!\vol_M -C \int_{M} R^-_M \de\!\vol_M.\eqno \qedhere$$
\end{proof}
We are now in position to give the proof of Theorem \ref{th:main}.

\begin{proof}[Proof of Theorem \ref{th:main}]
Let us divide the proof into 3 steps as follows.

\textbf{Step 1}: We prove the existence of a smooth diffeomorphism $F:M \to \mathbb{R}^3/\mathbb{Z}^3$ that is $C(K, D, v)$-Lipschitz. In particular, $E(F) \le C(K, D, v)$.

The proof is based on Ricci flow smoothing.
Fix $D, v>0$, and consider a family of positive numbers $\{c(k)\}_k$.
Let $\mathcal{M}=\mathcal{M}(D, v, \{c(k)\}_k)$ be the moduli space of Riemannian metrics $g$ on $\mathbb{R}^3/\mathbb{Z}^3$ satisfying $\diam_g\le D$, $|\nabla^k\mathrm{Riem}_g| \le c(k)$ and $\vol_g(M) \ge v$, where $\mathrm{Riem}_g$ denotes the Riemannian curvature tensor.  We know that $\mathcal{M}$ is compact with respect to the $C^{\infty}$-topology.
In particular, there exists $C=C(D, v, \{c(k)\}_k)>0$ such that the identity map from $(\mathbb{R}^3/\mathbb{Z}^3, g)$ to $(\mathbb{R}^3/\mathbb{Z}^3, g_{\mathbb{R}^3/\mathbb{Z}^3})$ is $C$-Lipschitz for any $g \in \mathcal{M}$.

Let us denote by $g_M$ the Riemannian metric on $M$.
On the other hand, thanks to \cite[Theorem 1.1]{ST}, there exists $t_0=t_0(K, D, v)>0$ such that the Ricci flow $g_{t_0}$ at time $t_0$ starting from $g_M$ satisfies;
\begin{enumerate}
\item the identity map from $(M, \de_{g_M})$ to $(M, \de_{g_{t_0}})$ is $C(K, D, v)$-Lipschitz;
\item $|\nabla^k\mathrm{Riem}_{g_{t_0}}|\le C(K, k, D, v)$ for any $k \in \mathbb{Z}_{\ge 0}$;
\item $\ric_{g_{t_0}}\ge -\alpha(K, D, v)$ and $\vol_{g_{t_0}}(M)\ge \beta(K, D, v)>0$.
\end{enumerate}
Thus, due to the observation above and (2)-(3), we have a smooth diffeomorphism $\tilde F:(M, g_{t_0}) \to \mathbb{R}^3/\mathbb{Z}^3$ with $E_{g_{t_0}}(\tilde F) \le C(K, D, v)$ 
Moreover, by (1) above, the composition $$F:(M, \de_{g_M}) \stackrel{\mathrm{id}}{\to}(M, \de_{g_{t_0}}) \stackrel{\tilde F}{\to} \mathbb{R}^3/\mathbb{Z}^3$$ 
provides the desired map. Thus we have \textbf{Step 1}.

\textbf{Step 2}: There exists a surjective harmonic map $\Phi:M \to \mathbb{R}^3/\mathbb{Z}^3$ with $E(\Phi) \le C(K, D, v)$.

This is done by applying the harmonic map heat flow $h_t F$, as in the proof of Theorem \ref{mthm0}. Specifically,  the limit harmonic map $\Phi$ of $h_tF$ as $t \to \infty$ provides the desired harmonic map because $\Phi$ has degree $1$, and its energy is at most $E(F)$. 

\textbf{Step 3}: We can now conclude the proof. Let $u_i$, $i=1,2,3$ be the components of $\Phi$. Then by applying Corollary \ref{cor:stern} to each $u_i$ and using our assumption on $R_M^-$ we get
$$-\!\!\!\!\!\!\int_M |\nabla du_i|^2\de\!\vol_M \le C\delta.$$
We choose $\delta$ and $\tau$ so that we can apply Proposition \ref{prop-lower b}. Therefore we get that $\Phi$ is a $(\Psi; C_1, C_2)$-harmonic map, where $\Psi=\Psi(\delta|K, D, v)$ and $C_i=C_i(K, D, v)>0$. Then the conclusion follows from (1) in Theorem \ref{mthm0}. \end{proof}

Finally let us provide an analogous result of \cite[Theorem 1.11]{ABK} in our setting. Observe that we replace the $L^3$-bound on Ricci by a lower bound on Ricci and an $L^1$-bound on the negative part of the scalar curvature.
\begin{theorem}\label{th:ABK}
Let $M$ be a Riemannian manifold of dimension $3$ with $\ric_M \ge K, \diam_M\le D$ and $\vol_M(M) \ge v$ for some $K \in \mathbb{R}, D, v>0$. Assume the following:
\begin{enumerate}
\item the second integral homology $H_2(M;\mathbb{Z})$ contains no spherical classes;
\item there are $3$-cohomology classes $\alpha_i \in H^1_{\mathrm{dR}}(M;\mathbb{Z}), i=1,2,3$ such that $\alpha_1\wedge \alpha_2 \wedge \alpha_3$ generates $H^3_{\mathrm{dR}}(M;\mathbb{Z})$;
\item we have $\int_MR_M^-\de\!\vol_M \le \delta$ for some $\delta>0$.
\end{enumerate}
Then $M$ is $\Psi$-Gromov-Hausdorff close to a flat $3$-torus, where $\Psi=\Psi(\delta|K, D, v)$.
\end{theorem}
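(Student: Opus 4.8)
The plan is to reduce Theorem \ref{th:ABK} to Theorem \ref{th:main} (the already-established stability statement for $3$-tori) by showing that the topological hypotheses (1)–(2) force $M$ to be Gromov-Hausdorff close to a flat $3$-torus \emph{without} assuming $M$ is itself a torus, essentially by running the same Stern-inequality argument on cohomology classes rather than on a globally defined harmonic map. First I would use hypothesis (2) to produce three smooth maps $\psi_i : M \to \mathbb{R}/\mathbb{Z}$ representing the integral classes $\alpha_i$, and then replace each $\psi_i$ by the harmonic map $u_i$ in its homotopy class via the harmonic map heat flow of Eells–Sampson (the target $\mathbb{R}/\mathbb{Z}$ is flat, hence nonpositively curved, so the flow converges and the energy does not increase). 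Since $\alpha_1 \wedge \alpha_2 \wedge \alpha_3$ generates $H^3_{\mathrm{dR}}(M;\mathbb{Z})$, the map $\Phi = (u_1,u_2,u_3): M \to \mathbb{R}^3/\mathbb{Z}^3$ has nonzero degree, so in particular each $du_i$ does not vanish in $H^1_{\mathrm{dR}}(M;\mathbb{R})$ and $\Phi$ is surjective.

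Next I would invoke Stern's inequality \eqref{ineq:stern} for each $u_i$. Here is where hypothesis (1) enters: $\Sigma_\theta = u_i^{-1}(\theta)$ is, for a.e.\ $\theta$, a smooth closed surface representing a class in $H_2(M;\mathbb{Z})$, and the nonexistence of spherical classes guarantees that no component of $\Sigma_\theta$ is a sphere, so $\chi(\Sigma_\theta) \le 0$ and the left-hand side of \eqref{ineq:stern} is nonpositive. Exactly as in Corollary \ref{cor:stern} (whose proof uses only this sphere-freeness, not that $M$ is a torus), one obtains, using the co-area formula, the $L^\infty$-bound $\|du_i\|_{L^\infty} \le C(K,D,L)$ coming from Bochner and elliptic regularity (step 3 of the proof of Theorem \ref{th:harmonicVF}), and the energy bound $E(\Phi) \le C(K,D,v)$ from Step 1–2 of the proof of Theorem \ref{th:main}:
\begin{equation*}
0 \ge \int_M |\nabla d u_i|^2 \,\de\!\vol_M - C \int_M R_M^- \,\de\!\vol_M \ge \int_M |\nabla d u_i|^2 \,\de\!\vol_M - C\delta .
\end{equation*}
Hence $-\!\!\!\!\!\!\int_M |\nabla d u_i|^2 \,\de\!\vol_M \le C\delta$ for $i=1,2,3$. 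The only remaining ingredient is a quantitative lower bound $D(\Phi) \ge \tau > 0$: this is supplied by Proposition \ref{prop-lower b} together with part (2) of Theorem \ref{partial answer}, using that $\Phi$ is surjective with controlled energy. With $\delta$ small enough (depending on $K,D,v$) this makes $\Phi$ a $(\Psi; C_1,C_2)$-harmonic map, and part (1) of Theorem \ref{mthm0} gives that $M$ is $\Psi$-Gromov-Hausdorff close to a flat $3$-torus, with $\Psi = \Psi(\delta | K,D,v)$.

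The part I expect to be the main obstacle is verifying that the $3$-manifold $M$ here genuinely satisfies \emph{all} the hypotheses needed for Step 1 of the proof of Theorem \ref{th:main} — in particular, Simon–Topping Ricci-flow smoothing (\cite{ST}) is applied there to a $3$-torus, and one must check that it applies to an arbitrary closed oriented $3$-manifold with $\ric_M \ge K$, $\diam_M \le D$, $\vol_M(M) \ge v$; this is fine because \cite[Theorem 1.1]{ST} requires no topological hypothesis, but it should be stated carefully. A secondary subtlety is orientability: hypothesis (2) with $H^3_{\mathrm{dR}}(M;\mathbb{Z}) \neq 0$ forces $M$ to be orientable, which is exactly what Stern's theorem requires, so this should be remarked explicitly. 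Everything else is a routine transcription of the proof of Theorem \ref{th:main}, with the single structural change that the maps $u_i$ come from cohomology classes rather than from an \emph{a priori} product structure on $M$.
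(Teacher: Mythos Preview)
Your overall strategy matches the paper's: construct a map $\Phi:M\to\mathbb{R}^3/\mathbb{Z}^3$ from the integral classes $\alpha_i$, run the harmonic map heat flow to make it harmonic with nonzero degree (hence surjective), feed each component into Stern's inequality using hypothesis (1) to kill the Euler characteristic term, and finish with Proposition \ref{prop-lower b} and Theorem \ref{mthm0}. That part is fine, and your remarks on orientability and on the applicability of Simon--Topping are correct.

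The genuine gap is the energy bound $E(\Phi)\le C(K,D,v)$. You invoke ``Step 1--2 of the proof of Theorem \ref{th:main}'', but Step~1 there does more than apply \cite{ST}: after smoothing to $g_{t_0}$ with bounded geometry, it uses that the moduli space of such metrics \emph{on the torus} is $C^\infty$-compact, and hence the identity map to $(\mathbb{R}^3/\mathbb{Z}^3,g_{\mathrm{flat}})$ is uniformly Lipschitz. That moduli-space step is unavailable here because $M$ is not assumed to be a torus, and without it nothing controls $E(\psi_i)$ (or $\|du_i\|_{L^2}$) in terms of $K,D,v$ alone---the harmonic heat flow only says $E(u_i)\le E(\psi_i)$ for your \emph{chosen} $\psi_i$. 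The paper fills exactly this gap: after Ricci-flow smoothing to bounded geometry it invokes \cite[Corollary 7.2]{ABK} (equivalently, a smooth compactness argument in bounded geometry) to obtain $\|\alpha_i\|_{L^2(g_{t_0})}\le C(K,D,v)$ for the harmonic representatives of the integral classes, then defines $\tilde F(x)=\bigl(\int_{\gamma_x}\alpha_i\bigr)_i$ on $(M,g_{t_0})$ with $E_{g_{t_0}}(\tilde F)\le C$, and composes with the Lipschitz identity $(M,g_M)\to(M,g_{t_0})$ to get the controlled-energy map on $(M,g_M)$. Once you insert this step, your argument becomes the paper's.
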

\begin{proof} 
Since the proof is very close to that of Theorem \ref{th:main} with technical results obtained in \cite{ABK}, let us provide only an outline of the proof. 

Applying \cite[Theorem 1.1]{ST} again, we can find a Ricci flow $g_{t_0}$ at time $t_0$ starting at $g_M$ such that $g_{t_0}$ is in a bounded geometry. In particular $(M, g_{t_0})$ is in a class introduced in \cite[Definition 1.9]{ABK}. Therefore it follows from \cite[Corollary 7.2]{ABK} (or applying a smooth compactness result based on curvature controls) that 
\begin{equation}\label{energy}
    \|\alpha_i\|_{L^2(g_{t_0})}\le C(K, D, v)
\end{equation}
holds for any $i=1,2,3$. Then we define a smooth map $\tilde F:(M, g_{t_0}) \to \mathbb{R}^3/\mathbb{Z}^3$ by
\begin{equation}
    \tilde F(x):=\left(\int_{\gamma_x}\alpha_i \right)_{i=1,2,3},
\end{equation}
where $\gamma_x$ is a smooth curve from a fixed point $x_0 \in M$ to $x \in M$. Note that the degree of $\tilde F$ does not vanish because
denoting by $\omega_{\mathbb{R}^3/\mathbb{Z}^3}$ the canonical volume $3$-form of $\mathbb{R}^3/\mathbb{Z}^3$, it follows from the definition of $\tilde F$ that 
$$\tilde F^*\omega_{\mathbb{R}^3/\mathbb{Z}^3}=\alpha_1\wedge \alpha_2\wedge \alpha_3$$
holds. In particular ${\tilde F^*}:H^3_{\mathrm{dR}}(\mathbb{R}^3/\mathbb{Z}^3;\mathbb{R})\to H^3_{\mathrm{dR}}(M;\mathbb{R})$ is injective, thus the degree of $\tilde F$ does not vanish.

On the other hand, $\tilde F$ must be surjective because
if $\tilde F$ is not surjective, then the induced map $(\tilde F)_*:H_2(M;\mathbb{Z}) \to H_2(T;\mathbb{Z})$ is obtained by the composition $H_2(M;\mathbb{Z})\to H_2(T \setminus B;\mathbb{Z}) \to H_2(T;\mathbb{Z})$ for some small open ball $B \subset T$, and $H_2(T\setminus B;\mathbb{Z})$ vanishes since $T\setminus B$ is homotopic to a bouquet, thus $\mathrm{deg}(\tilde F)=0$ which is a contradiction.

Therefore $\tilde F$ is surjective with $E_{g_{t_0}}(\tilde F) \le C(K, D, v)$ because of (\ref{energy}). Then considering the composition 
$$F:(M, \de_{g_M}) \stackrel{\mathrm{id}}{\to}(M, \de_{g_{t_0}}) \stackrel{\tilde F}{\to} \mathbb{R}^3/\mathbb{Z}^3,$$
we obtain the desired conclusion by similar arguments as in the proof of Theorem \ref{th:main}. \end{proof}
{
\subsection{Proof of Theorem \ref{thm:almostnonricci}.}
Finally let us prove the remaining result.

\begin{proof}[Proof of Theorem \ref{thm:almostnonricci}.]
Let us firstly prove (1). For the harmonic map $\Phi$, 
Proposition \ref{prop-lower b} with Remark \ref{rem:delta} implies that $\Phi$ is $(2L\delta;L, \tau)$-harmonic if $\delta$ is small depending only on $n, D, c$ and $L$, where $\tau=\tau(n, D, c, L)>0$.
Thus we can apply Theorem \ref{mthm0} to complete the proof of (1).

Finally let us prove (2).
As done in the proof of (2) of Theorem \ref{mthm0}, applying the harmonic map heat flow starting at $F$, we can find a harmonic map $\tilde \Phi:M \to \mathbb{R}^n/\mathbb{Z}^n$ whose degree is the same to that of $F$, with $E(\tilde \Phi)\le E(F) \le L$. Thus $\tilde \Phi$ must be surjective. In particular applying (1) for $\tilde \Phi$ completes the proof of (2).
\end{proof}
}
\small{
\bibliographystyle{amsalpha}}
\bibliography{scalar}

\newcommand{\etalchar}[1]{$^{#1}$}
\providecommand{\bysame}{\leavevmode\hbox to3em{\hrulefill}\thinspace}
\providecommand{\MR}{\relax\ifhmode\unskip\space\fi MR }
\providecommand{\MRhref}[2]{%
  \href{http://www.ams.org/mathscinet-getitem?mr=#1}{#2}
}
\providecommand{\href}[2]{#2}
\begin{thebibliography}{AHVP{\etalchar{+}}19}

\bibitem[ABK22]{ABK}
Brian {Allen}, Edward {Bryden}, and Demetre {Kazaras}, \emph{{Stability of the
  positive mass theorem and torus rigidity theorems under integral curvature
  bounds}}, arXiv e-prints (2022), arXiv:2210.04340.

\bibitem[AC92]{andersoncheeger}
Michael~T. Anderson and Jeff Cheeger, \emph{{$C^\alpha$}-compactness for
  manifolds with {R}icci curvature and injectivity radius bounded below}, J.
  Differential Geom. \textbf{35} (1992), no.~2, 265--281. \MR{1158336}

\bibitem[AGS12]{ags_heat}
Luigi Ambrosio, Nicola Gigli, and Giuseppe Savar\'{e}, \emph{Heat flow and
  calculus on metric measure spaces with {R}icci curvature bounded below---the
  compact case}, Boll. Unione Mat. Ital. (9) \textbf{5} (2012), no.~3,
  575--629. \MR{3051737}

\bibitem[AGS14]{agsriemannian}
Luigi Ambrosio, Nicola Gigli, and Giuseppe Savar{\'e}, \emph{Metric measure
  spaces with {R}iemannian {R}icci curvature bounded from below}, Duke Math. J.
  \textbf{163} (2014), no.~7, 1405--1490. \MR{3205729}

\bibitem[AH17]{ambrosio_honda}
Luigi Ambrosio and Shouhei Honda, \emph{New stability results for sequences of
  metric measure spaces with uniform {R}icci bounds from below}, 1--51.
  \MR{3701735}

\bibitem[AH18]{ambrosio_honda_local}
\bysame, \emph{Local spectral convergence in {${\rm RCD}^*(K,N)$} spaces},
  Nonlinear Anal. \textbf{177} (2018), 1--23. \MR{3865185}

\bibitem[AHVP{\etalchar{+}}19]{AHPPS2019}
Brian Allen, Lisandra Hernandez-Vazquez, Davide Parise, Alec Payne, and
  Shengwen Wang, \emph{Warped tori with almost non-negative scalar curvature},
  Geom. Dedicata \textbf{200} (2019), 153--171. \MR{3956190}

\bibitem[All21]{Allen2021}
Brian Allen, \emph{Almost non-negative scalar curvature on {R}iemannian
  manifolds conformal to tori}, J. Geom. Anal. \textbf{31} (2021), no.~11,
  11190--11213. \MR{4310167}

\bibitem[AMS19]{amsnonlinear}
Luigi Ambrosio, Andrea Mondino, and Giuseppe Savar\'{e}, \emph{Nonlinear
  {D}iffusion {E}quations and {C}urvature {C}onditions in {M}etric {M}easure
  {S}paces}, Mem. Amer. Math. Soc. \textbf{262} (2019), no.~1270, 0.
  \MR{4044464}

\bibitem[AST17]{ast}
Luigi Ambrosio, Federico Stra, and Dario Trevisan, \emph{Weak and strong
  convergence of derivations and stability of flows with respect to {MGH}
  convergence}, J. Funct. Anal. \textbf{272} (2017), no.~3, 1182--1229.
  \MR{3579137}

\bibitem[AT14]{amtr}
Luigi Ambrosio and Dario Trevisan, \emph{Well-posedness of {L}agrangian flows
  and continuity equations in metric measure spaces}, Anal. PDE \textbf{7}
  (2014), no.~5, 1179--1234. \MR{3265963}

\bibitem[AT17]{amtrnotes}
\bysame, \emph{Lecture notes on the {D}i{P}erna-{L}ions theory in abstract
  measure spaces}, Ann. Fac. Sci. Toulouse Math. (6) \textbf{26} (2017), no.~4,
  729--766. \MR{3746641}

\bibitem[Bam16]{Bamler2016}
Richard~H. Bamler, \emph{A {R}icci flow proof of a result by {G}romov on lower
  bounds for scalar curvature}, Math. Res. Lett. \textbf{23} (2016), no.~2,
  325--337. \MR{3512888}

\bibitem[BB11]{BB}
Anders Bj\"orn and Jana Bj\"orn, \emph{Nonlinear potential theory on metric
  spaces}, European Mathematical Society (2011).

\bibitem[BG19]{Burkhardt2019}
Paula Burkhardt-Guim, \emph{Pointwise lower scalar curvature bounds for {$C^0$}
  metrics via regularizing {R}icci flow}, Geom. Funct. Anal. \textbf{29}
  (2019), no.~6, 1703--1772. \MR{4034918}

\bibitem[BNS22]{BrueSemolaNaber_boundary}
Elia Bru\`e, Aaron Naber, and Daniele Semola, \emph{Boundary regularity and
  stability for spaces with {R}icci bounded below}, Invent. Math. \textbf{228}
  (2022), no.~2, 777--891. \MR{4411732}

\bibitem[BNS23]{BrueSemolaNaber_torus}
\bysame, \emph{Stability of tori under lower sectional curvature},
  arXiv:2307.03824v2, to appear in Geom. Topol. (2023).

\bibitem[BS20]{BS18}
Elia Bru\`e and Daniele Semola, \emph{Constancy of the dimension for {${\rm
  RCD}(K,N)$} spaces via regularity of {L}agrangian flows}, Comm. Pure Appl.
  Math. \textbf{73} (2020), no.~6, 1141--1204. \MR{4156601}

\bibitem[CC97]{ChCo}
Jeff Cheeger and Tobias~H. Colding, \emph{On the structure of spaces with
  {R}icci curvature bounded below. {I}}, J. Differential Geom. \textbf{46}
  (1997), no.~3, 406--480. \MR{1484888}

\bibitem[Che99]{cheegerlipschitz}
Jeff Cheeger, \emph{Differentiability of {L}ipschitz functions on metric
  measure spaces}, Geom. Funct. Anal. \textbf{9} (1999), no.~3, 428--517.
  \MR{1708448 (2000g:53043)}

\bibitem[CJN21]{CJN}
Jeff Cheeger, Wenshuai Jiang, and Aaron Naber, \emph{Rectifiability of singular
  sets of noncollapsed limit spaces with {R}icci curvature bounded below}, Ann.
  of Math. (2) \textbf{193} (2021), no.~2, 407--538. \MR{4226910}

\bibitem[CL22]{ChuLee}
Jianchun Chu and Man-Chun Lee, \emph{Conformal tori with almost non-negative
  scalar curvature}, Calc. Var. Partial Differential Equations \textbf{61}
  (2022), no.~3, Paper No. 114, 20. \MR{4411703}

\bibitem[CMT23]{CMT23}
Gilles Carron, Ilaria Mondello, and David Tewodrose, \emph{Torus stability
  under {K}ato bounds on the {R}icci curvature}, J. Lond. Math. Soc. (2)
  \textbf{107} (2023), no.~3, 943--972. \MR{4555988}

\bibitem[Col97]{Colding97}
Tobias~H. Colding, \emph{Ricci curvature and volume convergence}, Ann. of Math.
  (2) \textbf{145} (1997), no.~3, 477--501. \MR{1454700}

\bibitem[DPG18]{DG}
Guido De~Philippis and Nicola Gigli, \emph{Non-collapsed spaces with {R}icci
  curvature bounded below}, J. Éc. polytech. Math. \textbf{5} (2018),
  613--650. \MR{3852263}

\bibitem[EH64]{ES}
James~Jr. Eells and Sampson~Joseph H., \emph{Harmonic mappings of {R}iemannian
  manifolds}, Amer. J, Math. \textbf{86} (1964), 109–160. \MR{0164306}

\bibitem[EKS15]{erbarkuwadasturm}
Matthias Erbar, Kazumasa Kuwada, and Karl-Theodor Sturm, \emph{On the
  equivalence of the entropic curvature-dimension condition and {B}ochner's
  inequality on metric measure spaces}, Invent. Math. \textbf{201} (2015),
  no.~3, 993--1071. \MR{3385639}

\bibitem[GH15]{GigliHan}
Nicola Gigli and Bang-Xian Han, \emph{The continuity equation on metric measure
  spaces}, Calc. Var. Partial Differential Equations \textbf{53} (2015),
  no.~1-2, 149–177.

\bibitem[Gig15]{giglistructure}
Nicola Gigli, \emph{On the differential structure of metric measure spaces and
  applications}, Mem. Amer. Math. Soc. \textbf{236} (2015), no.~1113, vi+91.
  \MR{3381131}

\bibitem[Gig18]{giglinonsmooth}
\bysame, \emph{Nonsmooth differential geometry---an approach tailored for
  spaces with {R}icci curvature bounded from below}, Mem. Amer. Math. Soc.
  \textbf{251} (2018), no.~1196, v+161. \MR{3756920}

\bibitem[Gig23]{Gigli-Lipschitz}
\bysame, \emph{On the regularity of harmonic maps from $\mathrm{RCD}({K}, {N})$
  to $\mathrm{CAT}(0)$ spaces and related results}, Ars Inveniendi Analytica
  (2023), no.~5, 55pp.

\bibitem[GL80]{GromovLawson1980}
Mikhael Gromov and H.~Blaine Lawson, Jr., \emph{Spin and scalar curvature in
  the presence of a fundamental group. {I}}, Ann. of Math. (2) \textbf{111}
  (1980), no.~2, 209--230. \MR{569070}

\bibitem[GMS15]{gms_convergence}
Nicola Gigli, Andrea Mondino, and Giuseppe Savar\'{e}, \emph{Convergence of
  pointed non-compact metric measure spaces and stability of {R}icci curvature
  bounds and heat flows}, Proc. Lond. Math. Soc. (3) \textbf{111} (2015),
  no.~5, 1071--1129. \MR{3477230}

\bibitem[GR18]{GR18}
Nicola Gigli and Chiara Rigoni, \emph{Recognizing the flat torus among
  $\mathrm{RCD}^\ast({0, N})$ spaces via the study of the first cohomology
  group}, Calc. Var. and PDE \textbf{57} (2018), no.~104.

\bibitem[Gro14]{Gromov2014}
Misha Gromov, \emph{Dirac and {P}lateau billiards in domains with corners},
  Cent. Eur. J. Math. \textbf{12} (2014), no.~8, 1109--1156. \MR{3201312}

\bibitem[GT18]{GT_sv_short}
Nicola Gigli and Luca Tamanini, \emph{Second order differentiation formula on
  {${\rm RCD}(K,N)$} spaces}, Atti Accad. Naz. Lincei Rend. Lincei Mat. Appl.
  \textbf{29} (2018), no.~2, 377--386. \MR{3797990}

\bibitem[GT21]{GT_sv_long}
\bysame, \emph{Second order differentiation formula on {${\sf{RCD}}^*(K,N)$}
  spaces}, J. Eur. Math. Soc. (JEMS) \textbf{23} (2021), no.~5, 1727--1795.
  \MR{4244516}

\bibitem[Ham20]{Ham}
Mark J.~D. Hamilton, \emph{Milnor's isospectral tori and harmonic maps},
  arXiv:2008.01043v1 (2020).

\bibitem[Han18a]{han18}
Bang-Xian Han, \emph{Characterizations of monotonicity of vector fields on
  metric measure spaces}, Calc. Var. \textbf{57} (2018), no.~113.

\bibitem[Han18b]{Han19}
\bysame, \emph{Ricci tensor on {${\rm RCD}^*(K,N)$} spaces}, J. Geom. Anal.
  \textbf{28} (2018), no.~2, 1295--1314. \MR{3790501}

\bibitem[Har67]{Hart}
Philip Hartman, \emph{On homotopic harmonic maps}, Canad. J, Math. \textbf{19}
  (1967), 673–687. \MR{0214004}

\bibitem[Hel78]{Hel}
Sigurdur Helgason, \emph{Differential geometry, lie groups, and symmetric
  spaces}, Academic press (1978).

\bibitem[HK00]{koskela}
Piotr Haj{\l}asz and Pekka Koskela, \emph{Sobolev met {P}oincar\'e}, Mem. Amer.
  Math. Soc. \textbf{145} (2000), no.~688, x+101. \MR{1683160 (2000j:46063)}

\bibitem[Hon15]{Honda15}
Shouhei Honda, \emph{{R}icci curvature and $\mathrm{L}^p$-convergence}, J.
  Reine Angew. Math. \textbf{705} (2015), 85--1154. \MR{3377391}

\bibitem[Hon17]{Honda17}
\bysame, \emph{Spectral convergence under bounded {R}icci curvature}, J. Funct.
  Anal. \textbf{273} (2017), no.~5, 1577--1662. \MR{3666725}

\bibitem[Hon18]{Honda-PDE}
\bysame, \emph{Elliptic {PDE}s on compact {R}icci limit spaces and
  applications}, Mem. Amer. Math. Soc. \textbf{253} (2018), no.~1211, v+92.
  \MR{3803543}

\bibitem[HP23]{HondaPeng}
Shouhei Honda and Yuanlin Peng, \emph{A note on the topological stability
  theorem from $\mathrm{RCD}$ spaces to {R}iemannian manifolds}, Manuscripta
  Math. \textbf{172} (2023), no.~3-4, 971–1007.

\bibitem[HS23]{HS}
Shouhei Honda and Yannick Sire, \emph{Sobolev mappings between $\mathrm{RCD}$
  spaces and applications to harmonic maps: a heat kernel approach}, J. Geom.
  Anal. \textbf{33} (2023), no.~9, 87pp. \MR{4603300}

\bibitem[Jia14]{Jiang}
Renjin Jiang, \emph{Cheeger-harmonic functions in metric measure spaces
  revisited}, J. Funct. Anal. \textbf{266} (2014), no.~1, 1373--1394.

\bibitem[KKL21]{KazarasKhuriLee}
Demetre Kazaras, Marcus Khuri, and Dan Lee, \emph{Stability of the positive
  mass theorem under {R}icci curvature lower bounds}, arXiv:2111.05202 (2021).

\bibitem[KM21]{kapovitch_mondino}
Vitali Kapovitch and Andrea Mondino, \emph{On the topology and the boundary of
  {$N$}-dimensional {$\mathrm{RCD}({K,N})$} spaces}, Geom. Topol. \textbf{25}
  (2021), no.~1, 445--495.

\bibitem[KX23]{KX23}
Demetre Kazaras and Kai Xu, \emph{Drawstrings and flexibility in the georch
  conjecture}, arXiv:2309.03756 (2023).

\bibitem[LNN23]{LeeNaberNeu}
Man-Chun Lee, Aaron Naber, and Robin Neumayer, \emph{{$d_p$}-convergence and
  {$\epsilon$}-regularity theorems for entropy and scalar curvature lower
  bounds}, Geom. Topol. \textbf{27} (2023), no.~1, 227--350. \MR{4584264}

\bibitem[LV09]{lottvillani}
John Lott and C\'{e}dric Villani, \emph{Ricci curvature for metric-measure
  spaces via optimal transport}, Ann. of Math. (2) \textbf{169} (2009), no.~3,
  903--991. \MR{2480619}

\bibitem[MMP22]{MMP22}
Ilaria Mondello, Andrea Mondino, and Raquel Perales, \emph{An upper bound on
  the revised first {B}etti number and a torus stability result for
  {$\mathrm{RCD}$} spaces}, Comment. Math. Helv. \textbf{97} (2022), no.~3,
  555--609. \MR{4468994}

\bibitem[MS23]{MS}
Andrea Mondino and Daniele Semola, \emph{Lipschitz continuity and
  {B}ochner-{E}ells-{S}ampson inequality for harmonic maps from
  $\mathrm{RCD}{(K,N)}$ spaces to $\mathrm{CAT}(0)$ spaces},
  arXiv:2202.01590v2, to appear in Amer. J. Math. (2023).

\bibitem[PKP20]{CPKP_graphTori}
Armando J.~Cabrera Pacheco, Christian Ketterer, and Raquel Perales,
  \emph{Stability of graphical tori with almost nonnegative scalar curvature},
  Calc. Var. and PDEs \textbf{59} (2020), no.~134.

\bibitem[Raj12]{rajala2}
Tapio Rajala, \emph{Interpolated measures with bounded density in metric spaces
  satisfying the curvature-dimension conditions of {S}turm}, J. Funct. Anal.
  \textbf{263} (2012), no.~4, 896--924. \MR{2927398}

\bibitem[Sor17]{Sormani_survey}
Christina Sormani, \emph{Scalar curvature and intrinsic flat convergence},
  Measure theory in non-smooth spaces, Partial Differ. Equ. Meas. Theory, De
  Gruyter Open, Warsaw, 2017, pp.~288--338. \MR{3701743}

\bibitem[ST21]{ST}
Miles Simon and Peter Topping, \emph{Local mollification of {R}iemannian
  metrics using {R}icci flow, and {R}icci limit spaces}, Geom. Topol.
  \textbf{25} (2021), no.~2, 913–948. \MR{4251438}

\bibitem[Ste22]{stern}
Daniel~L. Stern, \emph{Scalar curvature and harmonic maps to {$S^1$}}, J.
  Differential Geom. \textbf{122} (2022), no.~2, 259--269.

\bibitem[Stu06]{stugeo2}
Karl-Theodor Sturm, \emph{On the geometry of metric measure spaces. {II}}, Acta
  Math. \textbf{196} (2006), no.~1, 133--177. \MR{2237207 (2007k:53051b)}

\bibitem[SW11]{SormaniWenger2011}
Christina Sormani and Stefan Wenger, \emph{The intrinsic flat distance between
  {R}iemannian manifolds and other integral current spaces}, J. Differential
  Geom. \textbf{87} (2011), no.~1, 117--199. \MR{2786592}

\bibitem[SY79a]{SchoenYau1979}
Richard Schoen and Shing-Tung Yau, \emph{Existence of incompressible minimal
  surfaces and the topology of three-dimensional manifolds with nonnegative
  scalar curvature}, Ann. of Math. (2) \textbf{110} (1979), no.~1, 127--142.
  \MR{541332}

\bibitem[SY79b]{SchoenYau1979-2}
\bysame, \emph{On the structure of manifolds with positive scalar curvature},
  Manuscripta Math. \textbf{28} (1979), no.~1-3, 159--183. \MR{535700}

\bibitem[Vil16]{villani_survey}
C\'{e}dric Villani, \emph{Synthetic theory of {R}icci curvature bounds}, Jpn.
  J. Math. \textbf{11} (2016), no.~2, 219--263. \MR{3544918}

\bibitem[Zam22]{Zamora}
Sergio Zamora, \emph{First {B}etti number and collapse}, arXiv:2209.12628
  (2022).

\end{thebibliography}
\end{document}